\documentclass[reqno,11pt,centertags]{amsart}

\usepackage{amsmath,amsthm,amscd,amssymb,latexsym,upref,enumerate}
\input epsf
\usepackage{epsfig}
\usepackage[T2A,OT1]{fontenc}
\usepackage[ot2enc]{inputenc}
\usepackage[russian,english]{babel}
\usepackage{epic,eepicemu}
\usepackage{tikz}
\usepackage{mathrsfs}  
\newcommand{\bbD}{{\mathbb{D}}}
\newcommand{\bbN}{{\mathbb{N}}}
\newcommand{\bbR}{{\mathbb{R}}}

\newcommand{\bbZ}{{\mathbb{Z}}}
\newcommand{\bbC}{{\mathbb{C}}}
\newcommand{\bbQ}{{\mathbb{Q}}}
\newcommand{\bbH}{{\mathbb{H}}}

\newcommand{\cF}{{\mathcal{F}}}
\newcommand{\cL}{{\mathcal{L}}}
\newcommand{\cP}{{\mathcal{P}}}

\newcommand{\cJ}{{\mathcal{J}}}

\newcommand{\cC}{{\mathcal{C}}}

\newcommand{\cU}{{\mathcal{U}}}
\newcommand{\cK}{{\mathcal{K}}}
\newcommand{\cN}{{\mathcal{N}}}
\newcommand{\cT}{{\mathcal{T}}}
\newcommand{\cUnew}{{\mathcal{V}}}

\newcommand{\sa}{{\mathsf{a}}}
\renewcommand{\sb}{{\mathsf{b}}}
\newcommand{\sE}{{\mathsf{E}}}
\newcommand{\ol}{\overline}

\newcommand{\cD}{{\mathcal{D}}}
\newcommand{\fB}{{\mathfrak{B}}}
\newcommand{\fD}{{\mathfrak{D}}}

\newcommand{\fa}{{\mathfrak{a}}}

\newcommand{\e}{{\epsilon}}
\newcommand{\vk}{{\varkappa}}

\newcommand{\fA}{{\mathfrak{A}}}

\newcommand{\ve}{{\varepsilon}}

\newcommand{\fj}{{\mathfrak{j}}}

\newcommand{\z}{\zeta}
\newcommand{\g}{\gamma}
\renewcommand{\Re}{\text{\rm Re\ }}
\renewcommand{\Im}{\text{\rm Im\ }}

\newcommand{\tr}{\text{\rm tr}}

\newcommand{\loc}{\text{\rm loc}}
\newcommand{\supp}{\operatorname{supp}}
\allowdisplaybreaks \numberwithin{equation}{section}
\newtheorem{theorem}{Theorem}[section]
\newtheorem*{theorem-non}{Theorem}

\newtheorem{lemma}[theorem]{Lemma}
\newtheorem{proposition}[theorem]{Proposition}
\newtheorem{corollary}[theorem]{Corollary}

\theoremstyle{definition}
\newtheorem{definition}[theorem]{Definition}

\newtheorem{remark}[theorem]{Remark}

\newtheorem{example}[theorem]{Example}

\date{\today}

\title{New universality classes associated to fractals}

\author[B.\ Eichinger]{Benjamin Eichinger}

\address{B. Eichinger: School of Mathematical Sciences, Lancaster University, Lancaster LA1 4YF, United Kingdom and Institute of Analysis and Scientific Computing, TU Wien, 1040 Wien,
Austria}

\email{b.eichinger@lancaster.ac.uk}

\thanks{B.\ E.\ was supported by the Austrian Science Fund FWF, project no: P33885}

\author[A.\ Kheifets]{Alexander Kheifets}

\address{A. Kheifets: Department of Mathematics and Statistics, University of Massachusetts Lowell, Lowell, MA~01854, USA}

\email{{alexander\_kheifets@uml.edu}}

\author[M.\ Luki\'c]{Milivoje Luki\'c}

\address{M. Luki\'c: Department of Mathematics, Emory University, Atlanta, GA~30322, USA}

\email{milivoje.lukic@emory.edu}

\thanks{M. L.\ was supported in part by NSF grants DMS--2154563/2626207 and DMS--2453758/2626193.}

\author[P.\ Yuditskii]{Peter Yuditskii}

\address{P. Yuditskii: Institut f{\"u}r Analysis,
	Johannes Kepler Universit{\"a}t, 
	Altenberger Strasse 69,
	4040 Linz, Austria}

\email{peter.yuditskii@gmail.com}

\thanks{P.\ Y.\ was supported by the Austrian Science Fund FWF, project no: P34414}

\subjclass[2020]{Primary 42C05; Secondary 47B36,34L40,37F10,46E22,47B32}

\keywords{universality limit, orthogonal polynomial, canonical system, quadratic iteration, Christoffel function}

\begin{document}
	\maketitle

\begin{abstract}
The local behavior of zeros of orthogonal polynomials is determined by the local scaling behavior of Christoffel--Darboux (CD) kernels. All previously studied behaviors are described by scaling limits, and different values of the limit kernel correspond to different universality classes. In this paper, we describe new universality classes in which instead of a single limit kernel, there is a limit cycle. These are naturally suited to Cantor spectra and to fractal behaviors of the measure. We show that these new phenomena occur for two canonical models with singular measures: the middle third Cantor measure and the balanced/equilibrium measure on a real Julia set of an expanding polynomial.  In particular, this is the first result on the local behavior of CD kernels for an almost periodic operator with singular spectrum.

As a complementary result, we describe the asymptotics of the scaling function, and the Christoffel function, at a fixed point of a quadratic iteration. This is the first such analysis for an almost periodic model with singular spectrum. It allows us to conclude that, at the fixed point, the local scaling of zeros of the polynomial of degree $n$ is precisely of order $n^{-1/\alpha}$, where $\alpha$ is the local dimension of the measure. We also study the limit chain in this case, and prove that it can be parametrized by its asymptotics with respect to a Martin function (so-called $M$-type); this is the first result of this kind for a chain with a singular measure.
\end{abstract}

\section{Introduction}

	Let $\mu$ be a probability measure on $\bbR$ which is not supported on finitely many points, and has finite moments: 
	\begin{align}\label{intro:eq9}
		\int \lvert x\rvert^n d\mu(x)<\infty, \qquad \forall n\in\bbN_0.
	\end{align}
By applying the Gram--Schmidt process to the sequence of monomials $(z^k)_{k=0}^\infty$ in $L^2(\bbR,d\mu)$, one obtains the sequence of orthogonal polynomials $(p_k(z))_{k=0}^\infty$. The polynomial $p_k$ has $k$ real simple zeros, and the global and local asymptotics of the zeros are questions of classical interest \cite{Freud69,SimonEquilibrium,SimonSzego,StahlTotik,Szego}. The local behavior is studied using the Christoffel--Darboux (CD) kernels
\[
K(n,z,w) = \sum_{k=0}^{n-1}p_{k}(z)\overline{p_{k}(w)}.
\]
There is a long history of studying their scaling behavior around a point $\xi \in \bbR$ in regimes where a scaling limit exists, i.e.\
\begin{equation}\label{intro:eq18}
\lim\limits_{n\to\infty}\frac{K\left(n,\xi+\frac{z}{r(n)},\xi+\frac{w}{r(n)}\right)}{K\left(n,\xi,\xi\right)}= K_\infty (z,w)
\end{equation}
for some scaling sequence $r(n) \to \infty$ and nonconstant limit kernel $K_\infty$. The most studied phenomenon is so-called bulk universality, which corresponds to the sine kernel limit $K_\infty(z,w) = \sin(\pi(z - \ol w))/ (\pi (z - \ol w))$; cf. \cite{AvilaLastSimon,BSing,Breuer2021ConstrApprox,BreuerLastSimon,DKMVZ1,DKMVZ2,DKMVZ3,DeiftOPandRM,EichLukSim,EichLukWor,Findley08,LubinskyAnnals,LubinskyJFA09,LubSurvey,MaltsevCMP,Mitkovski,SimonTwoExt08,TotikUniv09,Totik16} for a very partial list of references. In particular, for measures corresponding to a determinate moment problem, bulk universality at the scale $r(n) = c K(n,\xi,\xi)$ for some $c \in (0,\infty)$, together with the condition
\begin{equation}\label{eqnNevai}
\lim_{n\to\infty} \frac{K(n+1,\xi,\xi)}{K(n,\xi,\xi)} = 1,
\end{equation}
is equivalent  to $\mu([\xi, \xi \pm \epsilon)) /  \epsilon \to c$ as $\epsilon \downarrow 0$ \cite{EichLukWor}. Other limit kernels are also characterized in terms of local behaviors of the measure $\mu$ at $\xi$, and these are viewed as different universality classes, see e.g. \cite{Danka17,EichLukWor,MorenoFinkelSousaConstr,KMVV04,KvL02}. For instance, Bessel kernels correspond to hard edge universality. In the non-Hermitian setting, other kernels have been studied for instance in \cite{AmeurHedenmalmMakarov2011,CronvallWennman,HedenmalmWennman2021}.

Known universality classes arose from the study of measures on intervals (although they are now known to hold under local conditions which don't require intervals in the support \cite{EichLukSim,EichLukWor}), and they are not well suited to phenomena common in spectral theory, such as self-similarity and Cantor spectra \cite{DamanikFillman}.

The first goal of this paper is to describe new scaling behaviors associated with self-similar structures. Instead of a scaling limit, there will be a family of kernels
$K_s(z,w)$ continuous in $s \in (0,\infty)$ with respect to locally uniform convergence, with $K_{s\rho} = K_s$ for some $\rho > 1$, an increasing sequence of $r(n) \in (0,\infty)$, and $\beta \in (-1,1)$ such that
\begin{equation}\label{eqnLimitCycleCDkernel}
\frac 1{ r(n)^{1+\beta}} K\left(n,\xi+\frac{z}{r(n)},\xi+\frac{w}{r(n)}\right) - K_{r(n)}(z,w) \to 0,\qquad n \to\infty.
\end{equation}
This is a common way to describe the asymptotics of an object which doesn't converge but asymptotically approaches periodic or almost periodic behavior, compare \cite[Theorem 12.3]{Widom69}. We will provide a sufficient condition for \eqref{eqnLimitCycleCDkernel} in terms of Weyl functions
\begin{equation}\label{intro:eq20}
m_\mu(z) = \int \frac 1{x- z}\,d\mu(x), \quad z \in\bbC_+
\end{equation}
with the notation
\[
\bbC_\pm = \{ z \in \bbC \mid \pm \Im z > 0 \}.
\]
We describe \eqref{eqnLimitCycleCDkernel} as limit cycle behavior, due to the multiplicatively periodic condition $K_{s\rho} = K_s$, and view it as a new type of universality class. Even individually, the kernels $K_s$ have not previously appeared in the literature. We also note that the kernels $K_s$ for different $s \in [1,\rho)$ are not merely rescalings of each other. Whereas previously studied limit kernels are expressed in terms of Bessel or hypergeometric functions, the limit cycle corresponds to a self-similar canonical system. The solutions of this canonical system are the special functions associated with the universality class.

To formulate corollaries about local zero spacing, we will label the zeros of $p_n$ relative to $\xi$ by $\xi_k^{(n)}$, so that
\[
\dots < \xi_{-1}^{(n)} < \xi_0^{(n)} \le \xi < \xi_1^{(n)} < \xi_2^{(n)} < \dots
\]
Of course, for each $n$, only finitely many of these are well-defined. When we write a limit statement involving some $\xi_k^{(n)}$, it is implied as part of that statement that $\xi_k^{(n)}$ exists for all large enough $n$.
We will use the notation $f \lesssim g$ if $\limsup f/g \le C$ for some $C < \infty$ and $f \asymp g$ if $f \lesssim g$ and $g\lesssim f$. Typical of our results will be that for $k < l$,
\begin{equation}\label{eqn:distancebetweenzeros2}
 \xi_l^{(n)} - \xi_k^{(n)} \asymp K(n,\xi,\xi)^{-1/(1+\beta)}, \qquad n \to\infty.
\end{equation}
The fine structure of zeros for singular measures is a well-known open problem, popularized by Simon in \cite[Problem 1.4]{vanAssche10}. The only general result is a lower bound in terms of transfer matrices \cite[Theorems 2.1, 2.2]{LastSimon}; applied to our setting, this would give
\[
 \xi_l^{(n)} - \xi_k^{(n)} \gtrsim K(n,\xi,\xi)^{-(1+\lvert \beta\rvert)/(1+\beta)}, \qquad n \to\infty,
\]
so \eqref{eqn:distancebetweenzeros2} improves the exponent and provides a matching upper bound.

The second goal is to show that this behavior occurs in two prominent examples.  The first is the middle third Cantor measure, whose orthogonal polynomials have been studied as a natural model for the zero spacing for singular measures \cite{KrugerSimon15}. The second are balanced/equilibrium measures of real Julia sets of expanding polynomials; these measures are the canonical model in inverse spectral theory of almost periodic Jacobi operators with singular continuous spectrum \cite{BarnGerHarr82,BaGerHarr85,BellBeMou82,BelGerVolYu05,PeVolYu06,NPVY}.

In spectral theory, absolutely continuous spectrum is often encountered on intervals, whereas singular spectrum is often encountered on Cantor sets, but there are many counterexamples to this. Accordingly, there is no formal connection between the spectral type and the universality class, and we will provide an example of limit cycle behavior in an a.c.\ measure. Still, whereas bulk universality holds a.e.\ on the a.c.\ part of the measure \cite{AvilaLastSimon,EichLukSim}, singular measures are harder to study. Our application to balanced measures of Julia sets is the first result on scaling behavior of the CD kernels for almost periodic operators with singular spectrum.

In our examples, the behavior \eqref{eqnLimitCycleCDkernel} holds with a sequence such that $r(n) \to \infty$ and  $r(n+1) / r(n) \to 1$ as $n\to\infty$.  This ensures that every kernel in the limit cycle is indeed an accumulation point of the rescaled CD kernels.

The third goal of our paper is to study the growth rate of the scaling function $r(n)$, which is tightly related to the sequence $K(n,\xi,\xi)$: it follows already from \eqref{eqnLimitCycleCDkernel} evaluated at $z=w=0$ that
\[
K(n,\xi,\xi) \asymp r(n)^{1+\beta}.
\]
The reciprocal of $K(n,\xi,\xi)$ is known as the Christoffel function. Its asymptotics are a question of classical interest, traditionally studied on the absolutely continuous spectrum \cite{BessEntr,EichJAnalyse,GubkinMNT,MaNevTot,TotikAdv14}. 
For Stahl--Totik regular measures, under a Lebesgue point assumption for the density and a local Szeg\H o condition, $K(n,\xi,\xi) \sim n$ as $n\to\infty$ \cite{MaNevTot,TotikAdv14}. Whereas the scaling behavior of CD kernels depends on the local behavior of the measure, the asymptotic behavior of $K(n,\xi,\xi)$ depends also on the global properties of the measure $\mu$; see e.g. the discussion preceding Theorem 1.2 in \cite{TotikAdv14} for the role of Stahl--Totik regularity. The asymptotics are also known to be different for measures of unbounded support \cite{DeiftOPandRM}.

Singular measures have much more varied behavior, and an analysis of Christoffel function is expected to be highly model-dependent. We develop this analysis precisely for balanced measures on Julia sets of quadratic polynomials, at a fixed point of the quadratic iteration. We prove that
\[
K(n,\xi,\xi) \asymp n, \qquad n\to\infty.
\]
In fact, we will describe the oscillations in $K(n,\xi,\xi) / n$ and show that 
\begin{align}
K(n,\xi,\xi) \sim n b(n), \qquad n\to\infty \label{eqnKnoscillations} \\
r(n) \sim n^\kappa c(n), \qquad n \to\infty \label{eqnrnoscillations} 
\end{align}
for some explicit continuous functions $b,c:(0,\infty) \to (0,\infty)$ which are multiplicatively periodic in the sense that $b(2\ell) = b(\ell)$ and $c(2\ell) = c(\ell)$.

In fact, the main aspect of our analysis is the study of the limit cycle for this particular universality class. The kernels in the limit cycle are obtained from a chain of $J$-inner functions, and we will prove that this chain can be parametrized by its growth rate relative to the Martin function (known as $M$-type); this is the first positive result about parametrization by $M$-type for a chain with a singular measure.  From the chain parametrized in this way, the functions $b(n)$, $c(n)$ and the asymptotics \eqref{eqnKnoscillations}, \eqref{eqnrnoscillations} are obtained.

Quadratic iteration has a central place in dynamics, and it is our intention that this analysis helps in understanding more general dynamically defined settings.

In the remainder of this introduction, we will formulate the results precisely and provide some further context.

\subsection{Limit cycle behavior and local zero spacing}
We will describe asymptotic behavior for $2\times 2$ matrix kernels, from which \eqref{eqnLimitCycleCDkernel} will follow. To formulate this, we need the following definitions. Orthonormal polynomials of the second kind $q_j$ are defined by
\[
q_j(z)=\int\frac{p_j(x)-p_j(z)}{x-z}d\mu(x), \qquad j \ge 0.
\]
We will study the $2\times 2$ matrix kernels
\begin{align}\label{intro:eq3}
\cK_\mu(n,z,w)=\begin{pmatrix}
		\sum_{k=0}^{n-1}q_{k}(z)\overline{q_{k}(w)}& \sum_{k=0}^{n-1}q_{k}(z)\overline{p_{k}(w)}\\\sum_{k=0}^{n-1}p_{k}(z)\overline{q_{k}(w)}& \sum_{k=0}^{n-1}p_{k}(z)\overline{p_{k}(w)}
	\end{pmatrix},
\end{align}
which encode information about all eigensolutions of the associated Jacobi matrix, and contain the CD kernel as a diagonal entry.  Scaling limits of these kernels are not kernels generated by orthogonal polynomials. They typically correspond, in a more abstract way, to a measure without finite moments. To formulate our results properly and describe the limit cycle, we must introduce a larger space of $2\times 2$ matrix kernels and recall elements of $J$-multiplicative function theory \cite{ArovDym,deBrangesHilbertSpace,Potapov,RemlingBookCanSys}.

\begin{remark}\label{rem:intro}
In this paper we use
	\begin{align}\label{intro:eq16}
		J=\begin{pmatrix}
			0&-1\\1&0
		\end{pmatrix}.
	\end{align}
\begin{enumerate}[(i)]
\item 		An entire $2\times 2$ matrix function, $\fA(z)$, is said to be $J$-inner if it obeys $i(\fA(z)J\fA(z)^*-J)\leq 0$ for $z\in\bbC_+$ and $\fA(z)J\fA(z)^*-J=0$ for $z\in\bbR$. We denote the set of all $2\times 2$ entire $J$-inner matrix functions with $\det \fA = 1$ by $\mathcal I(2)$, and we denote by $\bbC\bbD(2)$ the set of kernels $\cK:\bbC\times\bbC\to\bbC^{2\times 2}$ generated by some $\fA\in\mathcal I(2)$ by the formula
		\begin{align}\label{intro:eq7}
			\cK(z,w)=\cK_\fA(z,w)=\frac{\fA(z)J\fA(w)^*-J}{z-\overline{w}}.
		\end{align}
Any $\cK \in \bbC\bbD(2)$ is  analytic in $z$ and antianalytic in $w$, i.e., the denominator in \eqref{intro:eq7} produces a removable singularity at $z=\ol w$. We equip $\bbC\bbD(2)$ with the topology of local uniform convergence in $\bbC\times\bbC$ in every entry.
\item
	A family $(\fA(t,\cdot))_{t\in[0,\infty)}$ of entire $J$-inner matrix functions is said to be $J$-monotonic if for every $t_1<t_2$ and $z\in\bbC\setminus\bbR$
	\[
	\cK_{\fA(t_1,\cdot)}(z,z)\leq  \cK_{\fA(t_2,\cdot)}(z,z), \quad \cK_{\fA(0,\cdot)}(z,z)=0.
	\] 
\item The kernels \eqref{intro:eq3} obtained from orthogonal polynomials on the real line are in the space $\bbC\bbD(2)$. They are naturally interpolated to a continuous $J$-monotonic family by piecewise linear interpolation,
		\begin{equation}\label{linearinterpolation}
			\cK_\mu(L,z,w) = \cK_\mu(n,z,w) + (L-n)( \cK_\mu(n+1,z,w) - \cK_\mu(n,z,w) )
\end{equation}
for $n \le L < n+1$. Kernels corresponding to one-dimensional Schr\"o\-dinger and Dirac operators are also in the space $\bbC\bbD(2)$. So are kernels for orthogonal polynomials on the unit circle, after a simple transformation.
\item Denote $\fA=\left(\begin{smallmatrix}\fA_{11}&\fA_{12}\\\fA_{21}&\fA_{22}\end{smallmatrix}\right)$. Henceforth, we will assume in addition that the $J$-monotonic family is in the limit point case, that is, the limit
\begin{align}\label{intro:eq13}
	m(z)=\lim_{t\to\infty}\frac{\fA_{11}(t,z)\tau+\fA_{12}(t,z)}{\fA_{21}(t,z)\tau+\fA_{22}(t,z)}
\end{align}
is independent of $\tau \in \bbC_+ \cup \bbR \cup \{\infty\}$.  In this case, \eqref{intro:eq13} defines the Weyl function, which is a Herglotz function: an analytic map $\bbC_+ \to \bbC_+ \cup \bbR \cup\{\infty\}$.
\item If $(\fA(t,\cdot))_{t\in[0,\infty)}$ is a continuous $J$-monotonic family, we call 
\[
\mathscr C=\{\fA(t,\cdot)\mid t\in [0,\infty)\}
\]
the associated \textit{chain}. Every monotone bijection from $[0,\infty)$ to $\mathscr C$ will be called a \textit{parametrization} of the chain. In the limit point case, the Weyl function \eqref{intro:eq13} does not depend on the  parametrization, so  we may  denote it by $m_{\mathscr C}$. 
\item If $U(t)$ are $J$-unitary, i.e., $U(t) J U(t)^* = J$, a gauge transformation $\fA(t,z) \mapsto \fA(t,z) U(t)$ doesn't change the Weyl function. Any chain can be uniquely transformed into the Potapov--de Branges gauge $\fA(t,0) = I$.
\item We recall the following result of de Branges \cite[Chapter 2]{deBrangesHilbertSpace}: the map ${\mathscr C}\mapsto m_{\mathscr C}$ is a bijection from the set of all chains coming from continuous, $J$-monotonic families in the limit point case in the Potapov--de Branges gauge, and the set of all Herglotz functions. For a Herglotz function $m$, we will denote by ${\mathscr C}(m)$ the corresponding chain. 
\item Every chain can be  parametrized so that $t\mapsto \fA(t,\cdot)$ is absolutely continuous and so that for a.e. $t\in[0,\infty)$, 
\[
\tr\left(\partial_z\partial_t\fA(t,z)J|_{z=0}\right)=1.
\]
This trace parametrization was used by de Branges \cite{deBrangesHilbertSpace}, and the chain is then denoted by
\begin{equation}\label{eq:1jun-1}
\mathscr C(m)=\{\fD(t,\cdot)\mid t\in [0,\infty)\}, \ \ \fD(0,z)=I,\ \tr\left(\partial_z\partial_t\fD(t,z)J|_{z=0}\right)=1.
\end{equation}
Completeness of this chain is the main contribution of de Branges in the proof of the theorem as stated above. Existence of the chain was found earlier by V.P. Potapov. 
The chain ${\mathscr C}(m)$ is complete in the following sense: let $\fB(z)$   be an entire $J$-inner matrix function such that $\det\fB(z)=1$, $\fB(0)=I$, and
$$
\cK_\fB(z,z)\le \cK_{\fD(t_0,\cdot)}(z,z)
$$
for some $t_0\in\bbR$. Then there exists a unique $t_\fB\le t_0$ such that $\fB(z)=\fD(t_\fB,z)$.
\item Certain transformations obviously stay in the space $\bbC\bbD(2)$; for instance, we will use the conjugation $\cK_s \mapsto \cUnew(r) \cK_s \cUnew(r)^*$ where $r > 0$,
			\begin{align}\label{def:VR}
			\cUnew(r)=\begin{pmatrix}
				r^{1/2}&0\\0&r^{-1/2}
			\end{pmatrix}.
			\end{align}	
\end{enumerate}
\end{remark}

All prior studies of the scaling behavior of CD kernels described scaling limits. In our result, we derive limit cycle behavior of the CD kernels from oscillatory boundary behavior of the Weyl function.

	\begin{theorem}\label{thm:4}
		Let $\mu$ be a measure on $\bbR$ corresponding to a determinate moment problem, $m_\mu$ the associated Weyl function and $\cK_\mu(L,\cdot,\cdot)$ the associated kernels \eqref{intro:eq3}, \eqref{linearinterpolation}. Fix $\xi \in \bbR$ and assume that there exist $\beta \in (-1,1)$, $\rho > 1$, and a function $\omega:\bbC_+ \to \bbC\setminus\{0\}$ which satisfies
\begin{equation}\label{rhoperiodic}
\omega(\rho z)=\omega(z), \qquad \forall z\in\bbC_+,
\end{equation}
such that the Weyl function has the normal boundary behavior
\begin{align}\label{intro:eq14}
\lim_{y\to 0}| e^{-\beta i \pi/2} y^{-\beta}m_\mu(\xi+iy)-\omega(iy)|=0.
\end{align}
		For $L > 0$ define
		\begin{align}\label{intro:eq17}
			r(L)=r(L,\xi):=\sqrt{(\cK_\mu)_{11}(L,\xi,\xi)(\cK_\mu)_{22}(L,\xi,\xi)}.
		\end{align}		
Then, there exists a family of matrix kernels $\cL_s \in \bbC\bbD(2)$, $s\in (0,\infty)$, and a constant $\rho > 1$, such that $\cL_{s\rho} = \cL_s$ for all $s$ and
			\begin{align}\label{intro:eq11}
				\lim\limits_{L\to\infty}\left(\frac{1}{r(L)}\cUnew(r(L)^\beta)\cK_\mu\left(L,\xi+\frac{z}{r(L)},\xi+\frac{w}{r(L)}\right)\cUnew(r(L)^\beta)^*-\cL_{r(L)}(z,w)\right)=0
			\end{align}
			uniformly for $(z,w)$ in compact subsets of $\bbC\times\bbC$. In particular,  \eqref{eqnLimitCycleCDkernel} follows from \eqref{intro:eq11} by taking a diagonal entry.

With the branch of $z^\beta$ on $\bbC_+$ with $0 < \arg(z^\beta) < \beta \pi$, the function $m_{\beta,\omega}(z) = z^\beta \omega(z)$ is a Herglotz function. This function uniquely determines the limit cycle of kernels: 
 if we denote by $\cK(t,\cdot,\cdot)$ the trace parametrized kernels associated with $m_{\beta,\omega}$, and for $s > 0$ we denote by $t(s)$ the unique point in $(0,\infty)$ so that 
			\begin{align}\label{eq:increasingTS}
			\cK_{11}(t(s),0,0)\cK_{22}(t(s),0,0)=s^2,
			\end{align}
then the limit cycle consists of the kernels
\begin{equation}\label{eqnLimitKernelsMatrix1}
\cL_s(z,w)=\frac{1}{s}\cUnew(s^\beta)\cK\left(t(s),\frac{z}{s},\frac{w}{s}\right)\cUnew(s^\beta)^*.
\end{equation}
\end{theorem}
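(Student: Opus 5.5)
Throughout, the idea is to reduce the statement to continuity of the de Branges correspondence ${\mathscr C}\mapsto m_{\mathscr C}$ under rescalings, applied to the family of rescaled Weyl functions, and then to invoke the $\rho$-periodicity of $\omega$.

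\emph{Step 1: the rescaled Weyl functions form a relatively compact family.} For $s>0$ define the Herglotz functions
\[
m_s(z)=s^{-\beta}\,m_\mu\!\left(\xi+\frac{z}{s}\right).
\]
This is the Weyl function of the chain obtained from the $\mu$-chain by the change of variable $z\mapsto \xi+z/s$ and conjugation by $\cUnew(s^{\beta})$. The conjugation changes $m$ into $s^{-\beta}m$ and respects $J$ because $\det \cUnew=1$. The spectral shift $z\mapsto\xi+z$ gives the chain $\fA(t,\xi+z)\fA(t,\xi)^{-1}$, which is again in the Potapov--de Branges gauge. I intend to show that
\[
m_s(z)-m_{\beta,\omega}(z)\to 0 \quad\text{as } s\to\infty,
\]
locally uniformly on $\bbC_+$, where $m_{\beta,\omega}(z)=z^\beta\omega(z)$ satisfies $m_{\beta,\omega}(\rho z)=\rho^\beta m_{\beta,\omega}(z)$. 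On the imaginary axis this is exactly \eqref{intro:eq14}, because $m_s(iy)=e^{\beta i\pi/2}y^\beta\,(y/s)^{-\beta}e^{-\beta i\pi/2}m_\mu(\xi+iy/s)$. To pass to all of $\bbC_+$, I use a normal-families argument. The family $\{m_s\}$ is bounded at $z=i$, so it is precompact in the Herglotz class (Herglotz representation plus Helly). Any subsequential limit agrees with $\lim m_{s_k}$ along $i\bbR_+$. Moreover the family $\{m_{s\rho^{n}}\}_n$ differs from $\rho^{n\beta}$-rescalings only by a harmless factor. From this, any limit $g$ along $s_k\to\infty$ satisfies
\[
g(iy)=(iy)^\beta\,\omega(iy\cdot\text{const}),
\]
and identity on a ray determines $g$. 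Consequently every limit point is a rotated version $m_{\beta,\omega}(\lambda z)\lambda^{-\beta}$ with $\lambda\in[1,\rho)$ determined by $s$ modulo $\rho$. The cleanest formulation is therefore that $m_s-(\text{the }\omega\text{-periodic model at scale }s)\to0$. As a byproduct, $m_{\beta,\omega}$ is Herglotz, being a limit of Herglotz functions, and it is nonconstant thanks to the $y^\beta$ growth. This also shows that $\omega$ extends analytically, since $\omega(z)=z^{-\beta}m_{\beta,\omega}(z)$.

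\emph{Step 2: convergence of Weyl functions implies convergence of the chains.} I use the standard continuity of the de Branges bijection. If Herglotz $m_k\to m$ locally uniformly, then the trace-normalized chains converge: $\fD_k(t,z)\to\fD(t,z)$ locally uniformly in $(t,z)$. In the determinate case, the piecewise linear interpolation of the polynomial chain is a reparametrization of the chain attached to $m_\mu$. The parametrization-free normalization is $\cK_{11}(\cdot,0,0)\cK_{22}(\cdot,0,0)$, which is strictly increasing and continuous; this is what defines $t(s)$ in \eqref{eq:increasingTS}. After conjugating the rescaled kernel by $\cUnew(r^\beta)$ and dividing by $r$, the product of diagonal entries at $(0,0)$ equals $1$, by the definition \eqref{intro:eq17} of $r(L)$. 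So for each $L$ the rescaled kernel is the element of the chain of $m_{r(L)}$ at the normalized level $1$. The same holds for $\cL_{r(L)}$: by \eqref{eqnLimitKernelsMatrix1} it is the element at level $1$ of the chain of $s^{-\beta}m_{\beta,\omega}(z/s)$ with $s=r(L)$, since rescaling by $s$ multiplies the product by $s^{-2}$.

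\emph{Step 3: closing the argument by contradiction.} Suppose \eqref{intro:eq11} fails. Then there are $L_k$, compact sets and $\epsilon>0$ violating it, and $r(L_k)\to\infty$ by determinacy. The periodicity $\cL_{s\rho}=\cL_s$ follows from $m_{\beta,\omega}(\rho z)=\rho^\beta m_{\beta,\omega}(z)$ together with uniqueness of chains. Hence I may write $r(L_k)=\rho^{n_k}\sigma_k$ with $\sigma_k\to\sigma\in[1,\rho]$. By Step 1, both Weyl functions converge to $\sigma^{-\beta}m_{\beta,\omega}(z/\sigma)$. By Step 2, the chains converge locally uniformly in the parameter, and the level-$1$ elements converge as well, because the normalizing function converges and is strictly monotone. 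This contradicts the choice of $L_k$. Taking the $(2,2)$ entry then yields \eqref{eqnLimitCycleCDkernel}.

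The main obstacle is Step 2: locally uniform convergence of chains from convergence of Weyl functions, including the selection of the correct chain element by the normalization $\cK_{11}\cK_{22}=1$. I would argue this via compactness of $J$-inner functions with bounded kernels at $0$. The bound comes from $\cK(z,z)$ being controlled by $\cK(0,0)$ on compacts via reproducing kernel estimates. Then de Branges completeness (Remark~\ref{rem:intro}(vii)) identifies every subsequential limit as an element of ${\mathscr C}(m)$. The limit point property of the chain ensures the level is not lost to infinity, and $\beta\in(-1,1)$ keeps both diagonal entries nondegenerate.
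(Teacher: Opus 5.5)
Your proposal follows the paper's proof essentially step for step. Step~1 is Lemmas~\ref{lem:3} and~\ref{lem:4}. Step~2 is Theorem~\ref{thm:1} combined with the level-one normalization $t_1(r)$ of Lemma~\ref{lem:5}. Step~3 is the subsequence/contradiction argument in the proof of Theorem~\ref{thm:3}, with $\cL_{s\rho}=\cL_s$ coming from Lemma~\ref{lem:8}. Two things must be corrected, however.

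\textbf{The sign of the exponent.} It is wrong throughout. Conjugation by $\cUnew(s^\beta)$ multiplies the Weyl function by $s^{\beta}$, not $s^{-\beta}$ (Lemma~\ref{lem:18}). So the correct objects are:
\begin{itemize}
\item $m_s(z)=s^{\beta}m_\mu(\xi+z/s)$, which is what your own imaginary-axis computation actually yields;
\item the chain of $s^{\beta}m_{\beta,\omega}(z/s)$ in Step~2;
\item the limits $\sigma^{\beta}m_{\beta,\omega}(z/\sigma)$ in Step~3.
\end{itemize}
With $s^{-\beta}$, the rescaled Weyl functions degenerate to $0$ or $\infty$ whenever $\beta\neq 0$.

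\textbf{The order of the convergence argument.} Your opening claim that $m_s-m_{\beta,\omega}\to0$ for continuous $s\to\infty$ is false in general, as you notice a few lines later. Moreover, $\omega$ is not assumed continuous, so \eqref{intro:eq14} alone does not give convergence along a general sequence $s_k=\rho^{n_k}\sigma_k$ with $\sigma_k\to\sigma$. Order the argument as the paper does:
\begin{itemize}
\item first prove convergence along $s=\rho^n$, where $\omega(iy/\rho^n)=\omega(iy)$ holds exactly;
\item upgrade this to locally uniform convergence on $\bbC_+$ by compactness and the identity theorem;
\item only then use local uniformity to handle $\sigma_k\to\sigma$.
\end{itemize}
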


Theorem \ref{thm:4} is only a special case of our more general Theorem \ref{thm:3}. Theorem \ref{thm:3} is formulated for canonical systems, and Theorem \ref{thm:4} is its application to orthogonal polynomials. Theorem~\ref{thm:3} has analogous applications to continuum Schr\"odinger operators, Dirac systems, Krein strings, and orthogonal polynomials on the unit circle.
	
We note that the geometric mean of diagonal entries \eqref{intro:eq17}, used here as a scaling function, was previously used in \cite{JitomirskayaLast} in the setting of Jacobi matrices and more generally for canonical systems in \cite{LanPruckWor} in order to estimate the function $y\mapsto |m_\mu(\xi+ iy)|$.

By Theorem~\ref{thm:4}, the function $m_{\beta,\omega}$ determines the universality class. Moreover, transformations of the form
\begin{equation}\label{eqnmrescaling6}
\tilde m_{\beta,\omega}(z) = \begin{cases} c_1 c_2 m_{\beta,\omega}(c_2 z) & c_1 > 0, c_2 > 0 \\
 c_1 c_2  \ol{ m_{\beta,\omega}(c_2 \ol z)} & c_1 > 0,  c_2  < 0
 \end{cases}
\end{equation}
correspond to simple transformations of the spectral measure: scalar multiplication by $c_1$ and pushforward by the linear map $\lambda \mapsto c_2 \lambda$. These correspond to equally simple transformations of the kernels in the limit cycle. Thus, if $m_{\beta,\omega}$ and $\tilde m_{\beta,\omega}$ are related by \eqref{eqnmrescaling6}, we will say that they correspond to the same universality class.

The limit cycle corresponds to a canonical system, which we think of as the limiting object representing the local behavior at $\xi$. This canonical system has its own measure and spectral type, not to be confused with the orthogonality measure and its spectral type.  Previously studied universality classes, where there is convergence to a single limit kernel, correspond to $\omega(z)\equiv \omega_0$ for some constant $\omega_0$, so the limiting object had an a.c.\ measure. The multiplicatively periodic function $\omega$ allows for much more freedom. We will provide examples where the measure of $m_{\beta,\omega}$ is purely singular continuous. This is, up to our knowledge, the first example of universality classes where the limiting kernels correspond to a singular measure. 

We will now formulate corollaries for zero spacing. 
We will say that the universality class is an edge universality class if $m_{\beta,\omega}$ has an analytic extension to $\bbC \setminus (-\infty,0]$ or to $\bbC \setminus [0, \infty)$ which satisfies $\ol{ m_{\beta,\omega}(\ol z)} = m_{\beta,\omega}(z)$ (for constant $\omega$, this corresponds to the well known hard edge universality classes). For zero spacing, edge universality classes are qualitatively different: the nearby zeros of orthogonal polynomials will be on one side of the point $\xi$, with at most one exception. We will also state a result about the distance from the smallest  zero to the minimum of the spectrum.

\begin{corollary}\label{corollaryZerosGeneral}
Under the assumptions of Theorem~\ref{thm:4}, let $\xi$ be such that the Weyl function has the normal boundary behavior \eqref{intro:eq14}. Then: 
\begin{enumerate}[(a)]
\item If the universality class of $m_{\beta,\omega}$ is not an edge universality class, then for any $k,l\in \bbZ$ with $k<l$, \eqref{eqn:distancebetweenzeros2} holds.
\item If the universality class of $m_{\beta,\omega}$ is an edge universality class, without loss of generality say $\ol{ m_{\beta,\omega}(\ol z)} = m_{\beta,\omega}(z)$ on $\bbC \setminus [0,\infty)$, then for any $k, l \in \bbN$, with $k < l$, \eqref{eqn:distancebetweenzeros2} holds. Moreover, for any $R > 0$, for all large enough $n$, $p_n$ can have at most one zero in $[\xi - R K(n,\xi,\xi)^{-1/(1+\beta)}, \xi]$, and along a subsequence where such a zero $\xi_0^{(n_j)}$ exists, \eqref{eqn:distancebetweenzeros2} holds also for $0 = k < l$.
\item If $\supp\mu$ is bounded below, and at the point $\xi = \min \supp \mu$, the Weyl function satisfies the normal boundary behavior \eqref{intro:eq14}, then  for $1 \le l$,
\begin{equation}\label{eqn:distancefromendpoint2}
\xi_l^{(n)} - \xi \asymp K(n,\xi,\xi)^{-1/(1+\beta)} 
\end{equation}
\end{enumerate}
The constants implicit in \eqref{eqn:distancebetweenzeros2} and \eqref{eqn:distancefromendpoint2} depend only on $\beta,\omega, k,l$.
\end{corollary}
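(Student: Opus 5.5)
The plan is to deduce zero locations from the limit cycle in Theorem~\ref{thm:4}, using Hurwitz's theorem together with compactness of the cycle. Take the $(2,2)$ entry of \eqref{intro:eq11} with $w=0$. Since $\cUnew(r^\beta)$ multiplies the $(2,2)$ entry by $r^{-\beta}$, we get
\[
r(n)^{-1-\beta}K\bigl(n,\xi+z/r(n),\xi\bigr)-(\cL_{r(n)})_{22}(z,0)\to 0 .
\]
The zeros of $p_n$ are not directly zeros of $K(n,\cdot,\xi)$. The standard device is to use the Christoffel--Darboux formula, which writes $K(n+1,z,\xi)$ through $p_{n+1}(z)p_n(\xi)-p_n(z)p_{n+1}(\xi)$, a combination of $p_n,p_{n+1}$. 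Alternatively, and more cleanly, we use the kernel $\cK$ at a shifted index together with the de Branges structure. The rescaled $\fA$-matrix (transfer matrix) also converges along the cycle, because $\cK_\fA$ determines $\fA$ in the Potapov--de Branges gauge up to the normalization handled in Theorem~\ref{thm:3}. So $p_n(\xi+z/r(n))$, suitably normalized, is asymptotic to an entry $E_s(z)$ of the limiting chain matrix $\fA(t(s),z/s)$ rescaled, with $s=r(n)$.

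The key steps are as follows. (1) Establish convergence of the rescaled transfer matrices, not just of the kernels, to the cycle $s\mapsto \fA_s$, $s\in[1,\rho]$. This family is compact and continuous in $s$. (2) Show that for each $s$ the relevant entry $E_s$, a de Branges function of the limit canonical system, has only real simple zeros. Show also that these zeros $\eta_k(s)$, labelled relative to $0$, are nonaccumulating, depend continuously on $s$, and satisfy $\inf_{s}(\eta_l(s)-\eta_k(s))>0$ and $\sup_s|\eta_k(s)|<\infty$. Continuity and $\rho$-periodicity reduce these bounds to the compact parameter set $[1,\rho]$. (3) Apply Hurwitz's theorem uniformly in $s$. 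This gives $r(n)(\xi^{(n)}_k-\xi)-\eta_k(r(n))\to0$, hence $\xi^{(n)}_l-\xi^{(n)}_k\asymp r(n)^{-1}$. (4) Convert to $K$. Evaluating at $z=w=0$ gives $K(n,\xi,\xi)/r(n)^{1+\beta}-(\cL_{r(n)})_{22}(0,0)\to0$. The diagonal entry $(\cL_s)_{22}(0,0)$ is bounded above and below on the cycle, since $(\cL_s)_{11}(0,0)(\cL_s)_{22}(0,0)=1$ by \eqref{eq:increasingTS} and $\cL_s$ is continuous and periodic. Hence $r(n)\asymp K(n,\xi,\xi)^{1/(1+\beta)}$, which proves (a).

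For (b), edge universality means the limit measure of $m_{\beta,\omega}$ is supported on $[0,\infty)$. Then $E_s$ has zeros only in $[0,\infty)$, except that the combination relevant to $p_n$ (a boundary condition $\tau$ rotating with $n$) may have at most one zero on the negative side. This is the usual interlacing fact: a rank-one perturbation of a positive operator has at most one negative eigenvalue. Hurwitz's theorem transfers this fact to $p_n$ on the window $[-R,0]$, and the subsequence statement follows by the same continuity argument. For (c), at $\xi=\min\supp\mu$ all zeros of $p_n$ lie to the right of $\xi$, so the $k=0$ exception is absent. Applying (b) to the first zero, with $\eta_1(s)$ bounded away from $0$ uniformly in $s$, gives \eqref{eqn:distancefromendpoint2}.

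The main obstacle is step (2), specifically showing $\inf_s(\eta_l(s)-\eta_k(s))>0$ and that zeros do not escape to infinity or collide as $s$ varies, and doing this uniformly in the boundary parameter in (b). I would first try to realize the $\eta_k(s)$ as eigenvalues of the canonical system on $[0,t(s)]$. These vary continuously with $t$ and are simple by self-adjointness, and compactness of $[1,\rho]$ then gives the uniform bounds. The constants depend only on $\beta,\omega,k,l$, since the cycle is determined by $m_{\beta,\omega}$.
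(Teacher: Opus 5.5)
Your overall architecture is a workable alternative to the paper's Freud--Levin route through Theorem~\ref{theoremFreudLevinPrecompact}. That architecture is: recover the normalized transfer matrices from the kernels via $W(z)=I-z\,\cK_W(z,0)J$, apply Hurwitz, and use compactness of the cycle together with compactness of the boundary parameter. Your step (4) is also fine. But the two facts that carry the content of the corollary are assumed rather than proved.

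First, in (a) you never use the hypothesis that the class is not an edge class. Note that $p_n(\xi+z/r(n))$ does not converge to a fixed entry $E_s$. Along subsequences it converges, after normalization, to $e_2^T\fA_s(z)\hat u$, where $\hat u$ is a limit of the normalized vectors $\cUnew(r(n)^\beta)(-q_n(\xi),p_n(\xi))^T$. Step (2) presupposes that every such limit function has infinitely many zeros on both $(0,\infty)$ and $(-\infty,0)$. If one of them had only finitely many positive zeros, Hurwitz would send $r(n)(\xi_l^{(n)}-\xi)$ to $+\infty$ along that subsequence for large $l$, and the upper bound in \eqref{eqn:distancebetweenzeros2} would fail. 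Continuity in $t$, simplicity of eigenvalues and compactness of $[1,\rho]$ cannot produce infinitely many zeros on a half-line, as the edge case shows. This is exactly Lemma~\ref{lemmaInfinitelyManyZeros}, whose proof uses self-similarity:
\begin{enumerate}[(i)]
\item Suppose the $(2,2)$ entry of $\cL_1(\cdot,0)$ had finitely many positive zeros. Then $\cL_{\rho^n}=\cL_1$ confines the positive zeros of the kernels at times $t(\rho^n)$ to $(0,C\rho^{-n}]$.
\item Interlacing then leaves the transfer-matrix entry with at most one zero in $(\epsilon,\infty)$.
\item Hence $m_{\beta,\omega}$ continues meromorphically across $(\epsilon,\infty)$ with at most one pole.
\item The relation $m_{\beta,\omega}(\rho z)=\rho^\beta m_{\beta,\omega}(z)$ excludes the pole, so the class would be an edge class.
\end{enumerate}
Interlacing then passes the conclusion to all boundary conditions $\hat u$. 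The same input is needed on $(0,\infty)$ in (b).

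Second, in (c) the lower bound $\xi_1^{(n)}-\xi\gtrsim K(n,\xi,\xi)^{-1/(1+\beta)}$ is precisely your assertion that $\eta_1$ stays away from $0$, and nothing in your argument supports it. The limit function $e_2^T\fA_s(z)\hat u$ takes the value $\hat u_2$ at $0$. Knowing that $p_n$ has no zeros below $\xi$ only shows that the limit has no negative zeros. The boundary condition that puts a zero exactly at $0$ is compatible with this: it is the endpoint of the range of boundary conditions with no negative zero. So you would have to rule out $r(n)^{-\beta}p_n(\xi)/q_n(\xi)\to0$ along subsequences. Part (b) says nothing about $\xi_1^{(n)}-\xi$ when there is no $\xi_0^{(n)}$.

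The paper's device is a quadratic change of variables. With $\xi=0$, let $\nu$ be the even measure whose pushforward under $x\mapsto x^2$ is $\mu$.
\begin{itemize}
\item Its Weyl function $z\,m_\mu(z^2)$ has the same kind of scaling, with $\tilde\beta=1+2\beta$ and $\tilde\omega(z)=\omega(z^2)$.
\item The polynomial $p_{2n}(x;\nu)=p_n(x^2;\mu)$ has zeros $\pm\sqrt{\xi_j^{(n)}}$.
\item $K(2n,0,0;\nu)=K(n,0,0;\mu)$.
\end{itemize}
Now $0$ is a two-sided, non-edge point for $\nu$. The result for $\nu$ with $k=0$, $l=1$ gives $2\sqrt{\xi_1^{(n)}}\asymp K(n,0,0;\mu)^{-1/(2(1+\beta))}$, and squaring yields \eqref{eqn:distancefromendpoint2}. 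In your language: the limit function for $\nu$ has the form $f(cz^2)$ and must have simple zeros, so $f(0)\neq0$.

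A smaller point: for the window claim in (b) you need the limit functions to have at most one zero in $(-\infty,0]$, not merely in $(-\infty,0)$. Otherwise a zero at $0$ together with a negative zero would allow two zeros of $p_n$ in $[\xi-RK(n,\xi,\xi)^{-1/(1+\beta)},\xi]$.
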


\subsection{The Cantor measure}
We now recall the middle third Cantor set $C \subset [0,1]$, which can be written in terms of $T(x) = 3 \lvert x - \frac 12\rvert - \frac 12$ as $C = \cap_{n=0}^\infty (T^{-1})^n([0,1])$, and the middle third Cantor measure.

\begin{theorem}\label{thmCantorMeasure}
Let $\mu$ be the middle third Cantor measure. For any $\xi \in \bbQ \cap \supp \mu$, the conclusions of Theorem~\ref{thm:4} hold at $\xi$, with
\begin{equation}\label{eqnbetaCantorMeasure}
\beta = \frac{\log 2}{\log 3} - 1.
\end{equation}
Moreover,  if $\xi,  \eta \in  \bbQ \cap \supp \mu$ are such that for some $m, n \in \bbN$, $3^m \xi  - \lfloor 3^m \xi \rfloor = 3^n \eta - \lfloor 3^n \eta \rfloor$, then they are in the same universality class.
\end{theorem}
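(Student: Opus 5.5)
We prove Theorem~\ref{thmCantorMeasure} by verifying the hypothesis \eqref{intro:eq14} of Theorem~\ref{thm:4} at every rational $\xi\in\supp\mu$. The tool is the self-similarity of the Cantor measure. Since $\mu = \frac12 (S_0)_*\mu + \frac12 (S_2)_*\mu$ with $S_0(x)=x/3$ and $S_2(x)=(x+2)/3$, the Stieltjes transform \eqref{intro:eq20} satisfies
\[
m_\mu(z) = \tfrac32\, m_\mu(3z) + \tfrac32\, m_\mu(3z-2).
\]
The second term is analytic near $z=0$, since $3z-2$ stays away from $[0,1]$ there.

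We first treat the fixed point $\xi=0$. Iterating the functional equation gives
\[
m_\mu(iy) = (3/2)^k m_\mu(3^k iy) + \sum_{j<k} (3/2)^{j+1} h(3^j iy),
\]
where $h(z)=m_\mu(3z-2)$ is analytic near $0$. Set $\beta = \log 2/\log 3 - 1$, so that $(3/2)^k = 3^{-k\beta}$.

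To get a genuine limit, we reorganize as follows. Let $g(z) = m_\mu(z) - c$, where $c$ is chosen so that the analytic corrections telescope. Concretely, we look for a function $\phi$ analytic near $0$ with $\phi(z)=\frac32\phi(3z)+\frac32 h(z)$; the natural candidate is the constant $\phi\equiv a$ with $a=-3a\,h(0)/\ldots$, solved order by order. Since $3/2<3$, the linear equation $\phi - \frac32\phi(3\cdot) = \frac32 h$ has a unique solution in power series: the coefficient of $z^n$ is divided by $1-\frac32 3^n\ne0$, and the series converges because the operator is contracting on analytic germs.

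Then $u = m_\mu - \phi$ satisfies $u(z)=\frac32 u(3z)$ exactly near $0$ in $\bbC_+$. It follows that $\omega(z) := e^{-\beta i\pi/2}(z/i)^{-\beta}u(z)$ is $3$-multiplicatively periodic on the imaginary axis, with $\rho=3$. More precisely, we define $\omega(z) = z^{-\beta}u(z)$ for small $z\in\bbC_+$ and extend it by periodicity. The remainder $y^{-\beta}\phi(iy)\to 0$ because $\beta<0$. This gives \eqref{intro:eq14} with $\omega(iy) = e^{-\beta i\pi/2} y^{-\beta} u(iy)$, provided $\omega\neq0$.

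Nonvanishing of $\omega$ needs an argument. We use $\Im u(iy) = \Im m_\mu(iy) - \Im\phi(iy)$. Here $\Im m_\mu(iy) \gtrsim \mu([-y,y])/y \asymp y^{\beta}$ by the local dimension $\log2/\log3$ at $0$, while $\Im\phi(iy)=O(y)$. This gives $|u(iy)|\gtrsim y^\beta$.

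For a general rational $\xi\in C$, the ternary expansion is eventually periodic. So some $T^{m}$, composed with an inverse branch, maps $\xi$ to a periodic point $\eta$ of $T$, with $T^p\eta=\eta$. The measure $\mu$ is invariant under the combinatorics of $T$: the restriction of $\mu$ to the level-$m$ cylinder containing $\xi$ is an affine copy of $\mu$, with mass $2^{-m}$ and scale $3^{-m}$ (or reflected). Hence $m_\mu(z)$ near $\xi$ equals $(3/2)^m$ times $m_\mu$ at the corresponding rescaled point, plus a function analytic near $\xi$, since the other cylinders lie at positive distance when $\xi$ is interior to its cylinder. The case where $\xi$ is an endpoint of a cylinder, such as $0$ itself, is one-sided and handled the same way.

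At the periodic point $\eta$, the $p$-fold iteration gives $u(z)=(3/2)^p u(\eta+3^p(z-\eta))$, possibly composed with a reflection, which contributes complex conjugation. This yields \eqref{intro:eq14} with $\rho=3^p$, or $3^{2p}$ if reflections occur. Applying Theorem~\ref{thm:4} then gives the first claim.

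For the universality-class statement, the same transfer shows that if $\xi$ and $\eta$ have the same shifted fractional parts, their $m_{\beta,\omega}$ are related by a transformation \eqref{eqnmrescaling6}. The affine maps between them are compositions of $S_0$, $S_2$ and reflections, and these multiply by $c_1>0$ and $c_2\in\bbR\setminus\{0\}$.

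The main obstacle I expect is showing that the analytic "remainder" pieces really are analytic across $\xi$. This fails when $\xi$ lies at a gap endpoint, where the neighboring cylinder touches. I would check that every point of $C$ at a finite level is separated from the other cylinders of that level on at least one side, and handle the two sides separately.
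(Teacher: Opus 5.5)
Your proposal is correct, but it verifies \eqref{intro:eq14} by a different mechanism than the paper.

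\textbf{The paper's route.} Take a periodic $\xi$, fixed by $g(\lambda)=\lambda/3^q+\sum_j 2c_j/3^j$. The paper uses $g_*\mu=2^q\mu$ near $\xi$ to build a self-similar measure $\nu$ on $\bbR$ with $\nu(3^q\,\cdot)=2^q\nu$, and checks $\int d\nu/(1+|x|)<\infty$. It then shows $(2/3)^{nq}m_\mu(\xi+z/3^{nq})\to f(z)=\int d\nu(x)/(x-z)$ by dominated convergence, and gets \eqref{intro:eq14} from Lemma~\ref{lem:3}. Eventually periodic points and the universality-class claim come from the one-step relations $m_\mu(\frac{2c+\xi}{3}+\frac z3)-\frac32 m_\mu(\xi+z)=o(1)$, which turn $f$ into $\frac32 f(3z)$.

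\textbf{Your route.} You use the functional equation for $m_\mu$ and remove the analytic inhomogeneity by solving $\phi-\frac32\phi(3\,\cdot)=\frac32 h$. The cleanest way is $\phi(w)=-\sum_{j\ge1}(2/3)^{j-1}h(w/3^j)$. Drop the sentence about a constant $\phi$: a constant only works if $h$ is constant.

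\textbf{What each approach buys.} Your route gives more than the paper states: $m_\mu(\xi+z)-m_{\beta,\omega}(z)$ coincides near $0$ with a function analytic at $0$. Hence \eqref{intro:eq14} holds with the explicit rate $O(y^{-\beta})$.

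The cost is that positivity of the limit is no longer automatic, whereas in the paper $f$ is a Stieltjes transform by construction. Your lower bound on $\Im u(iy)$ only gives $\omega\neq 0$ on $i(0,\infty)$. That is enough for Theorem~\ref{thm:3}, which only needs $\omega\not\equiv 0$. It is not enough, literally, for the hypothesis $\omega:\bbC_+\to\bbC\setminus\{0\}$ of Theorem~\ref{thm:4}.

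You can close this in one line. Since $\phi$ is bounded near $0$, $u(z)=\lim_n (2/3)^n m_\mu(z/3^n)$ is a locally uniform limit of Herglotz functions. It cannot be a real constant, because $u(3z)=\frac23 u(z)$ and $u\not\equiv 0$. So $u$ is Herglotz, and in fact $u$ is exactly the paper's $f$.

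\textbf{The obstacle you anticipate does not occur.} Distinct level-$m$ cylinders are disjoint closed intervals at mutual distance at least $3^{-m}$. So for $w\neq w_0$ the terms $m_\mu(S_w^{-1}(\cdot))$ are analytic in a full neighbourhood of $\xi$, even when $\xi$ is a gap endpoint. This is why the paper can work with a two-sided neighbourhood $U$ of $\xi$, and no one-sided splitting is needed.
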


In particular, all gap edges are in the same edge universality class as the endpoint $0$. The relevance of rationality in this result is that rational points are eventually periodic in the dynamics on the Cantor set, and the relevance of the condition on $\xi,\eta$ is that  they are eventually on the same periodic orbit. For the middle third Cantor measure, a lower bound on the zero spacing was proved in  \cite{KrugerSimon15}.

In the above example, $\beta+1$ is the Hausdorff dimension, but we wish to reiterate that spectral type is not directly related to the local scaling behavior of the kernel. Bulk universality was originally studied for a.c.\ measures but can also occur for singular measures \cite{BSing} and even for pure point measures \cite{EichLukWor}.  Likewise, we now point out an example of limit cycle behavior with $\beta < 0$ in an absolutely continuous measure.

\begin{example}\label{xmplACLimitCycle}
For the absolutely continuous measure
\[
d\mu(x) = \sum_{n=0}^\infty  \left( \frac 32 \right)^{n+1} \chi_{(2/3^{n+1}, 1/3^n)}(x) \,dx,
\]
the CD kernels have limit cycle behavior at $\xi = 0$,  as in the conclusions of Theorem~\ref{thm:4}, with $\beta$ again given by \eqref{eqnbetaCantorMeasure}. At every nonzero $\xi \in \supp \mu$, the CD kernels obey bulk universality or hard edge universality.
\end{example}

\subsection{Balanced/equilibrium measures on Julia sets} 
Let $T$ be a polynomial with a real Julia set $\sE_0$. In particular, $T$ has real coefficients. 
Its $n$-th iterate is denoted $T^{\circ n}$. The polynomial is said to be expanding on its Julia set if there exist $a> 0$, $\lambda > 1$ such that $\lvert (T^{\circ n})'(z) \rvert \ge a \lambda^n$ for all $z \in \sE_0$ and $n\in\bbN$ \cite[Section 3]{ErLyu}. The Julia set is then of the form
\begin{align}\label{JuliaSet}
\sE_0:=\bbC\setminus\{z\in\bbC\mid T^{\circ n}(z)\to\infty,\ n\to\infty\}.
\end{align}
Let $\mu_{\sE_0}$ denote the balanced measure of $\sE_0$, i.e., the unique probability measure such that for every $f\in C(\sE_0)$
\[
\int \frac{1}{d}\sum_{T (y)=x}f(y)d\mu_{\sE_0}(x)=\int f(x)d\mu_{\sE_0}(x).
\]
From the perspective of potential theory, $\mu_{\sE_0}$ is the equilibrium measure of $\sE_0$ \cite{Brolin65}, and the harmonic measure for the point $\infty$. There are no prior results about rescaling limits of CD-kernels for $\mu_{\sE_0}$. We show that these balanced measures provide a natural application of Theorem~\ref{thm:4}; in fact, they were the motivating example for this work.

\begin{theorem}\label{intro:thm2}
Let $T$ be an expanding polynomial, $d=\deg T > 1$, with a real Julia set $\sE_0 $. Let $\mu_{\sE_0}$ denote the balanced measure of $\sE_0$. Let $\xi \in \sE_0$ be an eventually fixed or periodic point, i.e., there exists a fixed or periodic point $\eta$ such that $T^{\circ n}(\xi) = \eta$ for some $n$. Then the matrix kernels associated to $\mu_{\sE_0}$ obey the limit cycle behavior \eqref{intro:eq11} at the point $\xi$. The constant $\beta$ is given by
\[
\beta = \frac{q \log d}{\log \lvert (T^{\circ q})'(\eta) \rvert} - 1
\]
where $q$ is the period of $\eta$. The limit cycle, up to rescaling, depends only on the eventual orbit of the point.
\end{theorem}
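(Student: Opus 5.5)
The plan is to reduce Theorem~\ref{intro:thm2} to Theorem~\ref{thm:4}. By that theorem, it suffices to show that the Weyl function $m = m_{\mu_{\sE_0}}$ has normal boundary behavior \eqref{intro:eq14} at $\xi$: for some $\rho>1$ and some multiplicatively $\rho$-periodic $\omega$, with the stated $\beta$. Determinacy is automatic because $\sE_0$ is compact.

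The key tool is the functional equation coming from the balanced property. For $f(x) = 1/(x-z)$ we have
\[
\frac1d\sum_{T(y)=x}\frac{1}{y-z} = \frac{T'(z)}{d\,(T(z)-x)} + \text{(terms from the other preimages of } x),
\]
and this identity is messy. The cleaner route is the classical renormalization relation for equilibrium measures. The complex Green function of $\Omega=\bbC\setminus\sE_0$ satisfies $G(T(z)) = d\,G(z)$. The Weyl function is then expressed through $G$: if $\Theta(z)$ denotes the analytic completion of $G$ on $\bbC_+$ (so $\Im\Theta = G$, via the harmonic conjugate), then $m(z) = -\int \frac{d\mu(x)}{z-x}$ relates to $\Theta'(z)$ as the Cauchy transform of harmonic measure: $\partial_z G = \frac12\int\frac{d\mu(x)}{z-x}$. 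Hence, up to the constant $-2$,
\[
m(z) = -2\,\partial_z G(z),
\]
and differentiating $G\circ T = dG$ gives
\[
T'(z)\, m(T(z)) = d\, m(z).
\]
This is the exact self-similarity we need.

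First consider a fixed point $\eta$ ($q=1$), and put $\lambda = T'(\eta)$; expansion gives $|\lambda|>1$. If $\lambda<0$, replace $T$ by $T^{\circ 2}$, so that we may assume $\lambda>1$. Let $\phi$ be the Koenigs linearizer near $\eta$, with $\phi(\eta)=0$, $\phi'(\eta)=1$ and $\phi(T(z)) = \lambda\phi(z)$; it is real on $\bbR$, so it preserves $\bbC_+$ locally. Define $h(\zeta) = m(\phi^{-1}(\zeta))\,(\phi^{-1})'(\zeta)$. The functional equation becomes $h(\lambda\zeta) = (d/\lambda)\,h(\zeta)$. Hence $h(\zeta)\zeta^{-\beta}$ with $\lambda^{\beta} = d/\lambda$, that is $\beta = \log d/\log\lambda - 1$, is multiplicatively $\lambda$-periodic. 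The bound $\beta\in(-1,1)$ follows from $d<\lambda<d^{2}$-type estimates, i.e.\ $\dim\mu\in(0,2)$. Mainly $\beta>-1$ is immediate, and $\beta<1$ needs $\lambda>\sqrt d$; I expect this to follow because $m$ is Herglotz, which forces growth no faster than $y^{-1}$ and no decay faster than $y$, combined with periodicity.

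Then I set $\omega(\zeta) = e^{-\beta i\pi/2}\zeta^{-\beta}h(\zeta)$, suitably normalized with $\rho=\lambda$. The remaining claim is that $m(\eta+iy) - (\text{this expression at } iy)\to 0$ after multiplying by $y^{-\beta}$. Since $\phi^{-1}(iy) = \eta+iy+O(y^2)$, one has to compare $m$ at $\phi^{-1}(iy)$ with $m$ at $\eta+iy$. This comparison is the main obstacle: $m$ is not smooth at the boundary. I would control it using the Herglotz derivative bound $|m'(z)|\le \Im m(z)/\Im z$. This gives $|m(\phi^{-1}(iy))-m(\eta+iy)|\lesssim |m|\cdot O(y^2)/y = o(|m|)$, and $|m(iy)|\asymp y^{\beta}$ by periodicity. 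The factor $(\phi^{-1})'\to1$ is harmless.

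For a periodic $\eta$, I apply the same argument to $T^{\circ q}$, whose Julia set and balanced measure coincide with those of $T$, with degree $d^q$. For $\xi$ with $T^{\circ n}(\xi)=\eta$, the orbit avoids critical points since $T$ is expanding, so $T^{\circ n}$ is a local real-analytic diffeomorphism $\psi$ near $\xi$. Iterating the functional equation gives $m(z) = d^{-n}(T^{\circ n})'(z)\,m(T^{\circ n}(z))$. With $\psi'(\xi)=c_2$, which may be negative (in which case one uses conjugation), the same derivative-bound comparison shows that $m$ at $\xi$ is a rescaling of the form \eqref{eqnmrescaling6} of the function at $\eta$. This gives the same $\beta$, and the limit cycle depends only on the eventual orbit. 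All points of that periodic orbit are related similarly by $T$, which completes the proof.
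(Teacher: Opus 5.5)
Your argument is correct in substance, but it handles the key limiting step differently from the paper.

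\textbf{The paper's route.} The paper works globally with the entire Poincar\'e function $F=\lim_n F_n$, $F_n=T^{\circ n}(\cdot/\rho^n)$, which is the analytic continuation of the inverse of your Koenigs linearizer. Differentiating $\theta_0\circ F_n=d^n\theta_0(\cdot/\rho^n)$ (equivalently, iterating your functional equation $n$ times) gives the exact identity $(d/\rho)^n m_{\sE_0}(z/\rho^n)=F_n'(z)\,m_{\sE_0}(F_n(z))$. Since $F_n\to F$ locally uniformly, one gets $\rho^{n\beta}m_{\sE_0}(z/\rho^n)\to m_\sE(z)=F'(z)\,m_{\sE_0}(F(z))$ locally uniformly on $\bbC_+$. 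This uses only continuity of $m_{\sE_0}$ at points away from $\sE_0$. Lemma~\ref{lem:3} then converts this into \eqref{intro:eq14}, and eventually periodic points are handled by composing with $\rho^nT(\xi+z/\rho^n)\to T'(\xi)z$.

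\textbf{Your route.} You stay at the boundary point. You obtain exact self-similarity of $h=(m\circ\phi^{-1})(\phi^{-1})'$ near $0$, and then compare $m(\phi^{-1}(iy))$ with $m(\eta+iy)$ directly through the Schwarz--Pick bound $|m'|\le \Im m/\Im z$. Integrated along the segment of length $O(y^2)$ at height $\asymp y$, this bound also supplies the Harnack-type comparability of $\Im m$ that your estimate uses implicitly.

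\textbf{What each buys.} Your route needs neither the comb/B\"ottcher machinery nor global information about $F$ (for instance $F^{-1}(\sE_0)\subset\bbR$), and it yields an explicit rate $O(y)$ in \eqref{intro:eq14}. The paper's route identifies the Herglotz function of the limit cycle as the Weyl function $m_\sE$ of a measure on $\sE=F^{-1}(\sE_0)$. Your self-similar extension of $h$ is the same function, but the explicit identification is what Section~\ref{sectionChristoffel} builds on.

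\textbf{Details to fix.} On $\zeta=iy$ the factor $\zeta^{-\beta}$ already contains $e^{-i\beta\pi/2}$, so the correct choice is $\omega=\zeta^{-\beta}h$, extended by periodicity, with no extra phase.

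The bounds $cy\le|m(\eta+iy)|\le C/y$ only give $\beta\le 1$, so the endpoint needs an argument. The self-similar extension of $h$ is Herglotz by Lemma~\ref{lem:3}, whose proof does not use $|\beta|<1$. A Herglotz $g$ with $g(\lambda z)=\lambda g(z)$ must be $bz$ with $b\ge 0$. That would force $m=b\,\phi\,\phi'$ near $\eta$, which is analytic and real on $\bbR$, contradicting $\eta\in\supp\mu_{\sE_0}$. Alternatively, the paper quotes $|(T^{\circ q})'(\eta)|\ge d^q$ from the literature, which even gives $\beta\le 0$.

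Finally, the Ruelle identity you set aside is exact: $\sum_{T(y)=x}(z-y)^{-1}=T'(z)/(T(z)-x)$. This is how the paper derives the same functional equation $d\,m=T'\cdot m\circ T$.
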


From the invariance of $\mu_{\sE_0}$ it follows that it is singular continuous \cite{BellBeMou82}. In many cases, $\mu_{\sE_0}$ is known to correspond to an almost periodic (in fact, limit periodic) Jacobi matrix \cite{BelGerVolYu05}; this is expected to hold in the general case, but it is still an open problem. In particular, Theorem~\ref{intro:thm2} is the first result to describe scaling behavior of CD kernels for some almost periodic Jacobi matrices with singular spectrum.

We regard it as an interesting open problem whether this limit cycle behavior holds in other almost periodic (and related) models of interest.

\subsection{Asymptotics of the scaling function and parametrization by $M$-type}
We now turn to two highly model-dependent questions: the asymptotic behavior of $K(n,\xi,\xi)$ as $n\to\infty$, and the nature of the  chain corresponding to the limit cycle. We will answer these questions in the canonical model of interest: that of a quadratic polynomial with a real Julia set, at a fixed point.

We write the polynomial in the form
\[
T(z) = z(z+\rho)
\]
with $-\rho > 2$ (by conjugating by the dynamics, this is equivalent to the often used form $T(w) = w^2 - \lambda$, with $\lambda > 2$).
 We will study the fixed point $0$, which is the internal fixed point. As before, $\sE_0$ denotes its Julia set and $\mu_{\sE_0}$ the balanced measure. Since $T$ is expanding and $0$ is a fixed point of $T$, by Theorem~\ref{intro:thm2}, $\cK_{\mu_{\sE_0}}$ has a limit cycle  at $\xi =0$.

To describe the chain corresponding to the limit cycle in this case, we have to recall exponential type and $M$-type and their importance in spectral theory.

\begin{remark}
\begin{enumerate}[(i)]
\item Recall that the order of an entire function $f$ is 
\begin{align*}
	\rho_f:=\limsup_{r\to\infty}\frac{\ln\ln\max_{|z|\leq r}|f(z)|}{\ln r}
\end{align*}   
and the type with respect to this order is 
\[
\tau_f:=\limsup_{r\to\infty}\frac{\ln\max_{|z|\leq r}|f(z)| }{r^{\rho_f}}.
\]
\item In direct spectral theory, transfer matrices obey some universal asymptotics at $\infty$. For instance, the $n$-step transfer matrix of the Jacobi recursion is a polynomial of degree $n$, Dirac transfer matrices over an interval of length $\ell$ have order $1$ and exponential type $\ell$, and Schr\"odinger transfer matrices over an interval of length $\ell$ have order $1/2$ and type $\ell$ \cite{PoschelTrubowitz,LukicFirstCourse}. These asymptotics underlie important results such as proofs of the Borg--Marchenko theorem or Stahl--Totik regularity theory (compare \cite{Bennewitz,SimonEquilibrium,EichLuk,EichLukGwa}).  Note that the length of the interval is encoded as the type; in other words, the natural parameter in direct spectral theory is encoded in the asymptotics of the transfer matrix.
\item In $J$-theory, there are no universal asymptotics, and asymptotics at $\infty$ are not preserved by locally uniform convergence. Our transfer matrices have exponential type given by the Krein--de Branges formula \cite{Krein51}, \cite[Section 39]{deBrangesHilbertSpace}, \cite[Theorem 1.11]{BLY1}, \cite{ReiffensteinWoracek}, but the exponential type is often $0$ and doesn't serve as a parameter for the family of transfer matrices. 
\item Now we provide a precise definition of the $M$-type. If $\sE\subset\bbR$ is closed and unbounded, $\infty$ is a boundary point of the Denjoy domain $\Omega=\bbC\setminus \sE$. Assume in addition that $\Omega$ is Greenian. Then a \textit{Martin function} at $\infty$ is a positive harmonic function in $\Omega$, which is bounded on bounded subsets of $\Omega$ and vanishes continuously at every Dirichlet boundary point of $E$.  It is called symmetric if in addition $M(\overline{z})=M(z)$. A symmetric Martin function is unique up to normalization \cite{Ancona79,Benedicks80}. We say that an entire function, $f$, has $M$-type $\ell>0$, if it is of bounded characteristic in $\bbC\setminus \sE$ \cite[p. 861]{VolYu16} and 
\[
\ell=\limsup\limits_{y\to\infty}\frac{\log|f(iy)|}{M(iy)}. 
\]
Using the matrix norm, the $M$-type of a matrix function is defined analogously, and this is a natural way to quantify the growth of transfer matrices.

\item  For a periodic canonical system, Floquet theory describes its spectrum $\sE$ in terms of the monodromy matrix. Typically, $\sE$ is unbounded, i.e., doesn't correspond to orthogonal polynomials, and we can consider a symmetric Martin function $M$. If the spectrum satisfies some known regularity properties (the Widom condition and DCT property), it was proved in \cite{Yuditskii2001} that the chain is parametrized in terms of $M$-type, i.e., each transfer matrix has finite $M$-type and for each $\ell > 0$ there is a unique transfer matrix in the chain which has $M$-type. In \cite{BessLukYu}, the same was proved for almost periodic canonical systems with the Widom and DCT properties. In these classes, spectra are of positive Lebesgue measure and spectral measures are absolutely continuous.

\item Parametrization by $M$-type is not always possible. For instance, the transfer matrix for the free Schr\"odinger operator is
\[
\fA_S(t,z)=\begin{pmatrix}
\cos(\sqrt{z}t)& \frac{\sin(\sqrt{z}t)}{\sqrt{z}}\\ -\sqrt{z}\sin(\sqrt{z}t)& \cos(\sqrt{z}t)
\end{pmatrix}
\]
and the transfer matrix for the free Dirac operator is
\[
\fA_D(t,z)=
\left(\begin{matrix}
\cos(zt)& \sin(zt)\\ -\sin(zt)& \cos(zt)
\end{matrix}\right).
\]
Building a chimera of these two families gives a $J$-monotonic family
\begin{equation}\label{eqnChimera}
	\fA(t,z)=\begin{cases}
		\fA_D(t,z),&\quad t\leq 1\\\medskip
		\fA_D(1,z)\fA_S(t-1,z),&\quad t\geq 1
	\end{cases}
\end{equation}
In this case for arbitrary $t>0$, the entries of $\fA(t,z)$ are of order $1$, but for $t\geq 1$, the type remains constant. Hence, a parametrization of $\fA$ in terms of $M$-type is not possible in such an example. 

\item Chimeras are common objects in $J$-theory. As an example consider a classical Krein problem on extensions of Hermitian-positive functions $S(t)$ from a finite interval $(-\ell,\ell)$ to the whole axis \cite[Chapt. 5, \S 3]{AkhiezerMomentProblem}. Each extension can be associated with a measure $\sigma$ on the real axis, which provides its Bochner representation, and the measure in turn corresponds to a Herglotz function. In the case of non-uniqueness, the collection of Herglotz functions is parametrised in terms of a fractional linear transform with entire $J$-inner function $\fA(\ell,z)$ as the matrix of this transform. This matrix is of exponential order one and its type depends on $\ell$. However, as soon as $S(t)$ possesses an additional smoothness in the origin (differentiable or just continuous) $\fA(\ell,z)$ contains with necessity a polynomial multiplier in its multiplicative representation, i.e., the factor corresponding to a moment problem. Indeed, each derivative of the Hermitian-positive function requires existence of an additional moment for the representative measure $\sigma$.
\item Chimeras have often led to counterexamples of conjectures in spectral theory. In \cite{Yuditskii2011}, they were used to prove that Widom DCT sets are not necessarily homogeneous, and that weakly homogeneous sets are not necessarily DCT. In \cite{VolbergYuditskii14}, they were used to disprove the Kotani--Last conjecture.
\end{enumerate}
\end{remark}

The main theorem of this part of the paper is that the chain of entire $J$-inner matrix functions associated to the limit cycle of $\cK_{\mu_{\sE_0}}$ can be parametrized by the $M$-type (i.e., in this sense this chain is not a chimera). In order to formulate the precise statement, we will now define the Herglotz function corresponding to the limit cycle and the $M$-type. 

Let $T(z)=z(z+\rho)$ and $\mu_{\sE_0}$ be the balanced measure of its Julia set. Define
\begin{equation}\label{intro:eq201}
m_{\sE_0}(z)=\int\frac{d\mu_{\sE_0}(x)}{x-z}.
\end{equation}
The Herglotz function corresponding to the limit cycle will be given in terms of a well-studied function in iteration theory. It is a classical result that the following limit exists, see e.g. \cite[Sect. 3]{Grabner15} 
\begin{equation*}
F(z)=\lim_{n\to\infty}F_n(z),\quad F_n(z)=T^{\circ n}(z/\rho^n).
\end{equation*}
The limit function $F(z)$ satisfies the Poincar\'e equation
\begin{equation}\label{eq1-2-25}
T(F(z))=F(\rho z),\quad F(0)=0, F'(0)=1,
\end{equation}
and we thus refer to it as the Poincar\'e  function. Set $\sE:=F^{-1}(\sE_0)$ and define the function 
\[
m_\sE(z)=F'(z)m_{{\sE_0}}(F(z)).
\]
The function $m_\sE(z)$ is again a Herglotz function and admits an integral representation  similar to \eqref{intro:eq201}
\begin{equation*}\label{intro:eq202}
m_{\sE}(z)=\int\frac{d\mu_{\sE}(x)}{x-z}.
\end{equation*}
 The measure $\mu_\sE$ is supported on $\sE$ and therefore in particular is again singular.   Note that in contrast to $\sE_0$, $\sE$ is an unbounded set and thus $\infty$ is a boundary point of $\Omega=\bbC\setminus \sE$.

\begin{theorem}\label{intro:thm3}
	Let $\sE_0$ be the Julia set of $T(z)=z(z+\rho)$, for $\rho<1-\sqrt{13}$. Let $\sE=F^{-1}(\sE_0)$ and $m_\sE(z)=F'(z)m_{\sE_0}(F(z))$. Let $\fD(t,z)$ be the de Branges' trace parametrization of ${\mathscr C}(m_\sE)$. Then the $M$-type of $\fD(t,z)$ exists for every $t>0$, i.e., 
	\[
	\ell(t)=\lim\limits_{y\to\infty}\frac{\log\|\fD(t,iy)\|}{M(iy)}. 
	\]
	Moreover, $\ell(t)$ is continuous and bijective. Hence, denoting by $t(\ell)$ its inverse,  $\fD(t(\ell),z)$ provides a parametrization of ${\mathscr C}(m_\sE)$ in terms of the $M$-type. 
\end{theorem}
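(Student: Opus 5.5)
The plan is to exploit the self-similarity of $m_\sE$. From the Poincar\'e equation \eqref{eq1-2-25} and the balance property of $\mu_{\sE_0}$ (which gives the functional equation $m_{\sE_0}(z) = \frac{T'(z)}{2}\, m_{\sE_0}(T(z))$ up to the correct normalization), we get a renormalization identity $m_\sE(\rho z) = c\, m_\sE(z)$ with an explicit constant $c$ ($c=2/\rho$ up to sign conventions, with the sign absorbed by \eqref{eqnmrescaling6}). Equivalently, $\sE$ is invariant under $z\mapsto \rho z$ and $\mu_\sE$ is self-similar. Then, via de Branges' uniqueness (Remark \ref{rem:intro}(vii)), the rescaled chain $\{\,\cUnew(\cdot)\fD(t,\rho z)\cUnew(\cdot)^*\,\}$ is again the chain of $m_\sE$. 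This yields a discrete symmetry of the form
\[
\fD(\tau(t), z) = \cUnew(\gamma)\fD(t,\rho z)\cUnew(\gamma)^{-1},
\]
where $\tau(t)=\lambda t$ for an explicit $\lambda>1$ determined by the trace normalization. Similarly, the Martin function satisfies $M(\rho z) = \kappa M(z)$: both sides are Martin functions for $\Omega=\bbC\setminus\sE$, and symmetric Martin functions are unique up to normalization. The harmonic-measure interpretation forces $\kappa = 2$, since $M(z)$ should behave like $\log|T^{\circ n}|/2^n$-type Green function limits, i.e. $M(z)=\lim G_{\sE_0}(F(z/\rho^{n}))\cdot 2^n$.

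Second, I will construct the candidate transfer matrices at the discrete scales directly. The natural finite objects are the polynomial transfer matrices $\fA_n$ of degree $2^n$ for $\mu_{\sE_0}$; by the balanced-measure/Jacobi-matrix renormalization (Bellissard--Bessis--Moussa), these factor through $T^{\circ n}$. Composing with $F$ and rescaling, the chain elements $\fD(t_n,z)$ at $t_n=\lambda^n t_0$ are of the form $\Phi(T^{\circ n}(F(z/\rho^n)))$-like expressions, i.e. functions of $F(z/\rho^{n})$ composed with fixed polynomial data. Since $\log|F(iy)|$ grows like $M(iy)$ (the Green function of $\sE_0$ at $\infty$ pulled back by $F$ is exactly the Martin function), these elements have exact $M$-type, say $\ell(t_n) = 2^{-n}\ell_0 \cdot(\text{scale})$, with $\ell(\lambda t)=\kappa^{-1}\dots$ consistent with the symmetry. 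The symmetry transfers existence of the limit defining $\ell(t)$ from one $t$ to the whole orbit $\{\lambda^k t\}$.

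Third, to handle all $t$ in a fundamental domain $[t_0,\lambda t_0]$, I will show existence of the limit and strict monotonicity. Monotonicity comes from $J$-monotonicity: $\fD(t_2)=\fD(t_1)\fB$ with $\fB$ $J$-inner, and for $\log\|\fD(t,iy)\|$ one uses that the $J$-inner factors have bounded characteristic in $\Omega$ (their entries are of bounded type in $\Omega$ because the chain's spectral measure lives on $\sE$). Thus $M$-types add along products (a Phragm\'en--Lindel\"of/Nevanlinna-factorization argument in the Greenian Denjoy domain), and $\ell$ is nondecreasing. Strictness, i.e. that no nontrivial factor has zero $M$-type, is exactly the non-chimera statement. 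This is the main obstacle. I would try to show that an entire $J$-inner $\fB\neq I$ of bounded characteristic in $\Omega$ with zero $M$-type must be constant. The route is to note that its associated de Branges space would then consist of functions of zero $M$-type, and to use that $\sE$ has the Widom/DCT-type property relative to $M$ near $\infty$, inherited from uniform perfectness and self-similarity, with the condition $\rho<1-\sqrt{13}$ presumably guaranteeing enough separation of the gaps. Then the Yuditskii-type argument \cite{Yuditskii2001} forces such functions to be constants.

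Finally, continuity of $\ell$ follows from monotonicity plus additivity and a squeeze argument. By the previous step, $\ell(t+h)-\ell(t)$ equals the $M$-type of the factor $\fD(t)^{-1}\fD(t+h)$, which tends to zero as $h\to0$; this can be bounded via the trace norm of the factor using the subharmonic estimate $\log\|\fB(z)\|\le C(\text{trace})\,M(z)$. Together with $\ell(\lambda^k t)\to 0,\infty$ as $k\to\mp\infty$ from the symmetry, this gives continuity and bijectivity.
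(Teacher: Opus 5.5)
Your skeleton (monotonicity of $\ell$ along the chain, chain elements of exact $M$-type, then exclusion of flat pieces of $\ell$) is the right one, and you correctly single out the non-chimera step as the heart of the matter. The argument you propose for that step, however, cannot work.

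First, the Widom/DCT machinery of \cite{Yuditskii2001,BessLukYu} is not available here. $\sE$ is locally a conformal preimage under $F$ of the Cantor set $\sE_0$, so it has zero Lebesgue measure and $\mu_\sE$ is singular. The Widom condition also fails: already the comb of $\theta_0$ has $2^{k-1}$ needles of height $h_0/2^{k-1}$ for every $k$, so the critical values of the Green function are not summable. This is exactly why the paper presents Theorem~\ref{intro:thm3} as the first $M$-type parametrization for a singular measure.

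Second, the general statement you aim for is false. Let $\cP$ be real with $\det\cP=0$ (so $\cP J\cP^*=0$) and let $\cD\ge0$ be real symmetric. Then $\fB(z)=I-z\,\cP\cD\cP^*J$ is a nontrivial entire $J$-inner polynomial with $\det\fB=1$, $\fB(0)=I$ and $\cK_\fB(z,z)=\cP\cD\cP^*$. Its $M$-type is $0$, because $M(iy)$ grows like $y^{\log 2/\log|\rho|}$ and hence $\log y=o(M(iy))$. Such linear factors are precisely what a flat piece of $\ell$ would look like, so they can only be ruled out with information specific to this chain.

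The hypothesis $\rho<1-\sqrt{13}$ is not about gap separation; it supplies exactly that chain-specific information. It gives limit periodicity of the Jacobi coefficients (continuous extension of $a_k$ to $\bbZ_2$), and purely continuous spectrum of every hull operator $\cJ_\vk$ \cite{EichLukYu3}. The paper uses the first in Lemma~\ref{l:20dec-1} to show that the divisor $\fD(t_-(\ell),\cdot)^{-1}\fD(t_+(\ell),\cdot)$ must equal $I-\cP\cD_\vk\cP^*Jz$, with $\cP$ a limit point of $|\rho|^{-n/2}\cU_n\fA_{k_n}(0)$ along dyadic approximants $k_n/2^n\to\ell$. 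It uses the second, via the Main Lemma~\ref{l:20dec-2}, to show $\cP=0$. If $\Phi_2\neq0$, the functions $R_{k_s}(F(z/\rho^s))F'(z/\rho^s)/\rho^s$ would converge to a non-affine Herglotz function, whereas the absence of an atom at $0$ in the spectral measures $\mu_\vk$ forces an affine limit.

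Your remaining steps also have gaps. Using only $\fA_{2^n}$ gives chain elements of exact $M$-type only along a geometric orbit $\{2^j\ell_0\}$. That yields neither continuity of $\ell$ nor existence of the limit at other $t$.

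The paper instead uses $p_{2^nk}=p_k\circ T^{\circ n}$ and the companion identity for $q_{2^nk}$ for \emph{every} $k$ (Lemma~\ref{lem:orthoPoly}). Via Lemmas~\ref{l2-2-11}, \ref{l:22dec1} and Theorem~\ref{thnov22-1} this produces $\fB_k=\cF^{-1}\fA_k(F)\cF\bigl(\begin{smallmatrix}1&0\\ \hat U_k&1\end{smallmatrix}\bigr)$; note the correction factor, so these are not functions of $F$ alone. Their normalizations lie in $\mathscr C(m_\sE)$ and have $M$-type exactly $k$, and by rescaling every dyadic value $k/2^s$ is attained (Corollary~\ref{cor:2}).

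With a dense set of attained values, existence of the limit and continuity at every $t$ follow by squeezing. This works because $\|\fD(t,iy)J\fD(t,iy)^*-J\|$ is monotone in $t$ and, for fixed $t$, determines $\log\|\fD(t,iy)\|$ up to $O(\log y)$ (Appendix).

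Your substitutes for this step are unavailable:
\begin{itemize}
\item Submultiplicativity of the norm gives only subadditivity of $M$-type for products, not additivity.
\item There is no bound $\log\|\fB(z)\|\le C(\text{trace})\,M(z)$ for a divisor $\fB$. Trace normalization only yields $\log\|\fD(t,iy)^{-1}\fD(t+h,iy)\|\le hy$, which is an exponential-type bound, while $M(iy)=o(y)$.
\end{itemize}
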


\begin{remark}
	\begin{enumerate}[(i)]
		\item Theorem \ref{intro:thm3} is the first example where a parametrization in terms of the $M$-type was proven for a system with a singular spectral measure.
		\item Let us explain the condition on $\rho$. The proof of Theorem  \ref{intro:thm3} relies on limit periodicity of the Jacobi matrix associated with the balanced measure of the Julia set of the polynomial $T(z)=z(z+\rho)$. This fact is known for the Jacobi matrix associated with the balanced measure of the Julia set of the quadratic polynomial $\tilde T(z)=z^2-\lambda$, for $\lambda>3$, \cite{BellBeMou82,BeMeMo,NPVY} and was improved in \cite{BakerBessisMoussa} to $\lambda>2.192$. The corresponding claim for the Jacobi matrix associated with $T$ follows, since the dynamical systems generated by $T$ and $\tilde T$ are equivalent with $\lambda=\rho^2/4-\rho/2$. Thus, our condition on $\rho$ corresponds to $\lambda>3$. 
	\end{enumerate}
\end{remark}

 Let 
$$
\cK_\fD(t(\ell),z,w)=\frac{\fD(t(\ell),z)J\fD(t(\ell),w)^*-J}{z-\overline{w}}.
$$
 We now describe the asymptotics of the Christoffel function and the matrix scaling function $r(n)$:

	\begin{theorem}\label{intro:thm1}
		Let $\mu_{\sE_0}$ be the balanced measure for the polynomial $T(z)=z(z+\rho)$, where $-\rho>2$ and define $\kappa=\log |\rho|/\log 2$. 
		\begin{itemize}
		\item[(a)]
		\begin{align}
K(n,0,0) & \asymp n, \qquad n\to\infty \label{intro:eq23b} \\
r(n) & \asymp n^\kappa, \qquad n\to\infty \label{intro:eq23}
		\end{align}
		\item[(b)]
		Assume that $-\rho>\sqrt{13}-1$. Then the functions $b,c : (0,\infty) \to (0,\infty)$ given by
			\begin{equation}\label{eq:28may-1b}
b(\ell)=\frac{1}{\ell}(\cK_\fD)_{22}(t(\ell),0,0)
\end{equation}
		\begin{equation}\label{eq:28may-1}
c(\ell)=\frac{1}{\ell^{\kappa}}\sqrt{(\cK_\fD)_{11}(t(\ell),0,0)(\cK_\fD)_{22}(t(\ell),0,0)}
\end{equation}
are continuous, multiplicatively periodic in the sense that
\[
b(2\ell) = b(\ell), \qquad c(2\ell) = c(\ell),
\]
and \eqref{eqnKnoscillations}, \eqref{eqnrnoscillations} hold.
		\end{itemize}
	\end{theorem}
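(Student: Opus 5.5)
Part (a) rests on the renormalization structure of the balanced measure under $T$. The balanced-measure invariance gives the functional equation
\[
m_{\sE_0}(z)=\tfrac12\,T'(z)\,m_{\sE_0}(T(z)),
\]
up to the standard constant normalization. It also gives the classical fact that the orthonormal polynomials of $\mu_{\sE_0}$ satisfy $p_{2n}=c_n\,p_n\circ T$. More precisely, the Jacobi matrix of $\mu_{\sE_0}$ is a renormalization, via $T$, of itself, and the $2n$-step transfer matrix at $z$ is expressed through the $n$-step transfer matrix at $T(z)$, conjugated by an explicit $z$-dependent factor.

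Evaluating this relation at the fixed point $z=0$, where $T(0)=0$ and $T'(0)=\rho$, yields the following recursion. Write $\cK(n)=\cK_{\mu_{\sE_0}}(n,0,0)$. Then the kernel over $2n$ steps is obtained from $\cK(n)$ by a fixed linear map, plus a correction of lower order. In scalar terms, one expects
\[
K(2n,0,0)\approx 2K(n,0,0),
\]
since $p_{2k}$ and $p_{2k+1}$ both relate to $p_k\circ T$ with the same value at $0$, while the $(1,1)$-entry gets multiplied by $\rho^2/2$. This gives the exponents: $K\sim n$, and $(\cK)_{11}\sim n^{2\kappa-1}$, since $(\rho^2/2)^{\log_2 n}=n^{2\kappa-1}$. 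Hence $r(n)=\sqrt{\cK_{11}\cK_{22}}\asymp n^{\kappa}$.

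To make this rigorous, I would prove two-sided comparisons of the form
\[
c\,K(n,0,0)\le K(2n,0,0)/2\le C\,K(n,0,0)
\]
with constants tending to $1$ fast enough that infinite products converge. Combining these with monotonicity of $K(n)$ in $n$ handles the indices between dyadic points. An alternative route is more robust: by Theorem~\ref{thm:4}, $K(n,0,0)\asymp r(n)^{1+\beta}$ with $\beta=\log 2/\log|\rho|-1$, so it suffices to prove $r(n)\asymp n^\kappa$. This in turn follows from the Jitomirskaya--Last type estimate $|m(iy)|\asymp$ ratio of diagonal entries at the scale where $r(n)\approx 1/y$, together with the dyadic recursion for $n$.

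For part (b), I would use the chain $\fD(t,z)$ from Theorem~\ref{intro:thm3}. The Poincar\'e equation $T(F(z))=F(\rho z)$ gives
\[
m_\sE(\rho z)=\tfrac{\rho}{2}\,\cdot\,(\text{Möbius-type correction})\,m_\sE(z),
\]
which makes $m_\sE$ self-similar: $\rho\, m_\sE(\rho z)$ is conjugate to $2\,m_\sE(z)$ via $\cUnew$. By de Branges uniqueness, the chain satisfies $\fD(t,\rho z)=\cUnew\,\fD(\sigma t,z)\,\cUnew^{-1}$ for a suitable time change $\sigma$. On the $M$-type side, $M(\rho z)=2M(z)$, because $M$ is essentially $\log|F|$ up to normalization, as $G_{\sE_0}(F(z))$ is the Martin function. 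Consequently the $M$-type doubles under the rescaling. Translating this into the parameter $\ell$ gives
\[
(\cK_\fD)_{22}(t(2\ell),0,0)=2(\cK_\fD)_{22}(t(\ell),0,0),
\]
and correspondingly a factor $2^{\kappa}$ for the geometric mean. This proves the multiplicative periodicity $b(2\ell)=b(\ell)$ and $c(2\ell)=c(\ell)$. Continuity follows from the continuity and bijectivity of $\ell(t)$ asserted in Theorem~\ref{intro:thm3}.

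The asymptotics \eqref{eqnKnoscillations} and \eqref{eqnrnoscillations} then need an identification of the discrete index $n$ with the $M$-type $\ell$. Theorem~\ref{thm:4} gives the limit cycle $\cL_{r(n)}$ in terms of the trace-parametrized chain for $m_{\beta,\omega}$, and this chain equals ${\mathscr C}(m_\sE)$ after the linear change $z\mapsto z/\text{const}$, since $F$ is tangent to the identity at $0$. It remains to show that the $n$-th kernel, rescaled, is asymptotic to $\fD(t(\ell_n),\cdot)$ with $\ell_n= n\cdot\text{const}$. I would prove this by comparing degrees. The $n$-step transfer matrix is a polynomial of degree $n$, and under $z\mapsto z/r(n)$ its growth along $iy$ at scales $1\ll y\ll r(n)$ is governed by $\log|T^{\circ k}|$ with $2^k\approx n$. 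Measured against $M$, this gives $M$-type $\approx n/2^{k}\cdot 2^{k}$ in the rescaled picture, i.e.\ proportional to $n$.

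The main obstacle is this last step: showing that $M$-type passes to the limit along the rescaled polynomial transfer matrices. As item (iii) of the remark on $M$-type warns, growth at infinity is not preserved by locally uniform convergence. Here the limit-periodicity of the Jacobi matrix for $\rho<1-\sqrt{13}$ is what supplies uniformity. I would first try to show that the transfer matrices over dyadic blocks of length $2^k$ are exactly $T^{\circ k}$-pullbacks of bounded-step matrices, so that their growth is controlled uniformly by $\log|F_k|\to\log|F|$.
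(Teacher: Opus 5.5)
You get the exponents right, and your mechanism for $b(2\ell)=b(\ell)$, $c(2\ell)=c(\ell)$ is essentially the paper's. The self-similarity is in fact exact: $\rho\,m_\sE(\rho z)=2m_\sE(z)$. Since $\rho<0$, the induced relation on the chain carries a $\fj$-conjugation, which is harmless at $z=0$.

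The gaps are in the growth statements. Neither is actually proved.

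In (a), the Jitomirskaya--Last route only reproduces $K(n,0,0)\asymp r(n)^{1+\beta}$. Theorem~\ref{thm:4} already gives this, and it says nothing about how $r(n)$ grows with $n$. Everything therefore rests on the dyadic comparison with summable errors, which you assert but do not derive. The heuristic behind it is also off in two ways:
\begin{itemize}
\item Only $p_{2k}(0)=p_k(0)$ holds. For example $p_1(0)^2=\rho^2/(\rho^2-2\rho)\neq p_0(0)^2$.
\item The renormalization is not a conjugation. The exact identity (Lemma~\ref{l2-2-11}) is $\cU_n\fA_{k2^n}(z/\rho^n)\cU_n^{-1}=\cF_n(z)^{-1}\fA_k(F_n(z))\cF_n(z)\left(\begin{smallmatrix}1&0\\U_{k,n}(z)&1\end{smallmatrix}\right)$. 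At $z=0$ the extra factors add terms such as $-U_{k,n}'(0)\,c_2c_2^{T}$, with $c_2=(-q_k(0),p_k(0))^{T}$. These have the size of a single summand of $\cK_k(0)$.
\end{itemize}
For general $k$, showing these terms are negligible after rescaling amounts essentially to $\cP=0$ in Theorem~\ref{thm:417}. The paper proves that only for $|\rho|>\sqrt{13}-1$, so it is not available in (a). Restricted to $k=2^j$, your one-step scheme can be repaired: $p_{2^j}(0)=p_1(0)$ and $q_{2^j}(0)=(\rho/2)^jq_1(0)$ make the rescaled corrections summable. But this needs the exact identity, and nondegeneracy of the limit still requires an argument.

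The paper instead fixes $k$ and sends the number of doublings to infinity. Using $F_n\to F$ and $U_{k,n}(z)-U_{k,n}(0)\to\hat U_k(z)$ (Lemma~\ref{l:22dec1}), it gets $\frac1{|\rho|^n}\cU_n\cK_{k2^n}(z/\rho^n)\cU_n^*\to\hat\cK_k(z)$ (Theorem~\ref{thnov22-1}). With $k=1$ and $z=0$ this gives positive finite limits for $K(2^n,0,0)/2^n$ and $r(2^n)/|\rho|^n$. Monotonicity in $n$ then handles the intermediate indices.

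In (b), the real content is the identification of the index $n$ with the $M$-type. Precisely, one needs $\frac1{|\rho|^s}\cU_s\cK_{n_s}(z/\rho^s)\cU_s^*\to\cK_\fD(t(\ell),z)$ whenever $n_s/2^s\to\ell$. You flag this as the main obstacle and leave it open. Theorem~\ref{thm:4} cannot supply it, because it parametrizes by $r(n)$ and uses only local data at $0$. Pushing $M$-type through locally uniform limits is exactly what is unavailable in general.

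The missing idea is to avoid that step by working with explicit limit objects:
\begin{itemize}
\item For dyadic $\ell=k/2^s$ the limit is $\fB_{k/2^s}(z)=\cU_s\fB_k(z/\rho^s)\cU_s^{-1}$, with $\fB_k=\cF^{-1}\fA_k(F)\cF\left(\begin{smallmatrix}1&0\\\hat U_k&1\end{smallmatrix}\right)$.
\item After normalization at $0$ it lies on $\mathscr C(m_\sE)$ by de Branges' ordering, because $\fB_k\star0\to m_\sE$.
\item Its $M$-type $k/2^s$ can be read off from $M=\log|F|+O(1)$ (cf.\ \eqref{eq:Mtype} and Corollary~\ref{cor:2}).
\end{itemize}
For arbitrary $n_s$, monotonicity of $\cK_n$ in $n$ sandwiches the rescaled kernels between those at indices $k_s2^j$ and $(k_s+1)2^j$. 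The sandwich closes as $s\to\infty$ by continuity of $\ell\mapsto t(\ell)$ from Theorem~\ref{intro:thm3}. Evaluating at $z=0$ and using compactness then gives \eqref{eqnKnoscillations} and \eqref{eqnrnoscillations}. Limit-periodicity enters only through Theorem~\ref{intro:thm3}, not in this step.
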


For instance, under the assumptions of (b), \eqref{intro:eq23b} can be written in the more explicit form
		\[
		\min_{\ell \in [1,2)} b \leq \liminf_{n\to\infty} \frac{ K(n,\xi,\xi)} n \leq\limsup_{n\to\infty} \frac{ K(n,\xi,\xi)} n  \leq \max_{\ell \in [1,2)} b.
		\]

For dyadic rationals $\ell = k / 2^s$, we can give an explicit formula for $\fD(t(\ell),z)$, see Theorem \ref{thnov22-1} and  \eqref{eq:bks}.

\medskip

In Section~\ref{sec:prel}, we will review some properties of canonical systems and their rescalings. In Section~\ref{secPeriodicModulation} we will study self-similar canonical systems and those with multiplicatively periodically modulated scaling behavior; in particular we will prove Theorem~\ref{thm:4}, Corollary~\ref{corollaryZerosGeneral}, and Theorem~\ref{thmCantorMeasure}. In Section~\ref{secExpandingJuliaSet} we study balanced measures on Julia sets and prove Theorem~\ref{intro:thm2}. In Section~\ref{sectionChristoffel}
we prove Theorem~\ref{intro:thm3} and Theorem~\ref{intro:thm1}.

\subsection*{Acknowledgements} We are thankful to M. Sodin, who pointed more than 40 years ago to a possible use of the Poincar\'e function in studying canonical systems with singular spectrum. 
	\section{Preliminaries}\label{sec:prel}

	In this section we recall some facts and basic definitions about canonical systems, some associated objects and continuity properties among those objects. 
	
	\subsection{Canonical systems, CD-kernels and Weyl functions}
	
	\begin{definition}
		We call a function $H:[0,\infty)\to\bbR^{2\times 2}$ a \emph{Hamiltonian} if 
		$H\in L^1_\loc([0,\infty))$, $H(t)\geq 0$ for a.e. $t\in(0,\infty)$, and $H(t)\neq 0$ for a.e. $t\in(0,\infty)$.	
		We assume in addition that $H$ is in the limit point case at $\infty$,  i.e., 
		\begin{align*}
			\int_0^\infty\tr H(t)dt=\infty. 
		\end{align*}
		We denote the set of all such Hamiltonians by $\bbH$. We call a Hamiltonian \emph{trace normalized}, if $\tr H(t)=1$ for a.e. $t\in(0,\infty)$. The set of all trace normalized canonical systems on $[0,\infty)$ will be denoted by $\hat\bbH$. 
		
		If $H\in\bbH$, the associated \textit{half-line}  \textit{canonical system} is the following differential expression
		\begin{align}\label{eq:10}
			\partial_t\fA_H(t,z) J=z\fA_H(t,z)H(t),\quad \fA_H(0,z)=
			\begin{pmatrix}
				1&0\\0&1
			\end{pmatrix},
		\end{align}
		where $t\in[0,\infty)$, $J$ is given by \eqref{intro:eq16} and $z\in\bbC$ is the spectral parameter. The matrix function $\fA_H$ is called the \textit{transfer matrix} of the system.
	\end{definition}
	
	Let $H_1,H_2\in\bbH$. We say that $H_2$ is a \emph{reparametrization} of $H_1$ and write $H_2\sim  H_1$, if there exists an increasing 
	bijection $\gamma:(0,\infty)\to(0,\infty)$ such that $\gamma$ and $\gamma^{-1}$ are absolutely continuous and
	\begin{equation}\label{eq:999}
		H_2(t)=H_1(\gamma(t))\gamma'(t).
	\end{equation}
	If $\fA_{1,2}$ denote the corresponding transfer matrices, then we have
	\[
	\fA_{2}(t,z)=\fA_1(\gamma(t),z).
	\]
	In particular using
	\[
	\gamma(t)=\int_0^t\tr H(s)ds
	\]
	shows:
	\begin{lemma}
		Let $H\in\bbH$. Then there always exists a trace-normalized $\hat H\in\hat\bbH$ with $H\sim \hat H$.
	\end{lemma}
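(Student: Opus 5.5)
The statement to prove is the final lemma: every $H\in\bbH$ is a reparametrization of some trace-normalized $\hat H\in\hat\bbH$. We take the hint in the text and use
\[
\gamma(t)=\int_0^t \tr H(s)\,ds .
\]
We then define $\hat H$ through the inverse map, $\hat H(s)=H(\gamma^{-1}(s))\,(\gamma^{-1})'(s)$. With this definition, $H(t)=\hat H(\gamma(t))\gamma'(t)$, which is exactly \eqref{eq:999} with the roles $H_2=H$, $H_1=\hat H$.

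\textbf{Properties of $\gamma$.} Since $H\in L^1_\loc$, the function $\gamma$ is absolutely continuous and nondecreasing. Because $H(t)\ge0$ and $H(t)\neq0$ a.e., we have $\tr H(t)>0$ a.e., so $\gamma$ is strictly increasing. The limit point condition $\int_0^\infty\tr H=\infty$, together with $\gamma(0)=0$ and continuity, shows that $\gamma$ is a bijection of $[0,\infty)$, hence of $(0,\infty)$.

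\textbf{Absolute continuity of $\gamma^{-1}$ (main obstacle).} This is the one point that needs care. We use the Banach--Zarecki criterion: a continuous increasing function is absolutely continuous if and only if it maps null sets to null sets. Let $N$ be a null set. Then $\gamma^{-1}(N)=\{t:\gamma(t)\in N\}$. Write it as $A\cup B$, where $A$ is the part on which $\gamma'$ exists and is positive, and $B$ is the remaining null set (recall $\gamma'=\tr H>0$ a.e.). By the area formula for the a.c. function $\gamma$,
\[
\int_{\gamma^{-1}(N)}\gamma'\,dt=|N|=0 .
\]
Since $\gamma'>0$ on $A$, this forces $|A|=0$, so $\gamma^{-1}(N)$ is null. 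Hence $\gamma^{-1}$ is absolutely continuous, and $(\gamma^{-1})'(s)=1/\gamma'(\gamma^{-1}(s))$ for a.e. $s$.

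\textbf{Checking $\hat H\in\hat\bbH$.} For a.e. $s$,
\[
\tr\hat H(s)=\frac{\tr H(\gamma^{-1}(s))}{\tr H(\gamma^{-1}(s))}=1 .
\]
This step uses the fact that $\gamma^{-1}$ maps null sets to null sets, so that "a.e. in $t$" transfers to "a.e. in $s$". Positivity $\hat H\ge0$, the condition $\hat H\neq0$ a.e., and local integrability all follow from the change of variables $\int_a^b|\hat H|\,ds=\int_{\gamma^{-1}(a)}^{\gamma^{-1}(b)}|H|\,dt$. Finally, $\int_0^\infty\tr\hat H=\infty$ holds trivially, so $\hat H$ is in the limit point case.
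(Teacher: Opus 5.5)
Your proof is correct and takes the paper's approach: the paper just takes $\gamma(t)=\int_0^t\tr H(s)\,ds$ and leaves the verification implicit, while you supply the omitted details, in particular the absolute continuity of $\gamma^{-1}$ via the Lusin (N) property and the Banach--Zarecki criterion. One small slip: passing from ``a.e.\ in $t$'' to ``a.e.\ in $s$'' in the trace computation uses that $\gamma$ (not $\gamma^{-1}$) sends null sets to null sets, which holds automatically because $\gamma$ is absolutely continuous; the property you proved for $\gamma^{-1}$ is what is needed instead for $H(t)=\hat H(\gamma(t))\gamma'(t)$ to hold for a.e.\ $t$.
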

	
	For $H\in\bbH$, it follows directly from \eqref{eq:10} that for $t\geq 0$
	\begin{align}\label{eq:22}
		\cK_H(t,z,w):=\frac{\fA_H(t,z)J\fA_H(t,w)^*-J}{z-\overline{w}}=\int_0^t\fA_H(s,z)H(s)\fA_H(s,w)^*ds.
	\end{align}
	In particular, $H\geq0$ implies that $\cK_H\in\bbC\bbD(2)$.
	
	A \emph{Herglotz function} is an analytic function from $\bbC_+$ to $\bbC_+$. The set of all analytic maps $w : \bbC_+ \to \overline{\bbC_+}=\bbC_+\cup\bbR\cup\{\infty\}$  is denoted by $\cN_0$. If $w\in \cN_0\setminus\{\infty\}$, then there exists $\alpha\in\bbR, \beta\geq 0$ and a positive Borel measure $\sigma$ with $\int\frac{d\sigma(x)}{1+x^2}<\infty$, 
	such that 
	\begin{align}\label{eq:100}
	w(z)=\alpha+\beta z+\int_\bbR\left(\frac{1}{x-z}-\frac{x}{1+x^2}\right)d\sigma(x).
	\end{align}

	Write $\fA_H=\left(\begin{smallmatrix}\fA_{11}&\fA_{12}\\\fA_{21}&\fA_{22}\end{smallmatrix}\right)$. For $z\in\bbC_+$, we define the \textit{Weyl function} of $H\in\bbH$ by 
	\begin{align}\label{eq:21}
		m_H(z):=\lim\limits_{t\to\infty}\frac{\fA_{11}(t,z)\tau+\fA_{12}(t,z)}{\fA_{21}(t,z)\tau+\fA_{22}(t,z)}	
	\end{align}
	where the limit does not depend on $\tau\in\overline{\bbC_+}$ since we are in the limit point case.  It follows from the positivity of $H$ that $m_H\in\cN_0$.

		\subsection{Jacobi recursion} \label{sec:Jacobi}
		Let us consider a measure $\mu$ on $\bbR$ with finite moments, not supported on finitely many points, and recall how it relates to canonical systems. 
		The orthonormal polynomials $p_n$ satisfy a three term recurrence relation, i.e., there exist Jacobi parameters $a_n>0,b_n\in\bbR$ such that 
		\begin{align}\label{intro:eq1}
			zp_n(z)&=a_{n+1}p_{n+1}(z)+b_np_n(z)+a_np_{n-1}(z),\quad n\geq 1\\
			zp_0(z)&=a_1p_1(z)+b_0p_0(z).\nonumber
		\end{align}
		For $n\geq 1$, the orthonormal polynomials of the second kind $q_n$ 
		satisfy the same recursion \eqref{intro:eq1}.
		Viewing this as a second order recursion and rewriting it in matrix form leads to 
		\begin{align}\label{intro:eq8}
			\begin{pmatrix}
				-a_nq_{n-1}(z)& -q_n(z)\\
				a_np_{n-1}(z)& p_n(z)
			\end{pmatrix}
			=
			\begin{pmatrix}
				-a_{n-1}q_{n-2}(z)& -q_{n-1}(z)\\
				a_{n-1}p_{n-2}(z)& p_{n-1}(z)
			\end{pmatrix}
			\fa_n(z),
		\end{align}
		where 
		\[
		\fa_n(z)=\begin{pmatrix}
			0& -\frac{1}{a_n}\\a_n& \frac{z-b_{n-1}}{a_n}
		\end{pmatrix}.
		\]
		Note that we are using row notation, where solutions of the recursion correspond to rows of the matrices in \eqref{intro:eq8}. Thus, iterating the recursion gives the $n$-step transfer matrix defined by 
		\begin{align}\label{intro:eq19}
			\fA_n(z)=\fa_1(z)\dots \fa_n(z).
		\end{align}
		From \eqref{intro:eq8} it follows that  
		\begin{align}\label{eq:50}
		\fA_n(z)=\begin{pmatrix}
			-a_nq_{n-1}(z)& -q_n(z)\\
			a_np_{n-1}(z)& p_n(z)
		\end{pmatrix}.
	\end{align}
		Then induction in $n$ shows that the kernel \eqref{intro:eq3} satisfies
		\begin{align}\label{intro:eq10}
			\cK_\mu(n,z,w)=\frac{\fA_n(z)J\fA_n(w)^*-J}{z-\overline{w}}.
		\end{align}
		Using $\cK_\mu(n,z,z) \ge 0$ shows that $\fA_n$ is an entire $J$-inner function and hence $\cK_\mu(n,\cdot,\cdot) \in \bbC\bbD(2)$.

		Let us relate this to canonical systems.
		For $\xi\in\bbR$, let
		\[
		W(n,z)=W(n,z,\xi)=\fA_n(z+\xi)\fA_n(\xi)^{-1}.
		\]
		Since 
		\[
		\fa_{n+1}(z+\xi)\fa_{n+1}(\xi)^{-1}=I-zN,\quad \text{where } N=\begin{pmatrix}
			0&0\\1&0
		\end{pmatrix},
		\]
		a direct computation shows that 
		\begin{align}\label{eq:20}
			(W(n+1,z)-W(n,z))J=zW(n,z)H_\xi(n),
		\end{align}
		where
		\[
		H_\xi(n)=\begin{pmatrix}
			q_n(\xi)^2&-p_n(\xi)q_n(\xi)\\-p_n(\xi)q_n(\xi)& p_n(\xi)^2
		\end{pmatrix}.
		\]
		This can be extended to a canonical system by linear interpolation: For $t\geq 0$ define
		\[
		W(t,z)=W(\lfloor t\rfloor,z)+(t-\lfloor t\rfloor)(W(\lfloor t\rfloor+1,z)-W(\lfloor t\rfloor,z))
		\]
		and
		\[
		H_\xi(t)=H_\xi(\lfloor t\rfloor).
		\]
		Then by direct verification, \eqref{eq:20} induces a  half-line canonical system
		\[
		\partial_t W(t,z)J=zW(t,z)H_\xi(t).
		\]
		Moreover, if $\mu$ is determinate, this canonical system is in the limit point case, and if  $m_{H_\xi}$ is the Herglotz function associated to $H_\xi$ by \eqref{eq:21} and $m_\mu$ as in \eqref{intro:eq20}, then it holds that 
		\begin{align*}
			m_{H_\xi}(z-\xi)=m_\mu(z).
		\end{align*} 
		
		\subsection{Continuity properties}\label{sec:cont}
		
		The topologies on $\cN_0$ and $\hat\bbH$ defined below are chosen so that the map $H\mapsto m_H$ is continuous. 
		
		Let $\Omega\subset\bbC^n$ be open and nonempty and $S_n$, $n\in\bbN_0$, be compact such that $S_n\subset \operatorname{int} S_{n+1}$, $\cup_n S_n=\Omega$. Moreover, let $Y$ be a metric space. Then the topology of locally uniform convergence on $C(\Omega,Y)$ is metrizable, with one choice of metric given by 
		\begin{align*}
			d(f,g):=\sum_{n=1}^\infty2^{-n}\min\{1,\sup_{z\in S_n}d_Y(f(z),g(z))\},\quad f,g\in C(\Omega,Y).
		\end{align*}
		
		Note that $\cN_0\subset C(\bbC_+,\overline{\bbC})$, where the Riemann sphere is endowed with the chordal metric. We equip $\cN_0$ with the subspace topology of locally uniform convergence in $C(\bbC_+,\overline{\bbC})$.  In this way, $\cN_0$ becomes a compact metric space. 
		
		Likewise, we equip $\hat\bbH$ with the metric
		\begin{align*}
			d(H_1,H_2):=\sum_{n=1}^\infty2^{-n}\min\left\{1,\sup_{T\in [0,n]}\left\|\int_0^TH_1(t)dt-\int_0^TH_2(t)dt\right\|\right\}
		\end{align*}
		for $H_1,H_2\in\hat\bbH$.
		
		It is a deep theorem of de Branges, that the map $H\mapsto m_H$ is a bijection.  The original proof of this result was published in the papers \cite[Theorem VI]{dB4}, \cite[Theorem XI, XII]{dB2}. For a book treatment see \cite[Theorem 5.1]{RemlingBookCanSys}. It is then an easy consequence of the definition of the topologies that the map is in fact a homeomorphism \cite[Corollary 5.8]{RemlingBookCanSys}.
		
		\begin{theorem}\label{thm:1}
			The map 
			\[
			\left\{
			\begin{array}{ccc}
				\hat\bbH&\to& \cN_0
				\\[2mm]
				H&\mapsto& m_H
			\end{array}\right.
			\]
			is a homeomorphism. In particular, $\hat\bbH$ is a compact metric space. 
		\end{theorem}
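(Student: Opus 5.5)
The plan is the standard route: take bijectivity from de Branges, prove continuity and compactness directly, and finish with the topological fact that a continuous bijection from a compact space onto a Hausdorff space is a homeomorphism. Concretely:
\begin{itemize}
\item[(i)] Injectivity and surjectivity of $H\mapsto m_H$ on $\hat\bbH$ are the content of de Branges' theorem \cite[Theorem VI]{dB4}, \cite[Theorem XI, XII]{dB2}, \cite[Theorem 5.1]{RemlingBookCanSys}. I will not reprove them; reparametrizations are removed by the trace normalization.
\item[(ii)] $\hat\bbH$ is compact.
\item[(iii)] $H\mapsto m_H$ is continuous.
\end{itemize}
Since $\cN_0$ is a (compact) metric space, (i)–(iii) give the theorem.

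\emph{Compactness.} For $H\in\hat\bbH$ put $F_H(T)=\int_0^T H(t)\,dt$. Since $0\le H$ and $\tr H=1$, every entry of $H$ is bounded by $1$. Hence the family $\{F_H\}$ is uniformly Lipschitz (constant $\le 1$) with $F_H(0)=0$. By Arzelà–Ascoli and a diagonal argument, every sequence $F_{H_n}$ has a subsequence converging uniformly on each $[0,N]$ to some $F$.

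The limit $F$ is again $1$-Lipschitz, so $F=\int_0^\cdot H$ for some $H\in L^\infty$. Its increments $F(b)-F(a)$ are positive semidefinite with trace $b-a$, because these are closed linear conditions preserved under the limit. Lebesgue differentiation then gives $H\ge0$ and $\tr H=1$ a.e. In particular $H\ne0$ a.e. and $\int_0^\infty \tr H=\infty$, so $H\in\hat\bbH$. Convergence in the metric $d$ is exactly locally uniform convergence of $F_{H_n}$, so $\hat\bbH$ is sequentially compact, and hence compact since it is metric.

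\emph{Continuity.} Let $H_n\to H$ in $\hat\bbH$.
\begin{itemize}
\item[(a)] First I show $\fA_{H_n}(t,z)\to\fA_H(t,z)$ uniformly on $[0,T]\times\{|z|\le R\}$. Write the system \eqref{eq:10} as the integral equation $\fA(t,z)=I-z\int_0^t \fA(s,z)H(s)\,ds\,J$. The solutions are uniformly bounded on compacts by Gronwall, since $\|H_n\|\le 1$. They are also equi-Lipschitz in $t$. The difference $\fA_{H_n}-\fA_H$ satisfies a Volterra equation with forcing term $z\int_0^t \fA_H(s,z)\,(H_n-H)(s)\,ds\,J$. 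Integrating by parts against $F_{H_n}-F_H$, and using that $\fA_H$ is Lipschitz in $s$, shows this forcing term tends to $0$ uniformly. Gronwall then yields the claim. The analogous argument applies to the Picard iterates if needed.
\item[(b)] Next I use Weyl disks. For $z\in\bbC_+$ and $T>0$, let $D_H(T,z)$ be the image of $\overline{\bbC_+}$ under the fractional linear map given by $\fA_H(T,z)$ as in \eqref{eq:21}. These disks are nested in $T$, and $m_H(z)\in D_H(T,z)$. In the limit point case (automatic here, since $\int\tr H=\infty$) they shrink to the single point $m_H(z)$ of the Riemann sphere. By (a), $D_{H_n}(T,z)\to D_H(T,z)$ in the chordal Hausdorff sense.
\item[(c)] Given $\epsilon>0$, choose $T$ so that $D_H(T,z)$ has chordal diameter less than $\epsilon$. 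For large $n$, $D_{H_n}(T,z)$ lies in the $\epsilon$-neighborhood of $D_H(T,z)$. Hence $m_{H_n}(z)$ is within $2\epsilon$ of $m_H(z)$, which gives pointwise convergence.
\item[(d)] Since $\cN_0$ is compact, every subsequence of $m_{H_n}$ has a locally uniformly convergent further subsequence. By (c) its limit must be $m_H$, so the whole sequence converges locally uniformly.
\end{itemize}

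I expect the main technical point to be handling degenerate disks, which must be done in the chordal metric throughout. This arises when $m_H$ is a real constant or $\infty$, i.e. $H$ is a constant rank-one projection. The disks may then collapse to boundary points of $\bbC_+$, or Euclidean radii may be unbounded. Working on the Riemann sphere, where fractional linear maps act continuously in their coefficients, should handle this uniformly.
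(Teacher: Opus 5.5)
Your proposal is correct and is essentially the argument the paper relies on. The paper takes bijectivity from de Branges and simply cites \cite[Corollary 5.8]{RemlingBookCanSys} for the homeomorphism, and your three ingredients are the standard way that cited fact is established: direct compactness of $\hat\bbH$ (Arzel\`a--Ascoli applied to $T\mapsto\int_0^T H$), continuity of $H\mapsto m_H$ via convergence of transfer matrices and nested Weyl disks in the chordal metric, and the compact-to-Hausdorff principle. The only difference is bookkeeping: the statement presents compactness of $\hat\bbH$ as a consequence of the homeomorphism, whereas you prove it first and use it to get continuity of the inverse.
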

		
		Consider $\fA_H,\cK_H$ as elements of $C([0,\infty)\times\bbC, \bbC^{2\times 2})$ and  $C([0,\infty)\times\bbC\times\bbC, \bbC^{2\times 2})$, respectively. Then with the corresponding topologies, standard ODE arguments show that the maps
		\begin{align}\label{eq:11}
			H\mapsto\fA_H\quad\text{and}\quad H\mapsto \cK_H
		\end{align}
		are continuous.

		\subsection{Rescalings}
		For $H\in \bbH$ we write
		\[
		H(t)=\begin{pmatrix}
			h_1(t)& h_3(t)\\h_3(t)& h_2(t)
		\end{pmatrix}.
		\]
		
		We discuss the following transformation on $H$. 
		\begin{definition}\label{def:3}
			Fix $g(r):(0,\infty)\to(0,\infty)$ and let $\cUnew$ be as in \eqref{def:VR}. We define a map $\cF_r:\bbH\to\bbH$  by 
			\[
			(\cF_r H)(t):=\cUnew(g(r)) H(rt)\cUnew(g(r))^*=\begin{pmatrix}
				g(r)h_1(rt)& h_3(rt)\\h_3(rt)& \frac{1}{g(r)}h_2(rt)
			\end{pmatrix}.
			\]
		\end{definition}

		It can be explicitly computed how this transformation acts on all the above mentioned objects. 
		\begin{lemma}\label{lem:18}
			Let $r>0$ and $H$ be a Hamiltonian on $[0,\infty)$. Then we have 
			\begin{align*}
				\fA_{\cF_rH}(t,z)&=\cUnew(g(r)) \fA_H(tr,z/r)\cUnew(g(r))^{-1},\\
				\cK_{\cF_rH}(t,z,w)&=\frac{1}{r}\cUnew(g(r))\cK_H(tr,z/r,w/r)\cUnew(g(r))^*,\\
				m_{\cF_rH}(z)&=g(r)m_H(z/r).\\
			\end{align*}
\end{lemma}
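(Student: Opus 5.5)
This is a direct verification from the canonical system \eqref{eq:10}. Set $V=\cUnew(g(r))$, a constant diagonal matrix with $\det V=1$, and define $\tilde\fA(t,z):=V\fA_H(tr,z/r)V^{-1}$. We will show that $\tilde\fA$ solves the canonical system for $\cF_rH$ with the same initial condition. Uniqueness of solutions of the linear ODE \eqref{eq:10}, which has locally integrable coefficients, then gives $\tilde\fA=\fA_{\cF_rH}$. The other two identities follow from this one.

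\emph{Transfer matrix.} Clearly $\tilde\fA(0,z)=VV^{-1}=I$. The key algebraic fact is that $\det V=1$ forces $V^{-1}J=JV^*$, equivalently $VJV^*=J$. To see this, check that $V^*J V=J$ holds for any $2\times 2$ matrix of determinant one, and note that $V$ is real diagonal. Now differentiate in $t$ and use \eqref{eq:10} at the point $(tr,z/r)$:
\[
\partial_t\tilde\fA(t,z)J = rV(\partial_s\fA_H)(tr,z/r)V^{-1}J = rV(\partial_s\fA_H)(tr,z/r)JV^*.
\]
This equals $rV\frac{z}{r}\fA_H(tr,z/r)H(tr)V^*$. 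Inserting $V^{-1}V$ gives
\[
z\,\tilde\fA(t,z)\,VH(tr)V^* = z\,\tilde\fA(t,z)\,(\cF_rH)(t),
\]
which is the canonical system for $\cF_rH$.

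\emph{Kernel.} Substitute the transfer matrix formula into the definition in \eqref{eq:22}. We need
\[
V\fA_H(tr,z/r)V^{-1}J(V^{-1})^*\fA_H(tr,w/r)^*V^* - J .
\]
Since $V^{-1}J(V^{-1})^*=J$ and $J=VJV^*$, this equals $V\bigl(\fA_H(tr,z/r)J\fA_H(tr,w/r)^*-J\bigr)V^*$. Divide by $z-\ol w = r\,(z/r-\ol{w/r})$. The result is
\[
\frac1r\,V\,\cK_H(tr,z/r,w/r)\,V^*,
\]
which is the claimed formula. Alternatively, one can substitute $s=ru$ directly in the integral representation \eqref{eq:22}.

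\emph{Weyl function.} Write $\tilde\fA=V\fA_H(tr,z/r)V^{-1}$ with $V=\operatorname{diag}(a,a^{-1})$ and $a=g(r)^{1/2}$. Conjugation multiplies the entries $(1,1),(1,2),(2,1),(2,2)$ by $1,a^2,a^{-2},1$ respectively. The Möbius action on $\tau$ therefore becomes
\[
\frac{\fA_{11}\tau+a^2\fA_{12}}{a^{-2}\fA_{21}\tau+\fA_{22}} = a^2\,\frac{\fA_{11}(\tau/a^2)+\fA_{12}}{\fA_{21}(\tau/a^2)+\fA_{22}}.
\]
Here the entries are evaluated at $(tr,z/r)$. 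Let $t\to\infty$ in \eqref{eq:21}, and note that $\tau/a^2$ still lies in $\ol{\bbC_+}$. The limit point property of $H$ at the point $z/r\in\bbC_+$ shows that the limit is $g(r)m_H(z/r)$, independent of $\tau$. This also shows that $\cF_rH$ is limit point. It is moreover a Hamiltonian, since it is locally integrable and nonnegative, and its trace integral diverges because $\tr(VH V^*)\ge \min(g,1/g)\tr H$.

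There is no real obstacle in this argument. The only points needing care are the identity $V^{-1}J=JV^*$ for the determinant-one diagonal $V$, and the chain-rule factor $r$ cancelling against the $1/r$ in the spectral variable.
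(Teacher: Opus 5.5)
Your proof is correct and follows the paper's route: the paper says only that the transfer-matrix formula follows by verifying the canonical system \eqref{eq:10} and that the other identities are checked directly, which is exactly the verification you carry out. (Minor point: the identity $A^{T}JA=\det(A)\,J$ holds for every $2\times 2$ matrix with the transpose rather than the adjoint, so it is the reality of $V$ that lets you write $V^*$ in place of $V^{T}$; you note this, so there is no gap.)
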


\begin{proof}
The formula for	$\fA_{\cF_rH}(t,z)$ follows by verification of \eqref{eq:10}. All other identies can be checked directly. 
\end{proof}

		\section{Periodic modulation of power law canonical systems} \label{secPeriodicModulation}
		
		Let $\beta\in (-1,1)$ and $\rho>1$. In this section we discuss Herglotz functions $m_{\beta,\omega}$ of the form 
		\begin{align}\label{intro:eq21}
			m_{\beta,\omega}(z):=z^{\beta}\omega(z),
		\end{align}
		where $\beta\in(-1,1)$ and $\omega$ obeys $\omega(\rho z) = \omega(z)$. 
		In particular, we will provide an explicit construction leading to such Herglotz functions. 
		
		\subsection{Self-similar Herglotz functions and kernels}
		\begin{lemma}\label{lem:6}
			Fix $\rho > 1$ and $\beta \in (-1,1)$. A function $m:\bbC_+ \to \bbC$ obeys
			\begin{equation}\label{mselfsimilar}
				m(\rho z) = \rho^\beta m(z), \qquad  \forall z \in \bbC_+
			\end{equation}
			if and only if it is of the form $m(z) = z^\beta \omega(z)$ for some $\omega:\bbC_+ \to \bbC$ obeying \eqref{rhoperiodic}.
		\end{lemma}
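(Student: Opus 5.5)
The argument is a direct computation with the fixed branch of $z^\beta$ on $\bbC_+$, namely $z^\beta = \exp(\beta \log z)$ with $\arg z \in (0,\pi)$. This branch is analytic, has no zeros on $\bbC_+$, and is multiplicative with respect to positive real dilations: for $\rho>0$ and $z\in\bbC_+$ we have $\rho z \in \bbC_+$ and $\arg(\rho z) = \arg z$. Hence
\[
(\rho z)^\beta = \exp\bigl(\beta(\log\rho + \log z)\bigr) = \rho^\beta z^\beta .
\]
Note that this uses $\rho>0$ essentially, since for other multipliers the argument could leave $(0,\pi)$.

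For the "if" direction, assume $m(z) = z^\beta\omega(z)$ with $\omega(\rho z)=\omega(z)$. Then
\[
m(\rho z) = (\rho z)^\beta \omega(\rho z) = \rho^\beta z^\beta \omega(z) = \rho^\beta m(z).
\]

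For the "only if" direction, given $m$ satisfying \eqref{mselfsimilar}, we must define $\omega$. The only possible choice is $\omega(z) := z^{-\beta} m(z)$, which is well defined because $z^\beta\neq 0$ on $\bbC_+$. With this choice $m(z)=z^\beta\omega(z)$ holds trivially, and
\[
\omega(\rho z) = (\rho z)^{-\beta} m(\rho z) = \rho^{-\beta} z^{-\beta}\rho^\beta m(z) = \omega(z),
\]
so \eqref{rhoperiodic} holds.

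No step here is a real obstacle. The one point that needs care is fixing the branch consistently and checking that it does not change under multiplication by the positive real number $\rho$. No regularity of $m$ or $\omega$ is needed, since the lemma is purely algebraic. If analyticity were required, it transfers between $m$ and $\omega$ because $z^{\pm\beta}$ is analytic on $\bbC_+$.
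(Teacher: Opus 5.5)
Your proposal is correct and follows the paper's proof: you define $\omega(z)=z^{-\beta}m(z)$, check $\omega(\rho z)=\omega(z)$, and verify the converse by direct substitution. You also spell out the identity $(\rho z)^\beta=\rho^\beta z^\beta$ for real $\rho>0$ on the fixed branch, which the paper uses without comment.
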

		
		\begin{proof}
			If $m$ obeys \eqref{mselfsimilar}, taking $\omega(z) = z^{-\beta}m(z)$ gives
			\[
			\omega(\rho z)=\frac{m(\rho z)}{\rho^\beta z^\beta}=\frac{\rho^\beta m(z)}{\rho^\beta z^\beta}=\omega(z).
			\]
			The other implication is similar.
		\end{proof}

		Next, we show how to extract a cycle of kernels from a Herglotz function with the property \eqref{mselfsimilar}:
		
		\begin{lemma}\label{lem:8}
			Let $m \in \cN_0$ obey \eqref{mselfsimilar} for some $\beta \in (-1,1)$ and let $\cK$ denote the associated trace-normalized kernel. For $s\in [1,\infty)$ define the continuous family of kernels \eqref{eqnLimitKernelsMatrix1}, where $t(s)$ is the unique point in $(0,\infty)$ which satisfies \eqref{eq:increasingTS}.
			Then, for all $s \ge 1$,
			\[
			\cL_{s\rho}=\cL_s.
			\]
		\end{lemma}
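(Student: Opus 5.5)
The plan is to turn the self-similarity \eqref{mselfsimilar} into a self-similarity of the trace-normalized Hamiltonian, using the injectivity in Theorem~\ref{thm:1}, and then read off the periodicity of $\cL_s$ from Lemma~\ref{lem:18}.

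\emph{Step 1.} Let $H\in\hat\bbH$ be the trace-normalized Hamiltonian with $m_H=m$. Take $g(r)=r^{-\beta}$ in Definition~\ref{def:3} and $r=\rho$. By Lemma~\ref{lem:18},
\[
m_{\cF_{\rho}H}(z)=\rho^{-\beta}m(z/\rho).
\]
Applying \eqref{mselfsimilar} at the point $z/\rho$ gives $m(z)=\rho^{\beta}m(z/\rho)$, so $m_{\cF_\rho H}=m_H$.

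\emph{Step 2.} $\cF_\rho H$ need not be trace normalized: its trace is $\rho^{-\beta}h_1(\rho t)+\rho^\beta h_2(\rho t)$. Let $\hat H$ be its trace-normalized reparametrization, $\hat H(t)=(\cF_\rho H)(\gamma(t))\gamma'(t)$. Reparametrization does not change the Weyl function, so $m_{\hat H}=m_H$. Theorem~\ref{thm:1} then gives $\hat H=H$. Consequently the two transfer-matrix families agree after a time change:
\[
\fA_H(\gamma(t),z)=\fA_{\cF_\rho H}(t,z),\qquad \cK_H(\gamma(t),z,w)=\cK_{\cF_\rho H}(t,z,w),
\]
for some increasing bijection $\tau\mapsto \tilde\tau$ of $(0,\infty)$. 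Combining this with Lemma~\ref{lem:18}, for every $\tau>0$ there is $\tilde\tau$ with
\[
\cK(\tilde\tau,z,w)=\frac1\rho\,\cUnew(\rho^{-\beta})\,\cK(\tau,z/\rho,w/\rho)\,\cUnew(\rho^{-\beta})^*,
\]
and $\tau\mapsto\tilde\tau$ is an increasing bijection.

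\emph{Step 3.} Evaluate at $z=w=0$. The conjugation by $\cUnew$ multiplies the $(1,1)$ entry by $\rho^{-\beta}$ and the $(2,2)$ entry by $\rho^{\beta}$, so the product of the diagonal entries picks up only the factor $\rho^{-2}$:
\[
\cK_{11}\cK_{22}(\tilde\tau,0,0)=\rho^{-2}\,\cK_{11}\cK_{22}(\tau,0,0).
\]
Set $\tau=t(s\rho)$, where the right side is $s^2$. By the uniqueness in \eqref{eq:increasingTS} (which I take as given, i.e.\ that $\cK_{11}\cK_{22}(t,0,0)$ is strictly increasing in $t$ and runs from $0$ to $\infty$), this forces $\tilde\tau=t(s)$.

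\emph{Step 4.} Substitute into \eqref{eqnLimitKernelsMatrix1}, writing $\cL_s$ with $\cK(t(s),\cdot,\cdot)$ expressed through $\cK(t(s\rho),\cdot,\cdot)$:
\[
\cL_s(z,w)=\frac1s\,\cUnew(s^\beta)\,\frac1\rho\,\cUnew(\rho^{-\beta})\,\cK\Bigl(t(s\rho),\frac{z}{s\rho},\frac{w}{s\rho}\Bigr)\,\cUnew(\rho^{-\beta})^*\,\cUnew(s^\beta)^*.
\]
Since $\cUnew(a)\cUnew(b)=\cUnew(ab)$, this is $\frac{1}{s\rho}\cUnew((s\rho)^\beta)\,\cK(t(s\rho),\cdot)\,\cUnew((s\rho)^\beta)^*=\cL_{s\rho}(z,w)$.

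The exponent signs are the only delicate bookkeeping: it matters that $\cUnew(s^\beta)\cUnew(\rho^{-\beta})\ne\cUnew((s\rho)^\beta)$ in general, so the direction in which Step 2 is used must be kept straight. The main obstacle, however, is justifying uniqueness of $t(s)$. The function $t\mapsto\cK_{11}\cK_{22}(t,0,0)$ is nondecreasing and tends to $\infty$ in the limit point case. Strict increase requires that $H$ have no intervals on which it is a fixed rank-one projection commuting with both diagonal entries, i.e.\ no stretch where it equals $e_1e_1^*$ or $e_2e_2^*$. Such an interval would force $m$ to have a special form near $0$ or $\infty$, which is incompatible with the power scaling when $\beta\in(-1,1)$.
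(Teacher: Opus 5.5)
You have the paper's strategy (self-similarity of $m$ gives, via de Branges uniqueness, a rescaled Hamiltonian that is a reparametrization of $H$, and the time is pinned down by $\cK_{11}\cK_{22}$ at $0$). However, the exponent bookkeeping is wrong, and as written Steps 1 and 4 are false rather than merely delicate.

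\textbf{Step 1.} With $g(r)=r^{-\beta}$, Lemma~\ref{lem:18} gives $m_{\cF_\rho H}(z)=\rho^{-\beta}m(z/\rho)$. The identity you took from \eqref{mselfsimilar}, $m(z)=\rho^{\beta}m(z/\rho)$, means $m(z/\rho)=\rho^{-\beta}m(z)$. Hence
\[
m_{\cF_\rho H}=\rho^{-2\beta}m_H,
\]
which differs from $m_H$ unless $\beta=0$. So you cannot conclude $\hat H=H$, and the Step 2 kernel identity with $\cUnew(\rho^{-\beta})$ does not hold.

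\textbf{Step 4.} Here you assert $\cUnew(s^\beta)\cUnew(\rho^{-\beta})=\cUnew((s\rho)^\beta)$, the very inequality you flagged. In fact $\cUnew(s^\beta)\cUnew(\rho^{-\beta})=\cUnew((s/\rho)^\beta)$. So even granting Step 2, you would get
\[
\frac{1}{s\rho}\,\cUnew((s/\rho)^\beta)\,\cK\Bigl(t(s\rho),\frac{z}{s\rho},\frac{w}{s\rho}\Bigr)\,\cUnew((s/\rho)^\beta)^*,
\]
which is not $\cL_{s\rho}$. The two sign errors happen to land on the right formula, but they do not form an argument.

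\textbf{The fix.} Take $g(r)=r^{\beta}$, as the paper does.
\begin{itemize}
\item Step 1 then gives $m_{\cF_\rho H}(z)=\rho^{\beta}m(z/\rho)=m(z)$.
\item Step 2 becomes
\[
\cK(\tilde\tau,z,w)=\frac1\rho\,\cUnew(\rho^{\beta})\,\cK\Bigl(\tau,\frac z\rho,\frac w\rho\Bigr)\,\cUnew(\rho^{\beta})^*.
\]
\item Step 3 is unchanged, since the conjugation still contributes only the factor $\rho^{-2}$ to $\cK_{11}\cK_{22}$ at $0$.
\item Step 4 now legitimately uses $\cUnew(s^\beta)\cUnew(\rho^{\beta})=\cUnew((s\rho)^\beta)$.
\end{itemize}
The paper packages the same computation by writing $\cL_s=\cK_{\cF_sH}(t(s)/s,\cdot,\cdot)$ and observing that $\cF_\rho\cF_sH=\cF_{s\rho}H$ and $\cF_sH$ have the same Weyl function.

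\textbf{Uniqueness of $t(s)$.} This is not the obstacle you describe. It is part of the hypothesis, and in any case $\frac{d}{dt}\bigl(\cK_{11}\cK_{22}\bigr)(t,0,0)=h_1\cK_{22}+h_2\cK_{11}>0$ a.e.\ wherever both diagonal entries are positive, because $h_1+h_2=1$. Hence the product is strictly increasing on the set where it is positive, and $t(s)$ is unique for every $s>0$ as soon as the product is not identically zero, i.e.\ $m\not\equiv 0,\infty$ (as in Lemma~\ref{lem:1}). No condition on intervals where $H$ is a fixed diagonal projection is needed.
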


		\begin{proof}
			Let $H$ denote the trace-normalized Hamiltonian corresponding to $m$. 
			In terms of Definition \ref{def:3}, we have
			\begin{align}\label{eq:24}
				\cL_s(z,w)=\cK_{\cF_s H}(t(s)/s,z,w),
			\end{align}
			for  $g(s)=s^\beta$. 
			
			Note that $t(s)$ is the unique value so that 
			\[
			(\cL_s)_{11}(0,0)(\cL_s)_{22}(0,0)=1.
			\]
			
			Since $g(s)g(t)=g(st)$, we have $\cF_s\cF_\rho=\cF_{s\rho}$. Denoting $m_s(z)=m_{\cF_sH}=g(s)m(z/s)$, then we have
			$g(\rho)m_s(z/\rho)=m_s(z)$ and hence by Lemma \ref{lem:18}, the kernels associated to $\cF_sH$ and to $\cF_\rho\cF_sH$ coincide up to reparametrization. Hence, using \eqref{eq:24} we find that there exists $\tilde t$ so that 
			\begin{align*}
				\cL_{s\rho}(z,w) = \cK_{\cF_\rho\cF_{s} H}(t(s\rho)/(s\rho),z,w)=\cK_{\cF_{s} H}(\tilde t,z,w)
			\end{align*}
			Since $t(s)$ is the unique $t>0$ so that $(\cL_{s})_{11}(0,0)(\cL_{s})_{22}(0,0)=1$, it follows that $\tilde t=t(s)s$ and hence $\cL_{s\rho}=\cL_s$. 
		\end{proof}

For the study of zeros, we need one more fact:

\begin{lemma}\label{lemmaInfinitelyManyZeros}
Assume that $m_{\beta,\omega}$ does not have an analytic continuation to $\bbC \setminus (-\infty,0]$ which is real-valued on $(0,\infty)$. Then $(\cK_1)_{22}(\cdot,0)$ has infinitely many zeros on $(0,\infty)$.
\end{lemma}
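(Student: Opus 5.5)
The plan is to argue by contradiction, combining the self-similarity of the chain with the fact that the positive zeros of the $21$-entry of the transfer matrices accumulate on the spectral measure of $m_{\beta,\omega}$ restricted to $(0,\infty)$.

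\emph{Step 1: identify the zeros.} Let $H$ be the trace-normalized Hamiltonian of $m=m_{\beta,\omega}$ and $\fA(t,z)=\fA_H(t,z)$. Then $\fA(t,0)=I$, and \eqref{eq:22} gives $\cK(t,z,0)=(\fA(t,z)J-J)/z$. Since $(\fA J)_{22}=-\fA_{21}$ and $J_{22}=0$, we get
\[
\cK_{22}(t,z,0)=-\fA_{21}(t,z)/z .
\]
By \eqref{eqnLimitKernelsMatrix1}, $(\cL_1)_{22}(z,0)=\cK_{22}(t(1),z,0)$. So it suffices to show that $\fA_{21}(t(1),\cdot)$ has infinitely many zeros in $(0,\infty)$.

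\emph{Step 2: self-similarity and monotonicity of the zero count.}
\begin{itemize}
\item By Lemma~\ref{lem:18} with $g(r)=r^\beta$, together with the proof of Lemma~\ref{lem:8}, the Hamiltonians $\cF_{\rho^k}H$ and $H$ have the same Weyl function up to reparametrization.
\item Hence $\fA(t(\rho^k),z)=\cUnew(\rho^{k\beta})^{-1}\fA(t(1),z\rho^{-k}\cdot c)\,\cUnew(\rho^{k\beta})$, with the appropriate matching of time parameters.
\item The conjugation is diagonal, so it only multiplies the $21$-entry by a positive constant. Therefore $\fA_{21}(t(\rho^k),\cdot)$ has the same number of positive zeros as $\fA_{21}(t(1),\cdot)$, only dilated.
\item Next I use the standard oscillation property of canonical systems. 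For fixed $t$, the zeros of $\fA_{21}(t,\cdot)$ are real and simple: they are eigenvalues of the system on $[0,t]$ with a fixed boundary condition, equivalently the poles of the Herglotz function $\fA_{11}(t,\cdot)/\fA_{21}(t,\cdot)$ (up to sign conventions). Moreover, the number of zeros in any interval $(0,R)$ is nondecreasing in $t$; this follows from interlacing and the $J$-monotonicity of the chain.
\item Suppose $\fA_{21}(t(1),\cdot)$ had only $N<\infty$ positive zeros. Then the same holds at every $t(\rho^k)$, and by monotonicity at every $t$. So each $\fA_{21}(t,\cdot)$ has at most $N$ zeros in $(0,\infty)$.
\end{itemize}

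\emph{Step 3: passing to the limit.} Take $\tau=\infty$ in \eqref{eq:21}. The meromorphic Herglotz functions $m_t=\fA_{11}(t,\cdot)/\fA_{21}(t,\cdot)$ converge locally uniformly on $\bbC_+$ to $m$. Their representing measures are discrete, supported on the zeros of $\fA_{21}(t,\cdot)$, and therefore have at most $N$ atoms in $(0,\infty)$. The measures converge vaguely to the measure $\sigma$ of $m$.
\begin{itemize}
\item On any compact interval $[a,b]\subset(0,\infty)$, a vague limit of measures with at most $N$ atoms is a sum of at most $N$ point masses.
\item Hence $\sigma|_{(0,\infty)}$ is purely atomic with at most $N$ atoms.
\item But \eqref{mselfsimilar} means that $\sigma$ is invariant, up to the factor $\rho^{1+\beta}$, under the dilation $x\mapsto\rho x$. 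A nonzero measure on $(0,\infty)$ with this property has infinitely many atoms (every atom $x_0$ produces atoms at $\rho^k x_0$, $k\in\bbZ$). Therefore $\sigma|_{(0,\infty)}=0$.
\end{itemize}
Then the Stieltjes representation \eqref{eq:100} shows that $m$ extends analytically across $(0,\infty)$, real-valued there. By Schwarz reflection it continues to $\bbC\setminus(-\infty,0]$ with real values on $(0,\infty)$, contradicting the hypothesis.

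\emph{Main obstacle.} The delicate point is Step 2: justifying the monotonicity in $t$ of the positive zero count of $\fA_{21}(t,\cdot)$ for a general (possibly singular) Hamiltonian, and tracking the time reparametrization correctly under $\cF_\rho$. For monotonicity I would first try the argument-principle/Prüfer angle for the row solution at fixed real $z>0$. This angle is monotone in $t$ because $H\ge0$. It also handles the case of $H$ having jump-type intervals, where the count may stay constant but never decreases.
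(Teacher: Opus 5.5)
Your proof is correct and follows essentially the paper's route. Both argue by contradiction via $\cL_{\rho^k}=\cL_1$, pass to the $\tau=\infty$ limit along $t(\rho^k)\to\infty$, and use the self-similarity of $m_{\beta,\omega}$ to rule out any surviving singularity on $(0,\infty)$; your exact identity $\cK_{22}(t,z,0)=-\fA_{21}(t,z)/z$ in the gauge $\fA(t,0)=I$ is a cleaner substitute for the paper's interlacing step. The monotonicity in $t$ that you flag as the main obstacle is not needed: the limit point property lets you take the limit along $t(\rho^k)$ alone, where the positive zeros are exactly $\rho^{-k}$ times those at $t(1)$ (your displayed dilation in Step 2 goes the wrong way, though only the zero count is used), so they even leave every compact subset of $(0,\infty)$, which is how the paper argues.
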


\begin{proof}
Assume that $(\cK_1)_{22}(\cdot,0)$ has finitely many zeros. We will use that $\cK_{\rho^n}=\cK_1$. Note that $\eqref{eq:increasingTS}$ implies that $t(\rho^n)$ is increasing and $t(\rho^n)\to\infty$ as $n\to\infty$. The relation \eqref{eqnLimitKernelsMatrix1} implies
\[
(\cK_1)_{22}(z,w)=\cK_{22}(t(1),z,w)=\frac{\cK_{22}(t(\rho^n),z/\rho^n,w/\rho^n)}{\rho^{n(1+\beta)}}
\]
Let $\xi_m$ denote the largest zero of $(\cK_1)_{22}(\cdot,0)$ on $(0,\infty)$. Then, for any $\epsilon>0$, for $n$ large enough $\cK_{22}(t(\rho^n),\cdot,0)$ has no zeros in $(\epsilon,\infty)$. Let $\fA(t,z)$ denote the transfer matrix generating $\cK$ by \eqref{eq:22}. An interlacing property as in the discussion preceding Theorem 10.1 in \cite{EichLukWor} shows  that $\fA_{22}(t(\rho^n),\cdot)$ has at most one zero in $(\epsilon,\infty)$. On the other hand choosing $\tau=\infty$ in \eqref{eq:21} shows that 
\[
m_{\beta,\omega}(z)=\lim\limits_{n\to\infty}\frac{\fA_{11}(t(\rho^n),z)}{\fA_{22}(t(\rho^n),z)}.
\]
Herglotz function arguments imply that $m_{\beta,\omega}$ has a meromorphic continuation to $\bbC \setminus (-\infty,\epsilon]$ with the symmetry $m_{\beta,\omega}(\bar z) = \overline{ m_{\beta,\omega}(z)}$ and with at most one pole on $(\epsilon,\infty)$. Since $\epsilon > 0$ was arbitrary it can be replaced by $0$. If $m_{\beta,\omega}$ has a pole at $\lambda > 0$, it would also have a pole at $\rho^n \lambda$, contradicting the uniqueness of the positive pole. Thus, this continuation is analytic on $\bbC \setminus (-\infty,0]$.
\end{proof}

\subsection{Rescaling limits}
The aim of this subsection is to prove rescaling limits for the matrix Christoffel-Darboux kernel associated to Herglotz functions which are asymptotically close to some $m_{\beta,\omega}$.  We begin by reinterpreting a statement about the normal behavior of a Herglotz function as a convergence statement in $\cN_0$.
		
		\begin{lemma}\label{lem:3}
			Let $w\in\cN_0$. Let $\beta\in (-1,1), \rho>1$ and $\omega:\bbC_+\to\bbC \setminus \{0\}$ obey \eqref{rhoperiodic}.  Let $m_{\beta,\omega}$ be given by \eqref{intro:eq21}. Then
			\begin{equation}\label{10augequiv}
				\lim\limits_{y\to 0} \lvert  e^{-i\beta\pi/2} y^{-\beta}w(iy)-\omega(iy) \rvert = 0
			\end{equation}
			if and only if uniformly on compact subsets of $\bbC_+$
			\begin{align}\label{eq:8}
				\lim\limits_{n\to\infty}w(z \rho^{-n}) \rho^{n\beta}=m_{\beta,\omega}(z).
			\end{align}
			In particular, if this holds, then $m_{\beta,\omega}$ is a Herglotz function.
		\end{lemma}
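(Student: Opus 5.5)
The plan is to work with the rescaled functions $w_n(z):=\rho^{n\beta}w(z\rho^{-n})$. Each $w_n$ lies in $\cN_0$, since it is a dilation of a map in $\cN_0$ multiplied by the positive constant $\rho^{n\beta}$. They satisfy the exact self-similarity relation $w_{n+1}(\rho z)=\rho^\beta w_n(z)$. Throughout I use the compactness of $\cN_0$ in the locally uniform chordal topology (Section~\ref{sec:cont}). I also use the elementary fact that chordal uniform convergence to a limit that stays bounded away from $\infty$ is the same as Euclidean uniform convergence. I will assume, as the statement implicitly does, that $\omega$ is analytic on $\bbC_+$. Otherwise $m_{\beta,\omega}$ could not be the locally uniform limit of analytic functions, and \eqref{10augequiv} only involves $\omega$ on $i(0,\infty)$ anyway.

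\emph{Step 1: \eqref{eq:8} implies \eqref{10augequiv}.} Given $y\downarrow 0$, write $y=t\rho^{-n}$ with $t\in[1,\rho)$ and $n\to\infty$. Then
\[
e^{-i\beta\pi/2}y^{-\beta}w(iy)=e^{-i\beta\pi/2}t^{-\beta}w_n(it).
\]
Since $m_{\beta,\omega}$ is finite and continuous on the compact segment $i[1,\rho]$, the convergence \eqref{eq:8} is Euclidean-uniform there. Hence the right-hand side equals $e^{-i\beta\pi/2}t^{-\beta}m_{\beta,\omega}(it)+o(1)=\omega(it)+o(1)$, using $(it)^\beta=e^{i\beta\pi/2}t^\beta$ for the chosen branch. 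Finally $\omega(it)=\omega(iy)$ by \eqref{rhoperiodic}, which gives \eqref{10augequiv}.

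\emph{Step 2: \eqref{10augequiv} implies \eqref{eq:8}.} For $t$ in a compact subset of $(0,\infty)$, the hypothesis together with periodicity gives
\[
w_n(it)=e^{i\beta\pi/2}t^{\beta}\bigl(\omega(it\rho^{-n})+o(1)\bigr)=(it)^\beta\omega(it)+o(1)
\]
uniformly in $t$. So every subsequential limit $f\in\cN_0$ of $(w_n)$, which exists by compactness, agrees with $m_{\beta,\omega}$ on the ray $i(0,\infty)$. Both functions are analytic on $\bbC_+$: $f$ is finite there because it is finite at points of the ray, and a map in $\cN_0$ is either $\equiv\infty$ or finite-valued. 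The identity theorem then gives $f=m_{\beta,\omega}$. Since all subsequential limits coincide and $\cN_0$ is compact, the whole sequence converges, which is \eqref{eq:8}. If one prefers not to assume $\omega$ analytic, the same argument still shows the limit is unique; passing to the limit in $w_{n+1}(\rho z)=\rho^\beta w_n(z)$ and applying Lemma~\ref{lem:6} then exhibits the limit as $z^\beta\tilde\omega$, with $\tilde\omega$ analytic, $\rho$-periodic, and equal to $\omega$ on $i(0,\infty)$.

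\emph{Step 3: the Herglotz property.} The limit $m_{\beta,\omega}$ belongs to $\cN_0$, is finite, and is nonzero on the ray because $\omega\neq0$. If it took a real value somewhere in $\bbC_+$, the open mapping theorem would force it to be a real constant $c\neq0$. For $\beta\neq0$ this contradicts $m(\rho z)=\rho^\beta m(z)$. For $\beta=0$ it is the degenerate case of a real constant $\omega$, which I would either exclude or note separately.

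The only delicate point is Step 2: passing from convergence on a single ray to locally uniform convergence in $\bbC_+$. The argument relies on compactness of $\cN_0$ plus uniqueness of analytic continuation, rather than on any direct estimate off the imaginary axis.
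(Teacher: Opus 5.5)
Your proof is correct and follows essentially the same route as the paper. You get \eqref{10augequiv} from \eqref{eq:8} by restricting to $z=it$, $t\in[1,\rho]$, and using periodicity, and you get the converse from pointwise convergence on the imaginary axis plus compactness of $\cN_0$ and the identity theorem. Your additional remarks are valid refinements of points the paper passes over quickly: analyticity of $\omega$ is implicitly required, and for $\beta=0$ a real constant $\omega$ gives a degenerate case of the Herglotz claim.
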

		
		\begin{proof} We write $r_n = \rho^n$.
			
			$\Rightarrow$:	By compactness and analyticity it suffices to show that for $y>0$ 
			\begin{align}\label{eq:7}
				\lim\limits_{n\to\infty}|w(iy/r_n)r_n^\beta-m_{\beta,\omega}(iy)|=0.
			\end{align}
			Set $\xi_n=y/r_n$. Then using that $\omega(i\xi_n)=\omega(iy)$, we get
			\begin{align*}
				|w(iy/r_n)r_n^\beta-m_{\beta,\omega}(iy)|&=|w(i\xi_n)(iy/i\xi_n)^\beta-(iy)^\beta\omega(i\xi_n)|\\
				&=y^\beta|w(i\xi_n)(i\xi_n)^{-\beta}-\omega(i\xi_n)|.
			\end{align*}
			Hence, by assumption we get \eqref{eq:7}.
			
			$\Leftarrow$: Taking $z = it$ with $t \in [1, \rho]$, we have uniform convergence
			\[
			\lim_{n\to\infty} w(it \rho^{-n}) \rho^{n\beta} = (it)^\beta \omega(it)
			\] 
			so dividing by $(it)^\beta$ and using $\omega(it) = \omega(it \rho^{-n})$ we rewrite this as
			\[
			\lim_{n\to\infty} \sup_{t \in [1,\rho]} \lvert w(it \rho^{-n}) (it \rho^{-n})^{-\beta} - \omega(it \rho^{-n}) \rvert = 0.
			\]
			Using $y = t \rho^{-n}$ this is logically equivalent to \eqref{10augequiv}.
			
			When \eqref{eq:8} holds, it gives $m_{\beta,\omega}$ as a locally uniform limit of Herglotz functions; since $m_{\beta,\omega}$ is nonzero, it must be a Herglotz function.
		\end{proof}

		\begin{lemma}\label{lem:4}
			Uniform on compact subsets of $\bbC_+$ convergence
			\[
			\rho^{n\beta} w (z / \rho^n) \to m(z), \qquad n\to\infty
			\]
			implies that:
			\begin{enumerate}[(a)]
				\item For any $a \in [1,\rho]$, with $r_n = a \rho^n$,
				\[
				\lim_{n\to\infty}  r_n^\beta w(z/r_n) = a^\beta m(z/a).
				\]
				\item Let $r_n$ be a sequence with $r_n\to\infty$. Let $k_n=\lfloor \log_\rho r_n\rfloor$ and $a_n=\rho^{\{log_\rho r_n\}}\in[1,\rho)$, i.e., $r_n=\rho^{k_n}a_n$. Choose a subsequence so that $a_{n_j}\to a\in[1,\rho]$.  Then
				\[
				\lim_{j\to\infty}  r_{n_j}^\beta w(z/r_{n_j}) = a^\beta m(z/a)
				\]
			\end{enumerate}
		\end{lemma}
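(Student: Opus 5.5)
Both parts follow from the hypothesis by substituting $z\mapsto z/a$ and using $\rho^{n\beta}w(z\rho^{-n})\to m(z)$ locally uniformly on $\bbC_+$. Throughout, $m$ is a Herglotz function; this is part of Lemma~\ref{lem:3} in our setting, or in general a locally uniform limit in $\cN_0$ with the chordal metric.

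For part (a), fix $a\in[1,\rho]$ and $r_n=a\rho^n$. Then
\[
r_n^\beta w(z/r_n)=a^\beta\,\rho^{n\beta}w\bigl((z/a)\rho^{-n}\bigr).
\]
The map $z\mapsto z/a$ sends compact subsets of $\bbC_+$ to compact subsets of $\bbC_+$. Hence, by the assumed uniform convergence on compacts, the right side converges to $a^\beta m(z/a)$ uniformly on compacts. This gives (a) immediately.

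For part (b), write $r_{n_j}=\rho^{k_{n_j}}a_{n_j}$ with $a_{n_j}\to a$ and $k_{n_j}\to\infty$, which holds since $r_n\to\infty$. As before,
\[
r_{n_j}^\beta w(z/r_{n_j})=a_{n_j}^\beta\,\rho^{k_{n_j}\beta}w\bigl((z/a_{n_j})\rho^{-k_{n_j}}\bigr).
\]
Fix a compact $K\subset\bbC_+$. The set $K'=\{z/b: z\in K,\ b\in[1,\rho]\}$ is compact in $\bbC_+$, being the continuous image of $K\times[1,\rho]$. Put $f_k(\zeta)=\rho^{k\beta}w(\zeta\rho^{-k})$. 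Then $\sup_{K'}|f_k-m|\to0$, so
\[
\sup_{z\in K}\bigl|f_{k_{n_j}}(z/a_{n_j})-m(z/a_{n_j})\bigr|\to0 .
\]
Since $m$ is continuous, it is uniformly continuous on $K'$, so $m(z/a_{n_j})\to m(z/a)$ uniformly on $K$. Finally $a_{n_j}^\beta\to a^\beta$, and $m$ is bounded on $K'$. Combining these three facts with the triangle inequality gives uniform convergence on $K$ to $a^\beta m(z/a)$.

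There is one caveat. If the convergence is only understood in the chordal metric, the values could be $\infty$. In our setting $m=m_{\beta,\omega}$ is finite-valued Herglotz, so on $K'$ the functions are eventually bounded and the Euclidean argument applies. The only step requiring care is this uniformity in the moving parameter $a_{n_j}$, which is handled by the compact set $K'$ above. No step is a genuine obstacle.
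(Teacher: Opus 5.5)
Your proof is correct and follows the paper's argument: (a) is the substitution $z\mapsto z/a$, and (b) writes $r_{n_j}=a_{n_j}\rho^{k_{n_j}}$ and combines locally uniform convergence with $a_{n_j}\to a$. You make explicit what the paper leaves implicit, namely the uniformity in the moving parameter via the compact set $\{z/b : z\in K,\ b\in[1,\rho]\}$ and the continuity of $m$.
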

		
		\begin{proof}
			(a) is immediate from the assumption.
			
			(b) We have
			\[
			\lim_{j\to\infty} r_{n_j}^\beta w(z / r_{n_j}) = \lim_{j\to\infty} a_{n_j}^\beta \rho^{k_{n_j} \beta} w(z / (a_{n_j} \rho^{k_{n_j}} )) = a^\beta m(z / a)
			\]
			as a consequence of uniform convergence on compacts and $a_{n_j} \to a$.
		\end{proof}

The main result of this section is the following theorem about the scaling behavior of kernels.

		\begin{theorem}\label{thm:3}
			Let $H\in\bbH$ and $m_H$ the associated Herglotz function. Let $\beta\in(-1,1)$, $\rho>1$ and $\xi\in\bbR$. 
			Assume that 
			\begin{align}\label{eq:25}
				\lim\limits_{y\to 0}|{(iy)}^{-\beta}m_H(\xi+iy)-\omega(iy)|=0
			\end{align}
			for some $\rho$-periodic $\omega$ which is not identically $0$. Let
			\[
			L_H=\inf\{L>0\mid \cK_{11}(L,\xi,\xi)\cK_{22}(L,\xi,\xi)>0\}.
			\]
			We have $L_H<\infty$ and define for $L>L_H$
			\[
			r(L):=\sqrt{\cK_{11}(L,\xi,\xi)\cK_{22}(L,\xi,\xi)}.
			\] 
			Then $r: (L_H,\infty) \to (0,\infty)$ is strictly increasing, continuous, $\lim_{L\to\infty} r(L) = \infty$, and uniformly for $(z,w)$ in compact subsets of $\bbC\times\bbC$. 	
			\begin{align}\label{eq:18}
				\lim\limits_{L\to\infty}\left(\frac{1}{r(L)}\cUnew(r(L)^\beta)\cK_H\left(L,\xi+\frac{z}{r(L)},\xi+\frac{w}{r(L)}\right)\cUnew(r(L)^\beta)^*-\cL_{r(L)}(z,w)\right)=0
			\end{align}
where $\cL_r(z,w)$ is the limit cycle associated to the Herglotz function $m_{\beta,\omega}(z)=z^\beta \omega(z)$ by Lemma \ref{lem:8}.
\end{theorem}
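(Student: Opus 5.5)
First I would reduce to $\xi=0$ by shifting the Hamiltonian: as in the Jacobi discussion, $W(t,z)=\fA_H(t,z+\xi)\fA_H(t,\xi)^{-1}$ solves a canonical system with Hamiltonian $H_\xi(t)=\fA_H(t,\xi)H(t)\fA_H(t,\xi)^*$ and Weyl function $m_H(\cdot+\xi)$. Its kernel at $(0,0)$ is $\cK_H(t,\xi,\xi)$ up to the $J$-unitary conjugation by $\fA_H(t,\xi)$. Kernels in $\bbC\bbD(2)$ that differ by such a gauge are compared through the diagonal products, and the conclusion \eqref{eq:18} is insensitive to this, since the limit object is defined only through $m$. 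Then \eqref{eq:25} together with Lemma~\ref{lem:3} gives $\rho^{n\beta}m_H(\xi+z\rho^{-n})\to m_{\beta,\omega}(z)$ locally uniformly. In particular $m_{\beta,\omega}$ is Herglotz and not identically $0$ or $\infty$.

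Next come the elementary properties of $r$. Since $\cK(L,\xi,\xi)=\int_0^L \fA H\fA^*$ is nondecreasing in $L$, its diagonal entries are nondecreasing. $L_H<\infty$ holds because if one diagonal entry vanished for all $L$, the Hamiltonian would be a single jump $H=h\,e e^*$ on $[0,\infty)$, with $m$ equal to $0$ or $\infty$, contradicting $\omega\not\equiv0$. For strict monotonicity I would use that $\det\cK(L,\xi,\xi)>0$ once $L>L_H$ (otherwise $H$ is of a single fixed direction on an initial interval, which is not compatible with increase of both entries). A strict increase of one entry then strictly increases the product. Divergence $r(L)\to\infty$ follows from the limit point property: if $r$ were bounded, $m$ would have a nonzero limit at $\xi$ along $iy$, contradicting $y^{-\beta}m\to$ a nonzero periodic function with $\beta\ne0$. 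For $\beta=0$, bounded $r$ would force $\int\tr H<\infty$ after the appropriate gauge.

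The main step is a compactness and uniqueness argument. Take any sequence $L_n\to\infty$ and set $r_n=r(L_n)$. Consider the rescaled Hamiltonians $\cF_{r_n}H_\xi$ with $g(r)=r^\beta$, trace normalized. By Lemma~\ref{lem:18} their Weyl functions are $r_n^\beta m_H(\xi+z/r_n)$. By Lemma~\ref{lem:4}(b), along a subsequence with $a_{n_j}\to a$ these converge to $a^\beta m_{\beta,\omega}(z/a)=m_{\cF_a H_0}$, where $H_0$ is the Hamiltonian of $m_{\beta,\omega}$. By Theorem~\ref{thm:1} and continuity \eqref{eq:11}, the trace-normalized Hamiltonians and their kernels converge locally uniformly in $(t,z,w)$.

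The rescaled kernel in \eqref{eq:18} is this kernel evaluated at the parameter corresponding to $L_n$, and that parameter is pinned by the normalization that the product of diagonal entries at $(0,0)$ equals $1$. I expect the main obstacle to be showing that these pinned times converge to the pinned time for the limit, which is exactly $t(a)/a$ in the notation of \eqref{eq:24}. I would try strict monotonicity of the limiting product in $t$ (by the same $\det>0$ argument applied to $H_0$, whose Weyl function is not $0$ or $\infty$). This, combined with uniform convergence on compact $t$-intervals, localizes the times. It also requires ruling out escape of the times to $\infty$, which holds because the limit product tends to $\infty$. The subsequence limit is therefore $\cL_a$. Since $\cL_{r_{n_j}}=\cL_{a_{n_j}}\to\cL_a$ by Lemma~\ref{lem:8} and continuity of $s\mapsto\cL_s$ (the same argument again), every subsequence of the difference has a further subsequence tending to $0$, which proves \eqref{eq:18}.
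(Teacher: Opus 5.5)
The overall architecture is the paper's: you shift to $\xi=0$, prove the monotonicity facts of Lemma~\ref{lem:1}, and then run the compactness argument of Lemma~\ref{lem:5}. That argument has four parts: subsequential convergence of the rescaled Weyl functions to $a^\beta m_{\beta,\omega}(z/a)$, Theorem~\ref{thm:1} for the trace-normalized kernels, times pinned by the diagonal product, and $\cL_{r(L)}=\cL_{a(L)}$ with a subsequence argument. That main step is sound. Your way of keeping the pinned times bounded (one $T$ where the limit product exceeds $1$, plus monotonicity) is a slightly more direct version of the paper's Dini argument. The problems lie in the supporting claims, two of which are false as stated.

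The key false claim is ``$\det\cK(L,\xi,\xi)>0$ once $L>L_H$''. A rank-one Hamiltonian in a slanted direction is compatible with growth of both diagonal entries. Take $H=\frac12\begin{pmatrix}1&1\\1&1\end{pmatrix}$ on $[0,1]$, followed by any tail satisfying \eqref{eq:25}. The initial interval changes $m_H$ by a fractional linear map with matrix $I+O(z)$, which does not affect \eqref{eq:25} since $|\beta|<1$. At $\xi=0$ you get $\cK(L,0,0)=\frac L2\begin{pmatrix}1&1\\1&1\end{pmatrix}$ for $L\le 1$, so $L_H=0$ while the determinant vanishes. What actually gives strict monotonicity, and what the paper uses, is that $\cK_{11}(L,\xi,\xi)+\cK_{22}(L,\xi,\xi)=\int_0^L\tr H_\xi$ is strictly increasing because $H_\xi\neq0$ a.e. Since both entries are nondecreasing and positive on $(L_H,\infty)$, one of them increases strictly, and hence so does the product.

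You invoke the same $\det>0$ reasoning for uniqueness of the pinned time of the limit. There too, use instead that the limit is trace-normalized (so the trace of its kernel at $(0,0)$ is $t$) and that its Weyl function is neither $0$ nor $\infty$.

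Likewise, your route to $r(L)\to\infty$ via ``a nonzero limit of $m$ along $iy$'' is unjustified and unnecessary. Fix $L_0>L_H$; then $\cK_{jj}(L,\xi,\xi)\ge\cK_{jj}(L_0,\xi,\xi)>0$. So bounded $r$ forces both diagonal entries, hence $\int_0^\infty\tr H_\xi$, to be bounded, contradicting the limit point condition. This works for every $\beta$ and involves no gauge.

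Finally, the shift to $\xi=0$ involves no conjugation of the kernel. Since $\fA_H(t,\xi)^{-1}$ enters $W(t,z)$ as a right $J$-unitary factor, $W(t,z)JW(t,w)^*=\fA_H(t,\xi+z)J\fA_H(t,\xi+w)^*$. That is, $\cK_{H_\xi}(t,z,w)=\cK_H(t,\xi+z,\xi+w)$ exactly. This exactness matters: contrary to what you write, \eqref{eq:18} is \emph{not} insensitive to a $t$-dependent conjugation. Such a conjugation would change the diagonal entries, hence $r(L)$ and the rescaled matrix itself. With these three repairs, your argument is complete.
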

		
\begin{remark}
As explained in Section \ref{sec:Jacobi}, for every $\mu$ corresponding to a determinate moment problem there exists a canonical system $H$, so that $\cK_H=\cK_\mu$, where $\cK_\mu$ is defined in \eqref{intro:eq3}. Thus, Theorem \ref{thm:4}  is a special case of Theorem \ref{thm:3}.
\end{remark}
	
	We divide the proof into two lemmas.
		
		\begin{lemma}\label{lem:1}
			In the notation and with the assumptions of Theorem \ref{thm:3}  we have $L_H<\infty$ and for $L>L_H$, $r(L)$ is strictly increasing, continuous, and $\lim_{L\to\infty}r(L)=\infty$.
		\end{lemma}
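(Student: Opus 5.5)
We prove the three claims about $r$ in turn, working with the diagonal entries of $\cK_H(L,\xi,\xi)=\int_0^L \fA_H(s,\xi)H(s)\fA_H(s,\xi)^*\,ds$ from \eqref{eq:22}. This matrix is positive semidefinite and nondecreasing in $L$, so $\cK_{11}(L,\xi,\xi)$ and $\cK_{22}(L,\xi,\xi)$ are continuous and nondecreasing. Their product is therefore continuous and nondecreasing, and $r$ is continuous.

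For \emph{strict} monotonicity on $(L_H,\infty)$, it suffices to show that the product cannot be constant on an interval $[L_1,L_2]$ with $L_1>L_H$. There both factors are positive. Constancy of the product then forces both $\cK_{11}$ and $\cK_{22}$ to be constant on $[L_1,L_2]$, since each is nondecreasing. Consequently the diagonal of $\fA(s,\xi)H(s)\fA(s,\xi)^*$ vanishes a.e. on $[L_1,L_2]$. Since this matrix is $\ge 0$, the whole matrix vanishes. Because $\fA(s,\xi)$ is invertible ($\det=1$), this gives $H(s)=0$ a.e. there, contradicting $H(t)\neq0$ a.e.

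It remains to show $L_H<\infty$ and $r(L)\to\infty$. First, $\cK_{22}(L,\xi,\xi)$ and $\cK_{11}(L,\xi,\xi)$ cannot both stay bounded, since their traces integrate a positive-definite-weighted $\tr H$. The real content, however, is that \emph{each} of them individually tends to $\infty$. Here we use \eqref{eq:25}: by Lemma~\ref{lem:3}, $y^{-\beta}|m_H(\xi+iy)|\to$ a nonzero bounded profile, with $\omega\neq0$. Since $\omega$ is $\rho$-periodic and its restriction to $i[1,\rho]$ is a locally uniform limit, $|\omega|$ is bounded above and below there. So $|m_H(\xi+iy)|\asymp y^{\beta}$ and $\Im m_H(\xi+iy)\asymp y^\beta$, the latter because the argument of $z^\beta\omega(z)$ lies strictly inside $(0,\pi)$ on $i\bbR_+$ for the Herglotz function $m_{\beta,\omega}$, bounded away from $0,\pi$ by compactness of $i[1,\rho]$.

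Suppose $\cK_{22}(L,\xi,\xi)\le C$ for all $L$. The standard Weyl-disk/Jitomirskaya--Last type estimate (or directly the representation of $\Im m$ via $\int_0^\infty$ of the Weyl solution) would then give $\Im m_H(\xi+iy)\gtrsim 1/y$ as $y\to0$. The mechanism is that a bounded $\cK_{22}$ means $(0\ 1)\fA$ is an $L^2(H)$ solution at $z=\xi$, i.e. $\xi$ is an eigenvalue. Since $\beta>-1$, this contradicts $\Im m\asymp y^\beta$. Similarly, bounded $\cK_{11}$ forces $m_H(\xi+iy)\to$ a finite real limit with $|m|$ behaving like $y$ near the point mass of $-1/m$, contradicting $|m|\asymp y^\beta$ with $\beta<1$.

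Hence both entries are eventually positive, which gives $L_H<\infty$, and both tend to $\infty$, which gives $r(L)\to\infty$. The main obstacle is making these two “bounded entry $\Rightarrow$ point mass of $m$ or $-1/m$ at $\xi$” implications rigorous. For this I would invoke the de Branges limit $\lim_{L\to\infty}\cK_H(L,\xi,\xi)$ and the identity $\mu(\{\xi\})=1/\lim\cK_{22}(L,\xi,\xi)$ (and the analogue for $-1/m$ with $\cK_{11}$), and then use $\lim_{y\to0} y\,\Im m(\xi+iy)=\mu(\{\xi\})>0$.
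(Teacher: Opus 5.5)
Your proof is correct. The continuity and strict monotonicity parts match the paper. You differ in how you obtain $L_H<\infty$ and $r(L)\to\infty$.

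\textbf{The paper's route.} It shifts to $\xi=0$, so that $\cK(L,0,0)=\int_0^L H$. If $\cK_{11}\cK_{22}$ vanished for all $L$, then by monotonicity one diagonal entry would vanish identically. That forces $H$ to be a reparametrization of $\mathrm{diag}(0,1)$ or $\mathrm{diag}(1,0)$, i.e.\ $m_H\equiv 0$ or $m_H\equiv\infty$, contradicting \eqref{eq:25}. After that, $r\to\infty$ is elementary. Beyond $L_H$ both entries are bounded below by positive constants, and their sum $\int_0^L\tr H$ diverges by the limit point condition, which survives the shift. So, contrary to your remark, the individual divergence of each diagonal entry is not ``the real content'' and is not needed.

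\textbf{Your route and what it buys.} You import the point-mass formula $\mu(\{\xi\})=1/\lim_L\cK_{22}(L,\xi,\xi)$, and its analogue for $-1/m_H$, obtained by conjugating with $J$, which swaps $h_1,h_2$. You combine this with two-sided bounds $|m_H(\xi+iy)|\asymp y^\beta$, using both $\beta>-1$ and $\beta<1$. This gives a strictly stronger conclusion: neither $m_H$ nor $-1/m_H$ has a point mass at $\xi$, so each entry diverges. The cost is an external theorem. The paper only uses that the limit profile is neither $0$ nor $\infty$.

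\textbf{Small repairs.}
\begin{itemize}
\item Your sentence about a ``positive-definite-weighted $\tr H$'' proves nothing as written, since $\fA(s,\xi)^*\fA(s,\xi)$ has no uniform lower bound. The correct reason is that the limit point property at $z=0$ transfers to $z=\xi$.
\item You only need the upper bound $\Im m_H\lesssim y^\beta$. This matters, because $\Im m_H\asymp y^\beta$ fails when $m_{\beta,\omega}$ is a real constant (e.g.\ $\beta=0$ and $\xi$ in a gap of the support with $m_H(\xi)\neq 0$).
\item Dispose of the case $\lim_L\cK_{22}=0$ (i.e.\ $m_H\equiv\infty$) separately before writing $1/\lim_L\cK_{22}$.
\end{itemize}
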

		
		\begin{proof}
		By a shift we may assume $\xi=0$. 
			We have 
			\begin{equation}\label{eqn:5aug}
			\begin{pmatrix}
				\cK_{11}(L,0,0)& \cK_{12}(L,0,0)\\\cK_{21}(L,0,0)&\cK_{22}(L,0,0)
			\end{pmatrix}
			=\int_0^LH(s)ds.
			\end{equation}
Thus, $\cK_{11}(L,0,0)=0$ for all $L\in [0,\infty)$ if and only if $H$ is a reparametrization of $H_2=(\begin{smallmatrix}
				0&0\\0&1
			\end{smallmatrix})$. A direct computation shows that $m_{H_2}\equiv 0$, which contradicts our assumption that $\omega$ does not vanish identically. Similarly, $\cK_{22}(L,0,0)=0$ for all $L\in [0,\infty)$ if and only if $H$ is a reparametrization of $H_1=(\begin{smallmatrix}
				1&0\\0&0
			\end{smallmatrix})$, with Weyl function $m_{H_1}\equiv \infty$ again contradicting \eqref{eq:25}. 
			
			Since $H\geq 0$ and $H\neq 0$ $a.e.$ on $[0,\infty)$, we get that  as functions of $L$, $\cK_{11}(L,0,0), \cK_{22}(L,0,0)$ are increasing and their sum is strictly increasing. Since for $L>L_0$, both are postive, this implies that $r(L)$ is strictly increasing. Continuity of $r(L)$ follows from \eqref{eqn:5aug}. Since $\tr H\notin L^1([0,\infty))$, we conclude that $r(L)\to\infty$ as $L\to\infty$. 
		\end{proof}
		
		Let $k_r=\lfloor \log_\rho r\rfloor$ and $a_r=\rho^{\{log_\rho r\}}\in[1,\rho)$.
		The key result will be the following lemma, which operates along suitable subsequences:
		
		\begin{lemma}\label{lem:5}
			In the notation and with the assumptions of Theorem \ref{thm:3} with $\xi=0$, let $a(L)=a_{r(L)}$ and $L_n$ be so that $a(L_n)\to a_\infty\in[1,\rho]$. Let $\cL_{a_\infty}$ be defined as in Lemma \ref{lem:8} associated to the Herglotz function $m_{\beta,\omega}$. Then it holds that
			\begin{align}\label{eq:104}
				\lim\limits_{n\to\infty}\frac{1}{r(L_n)}\cUnew(r(L_n)^\beta)\cK_H\left(L_n,\frac{z}{r(L_n)},\frac{w}{r(L_n)}\right)\cUnew(r(L_n)^\beta)^*=\cL_{a_\infty}(z,w)
			\end{align}
			uniformly for $(z,w)$ in compact subsets of $\bbC\times\bbC$. 
		\end{lemma}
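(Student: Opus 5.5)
The plan is to rewrite the rescaled kernel as a kernel of a rescaled Hamiltonian and then use the homeomorphism of Theorem~\ref{thm:1} together with the continuity of $H\mapsto\cK_H$ in \eqref{eq:11}. Put $r_n=r(L_n)$ and $g(r)=r^\beta$. Lemma~\ref{lem:18} gives
\[
\frac{1}{r_n}\cUnew(r_n^\beta)\cK_H\left(L_n,\frac{z}{r_n},\frac{w}{r_n}\right)\cUnew(r_n^\beta)^*=\cK_{\cF_{r_n}H}(L_n/r_n,z,w),
\]
with $m_{\cF_{r_n}H}(z)=r_n^\beta m_H(z/r_n)$. By Lemma~\ref{lem:3}, hypothesis \eqref{eq:25} is equivalent to $\rho^{n\beta}m_H(z\rho^{-n})\to m_{\beta,\omega}(z)$. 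Lemma~\ref{lem:4}(b) then yields $m_{\cF_{r_n}H}\to a_\infty^\beta m_{\beta,\omega}(z/a_\infty)=m_{\cF_{a_\infty}H_\infty}$ in $\cN_0$, where $H_\infty$ is the trace-normalized Hamiltonian of $m_{\beta,\omega}$. (Lemma~\ref{lem:1} gives $r_n\to\infty$.)

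The difficulty is that $\cF_{r_n}H$ is not trace normalized, so Theorem~\ref{thm:1} does not apply to it directly. To fix this, let $\hat H_n$ be the trace-normalized reparametrization of $\cF_{r_n}H$, and let $\hat t_n$ be the time with $\cK_{\cF_{r_n}H}(L_n/r_n,\cdot,\cdot)=\cK_{\hat H_n}(\hat t_n,\cdot,\cdot)$. Likewise let $\hat H$ be the trace normalization of $\cF_{a_\infty}H_\infty$, and let $\hat t$ be the time with $\cK_{\hat H}(\hat t)=\cL_{a_\infty}$. This is possible because, as in the proof of Lemma~\ref{lem:8}, $\cL_{a_\infty}=\cK_{\cF_{a_\infty}H_\infty}(t(a_\infty)/a_\infty)$. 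Theorem~\ref{thm:1} gives $\hat H_n\to\hat H$, and \eqref{eq:11} gives $\cK_{\hat H_n}\to\cK_{\hat H}$ locally uniformly in $(t,z,w)$. It therefore suffices to show $\hat t_n\to\hat t$.

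This step is the main obstacle, and I would handle it using the normalization. By the choice of $r_n$ and \eqref{eqn:5aug},
\[
(\cK_{\hat H_n})_{11}(\hat t_n,0,0)\,(\cK_{\hat H_n})_{22}(\hat t_n,0,0)=\frac{\cK_{11}(L_n,0,0)\,\cK_{22}(L_n,0,0)}{r_n^2}=1,
\]
and similarly $\hat t$ is the unique time at which the product of the diagonal entries of $\cK_{\hat H}(\cdot,0,0)=\int_0^{\cdot}\hat H$ equals $1$. The function $\phi_n(t)=\big(\int_0^t\hat H_n\big)_{11}\big(\int_0^t\hat H_n\big)_{22}$ is nondecreasing and converges locally uniformly to the corresponding $\phi$ for $\hat H$. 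The function $\phi$ is strictly increasing where it is positive, by the same argument as in Lemma~\ref{lem:1}: $m_{\beta,\omega}\ne0,\infty$, so neither diagonal entry vanishes identically, and the trace of the integral increases strictly. Also $\phi(t)\to\infty$. Hence the level-$1$ crossings converge: any limit point $t^*$ of $\hat t_n$ satisfies $\phi(t^*)=1$, so $t^*=\hat t$. Boundedness of $\hat t_n$ follows from $\phi(T)>1$ for large $T$ and the monotonicity of $\phi_n$. This gives $\hat t_n\to\hat t$.

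Combining $\hat t_n\to\hat t$ with the joint locally uniform convergence from \eqref{eq:11} gives \eqref{eq:104}. The fact that $t(a_\infty)$ is a well-defined increasing function, which enters the identification of $\hat t$, is justified in the same way as $\hat t$ itself.
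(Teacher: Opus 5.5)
Your proposal is correct and follows essentially the same route as the paper: rescale via Lemma~\ref{lem:18}, obtain convergence of the rescaled Weyl functions from Lemmas~\ref{lem:3} and~\ref{lem:4}, pass to trace-normalized Hamiltonians and apply Theorem~\ref{thm:1} together with \eqref{eq:11}, then show that the times at which the product of the diagonal entries at $0$ equals $1$ converge. The differences are only cosmetic: you bound $\hat t_n$ by evaluating at a single time $T$ with $\phi(T)>1$ rather than by the paper's Dini-type argument, and you explicitly identify the limit kernel as $\cL_{a_\infty}$ via \eqref{eq:24}, which the paper leaves implicit.
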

		\begin{proof}
			We divide the proof into two steps.
			
			\textbf{Step 1:} Let $g(r)=r^\beta$ and $\cF_r$ the corresponding transformation and set $m_r(z)=r^{\beta}m_H(z/r)=m_{\cF_rH}(z)$. Moreover, let $\hat H_r\in\hat\bbH$ be the trace normalized reparametrization of  $\cF_rH$ and $\hat \cK_r=\cK_{\hat H_r}$. Choose a subsequence so that $m_{r_n}$ converges to some $m_\infty\in \cN_0$ and assume that $m_\infty$ is not identically $0$ or $\infty$. Define $X=\{r_n\mid n\in\bbN\}\cup\{\infty\}$. By construction $(m_{r})_{r\in X}$ is a continuous family. By Theorem \ref{thm:1} the same holds for $(\hat H_r)_{r\in X}$ and $(\hat \cK_r)_{r\in X}$ with respect to the topologies defined in Section \ref{sec:cont}.
			Define on $X\times [0,\infty)$
			\[
			\sigma(r,t)=(\cK_{\hat H_r})_{11}(t,0,0)(\cK_{\hat H_r})_{22}(t,0,0).
			\]
			Then $\sigma$ is jointly continuous in $(r,t)$. Define $t_-(r)=\inf\{t>0\mid \sigma(r,t)>0\}$. Since in \eqref{eq:25}, $\omega$ is not identically $0$, and we assumed that our $m_\infty$ is not identically $0$ or $\infty$, we conclude that $m_r$ is not identically $0$ or $\infty$ for all $r\in X$.  Then as in the proof of Lemma \ref{lem:1}, we get that $t_-(r)<\infty$ and for fixed $r\in X$, $\sigma(r,\cdot)$ is an increasing bijection from $(t_-(r), \infty)$ onto $(0,\infty)$. In particular, there is a unique $t_1(r)$ so that $\sigma(r,t_1(r))=1$. We claim that 
			\begin{align}\label{eq:102}
				\sup_{r\in X}t_1(r)=T<\infty.
			\end{align}
			Indeed, since for fixed $r\in X$, $\lim_{t\to\infty}\sigma(r,t)=\infty$, monotonicity and Dini's theorem imply that
			\[
			\lim_{t\to\infty}\inf_{r\in X}\sigma(r,t)=\infty, 
			\]
			which implies \eqref{eq:102}. Compactness of $[0,T]$, and continuity $\sigma$ and the uniqueness claim about $t_1(r)$ imply that 
			\begin{align}\label{eq:103}
				\lim_{n\to\infty}t_1(r_n)= t_1(\infty).
			\end{align}
			
			\textbf{Step 2:} Let $r(L)=\sqrt{\cK_{11}(L,0,0)\cK_{22}(L,0,0)}$. By assumption we have $a(L_n)\to a_\infty$ and now Lemma \ref{lem:3} and Lemma \ref{lem:4} imply that 
			$m_{r(L_n)}$ converge to $a_\infty^\beta m_{\beta,\omega}(z/a_\infty)$.  Let $\hat H_{r(L_n)}, \hat \cK_{\hat H_{r(L_n)}}$ be as above. We have 
			\begin{align} \nonumber
				\left\{ \cK_{\hat H_{r(L_n)}}(t,z,w)\mid t\geq 0 \right\}=\hspace{6.6cm}\\ \left\{\frac{1}{r(L_n)}\cUnew(r(L_n)^\beta)\cK_H\left(t,\frac{z}{r(L_n)},\frac{w}{r(L_n)}\right)\cUnew(r(L_n)^\beta)^*\mid t\geq 0 \right\}.
			\end{align}
			Hence, there exists $t_n$ so that 
			\[
			\cK_{\hat H_{r(L_n)}}(t_n,z,w)=\frac{1}{r(L_n)}\cUnew(r(L_n)^\beta)\cK_H\left(L_n,\frac{z}{r(L_n)},\frac{w}{r(L_n)}\right)\cUnew(r(L_n)^\beta)^*.
			\]
			Evaluating at $z=w=0$, shows that $t_n=t_1(r(L_n))$. We conclude \eqref{eq:104} from continuity of the family $(\cK_{\hat H_{r(L_n)}})_{r(L_n)\in X}$ and \eqref{eq:103}.
		\end{proof}
		
		The proof of Theorem \ref{thm:3} is now a consequence of Lemma \ref{lem:5} and compactness arguments.
		
		\begin{proof}[Proof of Theorem \ref{thm:3}]
We start by reducing it to the case $\xi=0$. For $\xi\in\bbR$, consider the transfer matrix 
			\[
			W(t,z)=\fA(t,z+\xi)\fA(t,\xi)^{-1}.
			\]
			and the Hamiltonian
			\[
			H_\xi(t)=\fA(t,\xi)H(t)\fA(t,\xi)^*,.
			\]
			Using that $\fA(t,\xi)J\fA(t,\xi)^*=J$, a direct computation shows that 
			\[
			\partial_t	W(t,z)J=	zW(t,z)H_\xi(t).
			\]
			and 
			\[
			m_{H_{\xi}}(z)=m_H(z+\xi).
			\]
			Note also that $\tr H\in L^1([0,\infty))$ if and only if $\tr H_\xi\in L^1([0,\infty))$. Due to these transformations it suffices to consider $\xi=0$. 
			
			Since $r(L)=\rho^{k}a(L)$, for some $k$, periodicity of $\cL_s$ implies that $\cL_{r(L)}=\cL_{a(L)}$. Assume that \eqref{eq:18} does not hold. Then there exists a compact $C \subset \bbC$, $\e>0$ and $L_n\to\infty$ so that 
			\begin{align}\label{eq:106}
				\sup_{z,w\in C}\left|\frac{1}{r(L_n)}\cUnew(r(L_n)^\beta)\cK_H\left(L_n,\frac{z}{r(L_n)},\frac{w}{r(L_n)}\right)\cUnew(r(L_n)^\beta)^*-\cL_{a(L_n)}(z,w)\right|>\e.
			\end{align}
			Now choose a subsequence so that $a(L_{n_k})$ converges to some $a_\infty \in [1,\rho]$. Then, by Lemma \ref{lem:5} and continuity of the family $(\cK_a)_{a\in[1,\rho]}$ both expressions in \eqref{eq:106} converge to 
			$\cL_{a_\infty}$, which is a contradiction. 
		\end{proof}

\subsection{Fine structure of zeros from periodically modulated limits of Weyl functions}

The Freud--Levin theorem \cite{Freud69,LevinLubinsky08} was originally stated as the result that clock behavior follows from convergence of rescaled CD kernels to the sine kernel. The proof, however, allows for significant generalizations; see \cite[Section 10]{EichLukWor} for a general statement in case of convergence of rescaled kernels to an essentially arbitrary limit kernel.

For our current purposes, we need to look beyond convergence and generalize the Freud--Levin theorem to give uniform bounds under precompactness assumptions. To formulate this, consider a sequence of Hermite--Biehler functions $E_n$ (entire functions such that $\lvert E_n(z)\rvert > \lvert E_n(\ol z)\rvert$ for $z\in \bbC_+$) with no real zeros. We use the notation
\[
E^\sharp(z) = \ol{ E( \ol z) }
\]
and consider the associated reproducing kernels
\[
K_n(z,w) = i \frac{E_n(z) E_n^\sharp(\ol w) - E_n^\sharp(z) E_n(\ol w)}{z-\ol w}
\]
The function $E_n + E_n^\sharp$ has only real simple zeros, which we label $\xi_j^{(n)}$ be counting from $\xi$, so that
\[
\dots < \xi_0^{(n)} < \xi \le \xi_1^{(n)} < \xi_2^{(n)} < \dots
\] 
In a typical application to orthogonal polynomials, we use $E_n =  p_n + i p_{n-1}$, in which case $\xi_j^{(n)}$ are precisely zeros of $p_n$.

For a kernel $K_E$, if the function $K_E(\cdot,0)$ has infinitely many zeros on $(0,\infty)$, then by interlacing, so does $K_E(\cdot,\lambda)$ for any $\lambda \in \bbR$, and we denote by
\[
\kappa_1(\lambda,K_E) < \kappa_2(\lambda,K_E) < \dots
\]
all the zeros of $K_E(\cdot,\lambda)$ in $(\lambda,\infty)$.
\begin{theorem}\label{theoremFreudLevinPrecompact}
Assume that for some $\xi \in \bbR$ and some sequence $\tau_n \to \infty$, the sequence of rescaled kernels
\[
\frac{K\left(n,\xi+\frac{z}{\tau_n},\xi+\frac{w}{\tau_n}\right)}{K\left(n,\xi,\xi\right)}
\]
is precompact in the topology of uniform convergence on compacts. Denote by $\cK$ the set of its limit points.  Assume that for each $K_E \in \cK$, the function $K_E(\cdot,0)$ has infinitely many zeros on $(0,\infty)$. 
\begin{enumerate}[(a)]
\item 
\begin{equation}\label{eqnFirstEVupperbound}
\limsup_{n\to\infty} \tau_n (\xi_1^{(n)} - \xi) \le  \max_{K_E \in \cK}  \kappa_1(0,K_E).
\end{equation}
\item For every $1 \le k < l$, there exist constants $C_{k,l}^\pm \in (0,\infty)$ such that
\begin{equation} \label{eqnLiminfLimsup}
C_{k,l}^- \le \liminf_{n\to\infty} \tau_n (\xi_l^{(n)} - \xi_k^{(n)}) \le \limsup_{n\to\infty} \tau_n (\xi_l^{(n)} - \xi_k^{(n)}) \le C_{k,l}^+.
\end{equation}
The constants are determined by the family of limit kernels as follows:
\begin{align}
C_{k,l}^- & = \min_{K_E \in \cK} \min_{\lambda \in [0, \kappa_1(0,K_E)]} \Bigl( \kappa_{l-1}(\lambda,K_E) -  \kappa_{k-1}(\lambda,K_E) \Bigr) \label{eqnCklminus} \\
C_{k,l}^+ & = \max_{K_E \in \cK} \max_{\lambda \in [0, \kappa_1(0,K_E)]} \Bigl( \kappa_{l-1}(\lambda,K_E) -  \kappa_{k-1}(\lambda,K_E) \Bigr) \label{eqnCklplus}
\end{align}
\item If, in addition, for each $K_E \in \cK$, the function $K_E(\cdot,0)$ has infinitely many zeros on $(-\infty,0)$, then (b) holds for all $k < l$ in $\bbZ$. 
\end{enumerate}
\end{theorem}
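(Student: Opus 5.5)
We argue by contradiction along subsequences, reducing each statement to a statement about a single limit kernel and then using Hurwitz's theorem together with interlacing. Write $\tilde K_n(z,w)$ for the rescaled kernel in the statement and $\tilde\xi_j^{(n)} = \tau_n(\xi_j^{(n)}-\xi)$ for the rescaled zeros. The zeros of $E_n+E_n^\sharp$ are exactly the zeros of $K_n(\cdot,\lambda)$ for the particular real $\lambda$ where $E_n+E_n^\sharp$ vanishes; more usefully, for every real $\lambda$ the zeros of $K_n(\cdot,\lambda)$ interlace with those of $E_n+E_n^\sharp$. The cleanest formulation is the standard one from \cite[Section 10]{EichLukWor}: if $\lambda$ is itself a zero of $E_n+E_n^\sharp$, then the remaining zeros of $E_n+E_n^\sharp$ are precisely the zeros of $K_n(\cdot,\lambda)$. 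That is the tool I would use.

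\textbf{Part (a).} Suppose (a) fails. Then there is a subsequence with $\tilde\xi_1^{(n)} \ge \max_{\cK}\kappa_1(0,\cdot)+\delta$. Passing to a further subsequence, $\tilde K_n \to K_E\in\cK$. By Hurwitz, since $K_E(\cdot,0)$ has a simple real zero at $\kappa_1(0,K_E)$, the function $\tilde K_n(\cdot,0)$ has a zero near $\kappa_1(0,K_E)$ for large $n$. Interlacing then forces a zero of $E_n+E_n^\sharp$, after rescaling, in $(0,\kappa_1(0,K_E)+\delta/2]$. This contradicts the assumed lower bound on $\tilde\xi_1^{(n)}$. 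For this I need the convention $\xi\le\xi_1^{(n)}$ and the fact that between two consecutive zeros of $E+E^\sharp$ there is exactly one zero of $K(\cdot,\lambda)$ for $\lambda$ not a zero.

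\textbf{Part (b).} By (a), $\tilde\xi_1^{(n)}$ is bounded, so along any subsequence we may assume $\tilde K_n\to K_E$ and $\tilde\xi_1^{(n)}\to\lambda\in[0,\kappa_1(0,K_E)]$. Taking $\lambda_n=\tilde\xi_1^{(n)}$ as the base point, the rescaled zeros $\tilde\xi_j^{(n)}$ for $j\ge2$ are exactly the zeros of $\tilde K_n(\cdot,\lambda_n)$ to the right of $\lambda_n$. Joint local uniform convergence gives $\tilde K_n(\cdot,\lambda_n)\to K_E(\cdot,\lambda)$. All zeros are real and simple, and $K_E(\lambda,\lambda)>0$, so Hurwitz shows that $\tilde\xi_{j}^{(n)}\to\kappa_{j-1}(\lambda,K_E)$ for each fixed $j$, with the convention $\kappa_0=\lambda$. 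Therefore every subsequential limit of $\tau_n(\xi_l^{(n)}-\xi_k^{(n)})$ equals $\kappa_{l-1}(\lambda,K_E)-\kappa_{k-1}(\lambda,K_E)$ for some admissible pair $(K_E,\lambda)$, which gives \eqref{eqnLiminfLimsup}.

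What remains is to show that $C^\pm_{k,l}$ are attained and lie in $(0,\infty)$. The set $\{(K_E,\lambda)\}$ is compact, because $\cK$ is closed in a compact set and $\kappa_1(0,\cdot)$ is continuous, again by Hurwitz. The map $(K_E,\lambda)\mapsto\kappa_j(\lambda,K_E)$ is continuous, by Hurwitz together with the infinitude and simplicity of the zeros. Positivity follows from strict ordering of the zeros. One subtlety: $\max_{\cK}\kappa_1(0,K_E)<\infty$ also relies on this continuity, and that is used already in (a).

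\textbf{Part (c).} This is the same argument, now counting zeros of $K_E(\cdot,\lambda)$ on both sides of $\lambda$, using the zeros on $(-\infty,0)$ to control $\xi_0^{(n)},\xi_{-1}^{(n)},\dots$. The main obstacle is the Hurwitz step applied to zeros indexed relative to a moving base point $\lambda_n$. I will need to check carefully that no zero of $\tilde K_n(\cdot,\lambda_n)$ escapes to or enters from the point $\lambda_n$ itself. This is excluded because $K_E(\lambda,\lambda)>0$, which holds since $K_E$ is a limit of kernels normalized by $\tilde K_n(0,0)=1$ and is a nonzero de Branges kernel with no real zeros of $E$.
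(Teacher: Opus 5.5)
Your proposal is correct and follows essentially the paper's proof. Along subsequences on which the rescaled kernels converge to one $K_E\in\cK$ and $\tau_n(\xi_1^{(n)}-\xi)\to\lambda\in[0,\kappa_1(0,K_E)]$, a single-kernel Freud--Levin statement gives the limits $\kappa_{l-1}(\lambda,K_E)-\kappa_{k-1}(\lambda,K_E)$, and compactness of $\cK$ plus Hurwitz continuity of zeros gives the bounds. The only difference is that you prove the single-kernel step by hand, via the identity $K_n(\cdot,\lambda)\propto (E_n+E_n^\sharp)(\cdot)/(\cdot-\lambda)$ at a zero $\lambda$, interlacing, and Hurwitz, where the paper cites \cite[Theorem 10.1]{EichLukWor}.

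One fix is needed in (a): the interlacing fact as you state it is false for the gap containing $\lambda$, and it would not give your conclusion. The correct statement is that $\lambda$ together with the zeros of $K(\cdot,\lambda)$ strictly interlaces with the zeros of $E+E^\sharp$. This places a zero of $E_n+E_n^\sharp$ between $\xi$ (inclusive) and the first zero of $K_n(\cdot,\xi)$ to the right of $\xi$, which is exactly what (a) needs.
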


\begin{proof}
(a) Along a subsequence $n_j$ such that for some $K_E \in \cK$,
\begin{equation}\label{eqnIOGSHE}
\lim_{j\to\infty} \frac{K\left(n_j,\xi+\frac{z}{\tau_{n_j}},\xi+\frac{w}{\tau_{n_j}}\right)}{K\left(n_j,\xi,\xi\right)} = K_E(z,w),
\end{equation}
by the generalization of the Freud--Levin theorem provided in \cite[Theorem 10.1]{EichLukWor},
\[
\limsup_{j\to\infty} \tau_{n_j} (\xi_1^{(n_j)} - \xi) \le  \kappa_1(0,K_E).
\]
 Thus, by precompactness of the rescaled kernels, \eqref{eqnFirstEVupperbound} holds.

 (b) First note that the constants defined in \eqref{eqnCklminus}, \eqref{eqnCklplus} are well-defined by compactness of $\cK$ and the continuity of zeros (by Hurwitz's theorem) and lie in $(0,\infty)$.  Take a subsequence $n_j$ such that \eqref{eqnIOGSHE} holds and that for some $\lambda \in [0, K_E(\cdot,0)]$,
 \[
 \lim_{j\to\infty}  \tau_{n_j} (\xi_1^{(n_j)} - \xi) = \lambda.
 \]
Then by \cite[Theorem 10.1]{EichLukWor}, 
\[
\lim_{j\to\infty} \tau_{n_j}  (\xi_l^{(n_j)} - \xi_k^{(n_j)}) = \kappa_{l-1}(\lambda,K_E) -  \kappa_{k-1}(\lambda,K_E).
\]
Using compactness completes the proof.

(c) is proved analogously to (b) if the limit kernels have infinitely many negative zeros.
\end{proof}

\begin{proof}[Proof of Corollary~\ref{corollaryZerosGeneral}]
(a), (b): by Lemma~\ref{lemmaInfinitelyManyZeros}, the limit kernels have infinitely many zeros on each half-line, except in the case of an edge universality class. Thus, the claims follow from Theorem~\ref{theoremFreudLevinPrecompact}.

(c): We set $\xi = 0$ and denote by $\nu$ the even measure on $\bbR$ whose pushforward by $x\mapsto x^2$ is the measure $\mu$. The corresponding Weyl function is computed to be 
\[
g(z) = \int \frac 1{x-z}\,d\nu(x) = z m(z^2).
\]
Using Lemma~\ref{lem:3}, we rephrase \eqref{intro:eq14} as
\[
\lim_{n \to \infty} m( z \rho^{-n} ) \rho^{n\beta} = z^\beta \omega(z)
\]
and see that this implies
\[
\lim_{n \to \infty} g( z  \rho^{-n/2} ) \sqrt{\rho}^{n(1+2\beta)} = z^{1+2\beta} \omega(z^2)
\]
In words, $g$ obeys the same type of scaling behavior, with the constant $\beta$ replaced by $\tilde \beta = 1+2\beta$ and the multiplicatively periodic function $\omega$ by $\tilde\omega(z) = \omega(z^2)$.

Since $\nu$ is even and its pushforward is $\mu$,  orthogonal polynomials of even degree for the measure $\nu$ are precisely $p_{2n}(z, \nu) = p_n(z^2, \mu)$, so their zeros are $\pm \sqrt{ \xi_j^{(n)}}$, with $1\le j \le n$  (compare \cite[Lemma 10.3]{EichLukWor}). Moreover, $p_{2n-1}(\cdot,\nu)$ are odd polynomials, so $K_{2n}(0,0;\nu) = K_n(0,0;\mu)$. Thus, by (b) applied to the measure $\nu$, we have
\[
\sqrt{\xi_l^{(n)}} - \left( - \sqrt{\xi_l^{(n)}}  \right) \asymp K(2n,0,0;\nu)^{-1/(1+\tilde \beta)}
\]
for $l=1,2,\dots,n$. Squaring and using $2 / (1+\tilde \beta) = 1 / (1+ \beta)$ concludes the proof.
\end{proof}

\subsection{The middle third Cantor measure and the a.c.\ example}

\begin{proof}[Proof of Theorem~\ref{thmCantorMeasure}]
Let $\mu$ denote the middle third Cantor measure. Let us first assume that $\xi$ is a periodic point for the dynamics. Equivalently, there exists $q \in \bbN$ and $c_1,\dots, c_q \in \{0,1\}$ such that the linear map
\[
g(\lambda) = \frac{\lambda}{3^q} + \sum_{j=1}^q \frac{2c_j}{3^j}
\]
has fixed point $\xi$. Under this map, in some neighborhood $U$ of $\xi$, the pushforward measure $g_*(\mu)$ is equal to $2^q \mu$.  Shifting $\xi$ to $0$ and using this self-similarity to extend the measure to the real line, we obtain a unique measure $\nu$ with the properties
\[
\nu( (a,b]) = \mu((\xi+a,\xi+b]), \qquad  \forall (\xi+a,\xi+b] \subset U
\]
and $2^{-q} \nu((3^q a, 3^q b]) = \nu((a,b])$ for all $(a,b] \subset \bbR$. This rescaling property implies
\begin{align*}
\int \frac 1{1+\lvert x\rvert}\,d\nu(x) & \le \nu([-1,1]) + \sum_{k=0}^\infty \frac{\nu(\{x \mid 3^{kq} < \lvert x \rvert \le 3^{kq+q}\})}{3^{kq}}  \\
& = 1 +  \sum_{k=0}^\infty \frac{2^{kq}}{3^{kq}}  \nu(\{x \mid 1 < \lvert x \rvert \le 3^{q}\}) < \infty
\end{align*}
so we can define Herglotz functions
\begin{equation}\label{eqnmf}
m(z) = \int \frac 1{x-z} \,d\mu(x), \qquad f(z) = \int \frac 1{x-z} \,d\nu(x).
\end{equation}
Using the rescaling property
\[
m\left( \xi + \frac z{3^{nq}} \right)  = \int \frac 1{x-z/3^{nq}} \,d\mu(x) = \left(\frac 32\right)^{nq} \int_{-3^{nq}}^{3^{nq}} \frac 1{y-z} \,d\nu(y)
\]
and by dominated convergence we conclude
\[
\lim_{n\to\infty}  \left(\frac 23\right)^{nq} m\left( \xi + \frac z{3^{nq}} \right)  = f(z)
\]
uniformly on compact subsets of $\bbC_+$. By Lemma~\ref{lem:3} and Theorem~\ref{thm:4}, we conclude limit cycle behavior at $\xi$.

Next, we note that
\[
m\left( \frac 23 + \frac w3\right) - \frac 32 m(w) = \int_0^{1/3} \frac 1{x - \left( \frac 23 + \frac w3\right)} \,d\mu(x) = o(1), \quad w \to \xi \in [0,1].
\]
Thus,
\[
m\left( \frac {2+\xi}3 + \frac z3\right) - \frac 32 m(\xi + z) = o(1), \qquad z \to 0.
\]
Analogously,
\[
m\left( \frac {\xi}3 + \frac z3\right) - \frac 32 m(\xi + z) = o(1),  \qquad z \to 0.
\]
Due to this, if
\[
\left( \frac 23 \right)^{nq} m\left( \xi + \frac z{3^{nq}} \right) \to f(z), \qquad n \to\infty
\]
then for $c\in \{0,1\}$,
\[
\left( \frac 23 \right)^{nq} m\left( \frac{2c+\xi}3 + \frac z{3^{nq}} \right) \to \frac 32 f(3z), \qquad n \to\infty.
\]
By applying this finitely many times, we reach any eventually periodic point. These steps don't change the function $f$, so they don't change the universality class.
\end{proof}

\begin{proof}[Proof of Example~\ref{xmplACLimitCycle}]
Similarly to the previous proof, we now define
\[
d\nu(x) = \sum_{n\in \bbZ}  \left( \frac 32 \right)^{n+1} \chi_{(2/3^{n+1}, 1/3^n)}(x) \,dx,
\]
and note that $\nu((0,3^n]) = 2^n$ so $\int \frac 1{1+x}\,d\nu(x) < \infty$. Thus, with the notation \eqref{eqnmf}, we have
\[
\left(\frac 23 \right)^n m(z/3^n) = \left(\frac 23 \right)^n \int_0^1 \frac 1{x-z/3^n} \,d\mu(x) = \int_0^{3^n} \frac 1{x-z}\,d\nu(x) \to f(z)
\]
as $n\to\infty$.
\end{proof}

\subsection{Examples in terms of comb mappings}\label{sec:Comb1}
		We provide an explicit construction in terms of certain comb mappings, which leads to a Herglotz function $m$ satisfying \eqref{mselfsimilar}.
		
		First we need to construct a comb domain with a self-similarity property. To construct such a domain, we can choose in an arbitrary way parameters $\lambda > 1$, and for each choice of sign $\pm$, a sequence of $\eta_\pm^k \in [1, \lambda)$ with no repetitions, and a sequence of $h_\pm^k > 0$, $1 \le k \leq n_\pm$. Here each $n_\pm$ may be finite or infinite; if it is infinite we impose the condition
		\begin{equation}\label{eq:12}
			\lim_{k\to\infty} h_\pm^k = 0.
		\end{equation}
		\begin{figure}[ht]
			\centering
			\begin{tikzpicture}
				\draw[-] (-6,0) -- (6,0) ;
				\draw[dashed] (0,0) -- (0,4) ;
				
				\draw[-] (0,-0.1) -- (0,0.1) ;
				\node at (0, -.3)   {\scriptsize$0$};

				\draw[-] (2,0) -- (2,2) ;
				\draw[-] (4,0) -- (4,4) ;
				\draw[-] (3,0) -- (3,2.5) ;
				\draw[-] (3.2,0) -- (3.2,0.5) ;
				\draw[-] (3.2,0) -- (3.2,0.5) ;
				\draw[-] (3.7,0) -- (3.7,0.2) ;
				\draw[-] (2.3,0) -- (2.3,0.7) ;

				\draw[-] (-2.5,0) -- (-2.5,1.5) ;
				\draw[-] (-3.5,0) -- (-3.5,3) ;
				\draw[-] (-2.7,0) -- (-2.7,0.1) ;
				\draw[-] (-2.8,0) -- (-2.8,0.5) ;
				\draw[-] (-3.4,0) -- (-3.4,0.5) ;
				\draw[-] (-3.1,0) -- (-3.1,0.2) ;

				\node at (2.05, -.3)   {\scriptsize$\eta_+$};
				\node at (4.05, -.3)   {\scriptsize{$\lambda\eta_+$}};
				\node at (3.05, -.3)   {\scriptsize{$\eta_+^k$}};
				\node at (3.3,2.6)   {\scriptsize{$h_+^k$}};
				\node at (2.15, 2.1)   {\scriptsize{$h_+$}};
				\node at (4.15, 4.15)   {\scriptsize{$\lambda h_+$}};
				
				\node at (-2.35, -.3)   {\scriptsize$-\eta_-$};
				\node at (-3.45, -.3)   {\scriptsize{$-\lambda\eta_-$}};

				\node at (-2.25,1.6)   {\scriptsize$h_-$};
				\node at (-3.3,3.1)   {\scriptsize$\lambda h_-$};

				\node  (A) at (3,3) {};
				\node (B) at (0.5,3)  {};
				
				\draw [->] (A) to [bend right=45] (B);
				\node at (1.75,3.8)   {\scriptsize$\cdot\lambda^{-1}$};
				
				\node  (C) at (3,4.1) {};
				\node (D) at (6.5,4.1)  {};
				
				\draw [->] (C) to [bend left=35] (D);
				\node at (4.75,4.9)   {\scriptsize$\cdot\lambda$};
			\end{tikzpicture}
			\caption{The comb-domain $\Pi$ related to $\sE$}
			\label{fig:1}
		\end{figure}
		
		Under these assumptions, the set
		\begin{align}\label{eq:6}
			\Pi=\bbC_+\setminus \bigcup_{n\in \bbZ} \bigg( \bigcup_{k=1}^{n_-} ( -\lambda^n \eta_-^k,  \lambda^n (-\eta_-^k + i h_-^k)] \cup  \bigcup_{k=1}^{n_+} ( \lambda^n \eta_+^k,  \lambda^n (\eta_+^k + i h_+^k)]  \bigg)
		\end{align}
		is a comb domain with the self-similar property
		\begin{align}\label{eq:14}
			\Pi=\lambda \Pi.
		\end{align}
		An illustration of this construction is given in Figure \ref{fig:1}. The domain $\Pi$ is simply connected and due to the assumption \eqref{eq:12} the boundary is locally connected. By Carath\'eodory's theorem (sometimes called the continuity theorem \cite[Chapter 2]{PommerenkeBook}) this ensures that conformal maps from $\bbC_+$ to $\Pi$ can be continuously extended to the boundary. Since $0$ and $\infty$ are not cut points, i.e., $\partial\Pi \setminus \{0\}$ and $\partial\Pi \setminus \{\infty\}$ are connected, these values are taken precisely once \cite[Chapter 2]{PommerenkeBook}. Thus, there is a conformal map $\theta:\bbC_+\to\Pi$ such that $\theta(0)=0$ and $\theta(\infty)=\infty$, and it is unique up to rescaling of $\bbC_+$ (i.e. all other such maps are of the form $  \theta(\rho z)$ for some $\rho >0$). Since $\theta$ can be continuously extended to the boundary, we can define
		\begin{align}\label{eq:5}
			\sE=\theta^{-1}(\bbR)=\bbR\setminus\bigcup_{k\in\bbZ}(\sa_k,\sb_k),
		\end{align}
		where each gap $(\sa_k,\sb_k)$ is the preimage of some needle of the comb. 
		
		\begin{lemma}\label{lem:2}
			There exists $\rho> \lambda$ such that 
			\begin{align}\label{eq:15}
				\lambda\theta(z)=\theta(\rho z),\quad z\in\bbC_+.
			\end{align}
		\end{lemma}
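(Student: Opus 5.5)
The plan is to compare $\theta$ with its own rescaling. Consider $\tilde\theta(z)=\lambda^{-1}\theta(z)$. By the self-similarity \eqref{eq:14}, $\lambda^{-1}\Pi=\Pi$, so $\tilde\theta$ is again a conformal map from $\bbC_+$ onto $\Pi$. It extends continuously to the boundary with $\tilde\theta(0)=0$ and $\tilde\theta(\infty)=\infty$. By the uniqueness statement preceding \eqref{eq:5}, all such maps are of the form $\theta(cz)$ with $c>0$. Hence there is $\rho>0$ with $\lambda^{-1}\theta(z)=\theta(z/\rho)$, i.e.\ $\lambda\theta(z)=\theta(\rho z)$, which is \eqref{eq:15}. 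If one prefers to verify the uniqueness directly: $\theta^{-1}\circ\tilde\theta$ is a conformal automorphism of $\bbC_+$ fixing $0$ and $\infty$ (using the boundary extension, and that $0,\infty$ are attained exactly once). Such an automorphism is $z\mapsto cz$ with $c>0$.

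It remains to show $\rho>\lambda$, which I expect to be the only substantive step. First, $\rho\neq1$: otherwise $\lambda\theta=\theta$, which is impossible since $\lambda>1$ and $\theta\not\equiv0$. Moreover $\rho>1$, because $\theta$ maps $i\bbR_+$ onto a curve from $0$ to $\infty$, so $|\theta(iy)|$ must increase along it in the appropriate sense. Iterating gives $\theta(\rho^n z)=\lambda^n\theta(z)$, so $\theta$ behaves like a power $z^{\gamma}$ with $\gamma=\log\lambda/\log\rho$, modulated by a multiplicatively periodic factor. The claim $\rho>\lambda$ is then equivalent to $\gamma<1$.

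To get $\gamma<1$, I would compare $\theta$ with the Herglotz structure. The map $\theta$ sends $\bbC_+$ into $\bbC_+$, so it is a Herglotz function, which forces $\gamma\le1$. Indeed, a Herglotz function satisfies $|\theta(iy)|\lesssim y$ as $y\to\infty$. Since $\theta(i\rho^n)=\lambda^n\theta(i)$, this gives $\lambda^n\lesssim\rho^n$, so $\lambda\le\rho$.

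To exclude equality, suppose $\lambda=\rho$. Then $\theta(z)/z$ is multiplicatively periodic, and the linear term of the Herglotz representation \eqref{eq:100} would have to be positive. Alternatively, use harmonic measure: a genuine needle of positive height $h_\pm$ removes a nontrivial amount of area. Concretely, $\Im\theta$ is a positive harmonic function on $\bbC_+$ vanishing on $\sE$ but equal to the needle heights on the gaps, so $\Im\theta(x+i0)>0$ on the gaps. Hence the measure $\sigma$ in \eqref{eq:100} is nonzero and invariant under the scaling. If $\lambda=\rho$, the scaling of $\sigma$ (mass in $[\rho^n,\rho^{n+1}]$ growing like $\rho^{n}\cdot\lambda^n/\rho^n\cdot\rho^{n}$) would make $\int d\sigma/(1+x^2)$ diverge logarithmically. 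This contradicts \eqref{eq:100}. Therefore $\rho>\lambda$.
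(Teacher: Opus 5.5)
Your proof is correct, but for the key inequality $\rho>\lambda$ it takes a different route from the paper.

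The paper works at small scales. It first reduces, via $\Pi\subset\hat\Pi$ and the at most linear growth of the Herglotz function $\hat\theta^{-1}\circ\theta$, to a comb with a single generating slit. It then evaluates the Poisson representation of $\Im\theta$ at $i\rho^{-n}$ and bounds $\int_0^1 d\sigma(x)/x^2$ from below by $C\sum_{k\ge0}(\rho/\lambda)^k$. The mass of $\sigma$ on each rescaled gap is estimated through concavity of $\Im\theta$ in the gaps, obtained from the Hadamard representation of the Martin function $\Im\theta$. This yields first $\lambda\le\rho$ and then $(\lambda/\rho)^n\to0$.

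You work at infinity instead. Applying uniqueness of the representing measure in \eqref{eq:100} to $\theta(\rho z)=\lambda\theta(z)$ gives the exact covariance $\sigma(\rho B)=\lambda\rho\,\sigma(B)$. Hence for $n\ge0$ the piece $[\rho^n,\rho^{n+1})$ contributes to $\int d\sigma/(1+x^2)$ an amount comparable to $(\lambda/\rho)^n\sigma([1,\rho))$. Finiteness of this integral together with $\sigma\neq0$ then forces $\lambda<\rho$. This is shorter and more general: it only uses that $\theta$ is a non-affine Herglotz function satisfying the functional equation with $\rho>1$, and needs neither the single-slit reduction nor the concavity/Martin-function input. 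In fact the tail argument alone already gives the strict inequality, since the series diverges for every $\lambda\ge\rho$. So your separate step $\lambda\le\rho$ via $|\theta(iy)|=O(y)$ is not needed. What the paper's route gives in addition is small-scale information, namely $\Im\theta(iy)/y\to\infty$ as $y\downarrow0$, with explicit lower bounds on gap masses in terms of gap lengths and needle heights.

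A few places should be tightened.
\begin{enumerate}
\item The measure $\sigma$ is covariant, not invariant. State $\sigma(\rho B)=\lambda\rho\,\sigma(B)$ explicitly; your parenthetical growth rate is exactly this identity.
\item This identity also gives $\sigma(\{0\})=0$. So $\sigma\ne0$ (e.g.\ because $\theta$ is not affine) implies $\sigma([1,\rho))>0$ or $\sigma((-\rho,-1])>0$. Use the negative half-line if all needles lie to the left of $0$.
\item Your justification of $\rho>1$ is too vague, and both of your later steps depend on it. A precise argument: if $\rho<1$, then $\theta(i\rho^n)\to\theta(0)=0$ by continuity, whereas $\theta(i\rho^n)=\lambda^n\theta(i)\to\infty$.
\item The remark that the linear term would have to be positive is not a contradiction by itself and can be dropped.
\item On a gap, the boundary values of $\Im\theta$ run through $(0,h]$ along the two sides of a needle rather than equalling its height. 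Only their positivity is used, so this does not affect the argument.
\end{enumerate}
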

		\begin{proof}
			Due to \eqref{eq:14}, the map $ \lambda \theta(z)$ is a conformal map from $\bbC_+$ to $\lambda \Pi = \Pi$. Thus, it is of the form $\theta(\rho z)$ for some $\rho > 0$. Since $\lambda>1$, iterating \eqref{eq:15} shows that $\theta(\rho^n i) = \lambda^n \theta(i) \to \infty$ as $n\to\infty$, so $\rho>1$. 
			
			Using the scaling $\theta(\rho^n z) = \lambda^n \theta(z)$ for $z \in \ol{\bbC_+}$ with $1 \le \lvert z \rvert \le \rho$ shows that $\lvert \theta(z) \rvert \asymp \lvert z \rvert^{1+\beta}$ as $z \to \infty$, where $1+\beta = \ln \lambda / \ln \rho$. In particular, proving $\rho > \lambda$ is equivalent to proving $\lvert \theta(z) \rvert = o(\lvert z \rvert)$ as $z\to\infty$.
			
			To prove that $\rho  > \lambda$, it suffices to consider the simplest configuration with only one slit extended by a geometric progression (the case $n_- + n_+ = 1$). Indeed, for a general pattern, let $\hat \Pi$ be the domain with a single generating slit at $\eta^1_+$ of the height $h^1_+$ and the same multiplier $\lambda$ (if $n_+=0$, use $\eta^1_-$ and $h^1_-$ instead). Denote by $\hat\theta : \bbC_+ \to \hat\Pi$ the corresponding conformal map. Then $\Pi\subset \hat \Pi$, so the function
			\[
			w(z)=\hat\theta^{-1}(\theta(z))
			\]
			maps $\bbC_+$ into itself. A Herglotz function can grow at most as $z$. Therefore, if  $\lvert \hat\theta(z)\rvert \asymp \lvert z \rvert^{1 + \hat\beta}$ with $\hat\beta < 0$, then 
			\[
			\lvert \theta(z) \rvert = \lvert \hat\theta(w(z)) \rvert \asymp \lvert z \rvert^{1+\beta}, \qquad z \to\infty
			\]
			with $\beta \le \hat\beta < 0$.
			
			So now let $\theta$ be generated by a domain with a single generating slit. Since $\Im\theta$ is a positive harmonic function in $\bbC_+$, it has a Poisson representation
			\[
			\Im \theta(z) = a \Im z + \int \frac{ \Im z}{ \lvert x - z \rvert^2} \,d\sigma(x)
			\]
			for some $a\geq0$ and positive measure $\sigma$ with $\int \frac{d\sigma(x)}{1+x^2}<\infty$. Therefore
			\[
			\lambda ^{-n}\Im \theta(i)=\Im \theta(i\rho^{-n})= a\rho^{-n}+\int \frac{\rho^{-n}}{x^2+\rho^{-2n}}d\sigma(x)
			\]
			so
			\[
			\left(\frac{\lambda} {\rho}\right)^n=\frac{\Im \theta(i)}{a+\int \frac{d\sigma(x)}{x^2+\rho^{-2n}}}\le  \frac{\Im \theta(i)}{a+\int_0^1 \frac{d\sigma(x)}{x^2+\rho^{-2n}}}.
			\]
			By monotonicity we can pass to the limit
			\[
			\lim_{n\to\infty} \left(\frac{\lambda} {\rho}\right)^n\le  \frac{\Im \theta(i)}{a+\int_0^1 \frac{d\sigma(x)}{x^2}}.
			\]
			
			We estimate the last integral: Since $\theta$ can be continuously extended to $\bbR$, Stieltjes inversion \cite[Proposition 7.43]{LukicFirstCourse} shows that $\sigma$ is absolutely continuous with density 
			\begin{align}\label{eq:8b}
				d\sigma(x)=\frac{1}{\pi}\Im \theta(x)dx. 
			\end{align}
			Let $\eta_0,h_0$ be the data of $\Pi$ as in \eqref{eq:6} and $(\sa_0,\sb_0)=\theta^{-1}((\eta_0,\eta_0+ih_0])$ and assume without loss of generality that $(\sa_0,\sb_0)\subset [0,1]$. Due to \eqref{eq:15} we have $\sa_k=\rho^k \sa_0, \sb_k=\rho^k \sb_0$, $k\in\bbZ$. From \eqref{eq:8b} we conclude that $\sigma$ is supported in the gaps $\sa_k,\sb_k$.
			
			The function $\Im \theta$ is a Martin function for $\bbC\setminus \sE$ \cite{ErY12}. That is, it is a positive harmonic function in $\bbC\setminus\sE$ which extends to a subharmonic function to $\bbC$ vanishing on $\sE$. Using the Hadamard representation for subharmonic functions \cite[Section 4.2]{HayKenSubh}, yields that there exists a measure $\nu$ supported on $\sE$ with 
			\begin{align}\label{eq:17}\nonumber
				\Im \theta(z)=a\Re z+b+\int_{|x|\leq 1}\log|x-z|d\nu(x)  \\  +\int_{|x|>1}\left(\log\left|1-\frac{z}{x}\right|+\frac{\Re z}{x}\right)d\nu(x).
			\end{align}
			By this integral representation,  $\Im\theta$ is concave in gaps and thus we can estimate
			\[
			\int_{\sa_k}^{\sb_k}\frac{d\sigma(x)}{x^2}\geq\frac{1}{\sb_k^2}\int_{\sa_k}^{\sb_k}d\sigma(x)\geq \frac{(\sb_k-\sa_k)h_k}{2\sb_k^2}=\frac{(\sb_0-\sa_0)\rho^kh_0\lambda^k}{2\sb_0^2\rho^{2k}}=C\frac{\lambda^k}{\rho^k},
			\]
			where $C:=\frac{(\sb_0-\sa_0)h_0}{2\sb_0^2}$. Noting that for the interval $(0,1)$ we have to take negative $k$, we get
			\[
			\int_0^1 \frac{d\sigma(x)}{x^2}\geq C\sum_{k=0}^\infty\left(\frac{\rho}{\lambda}\right)^k.
			\]
			That is, 
			\[
			\lim_{n\to\infty} \left(\frac{\lambda} {\rho}\right)^n\le  \frac{\Im \theta(i)}{a+C\sum_{k=0}^\infty\left(\frac{\rho}{\lambda}\right)^k}.
			\]
Since the right-hand side is finite, we conclude that $\lambda\leq\rho$. Hence, the sum diverges and the right-hand side is $0$. We conclude that the limit on the left-hand side is $0$, i.e., $\lambda<\rho$.
		\end{proof}
		
		Let us set 
		\begin{align}\label{eq:19}
			m(z)=i\theta'(z).
		\end{align}
		
		\begin{corollary}\label{cor:1}
			Let $m$ be defined by \eqref{eq:19}. Then 
			\[
			m(z)=z^\beta \omega(z),
			\]
			where $\omega$ obeys \eqref{rhoperiodic} and
			\[
			\beta=\frac{\log\lambda}{\log \rho}-1 < 0.
			\]
		\end{corollary}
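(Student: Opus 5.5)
The plan is to differentiate the functional equation \eqref{eq:15} from Lemma~\ref{lem:2} and then apply Lemma~\ref{lem:6}. Before doing so, I check that $m$ is a Herglotz function, since Corollary~\ref{cor:1} implicitly requires it. Since $\theta$ maps $\bbC_+$ conformally onto the comb domain $\Pi$, whose boundary consists of the real line and vertical slits, the argument of $\theta'$ lies in $[0,\pi]$. This is the standard argument for comb mappings: $\arg\theta'$ is a bounded harmonic function whose boundary values on $\bbR$ are $0$ on $\sE$ (mapped to the real axis), and $\pm\pi/2$ on the two sides of the gaps (mapped onto the sides of vertical slits). With the appropriate orientation, this gives $\Im(i\theta'(z))>0$, i.e.\ $m=i\theta'$ is Herglotz. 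This positivity is also only needed for the later use of the corollary; the identity itself is purely algebraic.

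For the main computation, I differentiate $\lambda\theta(z)=\theta(\rho z)$ in $z$, which gives $\lambda\theta'(z)=\rho\,\theta'(\rho z)$, that is,
\[
m(\rho z)=\frac{\lambda}{\rho}\,m(z)=\rho^{\beta}m(z),\qquad \beta:=\frac{\log\lambda}{\log\rho}-1,
\]
since $\rho^{\beta}=\rho^{\log\lambda/\log\rho}\rho^{-1}=\lambda/\rho$. This is exactly the self-similarity \eqref{mselfsimilar}. By Lemma~\ref{lem:6}, setting $\omega(z)=z^{-\beta}m(z)$ with the principal branch on $\bbC_+$ gives $\omega(\rho z)=\omega(z)$, so $m(z)=z^\beta\omega(z)$.

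It remains to check the range of $\beta$. Lemma~\ref{lem:2} gives $\rho>\lambda>1$, so $0<\log\lambda/\log\rho<1$, and hence $\beta\in(-1,0)$. In particular $\beta<0$ and $\beta\in(-1,1)$, as Lemma~\ref{lem:6} requires. Finally, $\omega$ never vanishes: $\theta$ is conformal, so $\theta'\neq0$ on $\bbC_+$. Thus $\omega:\bbC_+\to\bbC\setminus\{0\}$, as needed to feed this example into Theorem~\ref{thm:4}.

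The only nontrivial input is the strict inequality $\rho>\lambda$, and that was already established in Lemma~\ref{lem:2}. The rest is a direct computation, so I do not expect a real obstacle; the one point needing care is the orientation and sign argument showing that $i\theta'$ maps into $\bbC_+$ rather than into $\bbC_-$.
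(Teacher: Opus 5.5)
Your proof is correct and is exactly the paper's argument: differentiate \eqref{eq:15} to get $m(\rho z)=(\lambda/\rho)\,m(z)=\rho^\beta m(z)$, apply Lemma~\ref{lem:6}, and read off $\beta<0$ from $\rho>\lambda$ in Lemma~\ref{lem:2}. Your Herglotz discussion is not needed for this statement, since Lemma~\ref{lem:6} applies to any function on $\bbC_+$ and the paper proves $m\in\cN_0$ separately in the subsequent proposition. Also, the boundary values you list give $\arg\theta'\in[-\pi/2,\pi/2]$, i.e.\ $\arg(i\theta')\in[0,\pi]$, not $\arg\theta'\in[0,\pi]$ as you wrote.
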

		\begin{proof}
			The definition of $\beta$ and \eqref{eq:15} yield 
			\[
			\rho^\beta m(z)=m(\rho z).
			\]
			Thus, Lemma \ref{lem:6} applies. 
		\end{proof}

		It remains to show that $m\in\cN_0$. Recall the definition of $\sE$ in \eqref{eq:5}. Since $\theta$ extends continuously to $\bbR$, we can define a measure on $\sE$ by 
		\begin{align}\label{eq:harmonicMeasure}
		\mu_\sE((a,b]):=\frac{1}{\pi}(\Re \theta(b)-\Re\theta(a)).
		\end{align}
		\begin{proposition}
			It holds that
			\begin{equation}\label{31jul1}
				\int\frac{d\mu_{\sE}(x)}{1+|x|}<\infty
			\end{equation}
			and  
			\begin{align}\label{eq:16}
				m(z)=m_{\sE}(z)= \int_\sE \frac{1}{x-z}\, d\mu_\sE(x).
			\end{align}
			In particular $m \in\cN_0$.
		\end{proposition}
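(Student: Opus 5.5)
The plan is to prove first that $m=i\theta'$ is a Herglotz function, and then identify its representing measure with $\mu_\sE$ by Stieltjes inversion applied to $\theta$ itself rather than to $\theta'$. The point is that the boundary regularity we have (continuity of $\theta$ up to $\bbR$, from Carath\'eodory's theorem) concerns $\theta$, not $\theta'$.

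\emph{Step 1: $m\in\cN_0$.} On the boundary, $\theta$ traverses $\partial\Pi$ from left to right. On $\sE$ we have $\Im\theta=0$ and $\Re\theta$ nondecreasing. On each gap $(\sa_k,\sb_k)$, $\Re\theta$ is constant and $\theta$ runs up and back down a needle. Heuristically, then, $\theta'$ is real and nonnegative on $\sE$ and purely imaginary on the gaps. Hence $\arg(i\theta')\in[0,\pi]$ on $\bbR$ away from the prevertices of needle tips and bases. To make this rigorous, I will first treat a truncated comb $\Pi_N$ with finitely many needles, keeping those with $|n|\le N$ and $k\le N$. For $\Pi_N$, the Schwarz--Christoffel formula
\[
\theta_N'(z)=C_N\prod_j\frac{z-c_j}{\sqrt{(z-a_j)(z-b_j)}}
\]
holds, with $a_j<c_j<b_j$ and $C_N>0$. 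From this formula one reads off directly that $\arg(i\theta_N')$ is a bounded harmonic function with boundary values in $[0,\pi]$, so $i\theta_N'$ is Herglotz. As $N\to\infty$, $\Pi_N\to\Pi$ in the sense of Carath\'eodory kernel convergence, so with matching normalizations $\theta_N\to\theta$ locally uniformly on $\bbC_+$. Consequently $i\theta_N'\to i\theta'$ locally uniformly, and $\cN_0$ is closed under such limits. I expect this step to be the main obstacle: one must choose normalizations of $\theta_N$ (for example, fixing $0$ and $\infty$ and one interior point) that make the kernel convergence work despite infinitely many needles accumulating at $0$ and $\infty$.

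\emph{Step 2: growth, and \eqref{31jul1}.} Differentiating \eqref{eq:15} gives $\theta'(\rho z)=\frac{\lambda}{\rho}\theta'(z)=\rho^\beta\theta'(z)$. Combined with the bound $|\theta(z)|\asymp|z|^{1+\beta}$ from the proof of Lemma~\ref{lem:2}, this yields $|m(iy)|\asymp y^\beta$. Since $\beta<0$, it follows that $m(iy)\to0$ and $m(iy)/(iy)\to0$ as $y\to\infty$. In addition, $\mu_\sE((-R,R])=\frac1\pi(\Re\theta(R)-\Re\theta(-R))\lesssim R^{1+\beta}$ for $R\ge1$. Summing over the dyadic shells $2^j<|x|\le 2^{j+1}$, exactly as in the proof of Theorem~\ref{thmCantorMeasure}, gives \eqref{31jul1} because $1+\beta<1$.

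\emph{Step 3: identification of the measure.} Write the representation \eqref{eq:100} for $m$, with measure $\sigma$, linear coefficient $b\ge0$ and constant $\alpha$. By Stieltjes inversion,
\[
\sigma((a,b])=\lim_{\e\downarrow0}\frac1\pi\int_a^b\Im m(x+i\e)\,dx
\]
(with the usual half-mass correction at atoms, which will be absent). Because $\Im m=\Re\theta'$ and $\partial_x\theta(x+i\e)=\theta'(x+i\e)$, the integral equals $\frac1\pi\Re\bigl(\theta(b+i\e)-\theta(a+i\e)\bigr)$. By continuity of $\theta$ up to $\bbR$, this converges to $\mu_\sE((a,b])$. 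Hence $\sigma=\mu_\sE$, and in particular $\sigma$ has no atoms. The growth bound $m(iy)/(iy)\to0$ forces $b=0$. Using \eqref{31jul1}, the term $\int\frac{x}{1+x^2}\,d\mu_\sE$ converges, so
\[
m(z)=\alpha'+\int\frac{d\mu_\sE(x)}{x-z}
\]
for a real constant $\alpha'$. Letting $z=iy\to i\infty$, the integral tends to $0$ by dominated convergence, again using \eqref{31jul1}, and $m(iy)\to0$ by Step~2. Therefore $\alpha'=0$, which gives \eqref{eq:16}.
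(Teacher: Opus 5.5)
There is a genuine gap in Step 1, and everything else in your argument depends on it: Steps 2 and 3 presuppose that $i\theta'$ is Herglotz. You leave this step unresolved, and the difficulty is real.

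Carath\'eodory's kernel theorem gives $\theta_N\to\theta$ only under an interior normalization, i.e.\ $\theta_N(z_0)=w_0$ with $\arg\theta_N'(z_0)$ prescribed. The Herglotz property you read off from the Schwarz--Christoffel formula, however, requires the boundary normalization $\theta_N(\infty)=\infty$. That property is not preserved under precomposition with an automorphism of $\bbC_+$ that moves $\infty$. Already for the trivial comb, $i\cdot(z)'=i$ is Herglotz but $i\cdot(-1/z)'=i/z^2$ is not.

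So for the interior-normalized maps you would have to prove that the prevertices $\theta_N^{-1}(\infty)$ tend to $\infty$. This is a boundary-correspondence statement that the kernel theorem does not supply. It can be proved, for instance by a length--area argument on small semicircles, but it is real work, because $\Pi_N$ and $\Pi$ differ by arbitrarily large needles near $\infty$ and by infinitely many needles near $0$. Your suggested normalization (fix $0$, $\infty$ and an interior point) is overdetermined: fixing two prevertices leaves only the dilations $z\mapsto cz$, $c>0$.

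The idea that makes the approximation unnecessary, and the one the paper uses, is subharmonicity. Extend $u=\Im\theta$ evenly to $\bbC$. Across each gap $u$ is harmonic, since you can reflect $\theta$ there because $\Re\theta$ is constant on the gap. Hence $u$ is continuous, nonnegative, harmonic on $\bbC\setminus\sE$ and zero on $\sE$. It is therefore subharmonic on $\bbC$, of finite order since $|u|\le|\theta|\lesssim|z|^{1+\beta}$, with a nonnegative Riesz measure $\nu$ supported on $\sE$.

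On $\bbC_+$ one has $i\theta'=-2\partial_z u$. Differentiating the Hadamard representation then gives $i\theta'(z)=a+\int\bigl(\frac{1}{x-z}-\frac{x}{1+x^2}\bigr)\,d\nu(x)$ with $a\in\bbR$. This yields the Herglotz property and the integral representation at once.

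From that point your Steps 2 and 3 are sound.
\begin{itemize}
\item Step 3 is exactly the paper's Stieltjes-inversion identification $\nu=\mu_\sE$, carried out correctly: you integrate $\Re\theta'(x+i\varepsilon)$, use continuity of $\theta$ up to $\bbR$, and thereby also exclude atoms.
\item Step 2 replaces the paper's appeal to the growth/tail correspondence for Herglotz functions with the direct bound $\mu_\sE((-R,R])\le CR^{1+\beta}$, obtained from scaling. This is a valid and more explicit route to \eqref{31jul1}, and to the vanishing of the linear and constant terms needed for \eqref{eq:16}.
\end{itemize}
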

		\begin{proof}
			We start again from the Hadamard representation \eqref{eq:17} of the subharmonic function $\theta$.  Adding the harmonic conjugate for $z\in\bbC_+$, differentiating and passing from normalization at $0$ to normalization at $i$ yields 
			\begin{equation}\label{31jul2}
				i\theta'(z)= a +  \int_{\sE} \left(\frac{1}{x-z}-\frac{x}{1+x^2}\right)\, d\nu(x).
			\end{equation}
			Now Stieltjes inversion yields that $\nu=\mu_\sE$. 
			It remains to show that \eqref{31jul1} holds and $i\theta'(z)$ can in fact be represented as in \eqref{eq:16}. For this, note that $\Im i\theta'(iy)\leq |i\theta'(iy)|\leq Cy^\beta$, for $\beta<0$ by Corollary \ref{cor:1}. Now both claims follow from the classical connection between growth of Herglotz functions and tails of the measure, cf. e.g. \cite[Proposition 7.33]{LukicFirstCourse}.
		\end{proof}

\section{Expanding polynomials with real Julia sets} \label{secExpandingJuliaSet}

\subsection{Limit cycles for balanced measures on Julia sets}\label{sec:JuliaSet}
		As an application of our main theorem, we show that Theorem \ref{thm:3} can be applied to orthogonal polynomials  with respect to the balanced measure on the Julia set of an arbitrary expanding  polynomial $T$ with a real Julia set  $\sE_0$, and rescaled at an arbitrary periodic point $\xi$ of $T$. Let  $\sa = \min \sE_0$ and $\sb = \max \sE_0$. Such a polynomial has the following characteristic property: the critical points 
		$$
		\cC_T=\{c: T'(c)=0\}
		$$
		are real and simple, and the critical values $T(c)$ are such that
		$$
		T(c)\in\bbR\setminus[\sa,\sb].
		$$
		That is, all critical points belong to the basin of attraction of infinity, see \cite[Theorem 2.19, p. 584]{ErLyu},
		$$
		T^{\circ n}(c)\to\infty,\quad\forall c\in\cC_T,
		$$
		where $T^{\circ (n+1)}(z)=T(T^{\circ n}(z))$.

	In this subsection we prove Theorem \ref{intro:thm2}. At first, let us assume that $\xi$ is a fixed or periodic point. It suffices, by replacing $T$ by some iterate $T^{\circ m}$, to consider the case when
	\[
	T(\xi) = \xi, \qquad T'(\xi) > 0.
	\]
	It is known that \cite[Theorem 4.2]{DGV2008} then $T'(\xi) \ge d$. By conjugating with a translation, we assume without loss of generality that $\xi = 0$, i.e., $T(0)=0$. Let $T'(0) = \rho > 1$. 
		
		The claim of Theorem \ref{intro:thm2}  follows mainly from two classical functional equations in iteration theory.
		First, the following limit exists,  see e.g. \cite[Sect. 3]{Grabner15},
		\begin{equation}\label{eq1-1-25}
			F(z)=\lim_{n\to\infty}F_n(z),\quad F_n(z)=T^{\circ n}(z/\rho^n).
		\end{equation}
		The limit function $F(z)$ satisfies the Poincar\'e equation \eqref{eq1-2-25}. Note that $F_n(0)=0,\ F_n'(0)=1$, respectively $F$ is normalized by the following conditions $F(0)=0,\ F'(0)=1$.
		
Similar to the construction in Section \ref{sec:Comb1}, there exists a comb mapping $\theta_0$ such that the Julia set $\sE_0$ as defined in \eqref{JuliaSet} is given by $\theta_0^{-1}([0,\pi])$. We will present the comb explicitly for a quadratic polynomial; see Figure \ref{fig:2} below. In this section, we work with the Green's mapping $g_{\sE_0}(z)=e^{i\theta_0(z)}$; cf. \cite{Pom77}.
			Note that in the iteration theory this is the well known B\"ottcher function, see e.g.
			\cite[Sect. 4]{Grabner15}.
			Initially only defined on $\bbC_+$, $g_{\sE_0}$ can be extended to the complex Green function with respect to $\infty$ to the domain $\bar \bbC\setminus \sE_0$.  It is a multivalued (character automorphic) function in the domain. We use the normalization $\theta_0(0)=0$. Since $e^{i\theta_0(T(z))}$ on the universal covering is again a Blaschke product with zeros corresponding to $\infty$ of multiplicity $d$, we have the B\"ottcher equation
		\begin{equation}\label{eq4-1-25}
			e^{i\theta_0(T(z))}=e^{di\theta_0(z)}.
		\end{equation}
		
		By \eqref{eq1-1-25}, the following limit exists
		\begin{equation}\label{eq1-4-25}
			\theta(z)=\lim_{n\to\infty}\theta_0(T^{\circ n}(z/\rho^n))=\theta_0(F(z)).
		\end{equation}
		Moreover, by \eqref{eq4-1-25}
		\begin{equation}\label{eqThetaLimitThetaN}
			\theta(z)=\lim_{n\to\infty}d^n\theta_0(z/\rho^n)
		\end{equation}
		and therefore
		$$
		\theta(\rho z)=d \theta(z).
		$$
		As a consequence we get the following important proposition.

			\begin{proposition}\label{prop:1} 
				The limit $\theta$ in \eqref{eqThetaLimitThetaN} is a complex Martin function for the set $\sE=F^{-1}(\sE_0)$. If we denote
				\begin{align}\label{eq:2-25}
					m_{\sE_0}(z)=i\theta_0'(z)=\int\frac{d\mu_{\sE_0}(x)}{x-z}
				\end{align}
				and
				\begin{align}\label{eq:3-25}
					m_{\sE}(z)=i\theta'(z)=
					\int\frac{d\mu_{\sE}(x)}{x-z},
				\end{align}
				cf. \cite{EichLuk,Lev89part2},
				with the Poincar\'e  function $F$ the following identity holds
				\begin{equation}\label{eq7-1-10}
					m_{\sE_0}(F(z)) F'(z)=m_\sE(z)
				\end{equation}
				and
				\begin{equation}\label{mrlim79}
					m_\sE(z) = \lim_{n\to\infty} \rho^{n \beta} m_{\sE_0}\left( z/\rho^n \right), \qquad \beta = \frac{\log d}{\log \rho} - 1 < 0.
				\end{equation}
			\end{proposition}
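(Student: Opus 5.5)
The plan is to obtain all claims by differentiating the functional equations already established for $\theta_0$ and $\theta$, and by reading off the Martin property from the comb picture. First, I would check that $\theta=\theta_0\circ F$ is a conformal map of $\bbC_+$ onto a comb domain with the self-similarity of Section~\ref{sec:Comb1}.

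For this, note that $F$ is entire and real on $\bbR$. Since $T$ is expanding with $T'(0)=\rho>0$, $F$ maps $\bbC\setminus\sE$ into $\bbC\setminus\sE_0$ and $\bbC_+$ into the region where $\theta_0$ is defined on the universal cover. Then $\Im\theta(z)=\Im\theta_0(F(z))=G_{\sE_0}(F(z))$, where $G$ is the Green function with pole at $\infty$. This is a positive harmonic function on $\bbC\setminus\sE$ that vanishes continuously on $\sE=F^{-1}(\sE_0)$, because $G_{\sE_0}$ vanishes on $\sE_0$. It is also bounded on bounded sets, since $F$ is bounded there, and it is symmetric since $F$ is real. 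Hence it is a symmetric Martin function at $\infty$ for $\sE$.

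Alternatively, and more robustly, I would use $\theta(z)=\lim d^n\theta_0(z/\rho^n)$ together with $\theta(\rho z)=d\theta(z)$. This identifies the image $\theta(\bbC_+)$ as the limit of the rescaled combs $d^n\theta_0(\bbC_+)$, which is a comb $\Pi$ with $d\Pi=\Pi$. This places us exactly in the setting of Lemma~\ref{lem:2} with $\lambda=d$, and the Martin property follows as in the proof of Lemma~\ref{lem:2}, citing \cite{ErY12}.

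Next comes the integral representation. For $m_{\sE_0}=i\theta_0'$, the representation is classical: $\theta_0'$ is the derivative of the complex Green function, and $\mu_{\sE_0}$ is the harmonic measure at $\infty$. For $m_\sE=i\theta'$, I would invoke the Proposition following Corollary~\ref{cor:1}, applied to the comb $\Pi$ with $\lambda=d$. It gives $m_\sE=\int d\mu_\sE(x)/(x-z)$ with $\mu_\sE$ defined from $\Re\theta$ as in \eqref{eq:harmonicMeasure}, provided $\beta<0$. That inequality is Lemma~\ref{lem:2}: $\rho>\lambda=d$, so $\beta=\log d/\log\rho-1<0$. One could also use the cited fact $\rho\ge d$ from \cite{DGV2008} together with strictness from Lemma~\ref{lem:2}.

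Identity \eqref{eq7-1-10} is the chain rule applied to $\theta=\theta_0\circ F$: $i\theta'(z)=i\theta_0'(F(z))F'(z)$. For \eqref{mrlim79}, differentiate $\theta=\lim_n d^n\theta_0(\cdot/\rho^n)$. Locally uniform convergence of analytic functions on $\bbC_+$ allows differentiation, giving
\[
i\theta'(z)=\lim_{n\to\infty} d^n\rho^{-n}\, i\theta_0'(z/\rho^n)=\lim_{n\to\infty}\rho^{n\beta}m_{\sE_0}(z/\rho^n),
\]
since $d^n\rho^{-n}=\rho^{n(\log d/\log\rho-1)}=\rho^{n\beta}$.

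The main obstacle is justifying that the convergence in \eqref{eqThetaLimitThetaN} holds locally uniformly on $\bbC_+$, including the choice of branch of $\theta_0$ near $0$ with $\theta_0(0)=0$. I would handle it as follows. First, $F_n\to F$ locally uniformly on $\bbC$, which is the classical Poincaré convergence. Second, $\theta_0$ is continuous up to the boundary and conformal on $\bbC_+$, and $F_n$ maps compact subsets of $\bbC_+$ into $\bbC_+$ for large $n$ because it is close to the real-symmetric map $F$. Then $\theta_0\circ F_n\to\theta_0\circ F$ locally uniformly. Finally, the Böttcher equation \eqref{eq4-1-25}, lifted with consistent normalization at the fixed point $0$, gives $\theta_0\circ F_n=d^n\theta_0(\cdot/\rho^n)$.
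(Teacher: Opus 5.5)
Your overall route is the paper's. You get \eqref{eq7-1-10} by the chain rule on $\theta=\theta_0\circ F$, and \eqref{mrlim79} by differentiating \eqref{eqThetaLimitThetaN}. You get the Martin property from $\Im\theta=G_{\sE_0}\circ F$ being positive, subharmonic on $\bbC$ and harmonic off $\sE$, or from the comb picture. The paper simply cites \cite{EichLuk,Lev89part2} for the Cauchy-transform representations. You instead derive them from the proposition following Corollary~\ref{cor:1}, with $\beta<0$ taken from Lemma~\ref{lem:2}. That works for an interior fixed point. At an endpoint of $\sE_0$, however, the limit comb lies over a half-line and is not of the form \eqref{eq:6}, so those two results would need adapting there.

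The step you single out as the main obstacle rests on a false claim. $F_n$ does not map compact subsets of $\bbC_+$ into $\bbC_+$, because $F$ does not. A nonaffine entire function that is real on $\bbR$ is never Herglotz. Concretely, $F$ maps small half-discs in $\bbC_+$ centred at its real critical points (the set $\cC$ of Section~\ref{sectionChristoffel}) onto sets meeting both half-planes. So you cannot compose with $\theta_0|_{\bbC_+}$. You need the multivalued continuation of $\theta_0$ to $\bbC\setminus\sE_0$, as your first paragraph suggests.

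The fact this actually requires, and which you never prove, is $F(\bbC_+)\cap\sE_0=\emptyset$, i.e.\ $\sE\subset\bbR$. It follows from complete invariance $T^{-1}(\sE_0)=\sE_0$ together with $F(z)=T^{\circ n}(F(z/\rho^n))$:
\begin{itemize}
\item these give $F(z)\in\sE_0$ if and only if $F(z/\rho^n)\in\sE_0$;
\item for large $n$, the point $z/\rho^n$ lies in a disc about $0$ on which $F$ is univalent and real-symmetric;
\item there $F(w)\in\bbR$ forces $w\in\bbR$, so $z\in\bbR$.
\end{itemize}
Your Green-function argument uses the same fact tacitly. It is needed both for $\theta_0\circ F$ to have a branch on $\bbC_+$ and for $\bbC\setminus\sE$ to be a Denjoy domain.

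With $\sE\subset\bbR$ in hand, the cleanest finish avoids branches entirely. Iterate the single-valued identity $m_{\sE_0}(w)=\frac{T'(w)}{d}m_{\sE_0}(T(w))$ on $\bbC\setminus\sE_0$, which the paper uses in the proof of Theorem~\ref{intro:thm2}. This gives
\[
\Bigl(\frac d\rho\Bigr)^n m_{\sE_0}(z/\rho^n)=F_n'(z)\,m_{\sE_0}(F_n(z)).
\]
For each compact $K\subset\bbC_+$, the set $F(K)$ is a compact subset of $\bbC\setminus\sE_0$. Hence the right-hand side converges uniformly on $K$ to $F'(z)\,m_{\sE_0}(F(z))$. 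This yields \eqref{eq7-1-10} and \eqref{mrlim79} simultaneously.
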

			
			\begin{proof}
				By \eqref{eq1-4-25} and \eqref{eqThetaLimitThetaN}, $\theta$ is nonconstant, positive subharmonic on $\bbC$, and harmonic on $\bbC \setminus \sE$. It is injective on $\bbC_+$ and defines a conformal map to a comb domain.
				
				By differentiating \eqref{eq1-4-25} we conclude \eqref{eq7-1-10}. Moreover, differentiating \eqref{eqThetaLimitThetaN} we obtain
				\[
				m_\sE(z) = \lim_{n\to\infty} \left( \frac d \rho \right)^n m_{\sE_0}\left( z/\rho^n \right)
				\]
				which gives \eqref{mrlim79}.
			\end{proof}

\begin{proof}[Proof of Theorem~\ref{intro:thm2}]
For a periodic point $\xi = 0$, from \eqref{mrlim79} it follows that $m_\sE(z\rho)=\rho^\beta m_\sE(z)$ and thus by Lemma \ref{lem:6} we have that 
				\begin{equation}\label{eq:26}
				m_\sE(z)=z^\beta\omega(z),\quad \omega(z\rho)=\omega(z).
				\end{equation}
				Hence, by Lemma~\ref{lem:3} the assumptions of Theorem~\ref{thm:3} are satisfied which implies \eqref{eq:18}.

If $\xi$ is eventually periodic, as before, by using a power of $T$ and an affine map, we pass to the case where $0$ is a fixed point, $T'(0) = \rho > 1$,
\[
T(\xi) =0, \quad \xi \neq 0.
\]
The equilibrium measure $\mu_{\sE_0}$ is an eigenvector for the adjoint $\cL_0^*$ of the Ruelle operator
\[
(\cL_0 f)(x) = \sum_{T(y)=x} f(y).
\]
\[
\int \frac 1{x-z} d\mu_{\sE_0}(x) = \frac 1d \int \frac{T'(z)}{x-T(z)} d\mu_{\sE_0}(x) 
\]
That is,
\[
m_{\sE_0} (z) = \frac {T'(z)}d m_{\sE_0}(T(z)), \quad z \in \bbC \setminus\sE_0.
\]
We denote
\[
f_n(z) = \left( \frac d\rho \right)^n m_{\sE_0} ( z/ \rho^n)
\]
and recall that $f_n \to m_\sE$ uniformly on compacts in $\bbC_\pm$. Moreover, differentiability implies that
\[
\rho^n T(\xi + z/\rho^n) \to T'(\xi) z, \qquad n \to\infty
\]
uniformly on compacts in $\bbC_+$. Composing these two statements, we conclude
\[
f_n(\rho^n T(\xi + z/\rho^n)) \to  m_\sE( T'(\xi) z )
\]
that is,
\[
\left( \frac d\rho \right)^n m_{\sE_0}(\xi + z/\rho^n) \to \frac{T'(\xi)}d m_{\sE}(T'(\xi)  z).
\]
Now the proof follows as before.
\end{proof}

		\section{Special case: orthogonal polynomials associated with the balanced measure  on the Julia set  of a quadratic polynomial }\label{sectionChristoffel}
In this section we consider the special case
		\begin{equation*}
			T(z):=z(z+\rho),\quad -\rho>2.
		\end{equation*}
		We point out that the multiplicator $\rho$ at the fixed point $\xi=0$ is negative. Let $\xi_+=1-\rho$ be the non-zero fixed point if $T$ and $\xi_-=-1$ be chosen so that $T^{-1}(\xi_+)=\{\xi_\pm\}$.  
		
The Julia set $\sE_0$ and the harmonic measure can be described in terms of an explicit comb mapping. For $h_0>0$ we construct the comb domain
		\[
		\Pi=\{x+iy\mid 0<x<\pi, y>0\}\setminus\bigcup_{k\geq 1}\bigcup_{\ell=1}^{2^{k-1}}\left\{\frac{2\ell-1}{2^k}\pi+iy\mid 0<y\leq\frac{h_0}{2^{k-1}}\right\}.
		\]
		Since $h_0/2^k\to 0$, the boundary of $\Pi$ is locally connected, conformal maps from $\bbC_+$ to $\Pi$ have continuous extensions to the closure of $\bbC_+$. Moreover, $0, \pi, \infty$ are not cut points, so there is a unique conformal map $\theta_0$ with the normalization $\theta_0(\xi_-)=0, \theta_0(\xi_+)=\pi, \theta_0(\infty)=\infty$. 
		\begin{figure}[ht]
				\begin{tikzpicture}
				\draw[line width=0.2mm,-] (0,0) -- (4,0) ;
				\draw[line width=0.2mm,-] (0,0) -- (0,4) ;
				\draw[line width=0.2mm,-] (4,0) -- (4,4) ;

				\draw[line width=0.2mm,-] (2,0) -- (2,3) ;
				\draw[line width=0.2mm,-] (1,0) -- (1,1.5) ;
				\draw[line width=0.2mm,-] (3,0) -- (3,1.5) ;
				
				\draw[line width=0.2mm,-] (0.5,0) -- (0.5,0.75) ;
				\draw[line width=0.2mm,-] (1.5,0) -- (1.5,0.75) ;
				\draw[line width=0.2mm,-] (2.5,0) -- (2.5,0.75) ;
				\draw[line width=0.2mm,-] (3.5,0) -- (3.5,0.75) ;
				
				\draw[line width=0.2mm,-] (0.25,0) -- (0.25,0.375) ;
				\draw[line width=0.2mm,-] (0.75,0) -- (0.75,0.375) ;
				\draw[line width=0.2mm,-] (1.25,0) -- (1.25,0.375) ;
				\draw[line width=0.2mm,-] (1.75,0) -- (1.75,0.375) ;
				\draw[line width=0.2mm,-] (2.25,0) -- (2.25,0.375) ;
				\draw[line width=0.2mm,-] (2.75,0) -- (2.75,0.375) ;
				\draw[line width=0.2mm,-] (3.25,0) -- (3.25,0.375) ;
				\draw[line width=0.2mm,-] (3.75,0) -- (3.75,0.375) ;

				\node at (2.25, 3.1)   {\scriptsize$h_0$};
				\node at (3.25,1.6)   {\scriptsize $\frac{h_0}{2}$};		
			\end{tikzpicture}
				\caption{The comb-domain $\Pi$ related to $\sE_0$}
			\label{fig:2}
		\end{figure}  
		There exists a unique choice of $h_0$, so that $\theta_0^{-1}([0,\pi])=\sE_0$, see \cite{NPVY}. In this case the harmonic measure $\mu_{\sE_0}$  of the Julia set $\sE_0$ is given by the Lebesgue measure on the base of the comb as in \eqref{eq:harmonicMeasure}.
		
		For linear fractional transforms we will use a brief notation
		$$
		A\star w=\frac{a_{11} w+a_{12}}{a_{21}w+a_{22}},\quad A=\begin{pmatrix}
		a_{11}&a_{12}\\a_{21}&a_{22}
		\end{pmatrix},\ \det A\not=0.
		$$
In this section we derive asymptotics for $\cK_{11}(n,0,0)$ and $\cK_{22}(n,0,0)$. Moreover, we prove Theorem \ref{intro:thm3}, that is we show that the limit chain can be parametrized by the M-type. Since e.g. by the discussion \cite[Section 33]{Sodin2000} $\cK(z,w)$ can be recovered from the diagonal, we work in this section with the diagonal
	$$
	\cK(z):=\cK(z,z)=\frac{\fA(z)J\fA(z)^*-J}{z-\bar z}
	$$	
	 of the reproducing kernels $\cK(z,w)$.
	In what follows
	\[
\mathscr C(m_{\sE_0})=\{\fA_n(\cdot)\mid n\in\bbZ_+\}
\]
is the standard discrete chain of transfer matrices generated by orthogonal polynomials associated to the balanced measure $\mu_{\sE_0}$, see \eqref{intro:eq8}. Moreover let
\[
\mathscr C(m_\sE)=\{\fD(t,\cdot)\mid t\in [0,\infty)\}
\]
be the chain that meets de Branges normalizations
\begin{equation}\label{eq:dbr}
\fD(0,z)=I,\quad\tr\left(\partial_z\partial_t\fD(t,z)J|_{z=0}\right)=1.
\end{equation}

Based on the recurrence relations \eqref{intro:eq1}, we introduce the half-line Jacobi matrix $\cJ$ corresponding to $\mu$, 
		\begin{align*}
		\cJ e_n=&a_{n}e_{n-1}+b_n e_n+a_{n+1}e_{n+1},\quad n\ge 1,\\
		\cJ e_0=&b_0e_0+a_1 e_1,
		\end{align*}
		where $(e_n)$ is the standard basis in $\ell^2(\bbZ_+)$.  In our case, by symmetry, $b_k=-\rho/2$ for all $k$. The following formulas are standard and can for instance be found in \cite[Section 10]{LukicFirstCourse}. The resolvent (Weyl) function of $\cJ$ coincides with the Herglotz function $m_{\sE_0}$,
		$$
		m_{\cJ}(z)=\langle(\cJ-z)^{-1}e_0,e_0\rangle=\int\frac{d\mu_{\sE_0}}{x-z}=m_{\sE_0}(z).
		$$
		We will use block representations
		\begin{equation}\label{eq:27may-1}
		\cJ=\begin{pmatrix}
		\cJ_k^- & 0\\
		0&\cJ_k^+
		\end{pmatrix}+a_k ( e_{k-1}\langle \cdot, e_k\rangle+e_{k}\langle \cdot, e_{k-1}\rangle).
		\end{equation}
		Recall that the diagonal entry $R_k(z)$ of the resolvent  can be represented in terms of one-sided resolvent functions $m^\pm_{\cJ,k}$
		\begin{equation}\label{eq:27may-2}
-\frac 1{R_{k}(z)}=-\frac 1{m_{\cJ,k}^+(z)}+m_{\cJ,k}^-(z),
\end{equation}
where
$$
R_k(z)=\langle(\cJ-z)^{-1}e_k,e_k\rangle
$$
and $m^\pm_{\cJ,k}$ are defined by
$$
m_{\cJ,k}^-(z)=a_k^2\langle(\cJ_k^--z)^{-1}e_{k-1},e_{k-1}\rangle,
\ \ 
m_{\cJ,k}^+(z)=\langle(\cJ_k^+-z)^{-1}e_{k},e_{k}\rangle
$$
and possess the following representations
\begin{equation}\label{eq:27may-3}
		m^-_{\cJ,k}(z)=-\frac{a_kp_{k-1}(z)}{p_k(z)},
		\quad
		m_{\cJ,k}^+(z)=	\fA_k(z)^{-1}\star m_{\cJ}(z).
		\end{equation}
		
		\subsection{An explicit expression for the limit reproducing kernels  on a dense set}
		In this subsection,
		using renormalization relations for orthogonal polynomials with respect to $\mu_{\sE_0}$, we derive  explicit formulas for the limit reproducing kernels
		corresponding to the $M$-type of the form $k/2^s$.
				 
		The following relations are well known, we provide a proof for the reader's convenience.		
		\begin{lemma}\label{lem:orthoPoly}
			Let $p_k,q_k$ denote the orthonormal polynomials of first and second kind associated to $\mu_{\sE_0}$. Then for $n,k\geq 0$ it holds that
			\begin{align}\label{eq:3}
				p_{2^nk}(z)&=p_k(T^{\circ n}(z)),\\
				q_{2^nk}(z)&=\frac{1}{2^n}q_k(T^{\circ n}(z))(T^{\circ n})'(z).\label{eq:4}
			\end{align}
		\end{lemma}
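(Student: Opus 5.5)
The plan is to prove the case $n=1$ directly from the two defining properties of $\mu_{\sE_0}$ and then induct on $n$. For $n=1$ the claims are $p_{2k}=p_k\circ T$ and $q_{2k}=\tfrac12 (q_k\circ T)\,T'$. Throughout, the only input is the balanced identity
\[
\int f\,d\mu_{\sE_0}=\int \tfrac12\sum_{T(y)=u}f(y)\,d\mu_{\sE_0}(u),\qquad f\in C(\sE_0).
\]
Applied to $f=h\circ T$, it gives invariance: $\int h(T(x))\,d\mu_{\sE_0}(x)=\int h\,d\mu_{\sE_0}$.

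\textbf{First-kind polynomials.} The polynomial $p_k\circ T$ has degree $2k$ and positive leading coefficient, since $T$ is monic. By invariance, $\int p_k(T)p_j(T)\,d\mu_{\sE_0}=\delta_{kj}$, so $p_k\circ T$ has norm one. By uniqueness of orthonormal polynomials with positive leading coefficient, it then remains to show that $p_k\circ T$ is orthogonal to every polynomial $g$ with $\deg g<2k$.

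Since $\deg T=2$, division by powers of $T$ writes any such $g$ as $g(x)=A(T(x))+x\,B(T(x))$. Comparing degrees forces $\deg A\le k-1$ and $\deg B\le k-1$.

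The $A$-part is orthogonal to $p_k\circ T$ by invariance and the orthogonality of $p_k$. For the $B$-part, apply the balanced identity to $f(y)=p_k(T(y))\,y\,B(T(y))$. The two preimages $y_1,y_2$ of $u$ under $T(y)=y(y+\rho)$ satisfy $y_1+y_2=-\rho$, so
\[
\int p_k(T(x))\,x\,B(T(x))\,d\mu_{\sE_0}(x)=-\frac{\rho}{2}\int p_k B\,d\mu_{\sE_0}=0,
\]
the last equality because $\deg B<k$. This proves $p_{2k}=p_k\circ T$.

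\textbf{Second-kind polynomials.} Start from the definition and substitute $p_{2k}=p_k\circ T$:
\[
q_{2k}(z)=\int\frac{p_k(T(x))-p_k(T(z))}{x-z}\,d\mu_{\sE_0}(x).
\]
For fixed $z$ the integrand is a polynomial in $x$, so the balanced identity applies. The key computation is the sum over preimages of $1/(y-z)$. Writing $T(y)-u=(y-y_1)(y-y_2)$ and taking the logarithmic derivative at $y=z$ gives
\[
\sum_{T(y)=u}\frac{1}{y-z}=\frac{T'(z)}{u-T(z)}.
\]
Substituting this,
\[
q_{2k}(z)=\frac{T'(z)}{2}\int\frac{p_k(u)-p_k(T(z))}{u-T(z)}\,d\mu_{\sE_0}(u)=\frac{T'(z)}{2}\,q_k(T(z)).
\]
The degenerate case $k=0$ is consistent, since $p_0=1$ and $q_0=0$.

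\textbf{Induction.} Suppose the formulas hold for $n-1$. Then
\[
p_{2^nk}=p_{2^{n-1}k}\circ T=p_k\circ T^{\circ n}.
\]
Similarly,
\[
q_{2^nk}(z)=\tfrac12T'(z)\,q_{2^{n-1}k}(T(z))=\tfrac12T'(z)\,2^{-(n-1)}\,q_k\big(T^{\circ n}(z)\big)\,(T^{\circ(n-1)})'(T(z)).
\]
By the chain rule, $T'(z)\,(T^{\circ(n-1)})'(T(z))=(T^{\circ n})'(z)$, so the right-hand side equals $2^{-n}q_k(T^{\circ n}(z))(T^{\circ n})'(z)$, which is the claim for $n$.

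The only nonroutine step is the orthogonality of $p_k\circ T$ to the odd part $x\,B(T(x))$. That step is exactly where the balanced (rather than merely invariant) property of $\mu_{\sE_0}$ and the constant preimage sum $y_1+y_2=-\rho$ are used.
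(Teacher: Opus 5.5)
Your proof is correct. The second-kind computation and the induction are essentially the paper's argument. The paper writes the preimage-sum step through the Ruelle operator, as $\int \cL_0 f\,d\mu_{\sE_0}=2\int f\,d\mu_{\sE_0}$ together with $\cL_0\bigl(\tfrac{1}{z-y}\bigr)(x)=\tfrac{T'(z)}{T(z)-x}$, and this is exactly your logarithmic-derivative identity.

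The real difference is in the first-kind identity $p_{2k}=p_k\circ T$. The paper does not prove it: it only says it follows from the eigenmeasure property and cites \cite{BarnGerHarr82}. You give a complete argument instead. You use uniqueness of orthonormal polynomials, write every $g$ with $\deg g<2k$ as $A\circ T+x\,B\circ T$ with $\deg A,\deg B\le k-1$, and kill the odd part using the $u$-independent preimage sum $y_1+y_2=-\rho$. This makes the lemma self-contained at the cost of a few lines. It relies on $T$ being monic, so that $p_k\circ T$ has positive leading coefficient, and on the preimage power sum being constant. For a degree-$d$ polynomial the same argument would go through using Newton's identities for the power sums of order $<d$.

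One small correction to your closing remark: the balanced (not merely invariant) property is also essential in the second-kind step. There you apply it to $y\mapsto \frac{p_k(T(y))-p_k(T(z))}{y-z}$, which is not of the form $h\circ T$.
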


		\begin{proof}
			The Ruelle operator $\cL_0:C(\sE_0)\to C(\sE_0)$ is defined by 
			\[
			(\cL_0f)(x)=\sum_{T(y)=x}f(y).
			\]
			The proof is based on the property that $\mu_{\sE_0}$ is an eigenmeasure of its adjoint $\cL_0^*$ to the eigenvalue $2$. That is, for every $f\in C(\sE_0)$,
			\begin{equation}\label{eq:1}
				\int (\cL_0f)(x)d\mu_{\sE_0}(x)=2\int f(x)d\mu_{\sE_0}(x).
			\end{equation}
			We will  use the following identity for $f(x)=\frac{1}{z-x}$ and $z\in\bbC$
			\begin{equation}\label{eq:2}
				(\cL_0 f)(x)=\frac{T'(z)}{T(z)-x}.
			\end{equation}

			First we show \eqref{eq:3}, \eqref{eq:4} for $k\geq 0$ and $n=1$. \eqref{eq:3} for $n=1$ follows from \eqref{eq:1}, see \cite{BarnGerHarr82}.
			 Using the first identity, \eqref{eq:1} and \eqref{eq:2}, we compute
			\begin{align*}
				q_{2k}(z)&=\int\frac{p_{k}(T(z))-p_k(T(x))}{z-x}d\mu_{\sE_0}(x)\\
				&=\frac{1}{2}\int \cL_0\bigg(\frac{p_{k}(T(z))-p_k(T(y))}{z-y}\bigg)(x)d\mu_{\sE_0}(x)\\
				&=\frac{1}{2}\int(p_{k}(T(z))-p_k(x))\cL_0\bigg(\frac{1}{z-y}\bigg)(x)d\mu_{\sE_0}(x)\\
				&=\frac{T'(z)}{2}\int\frac{p_{k}(T(z))-p_k(x)}{T(z)-x}d\mu_{\sE_0}(x)
				=\frac{T'(z)}{2}q_k(T(z))
			\end{align*}
			That is \eqref{eq:4} holds for $n=1$. Induction over $n$ shows that 
			\[
			p_{2^n k}(z)=p_{2 2^{n-1}k}(z)=p_{2^{n-1}k}(T(z))=p_k(T^{\circ(n-1)}(T(z)))=p_k(T^{\circ n}(z)).
			\]
			Likewise
			\begin{align*}
				q_{2^n k}(z)&=q_{2 2^{n-1}k}(z)=\frac{1}{2}q_{2^{n-1}k}(T(z))T'(z)\\
				&=\frac{1}{2}\frac{1}{2^{n-1}}q_k(T^{\circ (n-1)}(T(z)))
				(T^{\circ (n-1)})'(z)T'(z)\\
				&=\frac{1}{2^n}q_k(T^{\circ n}(z))(T^{\circ n})'(z).
			\end{align*}
		\end{proof}
		Recall that
		$
		F_n(z)=T^{\circ n}(z/\rho^n).
		$
		As an immediate consequence of Lemma \ref{lem:orthoPoly} we get
		\begin{lemma}
			For $n,k\geq 0$ it holds that
			\begin{align}\label{eq2-5-11}
				p_{k2^n}(z/\rho^n)&=p_k(F_n (z))\\ \label{eq2-6-11}
				(-1)^n q_{k2^n}(z/\rho^n)&=\left(\frac{|\rho|}{2}\right)^nq_k(F_n(z))F_n'(z).
			\end{align}
		\end{lemma}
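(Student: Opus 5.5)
This is a direct substitution into Lemma~\ref{lem:orthoPoly}, with $z$ replaced by $z/\rho^n$.

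For \eqref{eq2-5-11}, apply \eqref{eq:3} at the point $z/\rho^n$. This gives
\[
p_{k2^n}(z/\rho^n)=p_k(T^{\circ n}(z/\rho^n)),
\]
and the right-hand side is $p_k(F_n(z))$ by the definition $F_n(z)=T^{\circ n}(z/\rho^n)$. No further work is needed for this identity.

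For \eqref{eq2-6-11}, apply \eqref{eq:4} at $z/\rho^n$ to get
\[
q_{k2^n}(z/\rho^n)=\frac{1}{2^n}\,q_k\bigl(T^{\circ n}(z/\rho^n)\bigr)\,(T^{\circ n})'(z/\rho^n).
\]
Differentiating $F_n(z)=T^{\circ n}(z/\rho^n)$ by the chain rule gives $F_n'(z)=\rho^{-n}(T^{\circ n})'(z/\rho^n)$. Hence $(T^{\circ n})'(z/\rho^n)=\rho^n F_n'(z)$, and therefore
\[
q_{k2^n}(z/\rho^n)=\Bigl(\frac{\rho}{2}\Bigr)^n q_k(F_n(z))\,F_n'(z).
\]

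It remains to handle the sign. In this section $\rho<-2$, so $\rho^n=(-1)^n|\rho|^n$. Multiplying both sides by $(-1)^n$ and using $(-1)^{2n}=1$ gives exactly \eqref{eq2-6-11}.

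There is no real obstacle here. The only points requiring care are the chain-rule factor $\rho^{-n}$ and the sign convention forced by the negative multiplier $\rho$. Both identities hold for all $z\in\bbC$ because everything involved is a polynomial.
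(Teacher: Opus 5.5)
Your proposal is correct and matches the paper's proof: substitute $z/\rho^n$ into \eqref{eq:3} and \eqref{eq:4} and use the chain-rule identity $F_n'(z)=\rho^{-n}(T^{\circ n})'(z/\rho^n)$. You also spell out the sign bookkeeping $\rho^n=(-1)^n|\rho|^n$ for $\rho<-2$, which the paper leaves implicit.
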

		\begin{proof}
			Observing that 
			\[
			F_n'(z)=\frac{1}{\rho^n}(T^{\circ n})'(z/\rho^n),
			\]
			both identities follow by substituting $z/\rho^n$ in \eqref{eq:3} and \eqref{eq:4}.
		\end{proof}

		Based on \eqref{eq2-5-11},   \eqref{eq2-6-11} we extend renormalization relations on  transfer matrices $\fA_n(z)$.
		\begin{lemma}\label{l2-2-11}
			Let
			\begin{align}\label{eq:53}
				\cU_n=\begin{pmatrix}
				(-1)^n(|\rho|/2)^{-n/2}&0\\
				0&(|\rho|/2)^{n/2}
			\end{pmatrix}\quad\text{and}\quad
			\cF_n=\begin{pmatrix}
				1 & 0 \\
				0 & {F'_n(z)}
			\end{pmatrix}
		\end{align}
			Then the family $\{\fA_n\}$ obeys the following renormalization
			\begin{equation}\label{23-08-05-03'S}
				\cU_n
				\fA_{k2^n}(z/\rho^n)
				\cU_n^{-1}
				=
				\cF_n(z)^{-1}\fA_k(F_n(z))\cF_n(z)
				\begin{pmatrix}
					1 & 0 \\  U_{k,n}(z) & 1
				\end{pmatrix}
			\end{equation}
			where
			\begin{align}\label{eq:110}
				U_{k,n}(z)=
				-\sum_{c\in \cC_n}\frac{m^-_{\cJ,k}(F_n(c))}{(c-z)F_n''(c)}
			\end{align}
			and
			$\cC_n$ is the set of critical points of $F_n(z)$
			$$
			\cC_n=\{c: F'_n(c)=0\}.
			$$
		\end{lemma}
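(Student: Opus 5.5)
The plan is to compare both sides of \eqref{23-08-05-03'S} column by column. The second column will follow directly from \eqref{eq2-5-11}, \eqref{eq2-6-11}. The first column will then be forced by a uniqueness argument: given the second column of a transfer matrix and the normalization $\det=1$, there is only one admissible first column. Write $N=k2^n$ and recall from \eqref{eq:50} that
\[
\fA_N=\begin{pmatrix}-a_Nq_{N-1}&-q_N\\ a_Np_{N-1}&p_N\end{pmatrix},
\]
where $a_Np_{N-1}$ is a polynomial of degree $N-1$.

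\textbf{Step 1: second columns agree.} Multiplying out the right-hand side of \eqref{23-08-05-03'S} gives
\[
\cF_n^{-1}\fA_k(F_n)\cF_n\begin{pmatrix}1&0\\U&1\end{pmatrix}
=\begin{pmatrix}\fA_{11}(F_n)+\fA_{12}(F_n)F_n'U & \fA_{12}(F_n)F_n'\\ \fA_{21}(F_n)/F_n'+\fA_{22}(F_n)U & \fA_{22}(F_n)\end{pmatrix},
\]
with $\fA=\fA_k$ and $U=U_{k,n}$. The left side is $\cU_n\fA_N(z/\rho^n)\cU_n^{-1}$. Its second column consists of $-(-1)^n(|\rho|/2)^{-n}q_N(z/\rho^n)$ and $p_N(z/\rho^n)$. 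By \eqref{eq2-5-11} and \eqref{eq2-6-11} these equal $-q_k(F_n)F_n'$ and $p_k(F_n)$, which are exactly the right-hand second column.

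\textbf{Step 2: the right-hand side is an admissible transfer matrix.} Its determinant is $1$, being a product of factors of determinant $1$. The main point is that its entries are polynomials in $z$, with the $(2,1)$ entry of degree at most $N-1$. Using \eqref{eq:27may-3}, write the $(2,1)$ entry as
\[
p_k(F_n(z))\Bigl[-\frac{m^-_{\cJ,k}(F_n(z))}{F_n'(z)}+U_{k,n}(z)\Bigr].
\]
The function $m^-_{\cJ,k}(F_n)/F_n'$ is rational. Its possible poles come from the zeros of $p_k\circ F_n$, which are cancelled by the prefactor, and from the critical points $c\in\cC_n$. These critical points are simple, since the critical points of $T$ are simple and their orbits escape, so they never meet $\sE_0$ again. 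The residue at $c$ is $m^-_{\cJ,k}(F_n(c))/F_n''(c)$, and the term \eqref{eq:110} is built precisely to cancel these principal parts. Hence the bracket times $p_k(F_n)$ is entire, and so it is a polynomial.

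For the degree bound I use behaviour at infinity: $m^-_{\cJ,k}(w)=O(1/w)$, $1/F_n'$ is small, and $U_{k,n}(z)=O(1/z)$. Together with $\deg p_k(F_n)=N$ this gives degree at most $N-1$. The same argument applied to $\fA_{11}(F_n)+\fA_{12}(F_n)F_n'U$ shows the $(1,1)$ entry is a polynomial.

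\textbf{Step 3: uniqueness.} Let two polynomial matrices with determinant $1$ share the second column $(x,y)^T$. Their first columns then differ by $\lambda(x,y)^T$. Here $\lambda$ is a polynomial, because $x$ and $y$ are coprime: the determinant relation writes $1$ as a polynomial combination of them. On the left side the $(2,1)$ entry is $a_Np_{N-1}(z/\rho^n)$, and on the right it has degree at most $N-1$ by Step 2. Their difference is $\lambda p_N$, of degree below $N$, while $p_N$ has degree exactly $N$. This forces $\lambda=0$, which gives \eqref{23-08-05-03'S}.

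I expect the main obstacle to be Step 2: verifying the simplicity of critical points, the exact pole cancellation, and the degree count at infinity. That is where the specific form of $U_{k,n}$ enters.
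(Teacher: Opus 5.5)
Your proof is correct, but it takes a different route from the paper's. The paper \emph{derives} the $(2,1)$ entry. It expands $(\rho/2)^n m^-_{\cJ,k2^n}(z/\rho^n)=-(\rho/2)^n a_{k2^n}p_{k2^n-1}(z/\rho^n)/p_k(F_n(z))$ in partial fractions over the zeros of $p_k\circ F_n$; these are simple because they are zeros of $p_{k2^n}(z/\rho^n)$. It evaluates the residues using $\det\fA_{k2^n}=\det\fA_k=1$. It then regroups with the partial fraction decomposition of $1/(F_n'(F_n-x_j))$, whose critical-point terms assemble into $U_{k,n}$. This gives $(\rho/2)^n m^-_{\cJ,k2^n}(z/\rho^n)=m^-_{\cJ,k}(F_n)/F_n'-U_{k,n}$, and the $(1,1)$ entry follows from the determinant.

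You instead \emph{verify} the given formula. You examine the right-hand side only near the critical points, where $U_{k,n}$ is exactly the sum of the principal parts of $m^-_{\cJ,k}(F_n)/F_n'$, and at infinity. Rigidity then does the rest. That rigidity is even more immediate than via coprimality: if $M,M'$ are polynomial matrices with determinant $1$ and the same second column, then $M^{-1}M'=\left(\begin{smallmatrix}1&0\\ \lambda&1\end{smallmatrix}\right)$ with $\lambda=uv'-vu'$ a polynomial. Your route needs nothing about the zeros of $p_k\circ F_n$ and avoids the double partial-fraction bookkeeping. The paper's route shows where $U_{k,n}$ comes from rather than checking it, and produces the identity for $m^-_{\cJ,k2^n}$ along the way (though that identity also follows from your matrix identity).

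Two small things to tighten. First, what is actually needed at $c\in\cC_n$ is:
\begin{itemize}
\item $F_n''(c)\neq 0$, which holds because the orbit of $-\rho/2$ escapes and so never returns to $-\rho/2$;
\item $F_n(c)\notin[-1,1-\rho]$, the convex hull of $\sE_0$, which contains all zeros of $p_k$. This makes $m^-_{\cJ,k}(F_n(c))$ finite and keeps these poles away from the zeros of $p_k\circ F_n$.
\end{itemize}
The second fact is Lemma~\ref{lem:7}(i); your phrase ``never meet $\sE_0$'' is not quite the right statement. Second, the $(1,1)$ entry needs no cancellation argument: $F_n'U_{k,n}$ is already a polynomial, because $F_n'$ vanishes simply at each pole of $U_{k,n}$.
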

		
		\begin{remark}\label{rem:1}
			Note that the renormalized family $\{\cU_n
			\fA_{k2^n}(z/\rho^n)
			\cU_n^{-1}\}_{n\in\bbZ_0}$ consists also of $J$-expanding  matrix-functions for both even and odd indices.
		\end{remark}

		
		\begin{proof}
			Since
			$$
			\cU_n
			\fA_{k2^n}(z/\rho^n)
			\cU_n^{-1}=
			\begin{pmatrix}
				-a_{_{k2^n}} q_{{k2^n}-1}& -(\rho/2)^{-n}q_{k2^n}\\
				(\rho/2)^n a_{_{k2^n}} p_{{k2^n}-1}& p_{k2^n}
			\end{pmatrix}(z/\rho^n)
			$$
			in view of \eqref{eq2-5-11} and \eqref{eq2-6-11}
			\begin{equation*}\label{23-08-05-01S}
				=
				\begin{pmatrix}
					-a_{_{k2^n}} q_{{k2^n}-1}(z/\rho^n)& -q_{k}(F_n(z))F'_n(z)\\
					(\rho/2)^n a_{_{k2^n}} p_{{k2^n}-1}(z/\rho^n)& p_{k}(F_n(z))
				\end{pmatrix}.
			\end{equation*}
			We consider the following ratio
			\begin{equation*}\label{23-08-05-04S}
				(\rho/2)^n m_{\cJ,k 2^n}^-(z/\rho^n)
				=-\dfrac{(\rho/2)^n a_{_{k2^n}} p_{{k2^n}-1}(z/\rho^n)}{p_{k}(F_n(z))}.
			\end{equation*}
			Let $x_j$ be zeros of $p_k(z)$, $1\le j\le k$. Then zeros of the denominator are solutions of
			$$
			F_n(y)=x_j.
			$$
			Since at the same time they are zeros of $p_{k2^n}(z/\rho^n)$, they are simple. Therefore,
			$$
			(\rho/2)^n m_{\cJ,k 2^n}^-(z/\rho^n)=
			\sum_{j=1}^k\sum_{F_n(y)=x_j}\frac{(\rho/2)^n a_{_{k2^n}} p_{{k2^n}-1}(y/\rho^n)}{p'_k(F_n(y)) F_n'(y)}
			\dfrac{1}{y-z}
			$$
			Since $\det \fA_{k 2^n}(z)=1$ and $p_{k}(F_n(z))$ vanishes at $y$'s, we get
			$$
			(\rho/2)^n a_{_{k2^n}} p_{{k2^n}-1}(y/\rho^n)\cdot q_{k}(F_n(y))F'_n(y)=1.
			$$
			Hence,
			$$
			(\rho/2)^n m_{\cJ,k 2^n}^-(z/\rho^n)
			=\sum_{j=1}^k\sum_{F_n(y)=x_j}\frac{1}{q_k(F_n(y))p'_k(F_n(y)) F_n'(y)^2}
			\dfrac{1}{y-z}
			$$
			$$
			=
			\sum_{j=1}^k\frac{1}{p_k'(x_j)q_k(x_j)}\sum_{F_n(y)=x_j}\frac{1}{F_n'(y)^2(y-z)}.
			$$
			Since $\det\fA_k(z)=1$, we have
			$$
			a_k p_{k-1}(x_j)\cdot q_k(x_j)=1.
			$$
			Therefore,
			\begin{equation}\label{eq2-8-11}
				(\rho/2)^n m_{\cJ,k 2^n}^-(z/\rho^n)
				=\sum_{j=1}^k\frac{a_{k} p_{k-1}(x_j)}{p_k'(x_j)}\sum_{F_n(y)=x_j}\frac{1}{F'_n(y)^2(y-z)}.
			\end{equation}
			Now we note that
			\begin{equation}\label{eq2-9-11}
				\frac{1}{F_n'(z)(F_n(z)-x_j)}=\sum_{F(y)=x_j}\frac{1}{F_n'(y)^2(z-y)}+\sum_{c\in \cC_n}\frac{1}{F_n''(c)(F_n(c)-x_j)(z-c)}.
			\end{equation}
			Using \eqref{eq2-9-11} we can further transform \eqref{eq2-8-11} as follows
			$$
			(\rho/2)^n m_{\cJ,k 2^n}^-(z/\rho^n)
			=-\sum_{j=1}^k\frac{a_{k} p_{k-1}(x_j)}{p_k'(x_j)}\frac{1}{F_n'(z)(F_n(z)-x_j)}
			$$
			$$
			+
			\sum_{j=1}^k\frac{a_{k} p_{k-1}(x_j)}{p_k'(x_j)}\sum_{c\in \cC_n}\frac{1}{F_n''(c)(F_n(c)-x_j)(z-c)}
			$$
			$$
			=\frac{m^-_{\cJ,k}(F_n(z))}{F'_n(z)}
			+
			\sum_{c\in \cC_n}
			\frac{1}{F_n''(c)(z-c)}
			\sum_{j=1}^k\frac{a_{k} p_{k-1}(x_j)}{p_k'(x_j)}\frac{1}{F_n(c)-x_j}
			$$
			$$
			=\frac{m^-_{\cJ,k}(F_n(z))}{F'_n(z)}
			-
			\sum_{c\in \cC_n}
			\frac{m_{\cJ,k}^-(F_n(c))}{F_n''(c)(z-c)}
			=\frac{m^-_{\cJ,k}(F_n(z))}{F'_n(z)}
			-
			U_{k,n}(z).
			$$
			Hence
			$$
			(\rho/2)^n a_{_{k2^n}} p_{{k2^n}-1}(z/\rho^n)
			=\frac{a_kp_{k-1}(F_n(z))}{F'_n(z)}
			+
			p_k(F_n(z))U_{k,n}(z).
			$$
			Thus we proved that three entries of the matrices on the LHS and RHS of
			\eqref{23-08-05-03'S} coincide. Moreover they are not vanishing identically. Since the determinants of both matrices are one, 
			the identity
			\eqref{23-08-05-03'S} is proved.
		\end{proof}

		In order to show that the limit of $U_{k,n}$ as $n\to\infty$ exists, we need a preliminary lemma. 
		
		\begin{lemma}\label{lem:7}The following holds:
			\begin{enumerate}[(i)]
				\item ${\displaystyle \forall n\in\bbN, \forall c\in\cC_n: F_n(c)\ge \rho^2/4-\rho>1-\rho\lor F_n(c)\le -\rho^2/4<-1}$;
				\item ${\displaystyle \exists M>0, \forall n\in\bbN, \forall c\in\cC_n: 0<-\frac{m_{\cJ,k}^-(F_n(c))}{F_n(c)}<M}$;
				\item ${\displaystyle \exists M>0, c\in\cC: 0<-\frac{m_{\cJ,k}^-(F(c))}{F(c)}<M}$;
				\item ${\displaystyle -U_{k,n}(z)\in\cN_0}$;
			\end{enumerate}
		\end{lemma}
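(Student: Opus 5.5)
The plan is to read everything off the structure of the critical orbit of $T(z)=z(z+\rho)$ and the sign structure of the rational Herglotz function $m^-_{\cJ,k}$. Throughout, $k$ is fixed.

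\emph{Step 1: item (i).}
\begin{itemize}
\item By the chain rule,
\[
F_n'(c)=\rho^{-n}\prod_{j=0}^{n-1}T'\bigl(T^{\circ j}(c/\rho^n)\bigr).
\]
So $c\in\cC_n$ if and only if $T^{\circ j}(c/\rho^n)=-\rho/2$ for some $0\le j<n$.
\item For such $c$, $F_n(c)=T^{\circ(n-j-1)}(v)$, where $v=T(-\rho/2)=-\rho^2/4$ is the unique critical value. Since $-\rho>2$, we have $v<-1$.
\item Case $n-j-1=0$: then $F_n(c)=v\le-\rho^2/4$.
\item Case $n-j-1\ge1$: on $(-\infty,-\rho/2)$ the map $T$ is decreasing, and $-1<-\rho/2$. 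Hence
\[
T(v)\ \ge\ T(-1)=1-\rho=\xi_+,
\qquad
T(v)=\tfrac{\rho^2}{4}\Bigl(\tfrac{\rho^2}{4}-\rho\Bigr)\ \ge\ \tfrac{\rho^2}{4}-\rho,
\]
using $\rho^2/4>1$ and $\rho^2/4-\rho>0$.
\item On $[1-\rho,\infty)$ the map $T$ is increasing and $T(x)-x=x(x+\rho-1)\ge0$. So the orbit stays $\ge\rho^2/4-\rho>1-\rho$, which gives (i).
\end{itemize}

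\emph{Step 2: items (ii) and (iii).}
\begin{itemize}
\item By \eqref{eq:27may-3} and \eqref{eq:27may-1},
\[
m^-_{\cJ,k}(x)=a_k^2\langle(\cJ_k^--x)^{-1}e_{k-1},e_{k-1}\rangle=\sum_j\frac{w_j}{x_j-x},\qquad w_j>0,
\]
where the $x_j$ are the zeros of $p_k$. These lie in the convex hull of $\sE_0$, which is $[\xi_-,\xi_+]=[-1,1-\rho]$ (the hull is determined by $T^{-1}(\xi_+)=\{\xi_\pm\}$).
\item Hence $m^-_{\cJ,k}$ is positive for $x<-1$ and negative for $x>1-\rho$. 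Consequently $-m^-_{\cJ,k}(x)/x>0$ at every point allowed by (i).
\item Upper bound: $|m^-_{\cJ,k}(x)|\le a_k^2/\operatorname{dist}(x,[-1,1-\rho])$. By (i), $|x|$ and this distance are both bounded below by $\rho^2/4-1>0$ (using $\rho^2/4-\rho-(1-\rho)=\rho^2/4-1$). This gives $M$, even uniformly in $k$, since $a_k\le\|\cJ\|$.
\item For (iii), the Poincar\'e equation \eqref{eq1-2-25} gives $\rho F'(\rho z)=T'(F(z))F'(z)$. Thus critical values of $F$ again lie on the forward orbit of $-\rho^2/4$ (alternatively, pass to the limit from (i) via Hurwitz, using $F_n\to F$). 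The same estimate then applies.
\end{itemize}

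\emph{Step 3: item (iv).}
\begin{itemize}
\item Write
\[
-U_{k,n}(z)=\sum_{c\in\cC_n}\frac{w_c}{c-z},\qquad w_c=\frac{m^-_{\cJ,k}(F_n(c))}{F_n''(c)}.
\]
It suffices to show $w_c\ge0$, since a finite sum of this form is Herglotz.
\item $F_n$ is a real polynomial of degree $2^n$ with $2^n$ real simple zeros (the preimages of $0\in\sE_0$). So each critical point lies strictly between two consecutive zeros, is simple, and $F_n(c)\neq0$ by (i).
\item If $F_n(c)>0$, then $c$ is a local maximum, so $F_n''(c)<0$. If $F_n(c)<0$, then $F_n''(c)>0$. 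In both cases $\operatorname{sign}F_n''(c)=-\operatorname{sign}F_n(c)=\operatorname{sign}m^-_{\cJ,k}(F_n(c))$ by Step 2, hence $w_c>0$.
\end{itemize}

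The main obstacle I expect is item (i): locating the critical values precisely. The sign argument in (iv) and the bounds in (ii)–(iii) depend on those values avoiding a fixed neighborhood of the hull $[-1,1-\rho]$, with sign determined by which side they lie on.
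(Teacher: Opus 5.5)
Your proposal is correct and takes essentially the same route as the paper. You place the critical values of $F_n$ on the forward orbit of $T(-\rho/2)=-\rho^2/4$, and then combine the pole and sign structure of $m^-_{\cJ,k}$ on $[-1,1-\rho]$ with the sign of $F_n''$ at critical points. Your minor variations---the explicit monotone-orbit bound in (i), deriving (iii) directly from the Poincar\'e equation instead of via Hurwitz, and justifying $F_n''(c)F_n(c)<0$ through real-rootedness of $F_n$ and Rolle's theorem---are all valid and supply details the paper leaves implicit.
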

		\begin{proof}
			(i):	First we show that for all $n\in\bbN$ and all $c\in\cC_n$
			\begin{align}\label{eq:108}
			F_n(c)\in \{T^{\circ k}(-\rho/2)\mid k\in\bbN\}.
			\end{align}
			Since $F_n$ and $T^{\circ n}$ only differ by rescaling of the argument, the set of critical values coincide. Let  $\tilde \cC_n$ denote the set of critical points of $T^{\circ n}$ and set $\tilde \cC_0=\emptyset$. We claim that for $n\geq 1$, $\tilde \cC_{n+1}=\tilde\cC_1\cup T^{-1}(\tilde\cC_n)$. For $n=0$ this is clear. For $n\geq 1$ we have
			\[
			T^{\circ(n+1)}(z)'=T^{\circ n}(T(z))'={T^{\circ n}}'(T(z))T'(z).
			\]
			Hence, $T^{\circ(n+1)}(c)'=0$ if and only if $c\in \tilde \cC_1$ or $T(c)\in \tilde\cC_{n}$, which shows $\tilde \cC_{n+1}=\tilde\cC_1\cup T^{-1}(\tilde\cC_n)$. Now we can show \eqref{eq:108} by induction.  Since $\tilde\cC_1=\{-\rho/2\}$, the statement is true for $n=1$. Now take $c\in\cC_{n+1}$. If $c=-\rho/2$, $T^{\circ(n+1)}(c)$ clearly belongs to the set in \eqref{eq:108}. If $c\in T^{-1}(\tilde\cC_{n})$, then $T(c)\in \tilde\cC_{n}$ and thus, by induction hypothesis, $T^{\circ n}(T(c))=T^{\circ(n+1)}(c)$ belongs also to this set. This finishes the proof of \eqref{eq:108}. It follows from \eqref{eq:108} again by induction that 
			$$
			|F_n(c)+\rho/2|\ge \rho^2/4-\rho/2,\quad \forall c\in\cC_n
			$$ 
			which clearly implies (i).
			
			(ii): Since $m_{\cJ,k}^-$ has only poles in $[-1,1-\rho]$, $\lim\limits_{x\to\pm\infty}m_{\cJ,k}^-(x)=0$ and is monotonically increasing and positive on $(-\infty,-1)$ and monotonically increasing and negative on $(1-\rho,\infty)$, by (i) we find $M>0$ so  that uniformly in $n$ and $c\in\cC_n$
			\[
			\frac{1}{\sqrt{M}}<|F_n(c)|,\quad |m_{\cJ,k}^-(F_n(c))|<\sqrt{M}
			\]
			and hence 
			\[
			0<-\frac{m_{\cJ,k}^-(F_n(c))}{F_n(c)}<M.
			\]
			
			(iii): Since $F_n\to F$, by Hurwitz theorem and local uniform convergence, we have that for every $c\in\cC$ there is a unique $c^{(n)}\in\cC_n$ such that $F_n(c^{(n)})\to F(c)$. Since $F_n(c^{(n)})=T^{\circ n_k}(-\rho/2)$ for some $n_k\in\bbN$ and the set in \eqref{eq:108} is discrete we conclude that $F(c)=T^{\circ k}(-\rho/2)$ for some $k\in\bbN$. Hence, the estimates can be proved as in (ii).
			
			(iv): By \eqref{eq:110} we need to show that for all $c\in\cC_n$, 
			\[
			\frac{m_{\cJ,k}^-(F_n(c))}{F''_n(c)}>0.
			\]
			If $F_n(c)>0$, then by (i) $F_n(c)>1-\rho$ and hence by the properties of $m_{\cJ,k}^-$, $m_{\cJ,k}^-(F_n(c))<0$. On the other hand, in this case we have $F''_n(c)<0$ and the claim follows. The proof for $F_n(c)<0$ is analogously. 
		\end{proof}

		In order to find the limit of $U_{k,n}$, it will be convenient to use an auxiliary Herglotz function. 
		Let us define the rational Herglotz function
		\[
		w_n(z)=\frac{F_n(z)}{F_n'(z)}.
		\]
		Define
		\[
		\sigma_n=\sum_{c\in \cC_n}\sigma^n_{c}\delta_{c},\quad \sigma_{c}^n=-\frac{F_n(c)}{F_n''(c)}>0.
		\]
		Note that positivity of $\sigma_{c}^n$ follows as in the proof of Lemma \ref{lem:7}(iv), which together with the behavior at infinity given below implies that the rational function $w_n$ is indeed a Herglotz function.
		If $\cC_n=\{c\in\bbC\mid F'_n(c)=0\}$ denotes the set of critical points of $F_n$, then  since $w_n(0)=0$ we get 	
		\[
		w_n(z)=\frac{z}{2^n}+\sum_{c\in \cC_n}\left(\frac{1}{c-z}-\frac{1}{c}\right)\sigma_{c}^n,\qquad \sum_{c\in \cC_n}\frac{\sigma^n_c}{1+c^2}<\infty.
		\]
		Likewise, let us define the meromorphic Herglotz function 
		\[
		w(z)=\frac{F(z)}{F'(z)}
		\]
		and
		\[
		\sigma=\sum_{c\in \cC}\sigma_{c}\delta_{c},\quad \sigma_{c}=-\frac{F(c)}{F''(c)}>0,
		\]
		where $\cC$ denotes the set of critical points of $F$. 
		Since by \eqref{eq7-1-10} and \eqref{eq:26} $F(z)/F'(z)\sim z^{-\beta}$,
		\[
		\lim_{y\to\infty} \frac{\Im w(iy)}{y}=0,
		\]
		and hence there is no point mass at infinity in the integral representation of $w$. Using again that $w(0)=0$ we obtain
		\begin{align}\label{eq:105}
		w(z)=\sum_{c\in\cC}\left(\frac{1}{c-z}-\frac{1}{c}\right)\sigma_c, \quad \sum_{c\in\cC}\frac{\sigma_c}{1+c^2}<\infty. 
		\end{align}

		\begin{lemma}\label{l:22dec1}
			Uniformly on compact subsets of $\bbC_+$, the following limit exists
			\begin{equation}\label{14-nov-1}
				\hat U_k(z):=\lim_{n\to\infty}(U_{k,n}(z)-U_{k,n}(0))=
				-\sum_{c\in \cC}\frac{m_{\cJ,k}^{-}(F(c))}{F''(c)}\left(\frac 1{c-z}-\frac 1 c\right)
			\end{equation}
		and $-\hat U_k\in\cN_0$.
		\end{lemma}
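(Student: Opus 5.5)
The plan is to write $U_{k,n}(z)-U_{k,n}(0)$ as an integral against the positive measures $\sigma_n$ and pass to the limit using the convergence $w_n\to w$. By \eqref{eq:110},
\[
U_{k,n}(z)-U_{k,n}(0)=-\sum_{c\in\cC_n}\frac{m^-_{\cJ,k}(F_n(c))}{F_n''(c)}\left(\frac1{c-z}-\frac1c\right)
=\sum_{c\in\cC_n}\phi_n(c)\,\sigma^n_c\left(\frac1{c-z}-\frac1c\right),
\]
where $\phi_n(c)=m^-_{\cJ,k}(F_n(c))/F_n(c)$. Here I use $\sigma^n_c=-F_n(c)/F_n''(c)$, so that $-m^-_{\cJ,k}(F_n(c))/F_n''(c)=\phi_n(c)\sigma^n_c$. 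By Lemma~\ref{lem:7}(ii), $-M<\phi_n(c)<0$ uniformly in $n$ and $c$. Hence the weights $-\phi_n(c)\sigma_c^n$ are positive and dominated by $M\sigma^n_c$. The same argument with Lemma~\ref{lem:7}(iii) bounds the limiting weights.

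Step 1. Show that $w_n\to w$ locally uniformly on $\bbC\setminus\bbR$. This follows from $F_n\to F$ and $F_n'\to F'$ locally uniformly on $\bbC$; on compacts of $\bbC_+$ we have $F'\neq0$, because $w$ is Herglotz with $\Im w>0$ there. From this I get vague convergence of the measures $\sigma_n/(1+x^2)$ to $\sigma/(1+x^2)$. To rule out mass escaping to infinity, I use $\Im w_n(iy)/y\to\Im w(iy)/y$ for fixed $y$, together with the decay of $\Im w(iy)/y$ as $y\to\infty$ (the $z/2^n$ linear term vanishes in the limit). The standard Herglotz continuity theorem then gives convergence of the measures in the weak sense relevant for the kernel $\frac1{c-z}-\frac1c$. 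Local convergence of the atoms follows from Hurwitz: each critical point $c$ of $F$ is a limit of unique critical points $c^{(n)}\in\cC_n$, as in the proof of Lemma~\ref{lem:7}(iii).

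Step 2. Show that the weights $\phi_n(c^{(n)})$ converge to $\phi(c)=m^-_{\cJ,k}(F(c))/F(c)$. In fact they are eventually equal: $F_n(c^{(n)})$ lies in the discrete set $\{T^{\circ j}(-\rho/2)\}$ from \eqref{eq:108} and converges to $F(c)$. Combining Steps 1 and 2, for fixed $R$ the sum over $|c|\le R$ converges to the corresponding finite sum for $\cC$, since there are finitely many atoms there and $c^{(n)}\to c$, $\sigma^n_{c^{(n)}}\to\sigma_c$.

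Step 3. Handle the tail $|c|>R$ uniformly in $n$. For $z$ in a compact set $K$ I bound $|\frac1{c-z}-\frac1c|\le C_K/(1+c^2)$ when $|c|$ is large. The tail is then at most $MC_K\sum_{|c|>R}\sigma^n_c/(1+c^2)$, which is small uniformly in $n$ by the tightness from Step 1. I expect this tightness to be the main obstacle. To obtain it, I would use $\Im w_n(i)\to\Im w(i)$, which gives convergence of the total masses $\sum\sigma_c^n/(1+c^2)$ (plus the vanishing $2^{-n}$ term), together with the local convergence. This yields \eqref{14-nov-1} uniformly on compacts.

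Finally, each $-U_{k,n}(z)+U_{k,n}(0)$ is Herglotz by Lemma~\ref{lem:7}(iv), since the weights are positive. Its locally uniform limit $-\hat U_k$ is therefore in $\cN_0$; alternatively this can be read off directly from the positivity of the limiting weights $-\phi(c)\sigma_c$.
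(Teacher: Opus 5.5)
Your proposal is correct and follows essentially the same route as the paper. You factor the weights as the uniformly bounded multipliers $m^-_{\cJ,k}(F_n(c))/F_n(c)$ from Lemma~\ref{lem:7} times $\sigma^n_c$, get tightness of $\sigma_n/(1+c^2)$ from the Herglotz convergence $w_n\to w$ (the limit has no mass at infinity), and handle the finitely many atoms in a bounded window by Hurwitz; your tightness argument via $\Im w_n(i)=2^{-n}+\sum_c\sigma^n_c/(1+c^2)\to\Im w(i)$ is just a hands-on version of the paper's appeal to weak$^*$ convergence in $C(\overline{\bbR})^*$.
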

		\begin{proof}
		
		Recall that 
		\[
		U_{k,n}(z)-U_{k,n}(0)=-\sum_{c\in \cC_n}\left(\frac{1}{c-z}-\frac{1}{c}\right)\frac{m^-_{\cJ,k}(F_n(c))}{F_n''(c)}=\sum_{c\in\cC_n}\frac{z(1+c^2)}{(c-z)c}\nu^n_c,
		\]
		where
		\[
		\nu^n_c=-\frac{m^-_{\cJ,k}(F_n(c))}{F_n''(c)(1+c^2)}
		\]
		Likewise we write
		\[
		-\sum_{c\in \cC}\frac{m_{\cJ,k}^{-}(F(c))}{F''(c)}\left(\frac 1{c-z}-\frac 1 c\right)=\sum_{c\in\cC_n}\frac{z(1+c^2)}{(c-z)c}\nu_c,\quad \nu_c=-\frac{m^-_{\cJ,k}(F(c))}{F''(c)(1+c^2)}.
		\]
		Recall that by Lemma \ref{lem:7} $\nu_c,\nu_{c_n}<0$. If we denote $\nu^n=-\sum_{c\in\cC_n}\nu^n_c\delta_c$ and $\nu=-\sum_{c\in\cC}\nu_c\delta_c$, the claim follows from $\nu^n\to\nu$ in the weak$^*$ topology of $C(\overline{\bbR})^*$.

		 Since $F_n$ converges, uniformly on compact subsets of $\bbC_+$, $w_n\to w$. If $d\sigma_n=\sum_{c\in\cC_n}\sigma^n_c\delta_{c}$ and likewise $d\sigma=\sum_{c\in\cC}\sigma_c\delta_{c}$ the general theory of Herglotz functions, see e.g. \cite[Proposition 7.28]{LukicFirstCourse}, implies
		\begin{align}\label{eq:107}
			\delta_\infty2^{-n}+\frac{d\sigma_n(c)}{1+c^2}\to \frac{d\sigma(c)}{1+c^2}
		\end{align}
		 in the weak$^*$ topology of $C(\overline{\bbR})^*$. By \eqref{eq:105}, for every $\e>0$, there exists $N>0$ $-N,N\notin\cC$ and 
		 \begin{align}\label{eq:111}
		 \sum_{c\in\cC\setminus[-N,N]}\frac{\sigma_c}{1+c^2}<\e.
		 \end{align}
		 From \eqref{eq:107} and the fact that $2^{-n}\to 0$, we obtain that there is $n_0$ such that for $n\geq n_0$, 
		 \begin{align}\label{eq:112}
		 \sum_{c\in\cC_n\setminus[-N,N]}\frac{\sigma^n_c}{1+c^2}<2\e.
		 \end{align}
	
		Note that 
		\[
		\nu^n_c=\frac{m^-_{\cJ,k}(F_n(c))}{F_n(c)}\frac{1}{1+c^2}\sigma^n_c,\quad \nu_c=\frac{m^-_{\cJ,k}(F(c))}{F(c)}\frac{1}{1+c^2}\sigma_c.
		\]
		By Lemma \ref{lem:7} all multipliers are uniformly bounded. Hence, by \eqref{eq:111}, \eqref{eq:112} for every $f\in C(\overline{\bbR})$, we find $N$ and $n_0$ such that for $n\geq n_0$
		\[
		\int_{\overline{\bbR}\setminus[-N,N]}|f(x)|d\nu^n(x)+\int_{\overline{\bbR}\setminus[-N,N]}|f(x)|d\nu(x)<\epsilon. 
		\]
		
		On the other hand on $[-N,N]$ there are only finitely many critical points of $F$ and again by Hurwitz's theorem for every $c\in\cC\cap[-N,N]$ there exists an eventually unique sequence $c^{(n)}\in\cC_n$ such that $c^{(n)}\to c$. Since $F_n\to F$ locally uniformly we conclude that the multipliers of $\nu^n$ also converge and we find that $\nu^n|_{[-N,N]}\to \nu|_{[-N,N]}$ in the weak$^*$ topology of $C([-N,N])^*$. This finishes the proof of  \eqref{14-nov-1}. 
		
		Since $-U_{k,n}\in\cN_0$ by Lemma \ref{lem:7}, it follows that $-\hat U_k\in\cN_0$. 
			\end{proof}

		Lemma \ref{l:22dec1} allows us to send $n\to\infty$ in  \eqref{23-08-05-03'S}. From this, we obtain asymptotics of the diagonal reproducing kernels associated with orthogonal polynomials
		$$
		\cK_n(z)=\frac{\fA_n(z)J\fA_n(z)^*-J}{z-\bar z},
		$$
		where $\fA_n(z)$ is as in \eqref{eq:50}.

		\begin{theorem}\label{thnov22-1}
			Let
			$$
			 \fB_k(z)=\cF(z)^{-1}\fA_k(F(z))\cF(z)
			 \begin{pmatrix}
				1 & 0 \\  \hat U_{k}(z) & 1
			\end{pmatrix}, \quad \cF=\begin{pmatrix}
				1 & 0 \\
				0 & {F'(z)}
			\end{pmatrix},
			$$
			Then
			\begin{align}\label{eq:51}
					\lim_{k\to\infty}\fB_k\star 0={F'(z)}{m_{\cJ}(F(z))}=m_\sE(z).
			\end{align}
			We define the associated reproducing kernels
			\begin{align}
				\hat \cK_k(z)=\frac{\fB_k(z)J \fB_k(z)^*-J}{z-\bar z}.
			\end{align}
			Then 
			\begin{equation}\label{eq16_nov_1}
				\lim_{n\to\infty}\frac 1{|\rho|^n}\cU_n\cK_{k2^n}(z/\rho^n)\cU_n^*=\hat \cK_k(z).
			\end{equation}
		\end{theorem}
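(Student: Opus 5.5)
The plan is to reduce both claims to the renormalization identity of Lemma~\ref{l2-2-11}, the convergence $F_n\to F$, and Lemma~\ref{l:22dec1}. Claim \eqref{eq:51} is a direct computation followed by Markov's theorem. Claim \eqref{eq16_nov_1} comes from passing to the limit in \eqref{23-08-05-03'S} after discarding a constant $J$-unitary factor.

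\emph{Step 1: the limit \eqref{eq:51}.} For a lower triangular matrix we have $\left(\begin{smallmatrix}1&0\\ u&1\end{smallmatrix}\right)\star 0=0$. Also $\cF\star w=w/F'$ and $\cF^{-1}\star w=F'w$. Hence
\[
\fB_k\star 0=F'(z)\,\bigl(\fA_k(F(z))\star 0\bigr)=-F'(z)\frac{q_k(F(z))}{p_k(F(z))}.
\]
The measure $\mu_{\sE_0}$ has compact support, so Markov's theorem gives $-q_k/p_k\to m_{\cJ}$ uniformly on compacts of $\bbC\setminus[\sa,\sb]$. Consequently $\fB_k\star 0\to F'm_{\cJ}(F)$ on the open set of $z\in\bbC_+$ where $F(z)\notin[\sa,\sb]$; for example, a neighbourhood of $iy$ with $y$ large. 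To obtain convergence on all of $\bbC_+$, I will use that $\fB_k$ is $J$-inner. This follows from the identity \eqref{23-08-05-03'S} in the limit $n\to\infty$, and also directly, since $-\hat U_k\in\cN_0$ and $\cF^{-1}\fA_k(F)\cF$ inherits $J$-contractivity from the Herglotz property of $F'\,(\cdot)(F)$ structure. Therefore $\fB_k\star 0$ is a Herglotz function, these functions form a normal family, and Vitali's theorem upgrades the convergence to all compacts of $\bbC_+$. The identification $F'm_{\cJ}(F)=m_\sE$ is \eqref{eq7-1-10}.

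\emph{Step 2: rewriting the kernel.} Write $\fM_n(z)=\cU_n\fA_{k2^n}(z/\rho^n)\cU_n^{-1}$. Since $\rho^n$ is real, $z/\rho^n-\ol{z}/\rho^n=(z-\ol z)/\rho^n$. A diagonal matrix $D$ satisfies $DJD^{T}=(\det D)J$, and here $\det\cU_n=(-1)^n$. Combining these gives
\[
\frac1{|\rho|^n}\cU_n\cK_{k2^n}(z/\rho^n)\cU_n^*=\pm\frac{\fM_n(z)J\fM_n(z)^*-J}{z-\ol z}.
\]
I will check that the signs $(-1)^n$ coming from $\det\cU_n$ and from $\rho^n=(-1)^n|\rho|^n$ cancel; Remark~\ref{rem:1} asserting $J$-expansivity of the renormalized family is consistent with this. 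Next, \eqref{23-08-05-03'S} expresses $\fM_n$ as $\cF_n^{-1}\fA_k(F_n)\cF_n\bigl(\begin{smallmatrix}1&0\\ U_{k,n}&1\end{smallmatrix}\bigr)$. Split off the factor $\bigl(\begin{smallmatrix}1&0\\ U_{k,n}(0)&1\end{smallmatrix}\bigr)$, which is constant in $z$ and real because the points $c\in\cC_n$ are real. For real $a$ we have $\bigl(\begin{smallmatrix}1&0\\ a&1\end{smallmatrix}\bigr)J\bigl(\begin{smallmatrix}1&a\\0&1\end{smallmatrix}\bigr)=J$, so this factor does not change the kernel. Thus the kernel of $\fM_n$ equals the kernel of $\cF_n^{-1}\fA_k(F_n)\cF_n\bigl(\begin{smallmatrix}1&0\\ U_{k,n}-U_{k,n}(0)&1\end{smallmatrix}\bigr)$.

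\emph{Step 3: passing to the limit.} The functions $F_n\to F$ locally uniformly on $\bbC$, and $\fA_k$ is a polynomial matrix. By Lemma~\ref{l:22dec1}, $U_{k,n}-U_{k,n}(0)\to\hat U_k$ uniformly on compacts of $\bbC_+$. So the matrices converge to $\fB_k$ on compacts of $\bbC_+$, and the diagonal kernels converge as well, which gives \eqref{eq16_nov_1} on $\bbC_+$. On $\bbC_-$ the same argument applies by the symmetry $\ol{U_{k,n}(\ol z)}=U_{k,n}(z)$. If convergence is also wanted across $\bbR$, I would use that the kernels are positive and analytic–antianalytic, hence locally bounded once bounded off the axis, so the maximum principle transfers uniform convergence across the real axis.

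The main obstacle is that $U_{k,n}$ itself need not converge: its poles at the critical points $c\in\cC_n$ accumulate on $\bbR$, and only the normalized difference is controlled. Step 2 resolves this through the $J$-unitarity of the real constant shear, so the sign and normalization bookkeeping there has to be checked carefully.
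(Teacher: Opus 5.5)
Your proposal is correct and follows the paper's own (very terse) proof: Markov's theorem together with \eqref{eq7-1-10} for \eqref{eq:51}, then letting $n\to\infty$ in \eqref{23-08-05-03'S} using $F_n\to F$ and Lemma~\ref{l:22dec1}. Your write-up usefully makes explicit two points the paper leaves implicit: the signs coming from $\det\cU_n=(-1)^n$ and from $\rho^n=(-1)^n|\rho|^n$ cancel exactly, and the real constant factor $\bigl(\begin{smallmatrix}1&0\\ U_{k,n}(0)&1\end{smallmatrix}\bigr)$ is $J$-unitary, which is exactly why only the normalized limit $\hat U_k$ of Lemma~\ref{l:22dec1} is needed.

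Two side remarks should be dropped or replaced. First, $\cF^{-1}\fA_k(F)\cF$ is not $J$-expanding on its own: for $k=1$ and $B=\cF^{-1}\fA_1(F)\cF$, the matrix $BJB^*-J$ has vanishing $(1,1)$ entry but $(1,2)$ entry $1-F'(z)/\overline{F'(z)}$, which is nonzero whenever $F'(z)\notin\bbR$. The Vitali detour is unnecessary anyway: every point of $[\sa,\sb]=[-1,1-\rho]$ has only real preimages under each $F_n$, hence (by Hurwitz) under $F$, so $F(\bbC_+)\cap[\sa,\sb]=\emptyset$ and Markov's theorem already gives convergence on compacts of all of $\bbC_+$. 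Second, positivity of analytic--antianalytic kernels does not by itself give local boundedness across $\bbR$. Rank-one kernels $f_n(z)\overline{f_n(w)}$, with Runge-type entire $f_n$ tending to $0$ off $\bbR$ but $f_n(0)=n$, are counterexamples. For real points such as $z=0$, which the paper uses later, argue instead as follows. The proof of Lemma~\ref{l:22dec1} yields convergence of $U_{k,n}-U_{k,n}(0)$ uniformly on compacts of $\bbC$ avoiding the real, discrete critical set $\cC$ of $F$. Then apply the maximum principle, on small circles around the points of $\cC$, to the entire functions $\cU_n\fA_{k2^n}(z/\rho^n)\cU_n^{-1}\bigl(\begin{smallmatrix}1&0\\ U_{k,n}(0)&1\end{smallmatrix}\bigr)^{-1}$.
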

		\begin{proof}
			Since 
			\[
			\lim\limits_{k\to\infty}\fA_k(z)\star 0=m_{\cJ}(z),
			\]
			we get the first equality in \eqref{eq:51}. The second equality follows from \eqref{eq7-1-10}. Finally, $F_n\to F$ and Lemma \ref{l:22dec1}, allow us to take the limit in \eqref{23-08-05-03'S} to obtain \eqref{eq16_nov_1}. 
		\end{proof}
		For a fixed  $s\in\bbN_0$ we have 
		\begin{equation}\label{eq:27may-6}
		\frac 1{|\rho|^s}\cU_s\hat \cK_k(z/\rho^s)\cU_s^*=\lim_{n+s\to\infty}
		\frac 1{|\rho|^{n+s}}\cU_{n+s}\cK_{(k2^{-s})2^{n+s}}(z/\rho^{n+s})\cU_{n+s}^*.
	\end{equation}
		This justifies to formally extend the definition of $\hat\cK_k$ by
		\begin{align}\label{eq:54}
		\hat\cK_{k/2^s}(z):=\frac 1{|\rho|^s}\cU_s\hat \cK_k(z/\rho^s)\cU_s^*.
	\end{align}
		It follows from \eqref{eq:27may-6} that $k_1/2^{s_1}=k_2/2^{s_2}$ imply that $\hat\cK_{k_1/2^{s_1}}(z)=\hat\cK_{k_2/2^{s_2}}(z)$ and thus for such values the kernel is well defined. In particular, this shows that for $k,s\in\bbN_0$, $\hat\cK_{k2^s/2^s}(z)=\hat\cK_{k}(z)$. Moreover,  by \eqref{eq:54} this extension is $J$-monotonic. Namely, we have: 
		$$
		\hat\cK_{k_1/2^s}(z)\le \hat\cK_{k_2/2^s}(z), \quad k_1\le k_2.
		$$
		In the following we will use the trace parametrization of the chain $\mathscr C(m_\sE)$ introduced in Remark \ref{rem:intro}
				\begin{corollary}\label{cor:2} Let
		\[
\mathscr C(m_\sE)=\{\fD(t,\cdot)\mid t\in [0,\infty)\}
\]
be the unique de Branges' chain generated by the Herglotz function $m_\sE$ and 
$$
		 \cK_{\fD}(t,z)=\frac{\fD(t,z)J\fD(t,z)^*-J}{z-\bar z}
		$$
		be the corresponding chain of kernels in $\bbC\bbD(2)$. Then there is a unique $t_{k,s}>0$ such that
		\begin{equation}\label{eq:27may-4}
	\hat \cK_{k/2^s}(z)=	 \cK_\fD(t_{k,s},z).
		\end{equation}
		Moreover $\frac{k}{2^s}$ corresponds to $M$-type of  $\fD(t_{k,s},z)$.
		\end{corollary}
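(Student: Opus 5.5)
We first identify $\hat\cK_{k/2^s}$ as a kernel of the chain $\mathscr C(m_\sE)$, using de Branges' completeness, and then compute its $M$-type from the explicit form of $\fB_k$ in Theorem~\ref{thnov22-1}. By \eqref{eq:54} it suffices to treat $s=0$, i.e.\ the kernels $\hat\cK_k$, and then undo the rescaling $z\mapsto z/\rho^s$.

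\emph{Step 1: $\fB_k$ is an admissible $J$-inner function.} By Theorem~\ref{thnov22-1}, $\hat\cK_k$ is a locally uniform limit of the positive kernels $|\rho|^{-n}\cU_n\cK_{k2^n}(z/\rho^n)\cU_n^*$. Hence $\fB_k$ is entire and $J$-inner, with $\det\fB_k=1$. Since $F(0)=0$ and $F'(0)=1$, we have $\fB_k(0)=\fA_k(0)\bigl(\begin{smallmatrix}1&0\\ \hat U_k(0)&1\end{smallmatrix}\bigr)$, which is $J$-unitary; note $\hat U_k(0)=0$ by \eqref{14-nov-1}. We pass to the Potapov--de Branges gauge $\tilde\fB_k(z)=\fB_k(z)\fB_k(0)^{-1}$. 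This does not change the diagonal kernel $\hat\cK_k(z)$, because right multiplication by a constant $J$-unitary matrix leaves $\fA J\fA^*$ invariant.

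\emph{Step 2: membership in the chain.} We show that $w_k:=\tilde\fB_k^{-1}\star m_\sE$ belongs to $\cN_0$. By \eqref{eq:27may-3}, the right Weyl functions $m^+_{\cJ,k2^n}=\fA_{k2^n}^{-1}\star m_\cJ$ are Herglotz. After the rescaling $z\mapsto z/\rho^n$ and conjugation by $\cU_n$, the renormalization \eqref{23-08-05-03'S}, together with $F_n\to F$, Lemma~\ref{l:22dec1} and Proposition~\ref{prop:1}, should show that these rescaled functions converge to $\fB_k^{-1}\star m_\sE$. The limit is a locally uniform limit of functions in $\cN_0$, hence lies in $\cN_0$; right multiplication by the constant $\fB_k(0)$ preserves this property. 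Thus $m_\sE=\tilde\fB_k\star w_k$ with $w_k$ Herglotz. The standard de Branges/Potapov divisibility argument then gives $\cK_{\tilde\fB_k}(z,z)\le\cK_{\fD(t,\cdot)}(z,z)$ for $t$ large: compare the Weyl discs and use the limit point property of $m_\sE$. Completeness of $\mathscr C(m_\sE)$ (Remark~\ref{rem:intro}(vii)) then produces $t_{k,0}$ with $\tilde\fB_k=\fD(t_{k,0},\cdot)$, which is \eqref{eq:27may-4}. Uniqueness holds because in the trace parametrization $\tr\cK_\fD(t,0)=t$ is strictly increasing. For general $s$, \eqref{eq:54} together with the scaling $m_\sE(\rho z)=\rho^\beta m_\sE(z)$ and Lemma~\ref{lem:18} shows that the rescaled chain coincides with $\mathscr C(m_\sE)$; this gives $t_{k,s}$.

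\emph{Step 3: the $M$-type.} The Martin function is $M=\Im\theta=\Im\theta_0\circ F$, and $\Im\theta_0$ is the Green function $G_{\sE_0}(w)=\log|w|+\gamma+o(1)$ at infinity. Hence $\log|F(iy)|=M(iy)+O(1)$, and $F(iy)\to\infty$ because $M(iy)\to\infty$. The entries of $\fA_k(F)$ are polynomials in $F$ of degree at most $k$, and $p_k$ has exact degree $k$. Therefore
\[
\log\|\fA_k(F(iy))\|=kM(iy)+O(1).
\]
The remaining factors $F'(iy)$ and $\hat U_k(iy)$, as well as the constant matrix $\fB_k(0)^{-1}$, grow at most polynomially in $y$; this uses $F'=m_\sE/m_{\sE_0}(F)$ and Herglotz bounds. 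Since $M(iy)\asymp y^{1+\beta}$, these contributions are $o(M(iy))$, and so $\lim_y \log\|\fD(t_{k,0},iy)\|/M(iy)=k$.

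The main obstacle is the lower bound in Step 3. We must rule out that the conjugation by $\cF$, which contains $F'$, or the triangular factor containing $\hat U_k$ cancels the leading term $p_k(F)$. We would handle this by tracking the $(2,2)$ entry of $\fB_k$ directly: it equals $p_k(F)$ plus a term of the form $F'^{-1}q_k(F)\hat U_k$-type, whose size is smaller by a power of $y$.

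Finally, for general $s$, \eqref{eq:54} replaces $z$ by $z/\rho^s$ and conjugates by the constant matrices $\cU_s$. Since $M(z/\rho^s)=M(z)/2^s$ by $\theta(\rho z)=2\theta(z)$, the $M$-type of $\fD(t_{k,s},\cdot)$ is $k/2^s$.
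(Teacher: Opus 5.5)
Your chain-membership argument is a workable alternative, but Step~3 rests on a false claim. You assert that $F'(iy)$ grows at most polynomially, citing $F'=m_\sE/m_{\sE_0}(F)$. However, $m_{\sE_0}(w)=-w^{-1}(1+O(w^{-1}))$ as $w\to\infty$ and $F(iy)\to\infty$, so $F'(iy)=-(1+o(1))\,m_\sE(iy)F(iy)$. Thus $|F'(iy)|$ equals $|F(iy)|=e^{M(iy)+O(1)}$ up to polynomial factors, i.e.\ $F'$ itself has $M$-type $1$; this is the relation $F/F'\sim z^{-\beta}$ used just before \eqref{eq:105}. Consequently $\|\cF(iy)\|$ is not negligible, and the bound $\|\cF^{-1}\fA_k(F)\cF\|\le\|\cF^{-1}\|\,\|\fA_k(F)\|\,\|\cF\|$ only gives $M$-type at most $k+1$.

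The missing step is an entrywise count. The matrix $\cF^{-1}\fA_k(F)\cF$ has entries $-a_kq_{k-1}(F)$, $-q_k(F)F'$, $a_kp_{k-1}(F)/F'$ and $p_k(F)$. Since $\deg q_k=\deg p_{k-1}=k-1$ and $\deg q_{k-1}=k-2$, each factor $F'^{\pm1}$ is paired with a compensating missing power of $F$. So every entry is $O(y^C|F(iy)|^k)$. Together with $|\hat U_k(iy)|=O(y)$ (as $-\hat U_k\in\cN_0$), this gives $\log\|\fB_k(iy)\|\le kM(iy)+O(\log y)$.

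Conversely, what you call the main obstacle is not one. In fact $(\fB_k)_{22}=p_k(F)$ exactly, because right multiplication by the lower triangular factor puts the $\hat U_k$-term into the $(2,1)$ entry. Hence the lower bound $\log\|\fB_k(iy)\fB_k(0)^{-1}\|\ge kM(iy)-O(1)$ is immediate.

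On Step~2, your route differs from the paper's. The paper gets membership from \eqref{eq:51}, i.e.\ $\fB_L\star 0\to m_\sE$ (the triangular factor fixes $0$). It combines this with the $J$-monotonicity of $\hat\cK_{k/2^s}$ in $k$ and the completeness of $\mathscr C(m_\sE)$, using $L=\lceil k/2^s\rceil$. You instead push the Herglotz functions $m^+_{\cJ,k2^n}$ through the renormalization of Lemma~\ref{l2-2-11} and handle general $s$ by the scaling of $m_\sE$, which avoids the comparison with $L$.

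One caveat applies to your route. Lemma~\ref{l:22dec1} gives convergence of $U_{k,n}(z)-U_{k,n}(0)$, not of $U_{k,n}(z)$. You must therefore first apply the real M\"obius map $w\mapsto w/(U_{k,n}(0)w+1)$, which preserves $\cN_0$, to the rescaled $m^+_{\cJ,k2^n}$. Only then is the limit guaranteed to be $\fB_k^{-1}\star m_\sE$.
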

		\begin{proof}
		Note that by Remark \ref{rem:1}, $\fB_{k/2^s}(z)$ is the limit of $J$-inner matrix functions and therefore also $J$-inner. The $\bbC\bbD(2)$-kernel $\hat\cK_{k/2^s}$ can be represented as the $J$-form of the corresponding $J$-expanding matrix-function
	\begin{equation}\label{eq:bks}
		\fB_{k/2^s}(z):=\cU_s\fB_k(z/\rho^s)\cU_s^{-1}.
		\end{equation}
		Due to the de Branges uniqueness theorem (see Subsection \ref{sec:cont} and Remark \ref{rem:intro}) there exists a unique normalized monotonic family $(\fD(t,z))_{t\geq 0}$ 
		generated by $m_{\sE}(z)$. Let $L=\lceil k/2^s\rceil$. 
		 Since by \eqref{eq:51}, $\fB_{L}(z)\fB_{L}(0)^{-1}$ belongs to this chain, monotonicity implies that this chain should contain the matrix $\fB_{k/2^s}(z)\fB_{k/2^s}(0)^{-1}$. That is, there exists a unique $t_{k,s}$
		such that
		$$
		\fD(t_{k,s},z)=\fB_{k/2^s}(z)\fB_{k/2^s}(0)^{-1},
		$$
		which is \eqref{eq:27may-4}.
		
		Integrating \eqref{eq:2-25} in $z$ and using $\theta_0(0)=0$ gives
		\[
		\theta_0(z) =  i \int \log \frac{x - z}{x} \,d\mu_{\sE_0}(x)
		\]
		Using $\theta(z) = \theta_0(F(z))$ and taking imaginary parts, we get
		\begin{align}\label{eq:Mtype}
		M(z) = \log|F(z)| - \int\log|x|d\mu_{\sE_0}(x) + \int\log\left |1-\frac x{F(z)}\right|d\mu_{\sE_0}(x).
		\end{align}
		That is, $F(z)$ is of $M$-type $1$. Respectively $p_k(F(z))$ is of $M$-type $k$. By the rescaling property $\theta(\rho^2 z)=4\theta(z)$ we obtain 
		that the matrix function $\fB_{k/2^s}(z)$ is of $M$-type $ k/2^s$. 
		\end{proof}
		This corollary shows the meaning of the kernel $\hat\cK_{k/2^s}$. Using de Branges' parametrization we have
		\[
		\mathscr C(m_{\sE})=\{\fD(t,\cdot)\mid t\in [0,\infty)\}.
		\]
		However, $\hat\cK_{k/2^s}$ provides another partial parametrization of the chain corresponding to the M-type of the matrix elements. In the following, we will extend this chain by continuity, to obtain a bijection between  $(\hat\cK_{\ell})_{\ell\geq 0}$ and $(\cK_{\fD(t,\cdot)})_{t\geq 0}$, showing that the natural parametrization of 	$\mathscr C(m_{\sE})$ in terms of the M-type is possible. 
		\subsection{An extension by continuity in $M$-type scale}
		
		\begin{proposition}\label{prop:28}
			Let $\ell\in(0,1)$ and
			$$
			\ell=\sum_{s\ge 1}\frac{\e_s}{2^s},\quad \e_s\in\{0,1\},
			$$
			be its dyadic expansion. Define the sequence
			$$
			k_n=\e_n+{\e_{n-1}}{2}+\dots+{\e_1}{2^{n-1}},
			$$
			i.e., $k_s/2^s\to \ell$ as $s\to\infty$.
			Then
			$$
			\lim_{n\to\infty}\hat\cK_{k_n/2^n}(z)=\cK_\fD(t_-(\ell),z)
			\le
			\liminf_{s\to\infty}
			\frac 1{|\rho|^{s}}\cU_{s}\cK_{k_s}(z/\rho^{s})\cU_{s}^*
			$$
			$$
			\le \limsup_{s\to\infty}
			\frac 1{|\rho|^{s}}\cU_{s}\cK_{k_s}(z/\rho^{s})\cU_{s}^*
			\le
			\lim_{n\to\infty}\hat\cK_{(k_n+1)/2^n}(z)=\cK_\fD(t_+(\ell),z),
			$$
			where $t_{\pm}(\ell)$ are the biggest/smallest values of $t$ such that the $M$-type of $\fD(t,z)$ is equal to the given $\ell$.
		\end{proposition}
		
		\begin{proof}
			Since $k_{s_0}2^n\le k_{s_0+n}$, we have
			$$
			\hat \cK_{k_{s_0}/2^{s_0}}(z)=
			\lim_{n\to\infty}
			\frac 1{|\rho|^{s_0+n}}\cU_{s_0+n}\cK_{k_{s_0} 2^n}(z/\rho^{{s_0}+n})\cU_{s_0+n}^*
			$$
			$$
			\le
			\liminf_{n\to\infty}
			\frac 1{|\rho|^{s_0+n}}\cU_{s_0+n}\cK_{k_{s_0+n}}(z/\rho^{s_0+n})\cU_{s_0+n}^*.
			$$
		Let us now show that 
			\[
			\lim_{n\to\infty}\hat\cK_{k_n/2^n}(z)=\cK_\fD(t_-(\ell),z).
			\]
			Let $t_{k_{s},s}$ be as in Corollary \ref{cor:2}. By Corollary \ref{cor:Appendix} $t_{k_{s},s}$ is monotonic increasing as a function of $s$ and thus so is $\cK_\fD(t_{k_{s},s},z)$. Moreover, $\cK_\fD(t_{k_{s},s},z)\leq \cK_\fD(t_-(\ell),z)$. Since $\cK_\fD(t_{k_{s},s},z)$ is increasing, the limit 
			$$\lim_{s\to\infty}\cK_\fD(t_{k_{s},s},z)=:\cK_\fD(t_0,z)
			$$
			 exists and belongs to the chain. Moreover, $\cK_\fD(t_0,z)\leq \cK_\fD(t_-(\ell),z)$. If $\cK_\fD(t_0,z)\neq \cK_\fD(t_-(\ell),z)$, then by definition of $t_-(\ell)$ the M-type of $\cK_\fD(t_0,z)$ is less then $\ell$. This contradicts that $k_s/2^s\to \ell$.
		
			On the other hand
			$$
			(k_{s_0}+1)2^n\ge k_{s_0} 2^n+2^{n-1}+\dots+1\ge k_{s_0+n}.
			$$
			Therefore
			$$
			\hat \cK_{(k_{s_0}+1)/2^{s_0}}(z)=
			\lim_{n\to\infty}
			\frac 1{|\rho|^{s_0+n}}\cU_{s_0+n}\cK_{(k_{s_0}+1) 2^n}(z/\rho^{{s_0}+n})\cU_{s_0+n}^*
			$$
			$$
			\ge
			\limsup_{n\to\infty}
			\frac 1{|\rho|^{s_0+n}}\cU_{s_0+n}\cK_{k_{s_0+n}}(z/\rho^{s_0+n})\cU_{s_0+n}^*,
			$$
			and the estimation from above is also proved. The proof of 	$\lim\hat\cK_{(k_n+1)/2^n}(z)=\cK_\fD(t_+(\ell),z)$ is the same as above.
		\end{proof}
		
		It remains to show that the upper and lower limits coincide. The proof will be given in two steps. First, in Lemma 	\ref{l:20dec-1}, we find an expression for a possible jump matrix \eqref{eq27nov-13}. Then, based on the Main Lemma \ref{l:20dec-2}, we show that this jump matrix-function is in fact  trivial, i.e., the left and right limits coincide indeed.
		
		\begin{lemma}\label{l:20dec-1} Let $|\rho|>\sqrt{13}-1$.
			If $ \cK_\fD(t_-(\ell),z)\not = \cK_\fD(t_+(\ell),z)$, then
			\begin{equation}\label{eq27nov-13}
				\fD(t_+(\ell),z)=
				\fD(t_-(\ell),z)
				(I-\cP
				\cD_\vk
				\cP^*J z)
			\end{equation}
			with a certain $\cP$ such that $\cP J \cP^*=0$ and $\cD_\vk\ge 0$. 
		\end{lemma}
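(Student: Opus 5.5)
Set $\fB(z):=\fD(t_-(\ell),z)^{-1}\fD(t_+(\ell),z)$. By the multiplicative structure of the chain, $\fB$ is entire, $J$-inner, $\det\fB=1$, $\fB(0)=I$. The claim \eqref{eq27nov-13} says $\fB$ is \emph{linear} in $z$ with a nilpotent, $J$-neutral coefficient, i.e.\ an elementary Potapov factor of a jump (``singular'') interval of the canonical system. I therefore aim to show that $\fB$ has $M$-type zero and, more strongly, is a polynomial of degree one. Once $\fB(z)=I-zN$ with $N$ constant, $J$-innerness forces $NJ$ to be Hermitian and $\le 0$ up to sign, and $\det\fB=1$ forces $N^2=0$. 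Writing $N=\cP\cD_\vk\cP^*J$ (spectral decomposition of the positive matrix $-NJ$ restricted to its range) gives $\cP J\cP^*=0$ and $\cD_\vk\ge0$.

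\textbf{Step 1: the jump factor has zero $M$-type.} By Proposition~\ref{prop:28}, $\fD(t_\pm(\ell),\cdot)$ are limits of $\fB_{k_n/2^n}$ and $\fB_{(k_n+1)/2^n}$. The latter differ by the factor $\fB_{k_n/2^n}^{-1}\fB_{(k_n+1)/2^n}$, which by \eqref{eq:bks} and Theorem~\ref{thnov22-1} is a rescaling by $\cU_n$, $z\mapsto z/\rho^n$ of a one-step kernel of $M$-type $2^{-n}$. Using the explicit form of $\fA_k(F(z))$ and $\hat U_k$, I would bound $\log\|\cdot\|$ of this factor by $2^{-n}M(z)+O(1)$ uniformly. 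Passing to the limit then shows that $\fB$ has bounded characteristic in $\bbC\setminus\sE$ with $M$-type $0$.

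\textbf{Step 2: zero $M$-type forces a polynomial.} This is where I expect the main obstacle, and where $|\rho|>\sqrt{13}-1$ enters. An entire $J$-inner function of zero $M$-type need not be linear a priori. I would use the structure of $\sE=F^{-1}(\sE_0)$: the gaps $(\sa_k,\sb_k)$ are long relative to the pieces of $\sE$, since the comb heights $h_0/2^{k}$ decay geometrically and the Poincar\'e scaling is by $\rho$. The condition on $\rho$ ($\lambda>3$ in the $z^2-\lambda$ form) gives limit periodicity and, I expect, a uniform estimate of the form ``$M(z)\ge c\log|z|$ with $c>1$ along the imaginary axis''. I would combine this with the Phragm\'en--Lindel\"of principle in $\bbC\setminus\sE$. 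Since $\fB$ has zero $M$-type, $\log\|\fB\|=o(M)$, and bounded characteristic then yields polynomial growth of degree at most one, forcing $\fB$ to be a matrix polynomial of degree $\le1$. I would first try to extract the needed Martin-function lower bound from the representation \eqref{eq:Mtype} together with growth estimates for $F$.

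\textbf{Step 3: identify the linear factor.} With $\fB(z)=I+zB$, I would apply the $J$-inner identity on $\bbR$, $(I+xB)J(I+xB)^*=J$ for all real $x$. The linear term gives $BJ+JB^*=0$ and the quadratic term gives $BJB^*=0$. Together with the inequality in $\bbC_+$ this shows $-BJ\ge0$ with $B^2=0$, which produces $\cP,\cD_\vk$ as above. If $\cK_\fD(t_-)\ne\cK_\fD(t_+)$, then $B\neq0$ and $\cD_\vk\ne0$.
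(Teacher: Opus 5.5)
Your proposal has genuine gaps. Step~2 carries the entire content of the lemma, and it is not a valid inference. Zero $M$-type does not by itself make the jump factor a polynomial. From $\theta(\rho^2 z)=4\theta(z)$ one gets $M(iy)\asymp y^{\log 2/\log|\rho|}$, a positive power. Hence every entire function of sufficiently small order, transcendental or not, satisfies $\log\|\cdot\|=o(M(iy))$. Nor does Phragm\'en--Lindel\"of in $\bbC\setminus\sE$ cap the degree, because $J$-inner functions are unbounded on $\sE\subset\bbR$.

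More decisively, even a polynomial jump need not have degree one. Consider $\left(\begin{smallmatrix}1&z\\0&1\end{smallmatrix}\right)\left(\begin{smallmatrix}1&0\\-z&1\end{smallmatrix}\right)=\left(\begin{smallmatrix}1-z^2&z\\-z&1\end{smallmatrix}\right)$, the transfer matrix of two consecutive indivisible intervals with different directions. It is entire, $J$-inner, has determinant $1$, equals $I$ at $0$, and grows like $O(\log y)=o(M(iy))$. Yet it is not of the form \eqref{eq27nov-13}. So no argument that only compares growth with $M$ and uses the geometry of $\sE$ can yield the lemma.

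The same asymptotics give $M(iy)\ge c\log y$ for large $y$, for every $c$ and every $|\rho|>2$. So this is not where $|\rho|>\sqrt{13}-1$ enters. In the paper that hypothesis supplies limit periodicity, i.e.\ continuity of $a_k$ on the dyadic integers $\bbZ_2$. This is what makes $a_\vk$, $\cJ_\vk$ and $\hat U_\vk$ available as limits along $k_n\to\vk$.

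Step~1 fails too. $M$-type is not preserved under locally uniform convergence, and under the lemma's hypothesis your additive constants cannot be uniform in $n$. Suppose the normalized factors $\fB_{k_n/2^n}(0)\fB_{k_n/2^n}(z)^{-1}\fB_{(k_n+1)/2^n}(z)\fB_{(k_n+1)/2^n}(0)^{-1}$ obeyed $\log\|\cdot\|\le 2^{-n}M(z)+C$ with $C$ independent of $n$. Then their limit $\fD(t_-(\ell),z)^{-1}\fD(t_+(\ell),z)$ would be bounded on $\bbC$, hence $\equiv I$ by Liouville, contradicting $\cK_\fD(t_-(\ell),\cdot)\neq\cK_\fD(t_+(\ell),\cdot)$. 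That would be Theorem~\ref{thm:417}, which the paper reaches only with the extra input that every $\cJ_\vk$ has continuous spectrum. With $n$-dependent constants, nothing survives in the limit.

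You already isolate the right object in Step~1; the missing idea is to compute its limit directly rather than estimate its growth. Linearity comes from the renormalization structure, via Theorem~\ref{thnov22-1} and \eqref{eq:bks}:
\begin{enumerate}[(i)]
\item The dyadic step from $k_n/2^n$ to $(k_n+1)/2^n$ is a single recursion step $\fa_{k_n+1}(F(z/\rho^n))$. It is conjugated by $\cF(z/\rho^n)$ and by the triangular factors containing $\hat U_{k_n}(z/\rho^n)$ and $\hat U_{k_n+1}(z/\rho^n)$.
\item Using $\fA_{k_n+1}(0)J\fA_{k_n+1}(0)^*=J$, the product equals $I+\cU_n\fA_{k_n+1}(0)\,X_n(z)\,\fA_{k_n+1}(0)^*\cU_n^*J$.
\item Along a subsequence with $k_n\to\vk$, $\rho^nX_n(z)$ converges to a matrix linear in $z$. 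Only first-order Taylor terms at $0$ of $F$, $F'$, $\hat U_{k_n}$, $\hat U_{k_n+1}$ survive, and $\hat U_k(0)=0$.
\item Meanwhile $|\rho|^{-n/2}\cU_n\fA_k(0)$, $k\le 2^n$, is precompact by Lemma~\ref{l25s-3-2}. Any limit point $\cP$ has $\det\cP=0$, since $\det(\cU_n\fA_k(0))=\pm1$. For a real $2\times2$ matrix this is exactly $\cP J\cP^*=0$.
\end{enumerate}
Your Step~3, classifying linear $J$-inner factors of determinant one, is correct but becomes unnecessary once this computation is done.
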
 
		Note that the second matrix in the product is $J$-expanding. For the proof, we will need to following lemma:

		\begin{lemma}\label{l25s-3-2}
			The following family of matrices 
			\begin{equation}\label{eq25s-2-2}
				\left\{\frac 1{|\rho|^{n/2}}
				\cU_n\fA_{k}(0)
				\right\}_{k\le 2^n}
			\end{equation}
			is precompact.			
		\end{lemma}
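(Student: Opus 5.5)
The plan is to reduce precompactness to uniform boundedness, since the family consists of $2\times 2$ matrices. I then bound the entries of $\fA_k(0)$ by the diagonal entries of the kernel $\cK_\mu(\cdot,0,0)$ and control those with Theorem~\ref{thnov22-1} at $k=1$.

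\textbf{Step 1: row scalings.} By \eqref{eq:50},
\[
\fA_k(0)=\begin{pmatrix} -a_kq_{k-1}(0) & -q_k(0)\\ a_kp_{k-1}(0) & p_k(0)\end{pmatrix},
\]
and $\cU_n$ multiplies the first row by $\pm(|\rho|/2)^{-n/2}$ and the second row by $(|\rho|/2)^{n/2}$. Hence in $|\rho|^{-n/2}\cU_n\fA_k(0)$ the first row carries the factor $2^{n/2}|\rho|^{-n}$ and the second row the factor $2^{-n/2}$. Since $\mu_{\sE_0}$ has compact support, $a_k\le \max_{x\in\sE_0}|x|$. It therefore suffices to prove, uniformly in $k\le 2^n$,
\[
|p_j(0)|\lesssim 2^{n/2},\qquad |q_j(0)|\lesssim |\rho|^n2^{-n/2},\qquad j\in\{k-1,k\}.
\]

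\textbf{Step 2: reduction to the kernel diagonal.} We have $p_j(0)^2\le \sum_{i<j+1}p_i(0)^2=(\cK_\mu)_{22}(j+1,0,0)$, and similarly $q_j(0)^2\le(\cK_\mu)_{11}(j+1,0,0)$. Both diagonal entries are nondecreasing in their first argument, by \eqref{eqn:5aug} and $H\ge0$. So for $k\le 2^n$ everything is controlled by $(\cK_\mu)_{22}(2^{n+1},0,0)$ and $(\cK_\mu)_{11}(2^{n+1},0,0)$.

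\textbf{Step 3: growth along the dyadic sequence.} Apply \eqref{eq16_nov_1} with $k=1$ at the point $z=0$, which is legitimate because the convergence is locally uniform and the kernels are continuous on the diagonal. The $(2,2)$ entry gives
\[
|\rho|^{-n}(|\rho|/2)^n(\cK_\mu)_{22}(2^n,0,0)=2^{-n}(\cK_\mu)_{22}(2^n,0,0)\to(\hat\cK_1)_{22}(0).
\]
The $(1,1)$ entry gives
\[
2^n|\rho|^{-2n}(\cK_\mu)_{11}(2^n,0,0)\to(\hat\cK_1)_{11}(0).
\]
Convergent sequences are bounded, so
\[
(\cK_\mu)_{22}(2^{n+1},0,0)\lesssim 2^{n},\qquad (\cK_\mu)_{11}(2^{n+1},0,0)\lesssim |\rho|^{2n}2^{-n},
\]
where shifting $n$ by one only changes the constants. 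Taking square roots yields exactly the bounds required in Step 1. The family \eqref{eq25s-2-2} is therefore bounded in $\bbC^{2\times2}$, hence precompact.

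The only delicate point is Step 3: the kernel diagonals must be controlled uniformly over all $k\le 2^n$, not just along $k=2^n$. Monotonicity of the kernel in $k$ supplies exactly this. One should also check that evaluating the limit \eqref{eq16_nov_1} at the real point $z=0$ is permitted, which it is because the convergence is uniform on compacts of $\bbC$.
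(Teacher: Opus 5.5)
Your proposal is correct and follows essentially the same route as the paper: both evaluate \eqref{eq16_nov_1} with $k=1$ at $z=0$ to bound $2^{-n}\sum_{j<2^n}p_j(0)^2$ and $2^{n}|\rho|^{-2n}\sum_{j<2^n}q_j(0)^2$, read off termwise bounds on $p_j(0)$ and $q_j(0)$, and finish with the uniform boundedness of $a_k$. Your extra step of passing to $2^{n+1}$ via monotonicity is a small tidy-up, since it explicitly covers the index $j=k$ at the endpoint $k=2^n$, which the paper's sum over $j\le 2^n-1$ leaves implicit.
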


		\begin{proof}
			We use the well known CD-identity \eqref{intro:eq10} for orthogonal polynomials 
			$$
			\frac 1{|\rho|^n}\cU_n\cK_{2^n}(z/\rho^n)\cU_n^*
			$$
			$$
			=\frac 1{|\rho|^n}\cU_n\begin{pmatrix}
				\sum_{j=0}^{2^n-1}|q_j(z/\rho^n)|^2&\sum_{j=0}^{2^n-1}\overline{p_j(z/\rho^n)}q_j(z/\rho^n) \\
				\sum_{j=0}^{2^n-1}\overline{q_j(z/\rho^n)}p_j(z/\rho^n) \
				&
				\sum_{j=0}^{2^n-1}|p_j(z/\rho^n)|^2
			\end{pmatrix}
			\cU_n^*;
			$$
			Due to  \eqref{eq16_nov_1} the LHS has a limit $\hat\cK_1(z)$.
			Therefore the sum is uniformly bounded for an arbitrary  fixed $z\in \bbC$. In the origin we get that
			$$
			\frac 1{|\rho|^n}\cU_n
			\sum_{j=0}^{2^n-1}
			\begin{pmatrix}
				q_j(0)^2&{p_j(0)}q_j(0) \\
				{q_j(0)}p_j(0) 
				&
				p_j(0)^2
			\end{pmatrix}
			\cU_n^*\le C<\infty.
			$$
			Respectively each term in the sum is bounded
			$$
			\frac 1{|\rho|^n}\begin{pmatrix}
				(2/|\rho|)^{n/2}  q_{k}(0)\\
				(|\rho|/2)^{n/2} p_{k}(0)
			\end{pmatrix}\begin{pmatrix}
				(2/|\rho|)^{n/2}  q_{k}(0)&
				(|\rho|/2)^{n/2} q_{k}(0)
			\end{pmatrix}\le C.
			$$
			This implies precompactness of the family \eqref{eq25s-2-2}.
			
			In particular, from the (1,1) and (2,2) entry, we conclude that there exists $C>0$ so that for all $n\in\bbN$, $k<2^n$
			\[
			\frac{1}{|\rho|^{n/2}}\left(\frac{2}{|\rho|}\right)^{n/2}|q_{k}(0)|<C,\quad \frac{1}{|\rho|^{n/2}}\left(\frac{|\rho|}{2}\right)^{n/2}|p_{k}(0)|<C
			\]
			Since in addition $(a_k)$ are uniformly bounded  we get precompactness of the family \eqref{eq25s-2-2}.
		\end{proof}

			\begin{proof}[Proof of Lemma \ref{l:20dec-1}]  For $|\rho|>\sqrt{13}-1$  the recurrence coefficient $a_k$, $k\in\bbZ_+$, can be extended by continuity on the set of dyadic integers $\bbZ_2$ \cite{BellBeMou82}.
			For $\vk=\sum_{j=0}^\infty\e_j 2^j\in\bbZ_2$, $\e_j\in\{0,1\}$, the following limit exists
			$$
			a_{\vk}=\lim_{n\to\infty} a_{k_n},\quad k_n=\sum_{j=0}^n\e_j 2^j.
			$$
			Let $\cJ_\vk$ be the two-sided Jacobi matrix, formed by sequences
			$$
			a^\vk_n=a_{\vk+n},\quad b^\vk_n=-\rho/2, \quad\vk\in \bbZ_2,\ n\in\bbZ.
			$$
			Then  related $m_{\cJ_\vk}^\pm$ and $\hat U_\vk(z)$ are well defined and coincide with the limit of the corresponding
			$m_{\cJ,k_n}^\pm$ and $\hat U_{k_n}(z)$. We have $-\hat U_{\vk}\in\cN_0$ and
			\begin{align}\label{eq:56}
				\hat U_\vk(z)=-\sum_{c\in \cC}\frac{m_{\cJ_\vk}^{-}(F(c))}{F''(c)}\left(\frac 1{c-z}-\frac 1 c\right).
			\end{align}
			In particular, $\hat U_\vk(0)=0$.
			
			Since $t_{-}(\ell)\leq t_{+}(\ell)$, $\fD(t_-(\ell),z)$ is a divisor of $\fD(t_+(\ell),z)$. That is $\fD(t_-(\ell),z)^{-1}\fD(t_+(\ell),z)=:\fD_\Delta(z)$ is a $J$-expanding matrix function. We show, that $\fD_\Delta$ must necessarily be of the form \eqref{eq27nov-13}.  In the following, it will be more convenient to compute $\fD^{-1}_\Delta$. That is
			$$
			\fD_\Delta^{-1}(z):=\lim_{n\to\infty}
			\fB_{(k_n+1)/2^n}(0)
			\fB^{-1}_{(k_n+1)/2^n}(z)\fB_{k_n/2^n}(z)
			\fB^{-1}_{k_n/2^n}(0),
			$$
			where $\fB_{(k_n+1)/2^n}(z)$ and $\fB_{k_n/2^n}(z)$ are defined by \eqref{eq:bks}.
			Note that
			\begin{align}\label{eq:55}
				\cU_nJ\cU_n^*=(-1)^nJ.
			\end{align}
			To simplify notations in what follows we assume that $n$ is odd. We will frequently use that for this choice
			\[
			(-1)^n=-1\quad \text{and} \quad |\rho|^n=-\rho^n.
			\]
			First we note that 
			$$
			\fB_{(k_n+1)/2^n}(0)
			=\cU_n\fA_{k_n+1}(0)\cU_n^{-1}.
			$$
			Using \eqref{eq:55} and $\fA_{k_n+1}(0)J\fA_{k_n+1}(0)^{*}=J$ we get
			$$
			\fB_{k_n/2^n}(0)^{-1}
			=\cU_n\fA_{k_n}(0)^{-1}\cU_n^{-1}=
			\cU_n\begin{pmatrix}
				0&-1/a_{k_n+1}\\
				a_{k_n+1}& \rho/(2a_{k_n+1})
			\end{pmatrix}\fA_{k_n+1}(0)^{-1}\cU_n^{-1}
			$$
			$$
			=-
			\cU_n\begin{pmatrix}
				0&-1/a_{k_n+1}\\
				a_{k_n+1}& \rho/(2a_{k_n+1})
			\end{pmatrix}J\fA_{k_n+1}(0)^* J\cU_n^{-1}
			$$
			$$
			=-\cU_n
			\begin{pmatrix}
				1&0\\
				-\rho/2& 1
			\end{pmatrix}
			\begin{pmatrix}
				1/a_{k_n+1}&0\\
				0& a_{k_n+1}
			\end{pmatrix}\fA_{k_n+1}(0)^*\cU_n^*J
			$$
			Therefore, we compute
			$$
			\cU_n^{-1}\fB^{-1}_{(k_n+1)/2^n}(z)\fB_{k_n/2^n}(z)
			\cU_n
			\begin{pmatrix}
				1&0\\
				-\rho/2& 1
			\end{pmatrix}
			$$
			$$
			=\begin{pmatrix}
				1&0\\
				-\hat U_{k_n+1}(\frac z{\rho^n})&1
			\end{pmatrix}
			\begin{pmatrix}
				\frac{F(\frac z{\rho^n})+\frac\rho 2}{a_{k_n+1}}&\frac{F'(z/\rho^n)}{a_{k_n+1}}\\
				-\frac{a_{k_n+1}}{F'(z/\rho^n)}&
				0
			\end{pmatrix}
			\begin{pmatrix}
				1&0\\
				-\rho/2& 1
			\end{pmatrix}
			\begin{pmatrix}
				1
				&0\\
				\hat U_{k_n}(\frac z{\rho^n})&1\end{pmatrix}
			$$
			$$
			=\begin{pmatrix}
				1&0\\
				-\hat U_{k_n+1}(\frac z{\rho^n})&1
			\end{pmatrix}
			\begin{pmatrix}
				\frac{F(\frac z{\rho^n})+\frac\rho 2}{a_{k_n+1}}
				-\frac \rho 2
				\frac{F'(z/\rho^n)}{a_{k_n+1}}
				&\frac{F'(z/\rho^n)-1}{a_{k_n+1}}\\
				-\frac{a_{k_n+1}}{F'(z/\rho^n)}+a_{k_n+1}&
				0
			\end{pmatrix}
			\begin{pmatrix}
				1
				&0\\
				+\hat U_{k_n}(\frac z{\rho^n})&1\end{pmatrix}
			$$
			$$
			+\begin{pmatrix}
				1&0\\
				-\hat U_{k_n+1}(\frac z{\rho^n})&1
			\end{pmatrix}
			\begin{pmatrix}
				0&1/a_{k_n+1}\\
				-a_{k_n+1}&0
			\end{pmatrix}
			\begin{pmatrix}
				1
				&0\\
				\hat U_{k_n}(\frac z{\rho^n})&1\end{pmatrix}
			$$
			We can represent the last product as
			$$
			\begin{pmatrix}
				0&1/a_{k_n+1}\\
				-a_{k_n+1}&0
			\end{pmatrix}+\frac 1{a_{k_n+1}}
			\begin{pmatrix}
				-\hat U_{k_n}(z/\rho^n)&0\\
				-\hat U_{k_n}(z/\rho^n)\hat U_{k_n+1}(z/\rho^n)
				&\hat U_{k_n+1}(z/\rho^n)
			\end{pmatrix}
			$$
			
			For the first part we note that 
				\begin{align*}
					-&\cU_n\fA_{k_n+1}(0)\begin{pmatrix}
						0&1/a_{k_n+1}\\
						-a_{k_n+1}&0
					\end{pmatrix}
					\begin{pmatrix}
						1/a_{k_n+1}&0\\
						0& a_{k_n+1}
					\end{pmatrix}\fA_{k_n+1}(0)^*\cU_n^*J\\
					&=\cU_n\fA_{k_n+1}(0)J\fA_{k_n+1}(0)^*\cU_n^*J=\cU_nJ\cU_n^*J=-JJ=I.
			\end{align*}
			Let us now consider the remaining terms.
			Let $I_\vk\subset \bbZ$ be a subsequence such that $k_n\to\vk\in\bbZ_2$. By \eqref{eq:56}, $\hat U_{k_n}\to \hat U_\vk $ uniformly in a vicinicty of $0$ and we get that the following limits exist
			$$
			\lim_{k_n\to\vk}{\rho^n} \hat U_{k_n}(z/\rho^n)=\hat U_\vk'(0) z. 
			$$
			and
			$$
			\lim_{k_n\to\infty}\rho^n
			\begin{pmatrix}
				\frac{F(\frac z{\rho^n})+\frac\rho 2}{a_{k_n+1}}
				-\frac \rho 2
				\frac{F'(z/\rho^n)}{a_{k_n+1}}
				&\frac{F'(z/\rho^n)-1}{a_{k_n+1}}\\
				-\frac{a_{k_n+1}}{F'(z/\rho^n)}+a_{k_n+1}&
				0
			\end{pmatrix}=
			\begin{pmatrix}
				\frac{1-\rho/2 F''(0)}{a_{\vk+1}}
				&\frac {F''(0)}{a_{\vk+1}} \\
				a_{\vk+1} F''(0)&
				0
			\end{pmatrix} z
			$$
			Due to precompactness of the family \eqref{eq25s-2-2} we can choose a subsequence $I'_\vk$ in $I_\vk$ so that
			\begin{equation}\label{eq27nov-12}
				\lim_{n_j\to \infty}\frac 1{|\rho|^{n_j/2}}
				\cU_{n_j}\fA_{k_{n_j}}(0)=\cP.
			\end{equation}
Since $\det \cU_{n_j} = 1$ and $\det \fA_{k_{n_j}} = 1$, $\det \cP$ is evidently zero and the matrix is real. Therefore $\cP J\cP^*=0$.
			
			Therefore, along the subsequence $I_\vk'$ we obtain 
			$$
			\lim_{n\to\infty}
			\fB_{(k_n+1)/2^n}(0)
			\fB^{-1}_{(k_n+1)/2^n}(z)\fB_{k_n/2^n}(z)
			\fB^{-1}_{k_n/2^n}(0)=I+\cP
			\cD_\vk
			\cP^*J z
			$$
			where
			\begin{align*}
				\cD_\vk=&\begin{pmatrix}
					\frac{1-\rho/2 F''(0)+\hat U'_\vk(0)}{a^2_{\vk+1}}
					&{F''(0)} \\
					F''(0)&
					-\hat U'_{\vk+1}(0)
				\end{pmatrix}
				\\
				=&
				\begin{pmatrix}
					\frac 1{a^2_{\vk+1}}\left(
					\frac{F+\rho/2}{F'}+\hat U_\vk
					\right)'(0)
					&-\left(\frac 1{F'}\right)'(0) \\
					-\left(\frac 1{F'}\right)'(0)&
					-\hat U'_{\vk+1}(0)
				\end{pmatrix}\ge 0.
			\end{align*}
		Noting that $(I+\cP\cD_\vk\cP^*J z)^{-1}=I-\cP\cD_\vk\cP^*J z$, the lemma is proved.
		\end{proof}

		On the collection of $J$-expanding matrix-functions we define an involution
		$$
		\overleftarrow{\fA}(z)=\fj
		\fA(\bar z)^*\fj,\quad \fj=
		\begin{pmatrix}
			-1&0\\0&1
		\end{pmatrix}.
		$$
		Note that 
		\begin{align}\label{eq:57}
			\overleftarrow{\fA_k}(z)\star 0=-\frac{a_kp_{k-1}(z)}{p_k(z)}=m_{\mathcal J,k}^-(z),
		\end{align}
		which explains the notation. 
		
		Our goal is to show that $\cP=0$, which will be a consequence of the Main Lemma \ref{l:20dec-2} below. It's proof requires three preliminary lemmas. 
		\begin{lemma}\label{lem:8dec1}
			We define
			$$
			m_\ell^-(z)=\overleftarrow{\fD}(t_-(\ell),z)\star 0,\quad
			m_\ell^+(z)=\fD(t_-(\ell),z)^{-1}\star m_{\sE}(z).
			$$
			Then
			\begin{align}
				m_\ell^+(z)
				\label{eq29nov-16}
				=\lim_{s\to\infty}\cU_s\fA_{k_s}(0)
				\begin{pmatrix}
					1 & 0 \\  -\hat U_{k}(z/\rho^s) & 1
				\end{pmatrix} \star m_{\cJ,k_s}^+(F(z/\rho^s))F'(z/\rho^s)
			\end{align}
			and
			\begin{align}
				m_\ell^-(z)
				=
				\label{eq1dec-16}
				\lim_{s\to\infty}\cU_s^{-1}\overleftarrow \fA_{k_s}(0)^{-1}
				\left(\frac{m^-_{\cJ,k_s}(F(z/\rho^s))}{F'(z/\rho^s)}-\hat U_{k_s}(z/\rho^s)\right).
			\end{align}
			\end{lemma}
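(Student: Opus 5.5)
The plan is to realize $\fD(t_-(\ell),\cdot)$ as a limit of the explicit matrices $\fB_{k_s/2^s}(z)\fB_{k_s/2^s}(0)^{-1}$. Both formulas then come from the splitting of the full Weyl function at the level of the discrete chain, pushed through the renormalization \eqref{23-08-05-03'S}.

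By Proposition~\ref{prop:28} and Corollary~\ref{cor:2}, $\cK_\fD(t_{k_s,s},z)\to\cK_\fD(t_-(\ell),z)$. Since kernels in $\bbC\bbD(2)$ determine the normalized $J$-inner matrices, and the map \eqref{eq:11} is continuous, we get $\fD(t_{k_s,s},z)\to \fD(t_-(\ell),z)$ locally uniformly. Here
\[
\fD(t_{k_s,s},z)=\fB_{k_s/2^s}(z)\fB_{k_s/2^s}(0)^{-1},\qquad \fB_{k_s/2^s}(z)=\cU_s\fB_{k_s}(z/\rho^s)\cU_s^{-1}
\]
by \eqref{eq:bks}. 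Since $\fB_{k_s/2^s}(0)=\cU_s\fA_{k_s}(0)\cU_s^{-1}$ (using $F(0)=0$, $F'(0)=1$, $\hat U_k(0)=0$), this gives
\[
\fD(t_{k_s,s},z)^{-1}=\cU_s\fA_{k_s}(0)\,\fB_{k_s}(z/\rho^s)^{-1}\cU_s^{-1}.
\]

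\textbf{Formula \eqref{eq29nov-16}.} Apply the identity above to $m_\sE$. The factor $\cU_s^{-1}$ acts on $m_\sE$ by a scalar $\pm(|\rho|/2)^{s}$, and $m_\sE(z)=\rho^{-s\beta}\cdot$ (rescaled) by the self-similarity \eqref{eq:26}. Combining these, $\cU_s^{-1}\star m_\sE(z)=m_\sE(z/\rho^s)$ up to the fixed sign, since $\rho^\beta=2/\rho$.

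Next use the definition of $\fB_k$ and the identity $m_\sE(w)=F'(w)m_\cJ(F(w))$. This gives
\[
\fB_{k}(w)^{-1}\star m_\sE(w)=\begin{pmatrix}1&0\\-\hat U_k(w)&1\end{pmatrix}\star\Bigl(F'(w)\,\fA_k(F(w))^{-1}\star m_\cJ(F(w))\Bigr)=\begin{pmatrix}1&0\\-\hat U_k(w)&1\end{pmatrix}\star F'(w)m^+_{\cJ,k}(F(w)),
\]
using \eqref{eq:27may-3} and the fact that conjugation by $\cF$ multiplies the argument by $F'$. Setting $w=z/\rho^s$, $k=k_s$, and passing to the limit in
\[
m^+_\ell=\lim_s \fD(t_{k_s,s},\cdot)^{-1}\star m_\sE
\]
yields \eqref{eq29nov-16}. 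The passage to the limit is justified by local uniform convergence of the matrices together with normal-family compactness in $\cN_0$.

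\textbf{Formula \eqref{eq1dec-16}.} Here we use $\overleftarrow{\fD}(t,z)\star0$. The involution is antimultiplicative up to $\fj$-conjugation: $\overleftarrow{AB}=\overleftarrow{B}\,\overleftarrow{A}$. We compute $\overleftarrow{\fB_{k}}(w)\star 0$ from \eqref{eq:57}. The lower-triangular factor turns into an upper-triangular shift by $-\hat U_k$, and the $\cF$-conjugation divides by $F'$, so
\[
\overleftarrow{\fB_k}(w)\star0=\frac{m^-_{\cJ,k}(F(w))}{F'(w)}-\hat U_k(w).
\]
Applying $\overleftarrow{\fB_{k_s/2^s}(0)^{-1}}=\cU_s^{-1}\overleftarrow{\fA_{k_s}}(0)^{-1}\cdots$ and absorbing the $\cU_s$ factors as before gives \eqref{eq1dec-16} in the limit.

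\textbf{Main obstacle.} I expect the hardest part to be the bookkeeping of signs and scalings: the parity factor $(-1)^s$ in $\cU_s$, the identity $\cU_sJ\cU_s^*=(-1)^sJ$, and the fact that $\fj$ does not commute with $\cU_s$. I would handle these as in the proof of Lemma~\ref{l:20dec-1}, restricting first to odd $s$ and then to even $s$. A second point is to make sure the limits exist along the full sequence rather than a subsequence. This should follow from monotone convergence of the chain, which gives convergence of the matrices $\fD(t_{k_s,s},\cdot)$ themselves, so no appeal to precompactness (Lemma~\ref{l25s-3-2}) is needed.
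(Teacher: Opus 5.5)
Your proposal is correct and follows essentially the same route as the paper: you write $\fD(t_-(\ell),\cdot)$ as the limit of $\fB_{k_s/2^s}(z)\fB_{k_s/2^s}(0)^{-1}$, then use $\cU_s^{-1}\star m_\sE(z)=m_\sE(z/\rho^s)$ together with $m_\sE=F'\,m_\cJ(F)$ and \eqref{eq:27may-3} for the first formula, and the antimultiplicativity of the involution together with \eqref{eq:57} for the second. Your sign worries are unfounded, so no odd/even split is needed here: $\cU_s^{-1}\star w=(-1)^s(|\rho|/2)^s w=(\rho/2)^s w$ matches $m_\sE(z/\rho^s)=(\rho/2)^s m_\sE(z)$ exactly, and $\fj$ commutes with the diagonal $\cU_s$, so $\overleftarrow{\cU_s}=\cU_s$.
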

			\begin{proof}
					We have
				$$
				\fD(t_-(\ell),z)^{-1}\star m_{\sE}(z)=\lim_{s\to\infty}\fB_{k_s/2^s}(0)\fB_{k_s/2^s}(z)^{-1}\star m_{\sE}(z)
				$$
				Recall
				$$
				\fB_{k_s/2^s}(0)
				=\cU_s\fA_{k_s}(0)\cU_s^{-1}
				$$
				and that by \eqref{mrlim79}
					\[
					\cU_s^{-1}\star m_\sE(z)=\left(\frac{\rho}{2}\right)^sm_{\sE}(z)=m_\sE(z/\rho^s).
					\]
				Therefore
				$$
				\fB_{k/2^s}(z)^{-1}\star m_{\sE}(z)=\cU_s\fB_{k_s}(z/\rho^s)^{-1}\cU_s^{-1}\star m_\sE(z)=
				\cU_s\fB_{k_s}(z/\rho^s)^{-1}\star  m_\sE(z/\rho^s)
				$$
				and
				$$
				\fD(t_-(\ell),z)^{-1}\star m_\sE(z)=\lim_{s\to\infty}
				\cU_s\fA_{k_s}(0)\fB_{k_s}(z/\rho^s)^{-1}\star m_\sE(z/\rho^s).
				$$
				In its turn
				$$
				\fB_{k_s}(z/\rho^s)^{-1}\star m_\sE(z/\rho^s)
				$$
				$$
				=
				\begin{pmatrix}
					1 & 0 \\  -\hat U_{k}(z/\rho^s) & 1
				\end{pmatrix}
				\cF(z/\rho^s)^{-1}\fA_{k}(F(z/\rho^s))^{-1}\cF(z/\rho^s)\star m_\sE(z/\rho^s)
				$$
				By \eqref{eq7-1-10}
				and \eqref{eq:27may-3}
				we get
				$$
				\fB_{k_s}(z/\rho^s)^{-1}\star m_\sE(z/\rho^s)=
				\begin{pmatrix}
					1 & 0 \\  -\hat U_{k}(z/\rho^s) & 1
				\end{pmatrix} \star  m_{\cJ,k}^+(F(z/\rho^s))F'(z/\rho^s)
				$$
				This proves  \eqref{eq29nov-16}.
				
				At the same time
				$$
				\overleftarrow{\fD}(t_-(\ell),z)=\lim_{s\to\infty}\overleftarrow \fD(t_{k_s,s},z)=\lim_{s\to\infty}\overleftarrow\fB_{k_s/2^s}(0)^{-1}\overleftarrow\fB_{k_s/2^s}(z).
				$$
				Using \eqref{eq:57} we get
				$$
				\overleftarrow\fB_{k_s/2^s}(z)\star 0=\cU_s^{-1}
				\begin{pmatrix}
					1 &   -\hat U_{k_s}(z/\rho^s)\\
					0 & 1
				\end{pmatrix}\overleftarrow{\hat \fB}_{k_s}(z/\rho^s)\star 0
				$$
				$$
				=\cU_s\left(\frac{m_{\cJ,k}^-(F(z/\rho^s))}{F'(z/\rho^s)}-\hat U_{k_s}(z/\rho^s)
				\right)
				$$
				Therefore
				$$
				\overleftarrow{\fD}(t_-(\ell),z)\star 0=\lim_{s\to\infty}\cU_s^{-1}\overleftarrow \fA_{k_s}(0)^{-1}
				\star \left(\frac{m_{\cJ,k}^-(F(z/\rho^s))}{F'(z/\rho^s)}-\hat U_{k_s}(z/\rho^s)\right),
				$$
				which finishes the proof of \eqref{eq1dec-16}. 
			\end{proof}
		We introduce some notation, which will be convenient in the following.
			Recall \eqref{eq27nov-12}
		\begin{align} 
		\nonumber
			\cP:=\begin{pmatrix}
				\Psi_1& \Psi_2\\
				\Phi_1&\Phi_2
			\end{pmatrix}=&
			\lim_{s\to \infty}\frac 1{|\rho|^{s/2}}
			\cU_{s}\fA_{k_s}(0)
			\\
			=&\lim_{s\to \infty}\frac 1{|\rho|^{s/2}}
			\begin{pmatrix}
				\psi_{1,k_s}& \psi_{2,k_s}\\
				\phi_{1,k_s}&\phi_{2,k_s}
			\end{pmatrix}
		\end{align}
		which holds for odd $s$, where 
		\[
		\begin{pmatrix}
			\psi_{1,k_s}& \psi_{2,k_s}\\
			\phi_{1,k_s}&\phi_{2,k_s}
		\end{pmatrix}:=
		\cU_{s}\fA_{k_s}(0)=\begin{pmatrix}
			(2/|\rho|)^{s/2}& 0\\
			0&(|\rho|/2)^{s/2}
		\end{pmatrix}
		\begin{pmatrix}
			a_{k_s} q_{k_s-1}(0)& q_{k_s}(0)\\
			a_{k_s} p_{k_s-1}(0)&p_{k_s}(0)
		\end{pmatrix}.
		\]
		Moreover, we will use 
		
			\begin{align}\label{eq27nov-12v}\nonumber
				\fj J \cP J\fj=-\begin{pmatrix}
					\Phi_2& \Phi_1\\
					\Psi_2&\Psi_1
				\end{pmatrix}
				=&-\lim_{s\to \infty}\frac 1{|\rho|^{s/2}}
				\begin{pmatrix}
					\phi_{2,k_s}& \phi_{1,k_s}\\
					\psi_{2,k_s}&\psi_{1,k_s}
				\end{pmatrix}\\
				=&\lim_{s\to \infty}\frac 1{|\rho|^{s/2}}\cU_s^{-1}\overleftarrow \fA_{k_s}(0)^{-1}.
			\end{align}
			The last identity follows from 
	$$
	\cU_s^{-1}\overleftarrow \fA_{k_s}(0)^{-1}=\begin{pmatrix}
		(-1)(|\rho|/2)^{s/2}&0\\
		0&(|\rho|/2)^{-s/2}
	\end{pmatrix}\begin{pmatrix}
		p_{k_s}(0)& a_{k_s}p_{k_s-1}(0)\\
		-q_{k_s}(0)& -a_{k_s}q_{k_s-1}(0).
	\end{pmatrix}
	$$
	Using that $\det (\cU_s\fA_{k_s}(0))=-1$ we get the following triangular decompositions
	\begin{align}\label{eq:58}
		\cU_s\fA_{k_s}(0)
		=
		\begin{pmatrix}
			1 & \frac{ \psi_{2,k_s}}{\phi_{2,k_s}} \\ 0 & 1
		\end{pmatrix} \Lambda^+_{k_s}\cT^+_{k_s}
	\end{align}
	where 
	\[
	\Lambda^+_{k_s}:=\begin{pmatrix}
		-1/ {\phi_{2,k_s}}& 0\\
		0&  {\phi_{2,k_s}}
	\end{pmatrix},
	\quad
	\cT^+_{k_s}:=\begin{pmatrix}
		1& 0\\
		\frac{ \phi_{1,k_s}}{\phi_{2,k_s}}&1
	\end{pmatrix}
	\]
	and
	\begin{align}\label{eq:59}
		\cU_s^{-1}\overleftarrow \fA_{k_s}(0)^{-1}
		=\begin{pmatrix}
			1 & 0 \\ \frac{\psi_{2,k_s}}{\phi_{2,k_s}} & 1
		\end{pmatrix} \Lambda^-_{k_s}\cT^-_{k_s},
	\end{align}
	where
	$$
	\Lambda^-_{k_s}=\begin{pmatrix}
		-\phi_{2,k_s}& 0\\
		0& 1/\phi_{2,k_s}
	\end{pmatrix},
	\quad
	\cT_{k_s}^-=
	\begin{pmatrix}
		1&\frac{ \phi_{1,k_s}}{\phi_{2,k_s}}  \\
		0& 1
	\end{pmatrix}.
	$$
		\begin{lemma}
			Assume that $\Phi_2\not=0$.
			Then the limit \eqref{eq29nov-16} can be simplified to
			\begin{align}\label{eq4dec-18}
				-{\Phi_2^2}\hat U'_{\vk}(0)z &-\frac 1{m^+_\ell(z)-m_0^+}
				\\ \nonumber &={\Phi_2^2}\lim_{s\to\infty}\rho^s\left(-
				\frac{\phi_{1,k_s}}{\phi_{2,k_s}}-\frac{1}{{m_{\cJ,k_s}^+(F(z/\rho^s)F'(z/\rho^s)}}
				\right)
			\end{align}
			where 
			$
			m^+_0={\Psi_2}/{\Phi_2}.
			$
		\end{lemma}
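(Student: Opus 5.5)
The plan is to start from \eqref{eq29nov-16} and insert the triangular decomposition \eqref{eq:58} of $\cU_s\fA_{k_s}(0)$. Write $\cU_s\fA_{k_s}(0)=N_s\Lambda^+_{k_s}\cT^+_{k_s}$ with
\[
N_s=\begin{pmatrix}1&\psi_{2,k_s}/\phi_{2,k_s}\\0&1\end{pmatrix}.
\]
Each factor acts on a Herglotz-type argument by an elementary map. The lower triangular factors $\cT^+_{k_s}$ and $\left(\begin{smallmatrix}1&0\\-\hat U_{k_s}(z/\rho^s)&1\end{smallmatrix}\right)$ combine into a single lower triangular matrix, with $(2,1)$ entry $\phi_{1,k_s}/\phi_{2,k_s}-\hat U_{k_s}(z/\rho^s)$. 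On $1/w$ such a matrix acts as a translation: if $w_s:=m^+_{\cJ,k_s}(F(z/\rho^s))F'(z/\rho^s)$, then the argument after these two factors has reciprocal
\[
\frac1{w_s}+\frac{\phi_{1,k_s}}{\phi_{2,k_s}}-\hat U_{k_s}(z/\rho^s).
\]
The diagonal factor $\Lambda^+_{k_s}$ multiplies by $-1/\phi_{2,k_s}^2$, and $N_s$ adds $\psi_{2,k_s}/\phi_{2,k_s}$. This gives the exact identity
\[
\cU_s\fA_{k_s}(0)\left(\begin{smallmatrix}1&0\\-\hat U_{k_s}&1\end{smallmatrix}\right)\star w_s-\frac{\psi_{2,k_s}}{\phi_{2,k_s}}
=-\frac{1}{\phi_{2,k_s}^2\Bigl(\frac1{w_s}+\frac{\phi_{1,k_s}}{\phi_{2,k_s}}-\hat U_{k_s}(z/\rho^s)\Bigr)}.
\]
Taking reciprocals and rearranging, I will write
\[
-\frac{1}{\cdots-\psi_{2,k_s}/\phi_{2,k_s}}=\phi_{2,k_s}^2\Bigl(\frac1{w_s}+\frac{\phi_{1,k_s}}{\phi_{2,k_s}}\Bigr)-\phi_{2,k_s}^2\hat U_{k_s}(z/\rho^s).
\]

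Next I pass to the limit $s\to\infty$ along the subsequence of odd $s$ with $k_s\to\vk$, used in \eqref{eq27nov-12}. Since $\phi_{2,k_s}/|\rho|^{s/2}\to\Phi_2\neq0$ and $\psi_{2,k_s}/|\rho|^{s/2}\to\Psi_2$, the ratio $\psi_{2,k_s}/\phi_{2,k_s}$ tends to $m_0^+=\Psi_2/\Phi_2$. The left side therefore tends to $-1/(m^+_\ell(z)-m^+_0)$ by \eqref{eq29nov-16}. On the right, $\phi_{2,k_s}^2=|\rho|^s\Phi_2^2(1+o(1))$, and for odd $s$ we have $|\rho|^s=-\rho^s$. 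This produces the factor $\Phi_2^2\rho^s$ with the sign appearing in \eqref{eq4dec-18}. The term $\phi_{2,k_s}^2\hat U_{k_s}(z/\rho^s)$ is handled as in the proof of Lemma \ref{l:20dec-1}: $\hat U_{k_s}(0)=0$ and $\hat U_{k_s}\to\hat U_\vk$ uniformly near $0$, so $\rho^s\hat U_{k_s}(z/\rho^s)\to\hat U'_\vk(0)z$. This yields the term $-\Phi_2^2\hat U_\vk'(0)z$ on the left of \eqref{eq4dec-18}, and the remaining bracket is exactly the right-hand side.

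The main obstacle is justifying this limit step rather than the algebra. Because the $\hat U$ term converges and the left side converges, the remaining bracket multiplied by $\rho^s$ must converge as well, and this is what shows that the limit on the right of \eqref{eq4dec-18} exists. To make this rigorous I must check two things. First, $(1+o(1))$ times a possibly large quantity causes no trouble: I will replace $\phi_{2,k_s}^2$ by $|\rho|^s\Phi_2^2$ only after noting that the difference, multiplied by the bracket, is $o(1)$ times a convergent sequence. Second, convergence is locally uniform on $\bbC_+$ away from the point where $m_\ell^+=m_0^+$, which is excluded since $m^+_0$ is real.
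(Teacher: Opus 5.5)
Your proposal is correct and follows essentially the paper's route. The paper also inserts the factorization \eqref{eq:58}, commutes $\cT^+_{k_s}$ with the $\hat U$-factor, and moves $\Lambda^+_{k_s}$ past it (producing a lower-triangular factor with entry $\phi_{2,k_s}^2\hat U_{k_s}(z/\rho^s)$), then concludes with $\psi_{2,k_s}/\phi_{2,k_s}\to m_0^+$ and \eqref{eq6dec-1}; you reach the same identity by computing the M\"obius action of each factor explicitly, and your additional remarks (that the limit on the right of \eqref{eq4dec-18} exists because the other terms converge, that $\phi_{2,k_s}^2$ may be replaced by $|\rho|^s\Phi_2^2$, and that $m_\ell^+(z)\neq m_0^+$ since $m_\ell^+$ is a nonconstant Herglotz function while $m_0^+$ is real) make explicit points the paper leaves implicit.
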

		\begin{proof}
			Along the chosen subsequence of $s$ we have
			$$
			\lim_{s\to \infty} \frac{\psi_{2,k_s}}{\phi_{2,k_s}}=\frac{\Psi_2}{\Phi_2}=m^+_0.
			$$
			Further note that the triangular matrices
			$$
			\begin{pmatrix}
				1 & 0 \\  -\hat U_{k}(z/\rho^s) & 1
			\end{pmatrix} \quad \text{and}\quad \cT_{k_s}^+
			$$
			commute and
			$$
			\Lambda_{k_s}^+\begin{pmatrix}
				1 & 0 \\  -\hat U_{k}(z/\rho^s) & 1
			\end{pmatrix} =
			\begin{pmatrix}
				1 & 0 \\   \phi_{2,k_s}^2\hat U_{k}(z/\rho^s) & 1
			\end{pmatrix}  \Lambda_{k_s}^+.
			$$
			Moreover,
			\begin{equation}\label{eq6dec-1}
				\lim_{s\to \infty}\left( \frac 1{|\rho|^{s}} \phi_{2,k_s}^2\right)({|\rho|^{s}}\hat U_{k_s}(z/\rho^s))=-\Phi_2^2 \hat U'_{\vk}(0) z.
			\end{equation}
			Therefore, we get from \eqref{eq29nov-16} 
			\begin{align*}
				\begin{pmatrix}
					1 & 0 \\  \Phi_2^2 \hat U'_{\vk}(0) z & 1
				\end{pmatrix}
				\begin{pmatrix}
					1 & -m_{0}^+ \\  0 & 1
				\end{pmatrix}
				&\star m_\ell^+(z)\\
				&=\lim_{s\to\infty}\Lambda^+_{k_s}
				\cT_{k_s}^+
				\star m_{\cJ,k}^+(F(z/\rho^s))F'(z/\rho^s).
			\end{align*}
			
			This is exactly \eqref{eq4dec-18}.
		\end{proof}
		
		\begin{lemma}
			Let $\Phi_2\not=0$. Then the limit \eqref{eq1dec-16} can be simplified to 
			\begin{align}\label{eq6dec-2}
				\frac{m^-_\ell(z)}{-m_0^+ m^-_\ell(z)+1}+\Phi_2^2 \hat U'_\vk(0)z
				=\Phi_2^2\lim_{s\to\infty}
				\rho^{s}
				\left(\frac{m_{\cJ,k}^-(F(z/\rho^s))}{F'(z/\rho^s)}+ \frac{ \phi_{1,k_s}}{\phi_{2,k_s}} \right).
			\end{align}
		\end{lemma}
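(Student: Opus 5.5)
The plan mirrors the proof of \eqref{eq4dec-18}, using the lower triangular decomposition \eqref{eq:59} in place of \eqref{eq:58}. We start from \eqref{eq1dec-16}, written as
\[
m_\ell^-(z)=\lim_{s\to\infty}\cU_s^{-1}\overleftarrow \fA_{k_s}(0)^{-1}\star w_s(z),\qquad w_s(z)=\frac{m^-_{\cJ,k_s}(F(z/\rho^s))}{F'(z/\rho^s)}-\hat U_{k_s}(z/\rho^s),
\]
and insert \eqref{eq:59}. This gives
\[
\begin{pmatrix}1&0\\ \frac{\psi_{2,k_s}}{\phi_{2,k_s}}&1\end{pmatrix}\Lambda^-_{k_s}\cT^-_{k_s}\star w_s .
\]
Applying the inverse of the leading factor and passing to the limit along the chosen subsequence, where $\psi_{2,k_s}/\phi_{2,k_s}\to m_0^+$, we get
\[
\begin{pmatrix}1&0\\-m_0^+&1\end{pmatrix}\star m_\ell^-(z)=\frac{m_\ell^-(z)}{-m_0^+m_\ell^-(z)+1}=\lim_{s\to\infty}\Lambda^-_{k_s}\cT^-_{k_s}\star w_s(z).
\]
This produces the first term on the left of \eqref{eq6dec-2}.

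Next we evaluate the right-hand side. The matrix $\cT^-_{k_s}$ is a translation by $\phi_{1,k_s}/\phi_{2,k_s}$, and $\Lambda^-_{k_s}$ acts as multiplication by $-\phi_{2,k_s}^2$. Hence
\[
\Lambda^-_{k_s}\cT^-_{k_s}\star w_s=-\phi_{2,k_s}^2\Bigl(\frac{m^-_{\cJ,k_s}(F(z/\rho^s))}{F'(z/\rho^s)}+\frac{\phi_{1,k_s}}{\phi_{2,k_s}}\Bigr)+\phi_{2,k_s}^2\hat U_{k_s}(z/\rho^s).
\]
We write $\phi_{2,k_s}^2=\bigl(|\rho|^{-s}\phi_{2,k_s}^2\bigr)|\rho|^s$, with $|\rho|^{-s}\phi_{2,k_s}^2\to\Phi_2^2$, and handle the $\hat U$ term exactly as in \eqref{eq6dec-1}. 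That term converges to $-\Phi_2^2\hat U'_\vk(0)z$, which after moving to the left gives $+\Phi_2^2\hat U'_\vk(0)z$. For odd $s$ we have $|\rho|^s=-\rho^s$, so $-\phi_{2,k_s}^2(\cdots)\sim\Phi_2^2\rho^s(\cdots)$. This is precisely the right-hand side of \eqref{eq6dec-2}.

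The remaining analytic point is to justify that the limit of the bracketed expression exists at all. Existence follows because every other term has a limit: the left side converges by \eqref{eq1dec-16}, and the $\hat U$ term converges by \eqref{eq6dec-1}. The assumption $\Phi_2\neq0$ guarantees that $\phi_{2,k_s}\neq 0$ eventually, so the decomposition \eqref{eq:59} is legitimate, and that we may divide by $\Phi_2^2$.

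The main obstacle is sign bookkeeping. Three sources must be tracked consistently: the factor $(-1)^s$ in $\cU_s$, the sign of the entry $-\phi_{2,k_s}$ in $\Lambda^-$, and the relation $|\rho|^s=-\rho^s$ for odd $s$. The signs must come out matching both \eqref{eq4dec-18} and the convention used in \eqref{eq6dec-1}. To check them, I would verify the identity directly on the already established formula \eqref{eq27nov-12v}.
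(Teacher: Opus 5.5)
Your proposal is correct and is essentially the paper's proof. The paper likewise inserts the factorization \eqref{eq:59}, removes the lower-triangular factor (using $\psi_{2,k_s}/\phi_{2,k_s}\to m_0^+$), and treats the $\hat U$ term via \eqref{eq6dec-1} together with $|\rho|^s=-\rho^s$ for odd $s$. The only cosmetic difference is in the middle step: the paper commutes the translation $\left(\begin{smallmatrix}1&-\hat U_{k_s}(z/\rho^s)\\0&1\end{smallmatrix}\right)$ past $\cT^-_{k_s}$ and $\Lambda^-_{k_s}$, turning it into a translation by $\phi_{2,k_s}^2\hat U_{k_s}(z/\rho^s)$, whereas you compute the M\"obius action $-\phi_{2,k_s}^2\bigl(w_s+\phi_{1,k_s}/\phi_{2,k_s}\bigr)$ directly, which is the same computation.
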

		
		\begin{proof}
			This time we note that the matrices
			$$
			\begin{pmatrix}
				1 & -\hat U_{k}(z/\rho^s)\\0 & 1
			\end{pmatrix} \quad \text{and}\quad \cT_{k_s}^-
			$$
			commute and
			$$
			\Lambda_{k_s}^-\begin{pmatrix}
				1 &  -\hat U_{k}(z/\rho^s) \\ 0& 1
			\end{pmatrix} =
			\begin{pmatrix}
				1 &  \phi_{2,k_s}^2\hat U_{k}(z/\rho^s)\\ 0 & 1
			\end{pmatrix}  \Lambda_{k_s}^-.
			$$
			Using \eqref{eq6dec-1}, we obtain
			$$
			\begin{pmatrix}
				1 &  \Phi_2^2 \hat U'_\vk(0)z\\ 0 & 1
			\end{pmatrix} \begin{pmatrix}
				1 & 0 \\ -m_0^+& 1
			\end{pmatrix} \star m_\ell^-(z)=\Phi_2^2\lim_{s\to\infty}
			\rho^{s}
			\left(\frac{m_{\cJ,k}^-(F(z/\rho^s))}{F'(z/\rho^s)}+\frac{\phi_{1,k_s}}{\phi_{2,k_s}}\right).
			$$
			Thus \eqref{eq6dec-2} is proved.
		\end{proof}
		
		\begin{lemma}[Main Lemma]\label{l:20dec-2}
		For $\Phi_2\not=0$ along the suitable subsequence $I'_\vk$ the following limit holds
		\begin{align}\label{eq6dec-3}
			\frac{1}{\Phi_2^2}\lim_{s\to\infty}R_{k_s}(F(z/\rho^s))F'(z/\rho^s)/\rho^s
			=\left(\frac{m^-_\ell(z)}{1-m_0^+ m^-_\ell(z)}
			-\frac 1{m^+_\ell(z)-m_0^+}\right)^{-1}
		\end{align}
		where $R_{k_s}(z)$ is the  $k_s$-th diagonal entry of the resolvent matrix $(\cJ-z)^{-1}$.
	\end{lemma}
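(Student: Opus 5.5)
The identity will come from adding the two limit relations \eqref{eq4dec-18} and \eqref{eq6dec-2}, which hold along the subsequence $I'_\vk$, and then combining with the resolvent decomposition \eqref{eq:27may-2}.

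Adding \eqref{eq6dec-2} and \eqref{eq4dec-18}, the terms $\pm\Phi_2^2\hat U'_\vk(0)z$ cancel. The constants $\pm\phi_{1,k_s}/\phi_{2,k_s}$ inside the limits also cancel; these are the terms that are not individually bounded after multiplication by $\rho^s$. The left-hand side becomes
\[
\frac{m^-_\ell(z)}{1-m_0^+ m^-_\ell(z)}-\frac 1{m^+_\ell(z)-m_0^+}.
\]
On the right we get
\[
\Phi_2^2\lim_{s\to\infty}\rho^s\left(\frac{m^-_{\cJ,k_s}(F(z/\rho^s))}{F'(z/\rho^s)}-\frac{1}{m^+_{\cJ,k_s}(F(z/\rho^s))F'(z/\rho^s)}\right).
\]
The sum of the two limits exists because each limit exists separately, as proven in the preceding two lemmas along $I'_\vk$ with $\Phi_2\neq 0$.

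Next, factor out $1/F'(z/\rho^s)$ and apply \eqref{eq:27may-2} at the point $F(z/\rho^s)$:
\[
m^-_{\cJ,k}-\frac1{m^+_{\cJ,k}}=-\frac1{R_k}.
\]
The bracket then equals $-1/\bigl(R_{k_s}(F(z/\rho^s))F'(z/\rho^s)\bigr)$. The identity therefore reads
\[
\lim_{s\to\infty}\frac{-\Phi_2^2\rho^s}{R_{k_s}(F(z/\rho^s))F'(z/\rho^s)}=X(z),
\]
where $X$ denotes the left-hand combination above. Taking reciprocals, which is legitimate wherever $X(z)\neq0$, gives
\[
\frac1{\Phi_2^2}\lim_{s\to\infty} R_{k_s}(F(z/\rho^s))F'(z/\rho^s)/\rho^s = -X(z)^{-1}.
\]

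The main obstacle is the bookkeeping of signs and the nonvanishing needed to invert. The parity convention that $s$ is odd, so $\rho^s=-|\rho|^s$, has to be tracked consistently through \eqref{eq4dec-18} and \eqref{eq6dec-2}; that is where the sign that turns $-X^{-1}$ into $X^{-1}$ in \eqref{eq6dec-3} should come from. I would recheck each of those two lemmas against the factor $\rho^s$ versus $|\rho|^s$ and against \eqref{eq6dec-1}.

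For invertibility, I would argue that $X$ is, up to sign, a Herglotz function on $\bbC_+$, since it is a difference of Herglotz-type Möbius images of $m^\pm_\ell$. It is also not identically zero, since $m^\pm_\ell$ are genuine Weyl functions of the half-chains. Hence $X(z)\neq 0$ on $\bbC_+$, the reciprocal limit holds locally uniformly there, and by symmetry it holds on $\bbC_-$ as well.
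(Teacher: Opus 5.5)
Your route is exactly the paper's. You add \eqref{eq4dec-18} and \eqref{eq6dec-2}, let the terms $\pm\Phi_2^2\hat U'_\vk(0)z$ and $\pm\phi_{1,k_s}/\phi_{2,k_s}$ cancel, and apply \eqref{eq:27may-2} at $F(z/\rho^s)$. Your algebra is correct, and so is the sign it produces. The sum reads $X(z)=-\Phi_2^2\lim_{s\to\infty}\rho^s/\left(R_{k_s}(F(z/\rho^s))F'(z/\rho^s)\right)$, so the left side of \eqref{eq6dec-3} equals $-X(z)^{-1}$. The paper's one-line proof (``add and use \eqref{eq:27may-2}'') yields the same thing. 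The mismatch with the printed \eqref{eq6dec-3} is a sign slip in the statement. Rechecking the parity bookkeeping will not produce a compensating sign: the choice of odd $s$ is already built into \eqref{eq4dec-18} and \eqref{eq6dec-2}, through $\phi_{2,k_s}^2/|\rho|^s\to\Phi_2^2$, $|\rho|^s=-\rho^s$ and \eqref{eq6dec-1}. Both of those identities are correct as written.

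Your hedge ``up to sign'' is what hides this, and a Herglotz check settles it.
\begin{itemize}
\item The function $m^-_\ell/(1-m_0^+m^-_\ell)$ is the image of $m^-_\ell\in\cN_0$ under the real unimodular map $\left(\begin{smallmatrix}1&0\\-m_0^+&1\end{smallmatrix}\right)$, hence Herglotz. The function $-1/(m^+_\ell-m_0^+)$ is also Herglotz. So $X$ is Herglotz outright, and $X^{-1}$ maps $\bbC_+$ into $\bbC_-$.
\item The function $u\mapsto R_{k_s}(F(u))F'(u)$ is a symmetric Herglotz function. Its spectral measure lives on $\sE_0$. For $x\in\sE_0$, each $F_n'(u)/(x-F_n(u))$ is Herglotz because $F_n-x$ has only real zeros, and one lets $n\to\infty$.
\item Hence $R_{k_s}(F(z/\rho^s))F'(z/\rho^s)/\rho^s$ is Herglotz in $z$ for either sign of $\rho^s$. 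This is exactly what the paper asserts in the proof of Theorem~\ref{thm:417}.
\end{itemize}
So the correct identity has $-X^{-1}$ on the right. Equivalently, for odd $s$, it holds with $|\rho|^s$ in place of $\rho^s$ on the left. The sign is harmless downstream, since Theorem~\ref{thm:417} only uses that the right-hand side is not affine.

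The same observation settles invertibility. The function $-1/(m^+_\ell-m_0^+)$ takes values in $\bbC_+$ because $m^+_\ell$ is not a real constant. Otherwise $m_\sE=\fD(t_-(\ell),\cdot)\star m^+_\ell$ would be meromorphic, contradicting the continuity of $\mu_\sE$. Hence $X\neq 0$ on $\bbC_+$.
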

\begin{proof}
We recall that the resolvent diagonal entry can be represented in terms of one-sided resolvent functions $m_{\cJ,k}^{\pm}$ \eqref{eq:27may-2}.
By adding \eqref{eq4dec-18} and \eqref{eq6dec-2} we obtain \eqref{eq6dec-3}.
\end{proof}

\begin{theorem}\label{thm:417} Let $|\rho|>\sqrt{13}-1$.
Then matrix $\cP$ in \eqref{eq27nov-12}  is trivial. 	In other words 
$$
 \cK_\fD(t_-(\ell),z) = \cK_\fD(t_+(\ell),z)
 $$ for all $\ell>0$.
\end{theorem}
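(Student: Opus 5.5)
We argue by contradiction. Suppose that for some $\ell>0$ we have $\cK_\fD(t_-(\ell),z)\neq\cK_\fD(t_+(\ell),z)$; by the self-similarity \eqref{eq:54} we may reduce to $\ell\in(0,1)$. Lemma~\ref{l:20dec-1} then gives the jump factor $I-\cP\cD_\vk\cP^*Jz$ with $\cP\neq 0$, $\cP J\cP^*=0$ and $\cD_\vk\ge0$. Since $\cP$ is real of rank one, its columns are proportional. We first treat the case $\Phi_2\neq0$ and then dispose of $\Phi_2=0$ separately.

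\emph{Case $\Phi_2\neq 0$.} We use the Main Lemma~\ref{l:20dec-2}. Its right-hand side is determined entirely by the chain $\mathscr C(m_\sE)$ at the point $t_-(\ell)$: the Weyl functions $m^\pm_\ell$ of the left and right pieces, together with the real constant $m_0^+$. We then study the left-hand side directly. The quantity $R_{k_s}(F(z/\rho^s))F'(z/\rho^s)/\rho^s$ is the diagonal Green function $\int d\mu_{k_s}(x)/(x-F(z/\rho^s))$ of the two-sided limit-periodic operator $\cJ_\vk$, evaluated at points that approach the fixed point $0$ at scale $\rho^{-s}$. Because $F(z/\rho^s)\approx z/\rho^s$, this quantity should behave like $\rho^{-s}$ times the spectral measure at $\vk$ seen at scale $\rho^{-s}$. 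Using the self-similarity of the diagonal measures $\mu_k$ under $T$ (Lemma~\ref{lem:orthoPoly}) together with the bound of Lemma~\ref{l25s-3-2}, we aim to show that this limit vanishes identically, or at least is $o(1)$, where the relevant exponent is $\beta=\log2/\log|\rho|-1<0$.

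Granting this, the bracket on the right of \eqref{eq6dec-3} must be infinite, so
\[
\frac{m^-_\ell}{1-m_0^+m^-_\ell}\equiv\frac{1}{m^+_\ell-m_0^+}.
\]
A Herglotz function on the left would then equal minus a Herglotz function on the right (note that $-1/(m^+-c)$ is Herglotz). This forces both sides to be real constants, which is impossible for a nontrivial chain in the limit point case. This contradiction shows that $\Phi_2\neq0$ cannot occur.

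\emph{Case $\Phi_2=0$.} Then $\cP=\begin{pmatrix}\Psi_1&\Psi_2\\0&0\end{pmatrix}$, so the jump $I-\cP\cD_\vk\cP^*Jz$ is a linear factor of the form $I+c\,zE_{11}J$ with $c\ge 0$. We will show that a jump of this type contradicts the $M$-type bookkeeping. Lemma~\ref{l25s-3-2} gives $(|\rho|/2)^{s/2}|\rho|^{-s/2}p_{k_s}(0)\to0$, so $p_{k_s}(0)=o(2^{s/2})$. The $(2,2)$ entry of \eqref{eq16_nov_1}, summed between $k_s2^n$ and $(k_s+1)2^n$, must nevertheless carry the mass $c>0$ of the jump. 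These two facts should be incompatible, so $c=0$.

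The main obstacle is the vanishing of the left side of \eqref{eq6dec-3}, that is, quantitative decay of the diagonal Green function of $\cJ_\vk$ near $0$ uniformly along the subsequence $I'_\vk$. This is where the hypothesis $|\rho|>\sqrt{13}-1$ enters, through continuity of $a_\vk$ on $\bbZ_2$. My first attempt would be to use the renormalization \eqref{23-08-05-03'S} with $k$ replaced by $\vk$ to express $R_{k_s}$ through $m^\pm_{\cJ_\vk}$ evaluated at $F$. I would then combine this with the bound $|m_\sE(iy)|\lesssim y^\beta$, $\beta<0$, transported back by \eqref{mrlim79}.
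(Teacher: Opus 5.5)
Your plan for $\Phi_2\neq0$ has the right shape: a degenerate left side of \eqref{eq6dec-3} against a non-degenerate right side. But the decisive step is only stated as an aim, and the tools you name cannot deliver it. The bound $|m_\sE(iy)|\lesssim y^\beta$ and Lemma~\ref{l25s-3-2} concern the half-line Weyl function at $\infty$ and the growth of $\fA_k(0)$. In the paper the $z^\beta$ decay is used only to write the left side as a pure Cauchy transform $\int d\nu_{k_s,s}(y)/(y-z)$. Re-using the renormalization of Lemma~\ref{l2-2-11} together with \eqref{eq:27may-2} just reproduces \eqref{eq6dec-3}. What is needed is local and qualitative. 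By Stieltjes inversion and monotonicity of $F(\cdot/\rho^s)$ near $0$, you get $\nu_{k_s,s}((-A,A))=\mu_{k_s}(F((-A,A)/\rho^s))$, where $\mu_{k_s}$ is the spectral measure of $\cJ$ at $e_{k_s}$, and $\mu_{k_s}\to\mu_\vk$ weakly. This local mass tends to $0$ exactly because $\mu_\vk(\{0\})=0$, i.e.\ $0$ is not an eigenvalue of the two-sided operator $\cJ_\vk$. That is an external spectral input: every $\cJ_\vk$ has purely continuous spectrum when $|\rho|>\sqrt{13}-1$, as quoted in the paper's proof. This is where the hypothesis on $\rho$ is really used at this step, not merely through continuity of $a_\vk$, and no decay rate tied to $\beta$ is required. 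Moreover, vague convergence $\nu_{k_s,s}\to0$ only yields a limit $az+b$, since mass may escape to $\infty$; so do not aim to show the left side tends to $0$. The contradiction is that the right side of \eqref{eq6dec-3} cannot be linear: that would force $m^+_\ell$ or $m^-_\ell$ to degenerate, which is incompatible with $m_\sE$ being the Cauchy transform of the continuous measure $\mu_\sE$. Also, your step ``the bracket is infinite, hence the two terms coincide'' is a non sequitur, because equal terms would make the bracket vanish.

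The case $\Phi_2=0$ is not actually handled. From $\Phi_2=0$ and $\det\cP=0$ you only get $\Psi_2\Phi_1=0$, so the form $\cP=\begin{pmatrix}\Psi_1&\Psi_2\\0&0\end{pmatrix}$ is unjustified. Lemma~\ref{l25s-3-2} gives boundedness, not $p_{k_s}(0)=o(2^{s/2})$, which is just the case assumption itself. The clash with ``$M$-type bookkeeping'' is merely asserted. Indeed, at $z=0$ a jump $\cP\cD_\vk\cP^*$ with vanishing second row of $\cP$ sits entirely in the $(1,1)$ entry, so the $(2,2)$ sums of $p_j(0)^2$ you propose to use do not see it. The paper needs no case split. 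The argument above shows $\Phi_2=0$ along every admissible subsequence. Running it along $k_s-1$ (still $(k_s-1)/2^s\to\ell$) gives $p_{k_s-1}(0)/2^{s/2}\to0$, hence $\Phi_1=0$ since the $a_k$ are bounded. Multiplying \eqref{eq29nov-16} and \eqref{eq1dec-16} by $J$ swaps the rows of $\cU_s\fA_{k_s}(0)$. This gives the analogue \eqref{eq19dec-3} of \eqref{eq6dec-3} with $\Phi_2\mapsto-\Psi_2$ and $m^\pm_\ell\mapsto-1/m^\pm_\ell$, so the same contradiction yields $\Psi_2=0$, and then $\Psi_1=0$ by the same shift.
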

\begin{proof}

For an arbitrary $\vk\in\bbZ_2$, we define $\mu_\vk$ by
$$
R_\vk(z)=\langle(\cJ_\vk-z)^{-1}e_0,e_0\rangle=\int \frac{d\mu_\vk(x)}{x-z}.
$$
We consider the function $R_\vk(F(z/\rho^s))F'(z/\rho^s)/\rho^s$. By \eqref{eq6dec-3} it is a Herglotz function. Moreover, for $z\to\infty$, we have 
\[
R_\vk(F(z/\rho^s))F'(z/\rho^s)\sim \frac{-F'(z/\rho^s)}{F(z/\rho^s)}\sim z^\beta,
\]
for $\beta<0$ and we conclude again by \cite[Proposition 7.33]{LukicFirstCourse}  that
\[
R_\vk(F(z/\rho^s))F'(z/\rho^s)/\rho^s=\int\frac{d\nu_{\vk,s}(y)}{y-z}
\]
for some measure $\nu_{\vk,s}$. We claim that 
$$
\nu_{\vk,s}((y_1,y_2))=\mu_\vk((F(y_1/\rho^s),F(y_2/\rho^s))), \quad (y_1,y_2)\subset [\rho^s\sb^F_{j}, \rho^s\sa^F_{j+1}].
$$
Choose, $y_1,y_2$ as above and assume that $\nu_{\vk,s}$ has no mass points at those points. Then, by Stieltjes inversion we have 
\[
\nu_{\vk,s}((y_1,y_2))=\lim\limits_{\epsilon\to 0}\int_{y_1+i\epsilon}^{y_2+i\epsilon}\Im (R_\vk(F(z/\rho^s))F'(z/\rho^s)/\rho^s) dz.
\]
On $[\rho^s\sb^F_{j}, \rho^s\sa^F_{j+1}]$, $u=F(z/\rho^s)$ is monotonic and without loss of generality we consider an interval where it is increasing. Then, we get
\begin{align*}
\nu_{\vk,s}((y_1,y_2))&=\lim\limits_{\epsilon\to 0}\int_{F((y_1+i\epsilon)/\rho^s)}^{F((y_2+i\epsilon)/\rho^s)}\Im(R_\vk(u))du\\
&=\mu_\vk((F(y_1/\rho^s),F(y_2/\rho^s)))
\end{align*}
where we used the Stieljes inversion formula for the Herglotz function $R_\vk(u)$ in the last step.

It is shown in \cite{EichLukYu3}, that for $|\rho|>\sqrt{13}-1$  the spectrum of $\cJ_\vk$ is (singular) continuous for an arbitrary $\vk\in\bbZ_2$. Therefore for an arbitrary $\ve>0$ there exists $\delta>0$ such that
$$
\mu_\vk((-\delta,\delta))\le \ve.
$$
We choose a convergent subsequence $k_s\to\vk$ and therefore $\mu_{k_s}\to \mu_{\vk}$ weakly.
Then for an arbitrary fixed $A>0$ for all sufficiently big $s$ we have $F(-A/\rho^s,A/\rho^s)\subseteq (-\delta,\delta)$ and thus
$$
\nu_{k_s,s}(-A,A)=\mu_{k_s}(F(-A/\rho^s,A/\rho^s))\le 2\ve.
$$
Thus, $\nu_{k_s,s}$ converges to $0$ and hence the function on the LHS of \eqref{eq6dec-3} converges to $az+b$ uniformly on compact subsets of $\bbC_+$ for some $a\geq 0,b\in\bbR$. On the other hand, the RHS is not a linear function. The contradiction  deals with our assumption $\Phi_2\not=0$ (the value in the denominator in the LHS). We conclude that $\Phi_2=0$.

Recall that by \eqref{eq27nov-12}
\[
\Phi_2=\lim_{s\to\infty}\frac{p_{k_s}(0)}{2^{s/2}}.
\]
Thus, applying the same arguments for the subsequence of $\{k_s-1\}_{k_s\in I_\vk}$ yields
\[
\lim_{s\to\infty}\frac{p_{k_s-1}(0)}{2^{s/2}}=0
\] 
and therefore since $a_{k_s}$ are uniformly bounded
\[
\Phi_1=\lim_{s\to\infty}\frac{a_{k_s}p_{k_s-1}(0)}{2^{s/2}}=0.
\]

To show that $\Psi_2=0$,
we multiply both parts of \eqref{eq29nov-16} and \eqref{eq1dec-16} by $J$.
We get
\begin{align*}\nonumber
	-\frac 1{ m_\ell^+(z)}
	\label{}
	=\lim_{s\to\infty}J\cU_s\fA_{k_s}(0)
	\begin{pmatrix}
		1 & 0 \\  -\hat U_{k}(z/\rho^s) & 1
	\end{pmatrix} \star m_{\cJ,k_s}^+(F(z/\rho^s)F'(z/\rho^s)
\end{align*}
and
\begin{align*}
	- \frac 1{m_\ell^-(z)}
	=
	\label{}
	\lim_{s\to\infty}J\cU_s^{-1}\overleftarrow \fA_{k_s}(0)^{-1}\star
	\left(\frac{m^-_{\cJ,k_s}(F(z/\rho^s))}{F'(z/\rho^s)}-\hat U_{k_s}(z/\rho^s)\right).
\end{align*}
Assume that $\Psi_2\not=0$. Note that multiplication by $J$ leads to a permutation of the upper and lower row of the matrices $\cU_s\fA_{k_s}(0)$ and 
$\cU_s\overleftarrow  \fA_{k_s}(0)$. So, repeating the arguments, we get the same identity \eqref{eq6dec-3} with the replacements
$\Phi_2\mapsto -\Psi_2$ and $m_\ell^\pm\mapsto -1/m_\ell^\pm$, i.e.,
\begin{align}\label{eq19dec-3}\nonumber
	\frac{1}{\Psi_2^2}\lim_{s\to\infty}R_{k_s}(F(z/\rho^s))F'(z/\rho^s)/\rho^s\\
	=\left(\frac{1}{m_0^-- m^-_\ell(z)}
	+\frac {m_\ell^+(z)}{m^+_\ell(z)-m_0^-}\right)^{-1},\quad m^-_0=-\frac{\Phi_2}{\Psi_2}.
\end{align}
This time we get a contradiction with the assumption $\Psi_2\not=0$. The same arguments that shows that $\Phi_1=0$ proves that $\Psi_1=0$.
\end{proof}

\begin{proof}[Proof of Theorem \ref{intro:thm3}] Note that $\ell(t)$ is monotonic. Since for every $k/2^s$ there is $t_{k,s}$ such that $M$-type of
$\fD(t_{k,s},z)$ equals $k/2^s$, $\ell(t)$ is continuous. Theorem \ref{thm:417} implies that $t_-(\ell)=t_+(\ell)$. That is, $t_1<t_2$ implies $\ell(t_1)<\ell(t_2)$.
\end{proof}

\subsection{Proof of Theorem \ref{intro:thm1}}

\begin{proof}[Proof of Theorem \ref{intro:thm1}, part (a)]
Evaluating \eqref{eq16_nov_1} for $z=0$ we get 
\[
\lim_{k\to\infty}\frac{r(2^k,0)}{|\rho|^k}= c(1).
\]
Since $\kappa=\log|\rho|/\log 2$ we have for $n=2^k$ we have that
\begin{align}\label{eq:60}
|\rho|^k=(2^k)^{\kappa}.
\end{align}
Choose $k(n)$ so that 
\[
2^{k(n)-1}\leq n<2^{k(n)}.
\]
Hence, by monotonicity of $r(\cdot,0)$ we get that
\[
r(2^{k(n)-1},0)\leq r(n,0)<r(2^{k(n)},0)
\]
On the other hand, by \eqref{eq:60} we have 
\[
\frac{1}{|\rho|^{k(n)}}\leq \frac1{n^{\kappa}}\leq \frac{1}{|\rho|^{k(n)-1}}
\]
Thus,
\[
\frac{1}{|\rho|}\frac{r(2^{k(n)-1},0)}{|\rho|^{k(n)-1}}=\frac{r(2^{k(n)-1},0)}{|\rho|^{k(n)}}\leq \frac{r(n,0)}{n^{\kappa}}\leq\frac{r(2^{k(n)},0)}{|\rho|^{k(n)-1}}=\frac{r(2^{k(n)},0)}{|\rho|^{k(n)}}|\rho|
\]
Hence, 
\[
\frac{c(1)}{|\rho|}=\lim_{n\to\infty}\frac{r(2^{k(n)-1},0)}{|\rho|^{k(n)}}\leq \liminf_{n\to\infty}\frac{r(n,0)}{n^{\kappa}}.
\]
and
\[
\limsup_{n\to\infty}\frac{r(n,0)}{n^{\kappa}}\leq \lim_{n\to\infty}\frac{r(2^{k(n)},0)}{|\rho|^{k(n)-1}}=c(1)|\rho|.
\]
The proof for $K(n,0,0)$ is analogous.
\end{proof}

In what follows we asssume that $|\rho|>\sqrt{13}-1$.
Due to continuity, we can extend Proposition \ref{prop:28} to a more general setting.

\begin{lemma}
Let a subsequence  $(n_s)$ be such that $n_s/2^s\to\ell$. Then
\begin{align}\label{eqComp:28-1}
	\frac{1}{|\rho|^s}\cU_s\cK_{n_s}(z/\rho^s)\cU_s^*\to \cK_\fD(t(\ell),z).
\end{align}
\end{lemma}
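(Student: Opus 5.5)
The idea is a sandwich argument. We compare $n_s$ with dyadic approximants from both sides, use the $J$-monotonicity of the discrete chain $\cK_n$ in $n$, and then invoke the continuity of the $M$-type parametrization from Theorem~\ref{thm:417}.

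First we fix $\epsilon>0$. We pick dyadic rationals $k_-/2^{r}<\ell<k_+/2^{r}$, both within $\epsilon$ of $\ell$, with a common $r$. (If $\ell$ is itself dyadic, or $\ell=0$, the lower one may be taken slightly below $\ell$ or omitted.) Since $n_s/2^s\to\ell$, for all large $s\ge r$ we have
\[
k_-2^{s-r}\le n_s\le k_+2^{s-r}.
\]
The kernels $\cK_n(z)$ are nondecreasing in $n$ in the matrix order, and conjugation by $\cU_s$ followed by scaling by $|\rho|^{-s}$ preserves this order. Hence
\[
\frac{1}{|\rho|^s}\cU_s\cK_{k_-2^{s-r}}(z/\rho^s)\cU_s^*\le \frac{1}{|\rho|^s}\cU_s\cK_{n_s}(z/\rho^s)\cU_s^*\le \frac{1}{|\rho|^s}\cU_s\cK_{k_+2^{s-r}}(z/\rho^s)\cU_s^*.
\]
By \eqref{eq:27may-6} and \eqref{eq:54}, the outer terms converge as $s\to\infty$ to $\hat\cK_{k_\pm/2^r}(z)$. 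By Corollary~\ref{cor:2}, these equal $\cK_\fD(t(k_\pm/2^r),z)$.

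Consequently every limit point of the middle term lies, for each $z$, between $\cK_\fD(t(\ell-\epsilon'),z)$ and $\cK_\fD(t(\ell+\epsilon''),z)$. By Theorem~\ref{intro:thm3} (that is, Theorem~\ref{thm:417}), $t(\ell)$ is a continuous bijection, so $t(k_\pm/2^r)\to t(\ell)$ as $\epsilon\to0$. The map $t\mapsto \cK_\fD(t,z)$ is continuous, because the chain is absolutely continuous in the trace parametrization. Letting $\epsilon\to0$ therefore pins the $\liminf$ and $\limsup$ of the diagonal at $z$ to $\cK_\fD(t(\ell),z)$, first for each fixed $z\in\bbC\setminus\bbR$.

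To upgrade this to locally uniform convergence, we note that the family is locally bounded: on compacts it is dominated by the upper bounds above, and a positive semidefinite kernel is controlled by its diagonal through Cauchy--Schwarz. Normality (Montel) then gives precompactness of the kernels $(z,w)$. Any limit point has diagonal $\cK_\fD(t(\ell),z)$, and the whole kernel is recovered from its diagonal (as noted via \cite[Section 33]{Sodin2000}). So all limit points coincide, and the full sequence converges.

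The main obstacle is the order-preservation step together with the pointwise-to-uniform upgrade. Order preservation requires care with the sign $(-1)^s$ in $\cU_s$. Since $\cU_s$ is real diagonal, $\cU_s X\cU_s^*$ preserves positivity regardless of that sign, and this should be checked explicitly. The remaining steps follow directly from earlier results.
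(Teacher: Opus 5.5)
Your proposal is correct and uses essentially the same sandwich argument as the paper. You bracket $n_s$ between dyadic multiples, use monotonicity of $\cK_n$ in $n$ together with Theorem~\ref{thnov22-1} and \eqref{eq:27may-6} for the outer limits, and close the gap using continuity of $t(\ell)$ from Theorem~\ref{thm:417}. Your fixed-$\epsilon$ dyadic bracket (which also handles dyadic $\ell$ directly) and your Montel/diagonal-recovery upgrade to locally uniform convergence are harmless refinements of details the paper leaves implicit, since in that section it works only with the diagonal $\cK(z)=\cK(z,z)$.
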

\begin{proof}
It suffices to consider $\ell$ which are not of the form $\frac{k}{2^n}$ for some $k,n$. Then, for $k_s$ as defined in Proposition \ref{prop:28} we have $k_s/2^s<\ell$. Thus, for a fixed $s$ and a sufficiently big $j$ we have
$$
\frac{k_s}{2^s}<\frac{n_{j+s}}{2^{j+s}},
$$
that is, $k_s 2^j<n_{s+j}$. Therefore
$$
			\frac 1{|\rho|^{s+j}}\cU_{s+j}\cK_{k_{s} 2^j}(z/\rho^{s+j})\cU_{s+j}^*
			\le
			\frac 1{|\rho|^{s+j}}\cU_{s+j}\cK_{n_{s+j}}(z/\rho^{s+j})\cU_{s+j}^*.
			$$
			By Theorem \ref{thnov22-1} the LHS has a limit as $j\to\infty$. Thus, we get
$$
			\hat \cK_{k_s/2^s}(z)
			\le \liminf_{j\to\infty}
			\frac 1{|\rho|^{s+j}}\cU_{s+j}\cK_{n_{s+j}}(z/\rho^{s+j})\cU_{s+j}^*.
			$$
Since $s$ is arbitrary we get
$$
			\cK_\fD(t(\ell),z)
			\le \liminf_{j\to\infty}
			\frac 1{|\rho|^{s+j}}\cU_{s+j}\cK_{n_{s+j}}(z/\rho^{s+j})\cU_{s+j}^*.
			$$
Now, for a fixed $s$ and sufficiently big $j$ we have
$(k_s+1)/{2^s}\ge{n_{j+s}}/{2^{j+s}}$. 
 Therefore
$$
			\frac 1{|\rho|^{s+j}}\cU_{s+j}\cK_{(k_{s}+1) 2^j}(z/\rho^{s+j})\cU_{s+j}^*
			\ge
			\frac 1{|\rho|^{s+j}}\cU_{s+j}\cK_{n_{s+j}}(z/\rho^{s+j})\cU_{s+j}^*.
			$$
		Hence, we obtain
$$
			\hat \cK_{(k_s+1)/2^s}(z)\ge \limsup_{j\to\infty}
			\frac 1{|\rho|^{s+j}}\cU_{s+j}\cK_{n_{s+j}}(z/\rho^{s+j})\cU_{s+j}^*,
			$$
			and therefore by continuity in $\ell$
			$$
			\cK_\fD(t(\ell),z)
			\ge \limsup_{j\to\infty}
			\frac 1{|\rho|^{s+j}}\cU_{s+j}\cK_{n_{s+j}}(z/\rho^{s+j})\cU_{s+j}^*.
			$$
			Therefore, we get \eqref{eqComp:28-1}.
\end{proof}

We require one more lemma. 
\begin{lemma}\label{lem:9}
	The function $m_\sE$ has the invariance property
	\[
	|\rho|^\beta m_{\sE}(z/|\rho|)=-m_{\sE}(-z).
	\]
\end{lemma}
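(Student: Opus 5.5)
The plan is to obtain the invariance directly from the two functional equations already available: the Poincaré equation \eqref{eq1-2-25} for $F$ and the Ruelle-operator identity for $m_{\sE_0}$. The latter was derived in the proof of Theorem~\ref{intro:thm2}; with $d=2$ it reads
\[
m_{\sE_0}(w)=\tfrac{T'(w)}{2}\,m_{\sE_0}(T(w)), \qquad w\in\bbC\setminus\sE_0 .
\]
Both sides of the claimed identity are analytic on $\bbC\setminus\sE$, using the real-symmetric extension of $m_\sE$, and are compared there. So it suffices to verify the identity off $\sE$. Alternatively, one can verify it on an open set and extend by analytic continuation.

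\textbf{Step 1.} Use the representation \eqref{eq7-1-10}, $m_\sE(z)=F'(z)\,m_{\sE_0}(F(z))$, at the point $\rho z$, together with $F(\rho z)=T(F(z))$. Differentiating the Poincaré equation gives $\rho F'(\rho z)=T'(F(z))F'(z)$. Hence
\[
m_\sE(\rho z)=F'(\rho z)\,m_{\sE_0}\bigl(T(F(z))\bigr)=\frac{F'(z)}{\rho}\,T'(F(z))\,m_{\sE_0}\bigl(T(F(z))\bigr)=\frac{2}{\rho}\,F'(z)\,m_{\sE_0}(F(z)).
\]
In the last equality we apply the Ruelle identity with $w=F(z)$. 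This requires $F(z)\notin\sE_0$, i.e.\ $z\notin\sE=F^{-1}(\sE_0)$. We conclude that $m_\sE(\rho z)=\frac 2\rho m_\sE(z)$ on $\bbC\setminus\sE$.

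\textbf{Step 2.} Now insert $\rho=-|\rho|$ and replace $z$ by $z/|\rho|$. This gives
\[
m_\sE(-z)=-\frac{2}{|\rho|}\,m_\sE(z/|\rho|).
\]
Finally, $\beta=\frac{\log 2}{\log|\rho|}-1$, which is \eqref{mrlim79} with $d=2$ and the multiplier taken in absolute value. Hence $|\rho|^{\beta}=2/|\rho|$, and the identity becomes exactly $|\rho|^\beta m_\sE(z/|\rho|)=-m_\sE(-z)$.

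\textbf{Main obstacle.} The only delicate point is bookkeeping of domains and signs. Since $\rho<0$, the point $\rho z$ lies in the opposite half-plane from $z$. One therefore must use $m_\sE$ as an analytic function on all of $\bbC\setminus\sE$, extended by $m_\sE(\bar z)=\overline{m_\sE(z)}$, rather than only on $\bbC_+$. This is legitimate because $F$ is real entire, so $F'(z)m_{\sE_0}(F(z))$ is analytic off $F^{-1}(\sE_0)$ and real-symmetric. One should also avoid the ambiguous power $\rho^{n\beta}$ in \eqref{mrlim79} and work only with $(2/\rho)^n$, which is real. With these conventions the computation above is purely algebraic.
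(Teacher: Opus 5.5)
Your argument is correct and is essentially the paper's proof. Both combine $m_\sE=F'\cdot(m_{\sE_0}\circ F)$, the eigenmeasure identity $m_{\sE_0}(w)=\tfrac{T'(w)}{2}\,m_{\sE_0}(T(w))$, and the differentiated Poincar\'e equation; the paper works in integral form directly at the point $z/|\rho|=-z/\rho$, while you first isolate $m_\sE(\rho z)=\tfrac{2}{\rho}\,m_\sE(z)$ and then substitute. (One minor slip: since $\rho\sE=\sE$, the common domain of the two sides is $\bbC\setminus(-\sE)$ rather than $\bbC\setminus\sE$, but your substitution $z\mapsto z/|\rho|$ already yields the identity exactly there.)
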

\begin{proof}
	Since $|\rho|=-\rho$, it follows from \eqref{eq7-1-10} that
	\[
	m_\sE(z/|\rho|)=\int\frac{F'(-z/\rho)}{x-F(-z/\rho)}d\mu_{\sE_0}(x).
	\] 
	Using the invariant property $\cL_0^*\mu_{\sE_0}=2\mu_{\sE_0}$ of the measure, we get from \eqref{eq:2}
	\[
	m_\sE(z/|\rho|)=
	\frac 1 2 \int\frac{F'(-z/\rho)T'(F(-z/\rho))}{x-T(F(-z/\rho))}d\mu_{\sE_0}(x).
	\]
	Since
	\[
	F(z)=T(F(z/\rho)), \quad F'(z)= \frac 1\rho T'(F(z/\rho))F'(z/\rho),
	\]
	we obtain
	\[
	m_\sE(z/|\rho|)=
	\frac{\rho} 2 \int\frac{F'(-z)}{x-F(-z)}d\mu_{\sE_0}(x)=-|\rho|^{-\beta} m_{\sE}(-z). \qedhere
	\]
\end{proof}
\begin{proof}[Proof of Theorem \ref{intro:thm1}, part (b)]
In particular, \eqref{eqComp:28-1} contains the limit
$$
\frac 1{2^s}(\cK_{n_s})_{22}(0)\to (\cK_\fD)_{22}(t(\ell),0).
$$
Therefore, we have
$$
\lim_{s\to\infty}\frac 1{n_s}(\cK_{n_s})_{22}(0)=\lim_{s\to\infty}\frac{2^s}{n_s}\frac 1{2^s}(\cK_{n_s})_{22}(0)= \frac 1 \ell(\cK_\fD)_{22}(t(\ell),0).
$$

Since $\det\cU_s=\pm 1$, \eqref{eqComp:28-1} implies
$$
\frac 1{|\rho|^s}\sqrt{\det\cK_{n_s}(0)}\to \sqrt{\det\cK_\fD(t(\ell),0)}.
$$
Since $(2^s)^\kappa=|\rho|^s$, we obtain
$$
\lim_{s\to\infty}\frac 1{n^\kappa_s}\sqrt{\det\cK_{n_s}(0)}=\lim_{s\to\infty}
\left(\frac{2^s}{n_s}\right)^\kappa\frac 1{|\rho|^s}\sqrt{\det\cK_{n_s}(0)}=\frac 1{\ell^\kappa} \sqrt{\det\cK_\fD(t(\ell),0)}.
$$

Similarly the relation for the product of diagonal entries
$$
\frac 1{|\rho|^s}\sqrt{(\cK_{n_s})_{11}(0)(\cK_{n_s})_{22}(0)}\to\sqrt{(\cK_\fD)_{11}(t(\ell),0)(\cK_\fD)_{22}(t(\ell),0)}
$$
implies $r(n_s,0)\sim {c(\ell)}n_s^{\kappa}$ with $c(\ell)$ given by \eqref{eq:28may-1}. By compactness, \eqref{eqnrnoscillations} follows, and \eqref{eqnKnoscillations} is proved analogously. 

It follows from Lemma \ref{lem:9} and Lemma \ref{lem:18} that
\[
\frac{1}{|\rho|}\fj\cUnew(|\rho|^\beta)\cK_\fD(t(2\ell),z/\rho)\cUnew(|\rho|^\beta)^*\fj=\cK_\fD(t(\ell),z)
\]
for all $\ell>0$, which implies $b(2\ell) = b(\ell)$ and $c(2\ell) = c(\ell)$. 
\end{proof}

\appendix	
	
\section{Monotonicity of $M$-type}
For the reader's convenience we provide a proof of a well known lemma.
\begin{lemma}
Assume that
\begin{equation}\label{eq:30may-1}
\lim_{y\to\infty}\frac{\log y}{M(iy)}=0.
\end{equation}
Then
\begin{equation}\label{eq:30may-2}
\limsup_{y\to\infty}\frac{\log\|\fA(iy)\|}{M(iy)}=\limsup_{y\to\infty}\frac{\log\|\fA(iy)J\fA(iy)^*-J\|}{2M(iy)}.
\end{equation}
\end{lemma}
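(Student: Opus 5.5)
We compare $\|\fA(iy)\|^2$ with $\|\fA(iy)J\fA(iy)^*-J\|$, two-sided, up to factors that are polynomial in $y$. Condition \eqref{eq:30may-1} says precisely that any such polynomial factor is invisible after dividing the logarithm by $M(iy)$. The $\limsup$ identity \eqref{eq:30may-2} then follows by taking logarithms, dividing by $2M(iy)$, and passing to the $\limsup$.

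\emph{Upper bound for the kernel form.} By the triangle inequality and $\|J\|=1$,
\[
\|\fA(iy)J\fA(iy)^*-J\|\le \|\fA(iy)\|^2+1 .
\]
We have $\|\fA\|\ge 1$ because $\det\fA=1$. Hence
\[
\log\|\fA(iy)J\fA(iy)^*-J\|\le 2\log\|\fA(iy)\|+\log 2 ,
\]
which gives the inequality $\ge$ in \eqref{eq:30may-2}. This uses only that $M(iy)\to\infty$, which follows from \eqref{eq:30may-1}.

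\emph{Lower bound for the kernel form.} This is the main step. For $z=iy$, $J$-innerness gives
\[
i(\fA J\fA^*-J)\le 0, \qquad\text{equivalently}\qquad \cK_\fA(z,z)=\frac{\fA J\fA^*-J}{z-\bar z}\ge 0 .
\]
Thus $P:=\fA J\fA^*-J=2iy\,\cK$ with $\cK\ge0$, and $\|P\|=2y\|\cK\|\ge y\,\tr\cK$. We must bound $\tr\cK$ below by roughly $\|\fA\|^2$. Write $\fA J\fA^*=J+2iy\cK$. Since $\cK$ is positive, the Hermitian part of $-i\fA J\fA^*$ equals $2y\cK$. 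Use the identity $\fA^*=J^{-1}\fA^{-1}J$, valid for $\det\fA=1$ in the $2\times2$ case via the adjugate formula $\fA^{-1}=-J\fA^{T}J$. With it, $\fA J\fA^*$ can be rewritten in terms of $\fA\fA^{*}$-type quantities.

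A cleaner route uses the standard inequality for $J$-contractive matrices. For $\Im z>0$, the matrix $\fA(z)^*$ is a $-iJ$-expansion. Passing to the Potapov–Ginzburg transform $S$ of $\fA$, which is a contraction, one has
\[
\|\fA\|^2\le \frac{1+\|S\|}{1-\|S\|}\cdot\text{const}, \qquad \|P\|\gtrsim \frac{1}{1-\|S\|^2},
\]
where the second estimate follows from $1-S^*S\ge$ the defect expressed through $P$. This yields
\[
\|\fA(iy)\|^2\le C\,\bigl(1+\|\fA(iy)J\fA(iy)^*-J\|\bigr)
\]
for $y\ge1$. If the constants acquire polynomial factors in $y$, for example from normalizing by $2y$, those are exactly absorbed by \eqref{eq:30may-1}.

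\emph{Conclusion and expected difficulty.} Putting the two bounds together gives
\[
2\log\|\fA(iy)\|=\log\|\fA(iy)J\fA(iy)^*-J\|+O(\log y)+O(1),
\]
and dividing by $2M(iy)$ yields \eqref{eq:30may-2}. The delicate point is the lower bound: a careful $2\times2$ computation should show $\|\fA J\fA^*-J\|\ge c\|\fA\|^2-C$ on the imaginary axis. I would first try this directly, using $\fA J\fA^{T}=J$ from $\det\fA=1$, so that $\fA J\fA^*-J=\fA J(\fA^*-\fA^{T})$. Then $\|\fA^*-\fA^{T}\|=2\|\Im\fA\|$ entrywise, and one must relate $\|\Im\fA(iy)\|$ to $\|\fA(iy)\|$. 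I would fall back on the Potapov–Ginzburg contraction argument if that comparison fails.
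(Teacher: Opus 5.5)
Your upper bound $\|\fA J\fA^*-J\|\le\|\fA\|^2+1$ is correct and is the ``evident'' direction in the paper. The gap is in the lower bound.

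You try to get $\|\fA(iy)\|^2\le C(y)\bigl(1+\|\fA(iy)J\fA(iy)^*-J\|\bigr)$, with $C(y)$ at most polynomial in $y$, as a pointwise inequality for $J$-contractive matrices. No such pointwise inequality exists. Take the constant real $J$-unitary matrix $\fA\equiv U=\begin{pmatrix}\lambda&0\\0&\lambda^{-1}\end{pmatrix}$. It lies in $\mathcal I(2)$ and has $\fA J\fA^*-J\equiv0$, yet $\|\fA(iy)\|=\lambda$ is arbitrarily large.

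The same example breaks both Potapov--Ginzburg estimates you display. For $J$-unitary $\fA$ the transform $S$ is unitary, so your bound $\|P\|\gtrsim(1-\|S\|^2)^{-1}$ would read $0\gtrsim\infty$, and your bound on $\|\fA\|^2$ in terms of $\|S\|$ is vacuous. Your fallback identity $\fA J\fA^*-J=-2i\,\fA J(\Im\fA)^T$ is correct, but it fails on the same example, since $\Im U=0$. So any constant in the lower bound must depend on the particular function $\fA$. It can only be obtained by using analyticity in $z$ together with a normalization at one point, and your proposal uses neither.

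This is exactly what the paper's proof supplies.
\begin{enumerate}[(1)]
\item Pass to the disk via $A(\z)=V^*\fA(z)V$, with $\Gamma=A\fj A^*-\fj\ge0$.
\item Impose the Arov normalization $a_{21}(0)=0$. This is harmless: right multiplication by a constant $\fj$-unitary matrix leaves $\Gamma$ unchanged and changes $\|A\|$ only by a bounded factor, which disappears after dividing by $M(iy)\to\infty$.
\item The diagonal entries of $\Gamma\ge0$ and of $A^*\fj A-\fj\ge0$ give $\|A\|^2\le4|a_{22}|^2$.
\item The function $s=a_{21}/a_{22}$ is a Schur function with $s(0)=0$. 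The Schwarz lemma gives $|s(\z)|\le|\z|$, hence $|a_{22}(\z)|^2(1-|\z|^2)\le\g_{22}(\z)+1$.
\item On the imaginary axis $(1-|\z|^2)^{-1}\le y$.
\end{enumerate}
Step (5) is the actual source of the $\log y$ term that \eqref{eq:30may-1} absorbs; it does not come from the factor $2y$ relating $\fA J\fA^*-J$ to $\cK_\fA$. Without the normalization and the Schwarz-lemma step, or an equivalent analytic argument, the inequality $\le$ in \eqref{eq:30may-2} remains unproved.
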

\begin{proof}
We use the following change of variables
$$
A(\z)=V^*\fA(z)V,\quad V=\frac 1{\sqrt{2}}
\begin{pmatrix}i&-i\\ 1&1
\end{pmatrix},
\quad
z=i\frac{1+\z}{1-\z}.
$$
In this case
$$
\|\fA(z)\|=\|A(\z)\|\quad\text{and}\quad
\|\fA(z)J\fA(z)^*-J\|=\|A(\z)\fj A(\z)^*-\fj\|.
$$
The matrix function
$$
A(\z)=\begin{pmatrix} a_{11}(\z)&a_{12}(\z)
\\ a_{21}(\z)&a_{22}(\z)
\end{pmatrix},
$$
is $\fj$-expanding in $\bbD$,
\begin{equation}\label{eq:30may-3}
\Gamma(\z)=
\begin{pmatrix} \g_{11}(\z)&\g_{12}(\z)
\\ \g_{21}(\z)&\g_{22}(\z)
\end{pmatrix}=
A(\z)\fj A(\z)^*-\fj\ge 0.
\end{equation}
 WLOG we assume that $A(\z)$ meets the Arov normalization $a_{21}(0)=0$ (the Arov gauge \cite{BessLukYu}).
 
 Diagonal entries of the inequality \eqref{eq:30may-3} provides the following estimations
\begin{align*}
 -|a_{11}(\z)|^2 +|a_{12}(\z)|^2+1&\ge 0\\
  -|a_{21}(\z)|^2 +|a_{22}(\z)|^2-1&\ge 0.
\end{align*}
In addition \eqref{eq:30may-3} and $\det A(\z)\not=0$ implies $A(\z)^*\fj A(\z)-\fj\ge 0$ and therefore
$$
  -|a_{12}(\z)|^2 +|a_{22}(\z)|^2-1\ge 0.
$$
Thus
\begin{align*}
 |a_{11}(\z)|^2\le |a_{12}(\z)|^2+1&\le |a_{22}(\z)|^2\\
 |a_{21}(\z)|^2\le |a_{22}(\z)|^2-1&\le |a_{22}(\z)|^2\\
  |a_{12}(\z)|^2\le |a_{22}(\z)|^2-1&\le |a_{22}(\z)|^2.
\end{align*}
We have
\begin{align}\nonumber
\|A(\z)\|^2=&\|A(\z)^*A(\z)\|\le \tr(A(\z)^*A(\z))\\
\label{eq:30may-5}
=&
|a_{11}(\z)|^2 +|a_{12}(\z)|^2+
|a_{21}(\z)|^2 +|a_{22}(\z)|^2\le 4 |a_{22}(\z)|^2.
\end{align}

On the other hand
$$
\g_{22}(\z)+1=|a_{22}(\z)|^2-|a_{21}(\z)|^2=|a_{22}(\z)|^2(1-|s(\z)|^2),\ \ s(\z)=\frac{a_{21}(\z)}{a_{22}(\z)}.
$$
The function $s(\z)$ belongs to the Schur class. Since $s(0)=0$, by Schwarz lemma 
$|s(\z)|^2\le|\z|^2$.
Thus 
$$
\log|a_{22}(\z)|^2+\log(1-|\z|^2)\le\log(\g_{22}(\z)+1).
$$
On the imaginary axis $z=iy$ we have $\z=|\z|$, consequently 
$$
\frac{1}{1-|\z|^2}\le\frac{(1+|\z|)^2}{1-|\z|^2}=y.
$$
Therefore
$$
\log|a_{22}(\z)|^2\le\log(\g_{22}(\z)+1)+\log y.
$$
In combination with
\eqref{eq:30may-5} we obtain
\begin{align}\nonumber
\limsup_{y\to\infty}\frac{\log\|\fA(iy)\|}{M(iy)}=
\limsup_{y\to\infty}\frac{\log\|A(\z)\|}{M(iy)}
\le \limsup_{y\to\infty}\frac{\log 4|a_{22}(\z)|^2}{2M(iy)}\\
\label{eq:30may-6}
\le \limsup_{y\to\infty}\frac{\log 4y+\log (\|\Gamma(\z)\|+1)}{2M(iy)}=
 \limsup_{y\to\infty}\frac{\log \|\Gamma(\z)\|}{2M(iy)}.
\end{align}

Since
$$
\|\Gamma(\z)\|\le 1+\|A(\z)\|^2
$$
the opposite estimation
$$
 \limsup_{y\to\infty}\frac{\log \|\Gamma(\z)\|}{2M(iy)}\le \limsup_{y\to\infty}\frac{\log\|A(\z)\|}{M(iy)}
$$
is evident.
\end{proof}

\begin{corollary}\label{cor:Appendix}
For a monotonic chain 
$\mathscr C=\{\fA(t,\cdot)\mid t\in [0,\infty)\}$
 the $M$-type $\ell(t)$, whenever it is well defined, is monotonic.
\end{corollary}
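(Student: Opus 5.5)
The plan is to derive the corollary directly from the preceding lemma, using \eqref{eq:30may-2}. Let $t_1<t_2$. Monotonicity of the chain means
\[
\cK_{\fA(t_1,\cdot)}(z,z)\le \cK_{\fA(t_2,\cdot)}(z,z), \qquad z\in\bbC\setminus\bbR .
\]
Multiplying by $z-\ol z = 2iy$ at $z=iy$, $y>0$, and using the sign conventions of $J$-inner functions (so that $i(\fA J\fA^*-J)\ge 0$ in $\bbC_+$ with our orientation, up to an overall sign fixed by \eqref{intro:eq7}), we obtain positive semidefinite matrices
\[
0\le \Gamma_1(iy):=\pm i\bigl(\fA(t_1,iy)J\fA(t_1,iy)^*-J\bigr)\le \Gamma_2(iy):=\pm i\bigl(\fA(t_2,iy)J\fA(t_2,iy)^*-J\bigr).
\]
For positive semidefinite matrices $0\le A\le B$ one has $\|A\|\le\|B\|$. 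Moreover, $\|\fA J\fA^*-J\|=\|\Gamma\|$, since multiplication by $\pm i$ does not change the norm. Hence
\[
\log\|\fA(t_1,iy)J\fA(t_1,iy)^*-J\|\le \log\|\fA(t_2,iy)J\fA(t_2,iy)^*-J\|
\]
for all $y>0$.

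Next we divide by $2M(iy)>0$ and take $\limsup_{y\to\infty}$. By \eqref{eq:30may-2}, which applies under the standing hypothesis \eqref{eq:30may-1}, the two sides are exactly $\ell(t_1)$ and $\ell(t_2)$ whenever the $M$-types are defined. This gives $\ell(t_1)\le\ell(t_2)$.

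The only points needing care are the following. First, the kernel inequality should be used in the form with the factor $z-\ol w$ removed; this is where the Hermitian matrix $\fA J\fA^*-J$ (times $\pm i$) becomes definite on the imaginary axis. Second, one must check that the hypothesis \eqref{eq:30may-1} holds for the Martin function in question. For our $M$ this follows from \eqref{eq:Mtype} together with $|F(iy)|\asymp y^{1+\beta'}$, where $\beta'$ is the relevant growth exponent: this gives $M(iy)\asymp y^{\gamma}$ for some $\gamma>0$, so $\log y=o(M(iy))$. I expect this verification to be the only nontrivial step, and it amounts to a routine growth estimate.
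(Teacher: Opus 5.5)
Your core argument is the paper's own. At $z=iy$ the kernel inequality orders the positive semidefinite matrices $-i\bigl(\fA(t_j,iy)J\fA(t_j,iy)^*-J\bigr)$, and hence their norms (with the paper's conventions the sign is $-i$, by Remark~\ref{rem:intro}(i)). Then \eqref{eq:30may-2} turns this into $\ell(t_1)\le\ell(t_2)$. The paper does not verify \eqref{eq:30may-1}; it is simply the standing hypothesis of the appendix lemma. Your extra check therefore goes beyond what the corollary needs, but as written it is wrong and should be repaired.

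The claim $|F(iy)|\asymp y^{1+\beta'}$ is false. The Poincar\'e equation $F(\rho z)=F(z)\bigl(F(z)+\rho\bigr)$ together with $F(\ol z)=\ol{F(z)}$ gives $|F(i|\rho|y)|=|F(iy)|\,|F(iy)+\rho|$. So $\log|F|$ roughly doubles under $y\mapsto|\rho|y$, and $\log|F(iy)|\asymp y^{\gamma}$ with $\gamma=\log 2/\log|\rho|$. Thus $F$ has finite positive order, not polynomial growth. Your premise would actually defeat your conclusion. Inserting a polynomial bound for $|F(iy)|$ into \eqref{eq:Mtype} gives $M(iy)\asymp\log y$, so $\log y/M(iy)$ would not tend to $0$ and \eqref{eq:30may-1} would fail.

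The clean verification bypasses $F$. The function $M(iy)=\Im\theta(iy)$ is positive and continuous for $y>0$, and the rescaling $\theta(\rho^2 z)=4\theta(z)$ gives $M(\rho^{2n}iy)=4^n M(iy)$. Hence $M(iy)\asymp y^{\log 2/\log|\rho|}=y^{1+\beta}$, and so $\log y=o(M(iy))$. The power law you had in mind is the growth of the comb map $\theta$, not of $F$.
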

\begin{proof}
Monotonicity of the chain means that
$$
\frac{\fA(z,t_1)J\fA(z,t_1)^*-J}{z-\overline{z}}\le \frac{\fA(z,t_2)J\fA(z,t_2)^*-J}{z-\overline{z}},\quad t_1<t_2.
$$
Therefore
$$
\|\fA(z,t_1)J\fA(z,t_1)^*-J\| \le 
\|\fA(z,t_2)J\fA(z,t_2)^*-J\|,\quad t_1<t_2.
$$
Then \eqref{eq:30may-2} implies $\ell(t_1)\le\ell(t_2)$.
\end{proof}

\begin{remark}
The norm of an individual $\fj$-expanding matrix $A$ can not be estimated in terms of its $j$-form $\Gamma$, since $\Gamma$ does not depend on a possible $\fj$-unitary multiplier of $A$ from  the right. Note that the norm of a $\fj$-unitary matrix can be arbitrary large. However, as soon as we have a matrix function $A(\z)$ it is enough to normalize it at a single point, say $\z=0$, to have an estimation $\|A(\z)\|$ by $\|\Gamma(\z)\|$ in an arbitrary point $\z\in\bbD$. To the purpose of such normalization,  V.P. Potapov introduce a concept of $\fj$-modulus. The Arov normalization is much simpler and provides similar or better estimations. It appeared naturally in the theory of unitary extensions of isometries. For details see the appendices in \cite{DamEichYud21}.
\end{remark}

		\bibliographystyle{amsplain}

\providecommand{\MR}[1]{}
\providecommand{\bysame}{\leavevmode\hbox to3em{\hrulefill}\thinspace}
\providecommand{\MR}{\relax\ifhmode\unskip\space\fi MR }
\providecommand{\MRhref}[2]{%
  \href{http://www.ams.org/mathscinet-getitem?mr=#1}{#2}
}
\providecommand{\href}[2]{#2}

	\end{document}